\documentclass[11pt,e-only]{amsart}
\usepackage[utf8]{inputenc}
\usepackage[english]{babel}

\usepackage{amsmath, amsfonts, amssymb, amsthm,mathtools}
\newcommand{\defeq}{\coloneqq}
\usepackage{mathtools}
\usepackage{graphicx}
\usepackage{enumitem}
\usepackage{xcolor}
\usepackage{makecell}
\usepackage{stackengine}
\usepackage{longtable}
\usepackage{aliascnt}

\usepackage{bbold}

\usepackage{booktabs}

\usepackage{tikz}
\usetikzlibrary{braids,decorations.pathreplacing}
\usetikzlibrary{matrix,arrows.meta}

\usepackage{multirow}
\usepackage{array}
\usepackage[breaklinks=true]{hyperref}
\usepackage{xurl}
\hypersetup{
    colorlinks,
    linkcolor={blue!80!black},
    citecolor={blue!50!black},
    urlcolor={blue!80!black}
}

\usepackage[capitalize]{cleveref}
\usepackage[margin=3cm]{geometry}

\usepackage{caption}
\usepackage{setspace}
\usepackage{fancyhdr}
\usepackage{subcaption}
\usepackage[normalem]{ulem}
\usepackage[isbn=false,giveninits=true,date=year]{biblatex}
\usepackage{csquotes}
\renewbibmacro*{journal}{
  \iffieldundef{shortjournal}
    {
      \iffieldundef{journaltitle}
        {}
        {
          \printtext[journaltitle]
            {
              \printfield[titlecase]{journaltitle}
              \setunit{\subtitlepunct}
              \printfield[titlecase]{journalsubtitle}
             }
         }
    }
    {\printtext[journaltitle]{\printfield[titlecase]{shortjournal}}}
}

\AtEveryBibitem{
  \iffieldundef{doi}{}{\clearfield{url}}
}
\AtEveryBibitem{
  \iffieldundef{eprint}{}{\clearfield{url}\clearfield{urldate}}
}

\usepackage{mymacros}

\crefname{section}{§}{§§}
\Crefname{section}{§}{§§}

\stackMath

\title{Quantum invariants indexed by fibered faces of the Thurston polytope}

\author[D. Passaro]{Davide Passaro}

\author[L. San Mart\'in Su\'arez]{Lara San Mart\'in Su\'arez}

\thanks{California Institute of Technology, Pasadena, CA 91125, USA}
\thanks{Contact information: \texttt{dpassaro@caltech.edu, lsanmart@caltech.edu.}}

\date{\today}

\date{\today}

\begin{document}

\begin{abstract}
We study the Gukov--Manolescu quantum invariant for oriented links.
While this invariant is a single series in the knot case, we find that links admit multiple such series, each one consistent with the Melvin--Morton--Rozansky expansion of the colored Jones polynomials.
We prove that convergent inverted state sums yield a family of multivariable Gukov--Manolescu series, indexed by monomials of the Alexander polynomial. 
We conjecture that these monomials correspond precisely to the fibered faces of the Thurston norm ball and provide extensive computational evidence for this correspondence. 
Further results concerning the leading term, the corresponding single-variable invariant, and the effect of partial Dehn surgery are also established.
\end{abstract}

\maketitle

\section{Introduction}

The perturbative expansion of the colored Jones polynomial of a knot $K$ is known to satisfy
\begin{equation}
    J_{K,n}(1+\hbar) = \frac{1}{\Delta_K((1+\hbar)^n)}+\hbar \frac{P^{(1)}_K((1+\hbar)^n)}{\Delta_K^3((1+\hbar)^n)} + \hbar^2 \frac{P^{(2)}_K((1+\hbar)^n)}{\Delta_K^5((1+\hbar)^n)}+\cdots~.
\end{equation}
Here, $\Delta_K(x)$ denotes the Alexander polynomial of the knot $K$ and the numerators $P^{(j)}_K(x)\in\Z[x+x^{-1}]$ are polynomial invariants of $K$. The equality holds in $\Z[n][[\hbar]]$ \cite{MM,BNG,RozMMRConj}.

Setting $x=(1+\hbar)^n$, we call the series on the right-hand side
\begin{equation}
    \mathit{MMR}_K(x,\hbar)\defeq \sum_{j=0}^\infty \hbar^j\frac{P^{(j)}_K(x)}{\Delta^{2j+1}_K(x)}\in \Q(x)[[\hbar]]
\end{equation}
the Melvin--Morton--Rozansky (MMR) expansion of the knot $K$.
Gukov and Manolescu \cite[Conj. 1.5]{GM} conjectured that this perturbative series can be resummed as a power series at $x=0$. Concretely, they conjecture the existence of a knot invariant
\begin{equation*}
    F_K(x,q)\in (x^\frac12-x^{-\frac12})\Z((q))[[x]]
\end{equation*}
for which
\begin{equation}\label{eq:FK-MMR}
    F_K(x,e^\hbar)=(x^{\frac12}-x^{-\frac12})\mathit{MMR}_K(x,\hbar)\,,
\end{equation}
where equality holds as elements of $(x^\frac12-x^{-\frac12})\Q[[x]][[\hbar]]$.

Beyond its role as a resummation of the colored Jones polynomials, the $F_K$ series is closely related to the categorification program of quantum invariants of three-manifolds.
To that end, in \cite{GPV,GPPV}, Gukov, Pei, Putrov and Vafa introduced the $\widehat Z$ invariants of negative-definite plumbed manifolds, as a step towards the categorification of the Witten--Reshetikhin--Turaev invariants.
These invariants are expected to admit an extension to arbitrary three-manifolds.
From this perspective, the Gukov--Manolescu series can be understood as the $\widehat Z$ analogue for link complements.

A combinatorial construction for the $F_K(x,q)$ series, satisfying \eqref{eq:FK-MMR}, was developed by Park in \cite{Park21}.
There, he introduces the \emph{inverted state sum}, an infinite sum which depends on a braid representative of the knot $K$ and a choice of a simple multicycle on the braid (cf. \cref{fig:T24stcycles}) called an \emph{inversion datum}.
Whenever such a choice leads to an inverted state sum converging in $\Z[q,q^{-1}][[x]]$, we say that the knot $K$ is \emph{nice}, and that the resulting series is the $F_K$ invariant of the knot $K$ which satisfies \eqref{eq:FK-MMR}.

One of the major obstacles in computing $F_K(x,q)$ through  this construction is that it heavily depends on the choice of a suitable braid and inversion datum that would make the inverted state sum convergent. Given an arbitrary knot $K$, there is currently no algorithm to produce such data.

Nevertheless, despite its computational difficulty, it is known that all homogeneous braid knots \cite[Theorem 1]{Park21}, as well as all fibered knots up to 12 crossings \cite[Theorem 1.4]{OSSS25}, are nice.
In \cite{OSSS25} it was also shown that, for this class of knots,  the $F_K(x,q)$ satisfies
\begin{equation}
    F_K(x,q)=(-1)^{\lambda(K)+1}q^{g(K)-\lambda(K)}x^{g(K)-\frac12} + \text{higher order terms in }x~.
\end{equation}
Here $g(K)$ is the three-genus of the knot $K$ and $\lambda(K)$ is the Hopf invariant of a fibered knot $K$, originally defined as the \emph{enhanced Milnor number} by Rudolph \cite{Rudolph1987}.

Park's formalism only produces convergent series if the knot has monic Alexander polynomial;
therefore the equality of \eqref{eq:FK-MMR} holds in $(x^\frac12-x^{-\frac12})\Z[[x]][[\hbar]]$.
Together with extensive computational support, these observations have led to the following conjecture.
\begin{conjecture}[\cite{OSSS25}]\label{conj:OSSS-nice-fibered}
The class of nice knots and fibered knots is the same.
\end{conjecture}

\subsection{The main result}
While the Gukov--Manolescu series for knots has been explored at some length, results pertaining to the same invariant for links remain scarce.
With our work, we aim to start filling this gap in the literature.
The central focus of our discussion is summarized by the following:
\begin{question}
    Is there a multi-component link analogue of \eqref{eq:FK-MMR}?
\end{question}

To tackle this question, we propose an alternative definition of ``niceness'': Given a braid $\bd$ and an inversion datum $\iota$, we say that the pair $(\bd,\iota)$ is nice if the inverted state sum of Park can be written as
\begin{equation}
    \sum_{\mathbf{k}\in \Z^\ell} g_{\mathbf{k}}(q)\mathbf{x}^\mathbf{k}~,
\end{equation}
where $\mathbf{x}^\mathbf{k}\defeq \prod_{i=1}^{\ell}x_i^{k_i}$ and $g_{\mathbf{k}}(q)\in\Z[q,q^{-1}]$ is obtained from a finite number of contributing states. This alternative definition generalizes the one proposed by \cite{ParkThesis}, in that the exponents $\mathbf{k}$ are not restricted to take values in $\N^\ell$ and may take values in $\Z^{\ell}$.

The multi-component link analogue of the right-hand side of \eqref{eq:FK-MMR} was developed by Rozansky in \cite{Rozansky1}.
For an algebraically connected link, its $\boldsymbol{\alpha}=(\alpha_1,\dots,\alpha_\ell)$-colored Jones polynomial $J_{L;\boldsymbol{\alpha}}(q)$ agrees, perturbatively, with a signed sum of the iterated Laurent series expansions of series of the form
\begin{equation}\label{eq:MMR-link}
    \mathit{MMR}_L(\mathbf{x},\hbar)\defeq \sum_{j=0}^\infty \hbar^j\frac{P_L^{(j)}(\mathbf{x})}{\Delta_L^{2j+1}(\mathbf{x})}~,
\end{equation}
evaluated at $\mathbf{x}=((1+\hbar)^{\alpha_1},\dots,(1+\hbar)^{\alpha_\ell})$ (cf. \cite[Corollary 1.8]{Rozansky1}).

Unlike the knot case, the MMR expansion for multi-component links may have poles at $(x_1,\dots,x_\ell)=(0,\dots,0)$, in which case
\begin{equation}
    \mathit{MMR}_L(\mathbf{x},\hbar)\notin \Z[[x_1^\frac12,\dots,x_\ell^\frac12]][[\hbar]],
\end{equation}
which requires finding an extension of the power series ring in which the coefficients of the multi-component MMR expansion lie.

This extension, however, is not unique.
Following \cite{AK13}, we may define as many different extended rings $R$ as extremal vertices $v$ of the Newton polytope of the Alexander polynomial for which the term $\mathbf{x}^v$ is monic.
We call such vertices \emph{integral}.

Given an integral vertex $v$, let
\begin{equation*}
    \mathcal{C}_v\defeq \mathrm{Cone}\{w-v\mid w\in\supp \Delta_L, w\neq v\}
\end{equation*}
and
\begin{equation*}
    R_{\mathcal{C}_v}[[\mathbf{x}]]\defeq \{f=\sum_{\mathbf{k}\in\Z^\ell} a_\mathbf{k} \mathbf{x}^\mathbf{k} \mid a_\mathbf{k}\in R \text{ and }\supp{f}\subseteq \mathcal{C}_v\}
\end{equation*}
where $R$ is a ring.
Then, the polynomial $\mathbf{x}^{-v} \Delta_L(\mathbf x)$ is a unit in the ring $\Z_{\mathcal{C}_v}[[x_1,\dots,x_\ell]]$. Under identification of the denominators of \eqref{eq:MMR-link} with their inverses, we obtain a formal Laurent series expression for $\mathit{MMR}_L(\mathbf{x},\hbar)$. We call this series the \emph{expansion of} $\mathit{MMR}_L(\mathbf{x},\hbar)$ \emph{near $v$}.

Our main observation is that the inverted state sum naturally makes a choice of such a vertex $v$, and the result perturbatively agrees with the expansion of $\mathit{MMR}_L$ near $v$.

\begin{theorem}
Let $(\bd,\iota)$ be nice and let $F_{(\beta,\iota)}(\mathbf{x},q)$ denote the result of its inverted state sum.
Then, the pair $(\bd,\iota)$ determines a unique integral vertex $v$ such that
\begin{enumerate}
    \item The inverted state sum $F_L^{(2v)}(\mathbf{x},q)\defeq F_{(\beta,\iota)}(\mathbf{x},q)$ is an invariant of the pair $(L,v)$.
    \item The invariant $F_L^{(2v)}$ can be written as
    \begin{equation*}
        F_L^{(2v)}(\mathbf{x},q)=\mathbf{x}^{-v}\sum_{\mathbf{k}\in \mathcal{C}_v} f_\mathbf{k}(q)\mathbf{x}^\mathbf{k}\,,
    \end{equation*}
    where $f_\mathbf{k}(q)\in\Z[q,q^{-1}]$.
    \item $F_L^{(2v)}(\mathbf{x},\hbar+1)$ expanded near $\hbar=0$ recovers the expansion of $\mathit{MMR}_L(\mathbf{x},\hbar)$ near $v$.
\end{enumerate}
\end{theorem}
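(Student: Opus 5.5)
We follow Park's strategy for knots \cite{Park21} and adapt it to the multivariable setting. The organizing object is the \emph{uninverted} state sum, i.e.\ the $R$-matrix state sum of the closed braid $\hat\beta$ with Verma-module colors of highest weights $\alpha_i$ on the components. This is a finite rational expression, and by Rozansky \cite{Rozansky1} its perturbative expansion in $\hbar$ is $\mathit{MMR}_L(\mathbf{x},\hbar)$ up to the normalizing factor. Inverting along $\iota$ replaces, on each strand of the multicycle, the geometric series $\sum_{k\ge0}$ by $-\sum_{k<0}$. Each individual state is then a formal expansion of the same local rational weight, but at $x_i^{\pm1}\to 0$ in the opposite direction.

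\textbf{Locating the vertex and proving (2).} Niceness says each coefficient $g_{\mathbf k}$ comes from finitely many states. So the supports of the state contributions lie in a pointed cone translated by a fixed vector, and all of them share a common minimal element with respect to a linear functional $\phi$ that is positive on that cone. Choose $\phi$ generic. The state minimizing $\phi$ yields the leading monomial of $F_{(\beta,\iota)}$, which we write as $\mathbf{x}^{-v}$ times a unit. We then identify $2v$, the exponent in $\mathbf{x}^{2v}$ with $\mathbf{x}=(x_1^{1/2},\dots)$, as a vertex of the Newton polytope of $\Delta_L$ (after the half-integer shift coming from the normalization by $\prod(x_i^{1/2}-x_i^{-1/2})$). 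We also show that $\mathbf{x}^{v}$ occurs in $\Delta_L$ with coefficient $\pm1$. The argument: specialize $q=1$, i.e.\ $\hbar=0$, where the state sum becomes the Burau/Alexander computation $\det(I-\text{Burau}(\beta))^{-1}$ expanded in the direction determined by $\iota$. Finiteness of contributing states forces this inverse to lie in $\Z_{\mathcal C}[[\mathbf x]]$ for some pointed cone $\mathcal C$. A Laurent polynomial is invertible in such a ring only if its extremal term in the direction of $\mathcal C$ is a monomial with unit coefficient. That term is exactly an integral vertex $v$ with $\mathcal C\subseteq\mathcal C_v$. Uniqueness follows because $\phi$ is minimized at a single vertex, and two different vertices give incompatible expansion rings. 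The support statement (2) is then the same argument applied at all orders in $\hbar$.

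\textbf{Proving (3).} This is done termwise in $\hbar$. The coefficient of $\hbar^j$ of $F_{(\beta,\iota)}(\mathbf x,e^\hbar)$ is a formal sum of iterated expansions of the local rational weights. Since the sum converges in $\Z_{\mathcal C_v}((\mathbf x))$, it equals the expansion in $\mathcal C_v$ of the $\hbar^j$ coefficient of the uninverted rational function, namely $P^{(j)}_L/\Delta_L^{2j+1}$ times the prefactor. The justification is that expansion into $\Z_{\mathcal C_v}[[\mathbf x]]$ is a ring homomorphism on the subring of rational functions whose denominators are invertible there. It is therefore compatible with Park's inversion identities applied crossing by crossing, for example $\sum_{k\ge0}x^k=-\sum_{k<0}x^k$ as rational functions.

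\textbf{Proving (1).} Invariance is shown via Markov moves that respect inversion data, following Park's knot proof. Because (3) determines $F$ uniquely as the $\mathcal C_v$-expansion of $\mathit{MMR}_L$ at every order in $\hbar$, two nice pairs yielding the same $v$ must agree coefficientwise in $\hbar$, and hence as $q$-series. So invariance reduces to the uniqueness of expansion. We expect the main obstacle to be the rigorous interchange step in (3): showing that the termwise $\hbar$-expansion of an infinite, merely convergent state sum commutes with re-expansion into $\mathcal C_v$. This needs a uniform bound showing that, for each $\mathbf k$ and each $j$, only finitely many states contribute. We would try to obtain this from the niceness finiteness itself, since the $\hbar$-expansion of $g_{\mathbf k}(e^\hbar)$ involves only the same finitely many states.
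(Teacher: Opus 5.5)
Your reduction of (1) to (3) is sound. The argument for (3), however, has a genuine gap, and (1) and your treatment of (2) both rest on it.

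\textbf{The gap in (3).} The inference ``since the sum converges in $\mathbb{Z}_{\mathcal{C}_v}((\mathbf{x}))$, it equals the expansion in $\mathcal{C}_v$ of the $\hbar^j$ coefficient of the uninverted rational function'' does not follow. The sum over states is infinite. That cone expansion is a ring homomorphism only controls finitely many sums and products. A convergent infinite sum of cone expansions of rational functions need not be the expansion of any rational function, let alone of $P^{(j)}_L/\Delta_L^{2j+1}$.

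Applying $\sum_{k\ge0}x^k=-\sum_{k<0}x^k$ ``crossing by crossing'' cannot repair this, for three reasons:
\begin{enumerate}
\item the labels are coupled by conservation at every crossing and by the closure;
\item inversion changes the summation range on an entire multicycle at once;
\item the inverted $R$-matrix entries at negative labels are continuations of the entries in the labels, not re-expansions of the same local weights.
\end{enumerate}
Nor is there, for a general braid, a convergent uninverted Verma-module state sum in $\mathbf{x}$ to compare with. Rozansky's theorem concerns the colored Jones polynomials, not such a state sum.

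The obstacle you flag (finitely many states per $(\mathbf{k},j)$) is immediate from niceness. The real difficulty is obtaining a closed form for the infinite sum at each order in $\hbar$. Your $q=1$ step also tacitly assumes such a closed form.

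\textbf{The missing ingredient.} The paper, following Park, uses a parametrized model. With independent parameters $(\alpha_c,\beta_c,\gamma_c)$ at each crossing, the inverted state sum is an honest formal power series. By the Foata--Zeilberger (MacMahon) identity it equals $1/(W(c_\iota)\det(I-\mathcal{B}^{\mathrm{inv}}))$. The inverted/uninverted cycle correspondence gives $(-1)^{\mathbf{s}}W(c_\iota)\det(I-\mathcal{B}^{\mathrm{inv}})=\det(I-\mathcal{B})$, which specializes to $(1-x_1)\mathbf{x}^{-\rho(\beta)}\Delta_L$. The $\hbar^j$-coefficients are then obtained by Park's differential operators $D_j$ in these parameters. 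These act the same way on the inverted and uninverted sums, so the comparison with $P^{(j)}_L/\Delta_L^{2j+1}$ is made before specializing.

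To justify the specialization you need more than your one-line claim that niceness gives a pointed cone. The paper's recession-cone analysis shows that niceness is equivalent to the existence of a positive functional uniquely minimized at $\mathbf{a}(c_\iota)$. This also yields the explicit vertex $v=\rho(\beta)+\mathbf{a}(c_\iota)$.

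\textbf{Smaller points.}
\begin{enumerate}
\item Granting the closed form, your abstract identification of $v$ is a valid alternative: a Laurent polynomial invertible in a pointed-cone ring has a unit extremal monomial, and inverses are unique. However, the inclusion needed to place both inverses in a common ring is $\mathcal{C}_v\subseteq\mathcal{C}$, not the reverse.
\item ``The same argument at all orders in $\hbar$'' does not give (2). For $j\ge1$ nothing is being inverted, so the support at generic $q$ must be controlled on the state space itself.
\end{enumerate}
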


\subsection{Interpretation}
Surprisingly, the integral vertices associated to the Gukov--Manolescu series seem to have a topological meaning.

In \cite{Thurston}, Thurston showed that the cohomology classes in $H^1(S^3\setminus L,\Z)$ that represent fibrations of the link complement form open cones over top-dimensional faces of the Thurston polytope. Under duality, these distinguished faces (called \emph{fibered faces}) correspond to vertices in the dual Thurston polytope, which determine integral vertices by work of McMullen \cite{McMullen}.

We conjecture that the elements of the support of the Alexander polynomial that make $L$ a nice link are precisely the ones that correspond to fibered faces under duality.
\begin{conjecture} Let $L$ be an oriented link and $v$ an extremal vertex of the dual Thurston polytope. Then, $(L,2v)$ is nice if, and only if, $v$ is dual to the interior of a fibered face.
\end{conjecture}

For a knot, this reduces to \cref{conj:OSSS-nice-fibered}.
For a link, however, this statement is considerably richer.
A link may admit several fibered faces, each of which, we conjecture, gives an $F_L^{(v)}$ series. From this perspective, we regard ``niceness'' as a property of a pair $(L,v)$, where $L$ is an oriented link and $v$ is an  extremal vertex of the dual Thurston polytope. Therefore, the link analogue of the Gukov--Manolescu series is a family of invariants $\{F_L^{(v)}\}_v$, conjecturally indexed by the vertices associated to fibered faces of the Thurston polytope.

We provide computational support for this conjecture: for all links $L$ up to 10 crossings with $2$ or $3$ components, and for all vertices $v$ of the dual Thurston polytope associated to fibered faces, we find that $(L,2v)$ is a nice pair.
The check was made using several tools \cite{OrlandBraidsSoftware,FKCompute2026,lfhcompute} and we collect the associated $F_L^{(v)}$ series in \cite{topologyfyi}.

\subsection{The single-variable analogue} While the Gukov--Manolescu series of a link is naturally a series in the $\mathbf{x}=(x_1,\dots,x_\ell)$-variables, it still makes sense to consider its single-variable equivalent. In particular, whenever we set $\mathbf{x}=(x,\dots,x)$, we recover Park's stricter notion of niceness.
We have the following result for fibered links.
\begin{proposition} Let $L$ be a fibered link and $v$ be an extremal vertex of the multivariable Alexander polynomial associated to the canonical cohomology class $\mathbb{1}^*\in H^1(S^3\setminus L,\Z)$. Then, if $(L,v)$ is nice, the specialization $\mathbf{x}=(x,\dots,x)$ gives a well-defined power series satisfying
\begin{equation*}
    F_L(x,q)\defeq F_L^{(v)}((x,\dots,x),q)=\varepsilon_v q^{\alpha(L)} x^{-\frac{\chi(L)}{2}} + O(x^{-\frac{\chi(L)}{2}+1})\in\Z[q,q^{-1}][[x]]~.
\end{equation*}
If, moreover, $L$ is a homogeneous braid link, $\alpha(L)=\frac{b_1(L)}{2}-\lambda(L)$.
\end{proposition}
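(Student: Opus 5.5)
The plan is to deduce the statement from the Theorem above, using the convergence of the expansion near $v$ together with the structure of the Alexander polynomial of a fibered link. By part (2) of the Theorem,
\[
F_L^{(v)}(\mathbf{x},q)=\mathbf{x}^{-v/2}\sum_{\mathbf{k}\in\mathcal{C}_{v/2}}f_{\mathbf{k}}(q)\mathbf{x}^{\mathbf{k}}.
\]
Since $v$ is associated to the canonical class $\mathbb{1}^*=(1,\dots,1)$, which lies in the open cone over a fibered face, the linear functional $\mathbf{k}\mapsto \sum_i k_i$ is strictly positive on $\mathcal{C}_{v/2}\setminus\{0\}$. Indeed, $v$ is the unique minimizer of $\langle \mathbb{1}^*,\cdot\rangle$ on the Newton polytope of $\Delta_L$: by McMullen's theorem, evaluation of the Alexander norm at a fibered class equals the Thurston norm, and the face dual to the class is a single vertex. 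Consequently, for each $N$ only finitely many $\mathbf{k}$ in the cone have $|\mathbf{k}|=N$. Each coefficient of $x^N$ in the specialization $\mathbf{x}=(x,\dots,x)$ is then a finite sum of Laurent polynomials, so the specialization is a well-defined element of $x^{-|v|/2}\Z[q,q^{-1}][[x]]$, and its leading term is $f_{\mathbf 0}(q)\,x^{-|v|/2}$.

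Next I identify the exponent. For a fibered link, $|v|$ is the width of $\Delta_L(t,\dots,t)$ after the usual $(t-1)$ correction; equivalently, the Thurston norm of $\mathbb{1}^*$ equals $-\chi(\Sigma)$ for the fiber surface $\Sigma$. With $\ell\ge2$, Torres' formula gives $\Delta_L(t,\dots,t)(t-1)\doteq\Delta_L(t)$, whose degree span is $2g+\ell-1=-\chi(L)+1$. After matching the half-integer normalization from the Theorem's $(2v)$ convention (and the factor $(x^{1/2}-x^{-1/2})$ for knots), the leading exponent comes out as $-\chi(L)/2$. This step is bookkeeping, but the Torres factor must be tracked carefully.

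For the leading coefficient, part (3) of the Theorem says that $f_{\mathbf 0}(q)$ at $q=1$ equals the leading coefficient of the expansion of $\mathit{MMR}_L$ near $v$, which is $\pm1$ because the vertex is integral (monic). Since $f_{\mathbf 0}\in\Z[q,q^{-1}]$ and only one state contributes at the extremal vertex, $f_{\mathbf 0}=\varepsilon_v q^{\alpha(L)}$ is a signed monomial. This defines $\alpha(L)$ and $\varepsilon_v$. The claim of a single contributing state needs its own justification. I would take the minimal-degree state in the inverted state sum and show it is forced, as in the knot case of \cite{OSSS25}.

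For homogeneous braid links, I would follow the argument of \cite{OSSS25}. Write $L$ as the closure of a homogeneous braid on $n$ strands with $c$ crossings, so that $-\chi=c-n$. The minimal state assigns the extremal spin to each strand, and the $q$-weight of the R-matrix entries is a sum of local contributions: positive and negative generators each contribute a fixed power. Summing these gives $\alpha=\tfrac12(\ell\text{-dependent term})-\lambda(L)$, where $\lambda$ is computed via Rudolph's formula for homogeneous braids as a signed count of generators. Matching the constant with $b_1(L)/2$ generalizes the knot formula $g-\lambda$. This constant-matching computation is the main obstacle. I would first verify it on Hopf links and $T(2,2k)$ to fix the normalization, and then run the general local count.
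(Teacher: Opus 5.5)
Your argument for the first claim is essentially sound, and two of its inputs follow a different route from the paper.

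\textbf{Where you differ from the paper.} The paper gets both the uniqueness of the $\langle\mathbb{1}^*,\cdot\rangle$-minimizer and the identity $\langle\mathbb{1}^*,v\rangle=\chi(L)$ from link Floer homology: \cref{thm:fibered-link-HFL}, Ni's index formula and the Ozsv\'ath--Szab\'o polytope theorem. You instead use two classical facts.
\begin{enumerate}
\item McMullen's equality at the fibered class, together with $2N(\Delta_L)\subseteq B_T^\vee$. A second extremizer in $2N(\Delta_L)$ would be a second extremizer in $B_T^\vee$, which is impossible because $\mathbb{1}^*$ lies over the interior of a fibered face.
\item The one-variable Alexander polynomial of a fibered link is monic of span $1-\chi(L)$. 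You transport this through $\Delta^{(1)}_L(t)=(t^{1/2}-t^{-1/2})\Delta_L(t,\dots,t)$, which is item (5) of \cref{prop:Alex-properties}.
\end{enumerate}
This avoids Floer theory entirely. You should state explicitly that the extreme coefficients of $\Delta_L(t,\dots,t)$ cannot cancel because the extreme monomials are unique.

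\textbf{Single contributing state.} You do not need to redo \cite{OSSS25}. By \cref{lemma:translated-cone}, every valid state is $s_0+r$ with $r\in R(\iota)$. The proof of \cref{prop:nice-iff-unital} gives $\phi_v(A(r))>0$ for $r\neq 0$. \cref{tab:xq-pos-contributions,tab:xq-neg-contributions} then show that the lowest term of $P(s_0)$ is a signed $q$-monomial. This is exactly \cref{prop:L}.

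\textbf{The gap: the homogeneous braid formula.} You compute neither side, and calibrating on Hopf links and $T(2,2k)$ cannot supply the missing topological input.

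On the state-sum side, use the inversion multicycle of \cref{prop:hom-braid-alex}. It puts $-$ on column $i+1$ exactly when $\varepsilon(i)=-1$. Since these columns are constant along the braid, each crossing sees only the all-$+$, all-$-$ or vertical-jump pattern. The tables then give $q^{1/2}$ at every positive crossing and $q^{-1/2}$ at every negative crossing. The trace factor \eqref{eq:trace-factor-link} gives $q^{1/2}$ for each of the $k_-$ closed strands labelled $-1$, and $q^{-1/2}$ for each of the remaining $k_+$. Hence $\alpha(L)=\frac12(N_+-N_-)-\frac12(k_+-k_-)$. Note that the count is per crossing and per strand, not per generator.

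On the topological side you need $b_1(L)=N-n+1$ and, crucially, $\lambda(L)=N_--k_-$. Your proposal gives no mechanism for the latter. Neither a local weight count nor a finite list of examples produces it, unless you already know that $\lambda$ is affine in $(N_\pm,k_\pm)$. The paper gets it from three ingredients:
\begin{enumerate}
\item Stallings' theorem that the fiber of a homogeneous braid closure is a plumbing of $N_+-k_+$ positive and $N_--k_-$ negative Hopf bands.
\item Additivity of $b_1$ and of the enhanced Milnor number under this operation.
\item The values $\lambda(H^+)=0$ and $\lambda(H^-)=1$.
\end{enumerate}
With $n-1=k_++k_-$, one then checks that $\frac{b_1(L)}{2}-\lambda(L)=\frac12(N_+-N_-)-\frac12(k_+-k_-)$.
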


In general, we may still define the single-variable invariant $F_L(x,q)$ of a non-fibered link whose link exterior admits a fibration. In this case, we are no longer guaranteed that the result will have finite coefficients in $q$; however, we observe in examples that the specialization $\mathbf{x}=(x,\dots,x)$ still makes sense if $(L,v)$ is a nice pair and $v$ minimizes the pairing $\inner{\mathbb{1}^*}{v}$. The series we obtain is a Laurent power series in two variables, $x$ and $q$.

This observation, together with the analogous result for fibered knots, motivates the following detection conjecture for fibered links.
\begin{conjecture} Let $L$ be an oriented link. Then, $L$ is fibered if, and only if, there exists a unique integral vertex for which $(L,2v)$ is nice and the single variable $F_L(x,q)=F_L^{(v)}((x,\dots,x),q)$ is an element of the ring $\Z[q,q^{-1}][[x]]$.
\end{conjecture}

\subsection{Paper organization}

In \cref{sec:preliminaries}, we review the topological and combinatorial background, including the Thurston and Alexander norms, link Floer homology, random walks on braids, and the inverted state sum construction.
In \cref{sec:alexander-inverses}, we develop the theory for the cone-supported expansions of the inverse Alexander polynomial, associated to exposed vertices of its Newton polytope.
In \cref{sec:main-results}, we relate nice inversion data to positively exposed integral vertices and prove our main theorem. We then study their leading terms and single-variable specializations, and conclude by formulating the conjectures relating nice vertices to fibered faces of the Thurston norm ball.
In \cref{sec:worked-examples} we demonstrate our results on various links.
We conclude our exposition with \cref{sec:surgery}, in which we apply our results to Dehn surgeries on unknotted components of 2-component links.

\subsection*{Acknowledgements}
The authors are grateful to Sergei Gukov for his guidance and support in completing this project.
The authors would like to thank Paul Orland for offering advanced access to computational software which was used to produce nice braids.
The authors would further like to thank Josef Svoboda and Sunghyuk Park for their comments on the first draft of the paper.
The work of Davide Passaro is supported by a Sherman Fairchild Postdoctoral Fellowship sponsored by the Walter Burke Institute for Theoretical Physics.
The work of Lara San Mart\'in Su\'arez is supported by a fellowship from ``La Caixa'' Foundation (ID 100010434), with fellowship code LCF/BQ/EU23/12010094.

\section{Preliminaries}\label{sec:preliminaries}

Let $L=L_1\sqcup L_2\sqcup\cdots\sqcup L_\ell\subset S^3$ be an oriented, multi-component link of $\ell$ components. Let $\mathcal{N}(L)$ be a tubular neighbourhood of $L$. The link complement of $L\subset S^3$ is the manifold $M\defeq S^3\setminus \mathcal{N}(L)$ such that $\mathrm{int}(M)=S^3\setminus L$.

The meridians $\mu_1,\mu_2,\dots,\mu_\ell$ of the link $L$ determine a basis of $H_1(M,\Z)\cong \Z^\ell$ and a dual basis of $H^1(M,\Z)$. By Poincaré--Lefschetz duality,
\begin{equation*}
    H_2(M,\partial M;\Z)\cong H^1(M,\Z)\hspace{1.25em}\text{and}\hspace{1.25em} H^2(M,\partial M;\Z)\cong H_1(M,\Z)~.
\end{equation*}
Therefore, the properly embedded oriented surfaces in $M$ may be represented by cohomology classes via Poincaré duality, which we denote by PD.

Denote by $\AL$ the affine lattice over $H_1(S^3\setminus L,\Z)$ given by the elements $\sum_{i=1}^n a_i[\mu_i]$ with $a_i\in\Q$ satisfying $2a_i+lk(L,L_i)-1\in2\Z$. Analogously, denote by $\mathbb{H}=\HL$ the affine lattice over $H_1(S^3\setminus L,\Z)$ given by the elements $\sum_{i=1}^n a_i[\mu_i]$ with $a_i\in\Q$ satisfying $2a_i+lk(L,L_i)\in 2\Z$.

\subsection{Fibered classes and the link complement}

In \cite{Thurston}, Thurston introduced a semi-norm on compact and oriented three-manifolds with boundary.
For our purposes, we will restrict this exposition to the case where $M$ is a link complement as above.

Given a compact, connected surface $S$, we define the complexity of $S$ as
\begin{equation}
    \chi_-(S)\defeq\begin{cases}
        -\chi(S) &\text{ if } \chi(S)\leq 0, \\
        0 &\text{ otherwise}
    \end{cases}~.
\end{equation}
If $S = \bigsqcup_{i=1}^m S_i$ is disconnected, we set $\chi_-(S) \defeq \sum_{i=1}^{m}\chi_-(S_i)$.

The \emph{Thurston norm} is the semi-norm defined as
\begin{equation}
    \thurston{\phi} \defeq \inf_{\substack{(S, \partial S)\hookrightarrow (M, \partial M)\\ [S]=\operatorname{PD}[\phi]}}\chi_-(S)
\end{equation}
where $\phi\in H^{1}(M;\Z)\cong H_2(M,\partial M;\Z)$ and $S$ ranges over oriented embedded surfaces representing the Poincar\'e dual of $\phi$. This semi-norm is extended continuously to all of $H^1(M,\R)$.

Writing $\phi=(\phi_1,\dots,\phi_\ell)$ in the basis dual to the meridians, we can equivalently define $\thurston{\phi}$ as the minimal complexity of the embedded surfaces whose boundary wraps algebraically $\phi_i=\inner{\phi}{\mu_i}$ times around the component $L_i$ for $i=1,\dots,\ell$.

The dual of the Thurston norm, namely the \emph{dual Thurston norm}, is again a semi-norm on $H_1(M,\Z)\cong H^2(M,\partial M;\Z)$, defined as
\begin{equation}
    \thurstondual{v} \defeq \sup_{\{\phi\in H^1(M,\Z) \mid \thurston{\phi}=1} |\inner{\phi}{v}|.
\end{equation}
It is again extended continuously to all $H_1(M,\R)$.

The norms $\thurston{\cdot}$ and $\thurstondual{\cdot}$ are convex and linear on rays through the origin. Therefore, they are uniquely determined by their values on their unit balls
\begin{equation}
B_T\defeq\{\phi\in H^1(M,\Z) \mid \thurston{\phi}=1\} \hspace{0.75em}\text{and}\hspace{0.75em} B_T^\vee\defeq\{v\in H_1(M,\Z) \mid \thurstondual{v}=1\}~,
\end{equation}
which define convex polytopes on $\R^{\ell}$. We refer to $B_T$ and $B_T^\vee$ as the \emph{Thurston polytope} and the \emph{dual Thurston polytope}, respectively.

Assume that $M$ fibers over a circle, so that there exists a fibration $M\rightarrow S^1$. Each fiber is an embedded, oriented surface in $M$, and therefore determines, by Poincaré duality, a cohomology class $\phi\in H^1(M,\Z)$.
\begin{definition} A cohomology class $\phi\in H^1(M;\Z)$ is \emph{fibered} if PD[$\phi$] is represented by a fiber of a fibration $M\rightarrow S^1$.
\end{definition}

A fundamental theorem of Thurston \cite{Thurston} states that there exist distinguished, top-dimensional faces \[F_1,\dots,F_s\subseteq B_T\] such that $\phi\in H^1(M,\Z)$ is fibered if, and only if, the ray $\{\inner{c}{\phi}\mid c\in\R_{>0}\}$ that passes through $\phi$ intersects the Thurston polytope precisely at the interior of one of these top-dimensional faces. In other words, the normalized homology class $\frac{\phi}{\thurston{\phi}}$ belongs to $\mathrm{relint}(F_i)$ for some $i=1,\dots, s$.

It is a standard result in polytope theory that the relative interior of a top-dimensional face corresponds, under duality, to an extremal vertex in the dual polytope (cf. \cite{Ziegler1995}\footnote{In the book, the `duality' we refer to is called `polarity'.}). Therefore, there exist $v_1,\dots,v_s\in\mathrm{Vert}(B_T^\vee)$ such that $v_i$ is the dual face to $F_i$; that is, \[\arg\max_{\phi\in B_T}\inner{ \phi}{v_i} = F_i.\]
Under this identification, we say that $v_i$ is a \emph{fibered vertex}, and a cohomology class $\phi\in H^1(M,\Z)$ is fibered if, and only if, the equality
\begin{equation}
    \inner{ \phi}{v_i} = \thurston{\phi} \max_{\psi\in B_T} \inner{\psi}{v_i} = \chi_-(P.D.(\phi))
\end{equation}
holds for some $i=1,\dots,s$.

The usual notion of \emph{link fiberedness} is recovered by requiring the fiber to be a Seifert surface of $L$, so that its oriented boundary is $L$. Under Poincaré duality, it corresponds to the oriented cohomology class $\mathbb{1}^*\defeq (1,\dots,1)\in H^1(M,\Z)$.
\begin{definition} A link $L$ is \emph{fibered} if the oriented cohomology class $\mathbb{1}^*\in H^1(M,\Z)$ is a fibered cohomology class.
\end{definition}

\subsection{Random walks on braid closures}\label{sec:random-walks}

Let $\bd\in B_m$ be a braid of $n$ crossings. After identifying the labels of the incoming and outgoing ends of each strand, with the exception of the first strand which we leave open (see \cref{fig:openstrand}), fix a labeling of the resulting segments $\{a_i\}_{i=1}^{2n+1}$. At any crossing, we denote the incoming overstrand segment by $o$, the incoming understrand by $u$, and the corresponding outgoing strands by $o^+$ and $u^+$ according to the convention of \cref{fig:ouo+u+}.
\begin{figure}
    \centering
    \includegraphics[width=0.225\linewidth]{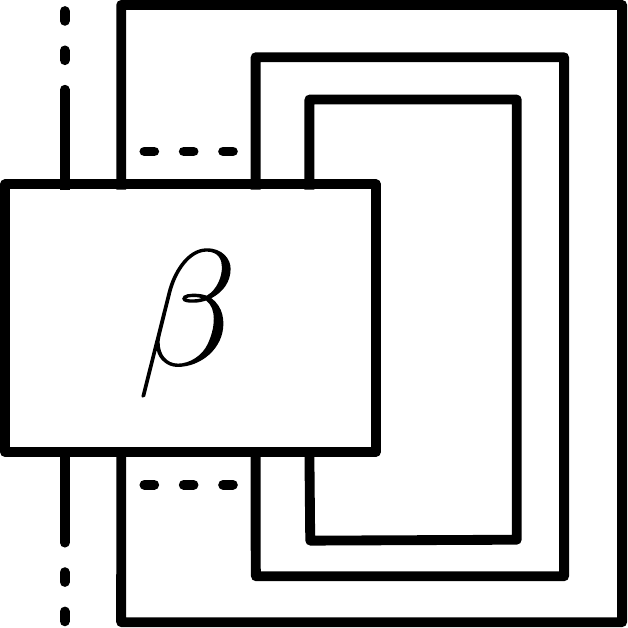}
    \caption{Incoming and outgoing label identification with an open strand.}
    \label{fig:openstrand}
\end{figure}

\begin{figure}
\centering
\includegraphics[width=0.13\textwidth]{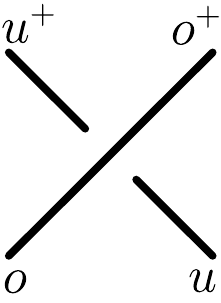}
\hspace{1em}
\includegraphics[width=0.13\textwidth]{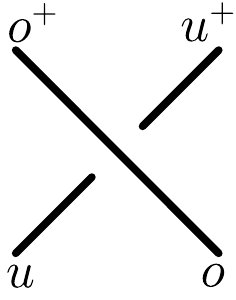}
\hspace{5em}
\includegraphics[width=0.13\textwidth]{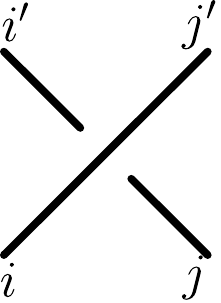}
\hspace{1em}
\includegraphics[width=0.13\textwidth]{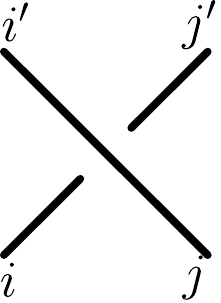}
\caption{Two labelling conventions used in this paper for segments adjacent to positive and negative crossings. The labels $\{o,\,u,\,o^+,\,u^+\}$ (left) highlight the overstrand and understrand of the crossing, while the labels $\{i,\,j,\,i',\,j'\}$ (right) are consistent with the literature on state sum models on braids.}
\label{fig:ouo+u+}
\end{figure}
Let $\mathcal{A}\in \mathrm{M}_{2n+1}(\Z[\mathbf{x}])$ be a matrix such that $\mathcal{A}_{r,k}\neq 0$ only when $r\in\{o,\,u\}$ and $k\in\{o^+,\,u^+\}$ for some crossing whose local labels are $\{o,\,u,\,o^+,\,u^+\}$ as in \cref{fig:ouo+u+}. Then, the matrix $\mathcal{A}$ defines a random walk\footnote{We use the notion of ``random walk'' loosely to mean a weighted directed walk.} on the braid $\bd$ as follows.

For every pair of segments $a_r,a_k$ with $\mathcal{A}_{r,k}\neq 0$, let $d_{r,k}$ denote the jump from $a_r$ to $a_k$. We impose the following commutation rule: $d_{r,k}$ and $d_{s,t}$ commute if and only if $r\neq s$. Two words in $\{d_{r,k}\}$ are identified if they differ by a finite sequence of exchanges of adjacent commuting jumps.

\begin{definition}
    A \emph{walk} on $\bd$ weighted by $\mathcal{A}$ is a path along the labeled segments of the braid such that, whenever it arrives at an incoming segment $r\in\{o,\,u\}$ of a crossing, it may continue to segment $k\in\{o^{+},\,u^{+}\}$ provided that $\mathcal{A}_{r,k}\neq 0$. Equivalently, a walk determines a word in the jumps $\{d_{r,k}\}$. We say a walk is \emph{primitive} if it passes through each segment at most once.

    A \emph{cycle} on $\bd$ weighted by $\mathcal{A}$ is a walk with the same starting and ending point. A \emph{based path} is a walk beginning at the incoming end of the first strand and terminating at its outgoing end. We say a based path or a cycle is \emph{primitive} if it is primitive as a walk.

    We say two cycles commute with each other if they are disjoint.

    A \emph{multicycle} $c=(c_1,\dots,c_s)$ is an ordered tuple of primitive cycles, modulo exchanges of adjacent commuting cycles.
    A multicycle is \emph{simple} if all of its components are pairwise disjoint.
\end{definition}
Let $\mathcal{Q}$ denote the set of simple multicycles.
Every simple multicycle $c\in\mathcal{Q}$ determines a permutation $\sigma_c \in S_{2n+1}$: For each $i=1,\dots,2n+1$, set
\begin{equation}
    \sigma_c(r) = \begin{cases}
        r & \text{if $c$ does not pass through $a_r$},\\
        k & \text{if $c$ travels from $a_r$ to $a_k$}, \\
    \end{cases}~.
\end{equation}
\begin{definition}The \emph{weight} of a multicycle $c$, denoted by $\weight{c}$, is given by
\begin{equation}
    \weight{c} = \prod_{d_{r,k}\text{ in } c} \mathcal{A}_{r,k}.
\end{equation}
Equivalently, for a simple multicycle $c$,
\begin{equation}
    \weight{c} = \prod_{\substack{1\leq i \leq 2n+1 \\ \sigma_c(i)\neq i} }\mathcal{A}_{i,\sigma_c(i)}.
\end{equation}
\end{definition}

Given the above, we can reorganize the Leibniz expansion of the determinant of $I-\mathcal{A}$ as a sum over simple multicycles.
More explicitly,
\begin{equation}\label{eq:transition-matrix-det}
    \begin{aligned}
        \det(I-\mathcal{A}) &= \sum_{\sigma\in S_{2n+1}} \sgn{\sigma} \prod_{i=1}^{2n+1}\left(\delta_{i,\sigma(i)}-\mathcal{A}_{i,\sigma(i)}\right)= \sum_{c\in \mathcal{Q}}(-1)^{|c|}\weight{c},
    \end{aligned}
\end{equation}
where $|c|$ denotes the number of closed components of $c$.

\subsection{The multivariable Alexander polynomial}\label{sec:multialexpoly}

Let $G=\pi_1(M)$ denote the link group.
Abelianization gives
\[
G\longrightarrow G/G'\cong H_1(M;\Z)\cong\Z^\ell,
\]
where the class of $\mu_i$ is the $i$th standard basis element.
The \emph{universal abelian cover} $\widetilde M\to M$ is the cover corresponding to the kernel of this homomorphism, namely the commutator subgroup $G'$.
Thus its sheets are indexed by $G/G'$.

The group $G/G'$ acts on the cover by relabeling these sheets, and hence acts on its cellular chains and on its homology.
After choosing the meridian basis, its integral group ring is
\[
\Z[G/G']\cong\Lambda_\ell=\Z[x_1^{\pm1},\ldots,x_\ell^{\pm1}],
\]
where $x_i$ denotes the class of $\mu_i$.
It follows that $H_1(\widetilde M;\Z)$ is naturally a $\Lambda_\ell$-module.
This is the Crowell--Fox construction of the Alexander module \cite{CrowellFox1977}.

Equivalently, a monomial $x_1^{a_1}\cdots x_\ell^{a_\ell}$ records the homology class $a_1[\mu_1]+\cdots+a_\ell[\mu_\ell]$ of a loop in the complement.
The separate exponents therefore retain the winding data around the individual components.

\begin{definition}
The \emph{Alexander module} of $L$ is the $\Lambda_\ell$-module $H_1(\widetilde M;\Z)$.
The \emph{multivariable Alexander polynomial}
\[
\Delta_L(x_1,\ldots,x_\ell)\in\Lambda_\ell
\]
is the greatest common divisor of its first elementary ideal.
It is defined only up to multiplication by a unit $\pm x_1^{a_1}\cdots x_\ell^{a_\ell}$.
If that elementary ideal is zero, we set $\Delta_L\equiv0$.
\end{definition}
The one-variable Alexander polynomial is defined analogously from the infinite cyclic cover corresponding to the kernel of
\begin{equation*}
    \pi_1(M)\to H_1(M;\Z) \cong \Z^{\ell}\to \Z\quad [\mu_i]\to1
\end{equation*}
recording the total winding number around the link components.

To fix a convention for the multivariable Alexander polynomial, we follow \cite{Rozansky98}. In particular,
\begin{equation}\label{eq:symmetry-Alex}
    \Delta_L(x_1^{-1},\ldots,x_\ell^{-1})
= (-1)^\ell \Delta_L(x_1,\ldots,x_\ell)~,
\end{equation}
and
\begin{equation}\label{eq:Hopf-Alex}
    \Delta_{L}(x,y)=1 \text{ if } L \text{ is the positive Hopf link}~.
\end{equation}
It follows that the support of the Alexander polynomial lives in $\AL$. This is the convention that we will use throughout the paper.

Let $L^{\mathrm{rev}}$ denote the link obtained by reversing every component, let $L^{(i)}$ denote the result of reversing only $L_i$, and let $\mirror{L}$ denote the mirror of $L$.

\begin{proposition}\label{prop:Alex-properties} Given a link $L$, let $\Delta_L(\mathbf{x})\in\Z[x_1^{\pm \frac12},\dots,x_\ell^{\pm \frac12}]$ denote the multivariable Alexander polynomial of $L$, normalized as in \eqref{eq:symmetry-Alex} and \eqref{eq:Hopf-Alex}. Then, $\Delta_L$ satisfies the following properties:
\begin{enumerate}
\item $\Delta_{L^{(i)}}(x_1,\ldots,x_\ell)=-\Delta_L(x_1,\ldots,x_i^{-1},\ldots,x_\ell)$.

\item $\Delta_{L^{\mathrm{rev}}}(x_1,\dots,x_\ell) = (-1)^{\ell}\Delta_L(x_1,\dots,x_\ell).$

\item $\Delta_{\mirror{L}}(x_1,\dots,x_\ell) = -\Delta_{L}(x_1^{-1},\dots,x_\ell^{-1}) = (-1)^{\ell+1}\Delta_L(x_1,\dots,x_\ell).$

\item{\emph{(Torres formula)}}  Let $L'=L\setminus L_\ell$ and $a_i=\operatorname{lk}(L_i,L_\ell)$.
If $\ell\geq2$, then
\begin{equation*}\label{eq:torres-multivariable}
\Delta_L(x_1,\ldots,x_{\ell-1},1)
=\begin{cases}
\left(\prod_{i=1}^{\ell-1}x_i^{\frac{a_i}{2}}-\prod_{i=1}^{\ell-1} x_i^{-\frac{a_i}{2}}\right)\Delta_{L'}(x_1,\ldots,x_{\ell-1}) & \ell > 2 \\
\frac{x_1^{\frac{a_1}{2}}- x_1^{- \frac{a_1}{2}}}{x_1^{\frac12}- x_1^{- {\frac12}}}\Delta_{L_1}(x_1) & \ell = 2
\end{cases}
\end{equation*}

\item{\emph{(Single-variable specialization)}}
\begin{equation}\label{eq:alexander-one-variable-specialization}
\Delta_L^{(1)}(x) =
\begin{cases}
\Delta_L(x),&\ell=1,\\
(x^{\frac12}-x^{-\frac12})\Delta_L(x,\ldots,x),&\ell>1.
\end{cases}
\end{equation}

\end{enumerate}
\end{proposition}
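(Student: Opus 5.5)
Most of these properties are classical facts about the Conway-normalized (Hosokawa--Torres) potential function. Our plan is to deduce them from the identification of Rozansky's normalization with the Conway potential function $\nabla_L$. Concretely, for $\ell\ge 2$ we use
\[
\Delta_L(x_1,\dots,x_\ell)=\pm\nabla_L(x_1^{1/2},\dots,x_\ell^{1/2}),
\]
with a uniform sign fixed by \eqref{eq:Hopf-Alex}. For $\ell=1$ we use
\[
\Delta_K(x)=\frac{\nabla_K(x^{1/2})}{x^{1/2}-x^{-1/2}}\cdot(\text{sign}).
\]
We check this identification first. The potential function satisfies $\nabla_L(t_1^{-1},\dots,t_\ell^{-1})=(-1)^\ell\nabla_L(t)$, which matches \eqref{eq:symmetry-Alex}. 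For the positive Hopf link it equals $1$ up to sign, which is \eqref{eq:Hopf-Alex}. By uniqueness of the symmetric normalization up to sign, the identification holds. The same symmetry also forces the support to lie in the affine lattice $\AL$, by parity of exponents.

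With this identification, items (1)--(3) come from the known behaviour of $\nabla$ under orientation changes.

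\textbf{Item (1).} Reversing $L_i$ gives $\nabla_{L^{(i)}}(t)=-\nabla_L(t_1,\dots,t_i^{-1},\dots,t_\ell)$. We verify this from a Seifert-matrix or Torres-type presentation: reversing the component inverts the corresponding meridian generator, and the sign is fixed by checking on the Hopf link. The positive Hopf link becomes the negative one, whose potential is $-1$.

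\textbf{Item (2).} We iterate item (1) over all components. This gives the sign $(-1)^\ell$ and inverts every variable. Applying \eqref{eq:symmetry-Alex} then restores the variables and contributes another $(-1)^\ell$. For the statement as written, note that $L^{\rm rev}$ has the same unoriented Seifert surface with reversed orientation, so the potential only changes by the global sign coming from the meridian convention. We reconcile the two counts by carrying the same Hopf-link sign check through.

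\textbf{Item (3).} Mirroring replaces the Seifert form $V$ by $-V$, which gives $\nabla_{\mirror L}(t)=-\nabla_L(t^{-1})$ up to the $\ell$-dependent sign, pinned down by the Hopf-link test. Combining this with \eqref{eq:symmetry-Alex} yields the second equality, $(-1)^{\ell+1}\Delta_L$.

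\textbf{Item (4).} The Torres formula is classical, $\Delta_L(t_1,\dots,t_{\ell-1},1)=(\prod t_i^{a_i}-1)\Delta_{L'}$ up to units. We rewrite it in symmetric form. For $\ell>2$, symmetrizing $\prod t_i^{a_i}-1$ gives $\prod x_i^{a_i/2}-\prod x_i^{-a_i/2}$. For $\ell=2$, the knot $L_1$ carries the extra factor $(x_1^{1/2}-x_1^{-1/2})^{-1}$ from the $\ell=1$ normalization. The remaining sign is again fixed using the Hopf link, where $a_1=1$ and both sides equal $1$.

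\textbf{Item (5).} This follows from the classical relation between the Conway polynomial and the potential function. For $\ell>1$, $\nabla_L(t)=(t-t^{-1})\nabla_L(t,\dots,t)$.

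\textbf{Main obstacle.} The difficulty is not the underlying topology, which is standard, but consistent sign bookkeeping across all items. We would settle every sign by evaluating on the Hopf link and the unlink-with-a-Hopf-component examples, after first fixing the potential-function identity uniformly.
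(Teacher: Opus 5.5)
The paper gives no proof of this proposition. It simply lists these as standard properties of Rozansky's normalization, i.e.\ of Conway's potential function with $t_i=x_i^{1/2}$, so reducing to $\nabla_L$ is the natural route. As written, however, your argument has genuine gaps.

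The most serious concerns item (2). Your own count is the correct one. Iterating (1) over all components and then applying \eqref{eq:symmetry-Alex} gives $\Delta_{L^{\mathrm{rev}}}(\mathbf{x})=(-1)^{\ell}\Delta_L(x_1^{-1},\dots,x_\ell^{-1})=\Delta_L(\mathbf{x})$. This differs from the claimed $(-1)^{\ell}\Delta_L(\mathbf{x})$ whenever $\ell$ is odd. You cannot ``reconcile the two counts'' with a Hopf-link check, because the Hopf link has $\ell=2$, where the two counts already agree. In fact item (2) as printed is false for odd $\ell$. Take $L=H^+\# H^+$, the chain of three round unknots with both clasps positive, the middle one labelled $L_2$, all of whose planes contain a common line through their centres. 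Rotation by $\pi$ about that line maps each component to itself with reversed orientation, so $L^{\mathrm{rev}}\cong L$, and (2) would force $\Delta_L=0$. But item (4) gives $\Delta_L(x_1,x_2,1)=x_2^{1/2}-x_2^{-1/2}\neq 0$. For $\ell=1$, any invertible knot gives the same contradiction. What can actually be proved is $\Delta_{L^{\mathrm{rev}}}(\mathbf{x})=(-1)^{\ell}\Delta_L(\mathbf{x}^{-1})=\Delta_L(\mathbf{x})$; presumably the inversion of the variables was dropped from the statement. Your write-up should say this rather than assert a reconciliation.

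Second, ``fixing every sign with the Hopf link'' does not determine signs. The symmetry \eqref{eq:symmetry-Alex} removes the monomial ambiguity but leaves an independent sign for \emph{each} link, and \eqref{eq:Hopf-Alex} pins it down only for $H^+$. So ``uniqueness of the symmetric normalization up to sign'' does not give $\Delta_L=\pm\nabla_L$ with a uniform sign. Likewise, an identity known only up to units may a priori carry an $L$-dependent sign. This applies to the classical Torres formula and to the meridian-inversion or Seifert-matrix arguments you sketch for (1) and (3). An $\ell$-dependent sign in (3) is also invisible to an $\ell=2$ example. Instead, take $\Delta_L(\mathbf{x}):=\nabla_L(x_1^{1/2},\dots,x_\ell^{1/2})$ as the definition (this is what ``following Rozansky'' means). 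Then invoke the potential-function theorems, which come with universal signs:
$\nabla_{L^{(i)}}=-\nabla_L(t_1,\dots,t_i^{-1},\dots,t_\ell)$,
$\nabla_{\overline{L}}=(-1)^{\ell+1}\nabla_L$,
Hartley's Torres formula $\nabla_L(t_1,\dots,t_{\ell-1},1)=(\prod t_i^{a_i}-\prod t_i^{-a_i})\nabla_{L'}$,
and $\nabla_L(t,\dots,t)=\nabla_L(z)/(t-t^{-1})$ with $z=t-t^{-1}$.
A Seifert form sees only this one-variable reduction, so it cannot prove (1) or (3) in several variables. Finally, your $\ell=1$ normalization is inverted. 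A knot's potential function is $\nabla_K(t)=\Delta_K(t^2)/(t-t^{-1})$, so $\Delta_K(x)=(x^{1/2}-x^{-1/2})\nabla_K(x^{1/2})$. With your formula, the $\ell=2$ case of (4) comes out with $x_1^{1/2}-x_1^{-1/2}$ multiplied instead of divided. Relatedly, for the ordinary symmetric knot polynomial used in (4) and (5), \eqref{eq:symmetry-Alex} and items (1)--(3) fail when $\ell=1$; for instance (3) would give $\Delta_{\overline{K}}=-\Delta_K$. So (1)--(3) should be stated and proved only for $\ell\geq 2$.
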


The specialization in \eqref{eq:alexander-one-variable-specialization} is a useful warning: the one-variable polynomial sees only the total exponent of a monomial, whereas the multivariable polynomial records its full exponent vector.
Distinct multivariable terms can therefore coincide, or even cancel, after the specialization.

\subsubsection{Wirtinger presentation of the link group}\label{sec:writinger}

The Wirtinger presentation is a presentation of $\pi_1(L)$ that can be constructed from a link diagram.

\begin{figure}
    \centering
    \includegraphics[width=0.25\linewidth]{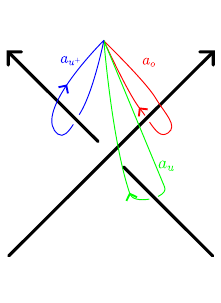}
    \caption{Generators for the upper Wirtinger presentation near a crossing.
    Each colored loop is a positively oriented meridian of the corresponding arc: $a_o$, in red, surrounds the overpassing arc, while $a_u$, in green, and $a_{u^{+}}$, in blue, surround the incoming and outgoing underpassing arcs respectively.}
    \label{fig:writinger}
\end{figure}

The \emph{upper} Wirtinger presentation is generated by the set of variables $\{a_i\}_{i=0}^{2n}$ and the relations
\begin{equation}
    a_{u^+} = a_o^{-1} a_u a_o, \hspace{2em} a_{o^+}=a_o
\end{equation}
for every set of generators $\{a_{o},a_{u},a_{u^+}\}$. See \cref{fig:writinger}.

The \emph{lower} Wirtinger presentation is defined as the upper Wirtinger presentation applied to the mirror of the link diagram, where we swap every overstrand for an understrand and vice versa. Equivalently, it is generated by $\{b_i\}_{i=0}^{2n}$ and the relations
\begin{equation}
    b_{o^+} = b_u^{-1} b_o b_u, \hspace{2em} b_{u^+} = b_u
\end{equation}
for every set of labels $\{o,u,o^+,u^+\}$ related in the same way.

\subsubsection{Fox free-derivative calculus}\label{sec:Fox-free-derivatives}

Given a presentation of the link group \[\pi_1(M)=\langle a_1,\dots,a_n\mid r_1,\dots,r_m\rangle,\] Fox differentiation gives the matrix
\[
  \left(\frac{\partial r_k}{\partial a_j}\right)_{k,j}
  \quad\text{over}\quad \Z[F_n],
\]
where the Fox derivatives are determined by
\[
  \frac{\partial a_i}{\partial a_j}=\delta_{ij},
  \qquad
  \frac{\partial(uv)}{\partial a_j}
  =\frac{\partial u}{\partial a_j}
   +u\frac{\partial v}{\partial a_j},
  \qquad
  \frac{\partial a_i^{-1}}{\partial a_j}
  =-a_i^{-1}\delta_{ij}.
\]
Abelianizing this matrix sends the generator associated to an arc on $L_i$ to $x_i$ and produces a matrix over $\Lambda_\ell$.
After the usual deletion of one redundant relation, the matrix is a presentation of the Alexander module.
By Fox's fundamental formula
\begin{equation*}
   r_k-1=\sum_{j=1}^{n}\frac{\partial r_k}{\partial{a_j}}(a_j-1)~.
\end{equation*}
After imposing the relations $r_k=1$ and abelianizing, the Fox matrix annihilates the nonzero vector with $j$-th entry $x_{\kappa(j)}-1$.
Therefore
\begin{equation}\label{eq:detiszer}
    \det \left[\frac{\partial r_k}{\partial a_j}\right]=0~.
\end{equation}
The corresponding maximal minors generate its first elementary ideal, with greatest common divisor $\Delta_L$, up to a unit.

\subsubsection{Gassner representation}\label{sec:Gassner}

Given a braid $\bd$, consider the link obtained from its closure. Enumerate each of the segments in the resulting diagram in the order that they appear in the braid. Denote by $\phi$ the function that sends every segment to the index of the component it belongs to. We construct a square matrix $A=A(\beta)\in M_{2n}(\Z[x_1,\dots,x_n])$ of size $2n$, where $n$ is the number of crossings of $\bd$, in the following way: at each crossing, denote by $o$ and $u$ the labels of the incoming overstrand and understrand, and by $o^+$ and $u^+$ the labels of the outgoing overstrand and understrand, respectively, as in \cref{fig:ouo+u+}. Let $s\in\{\pm1\}$ denote the sign of the crossing. Then, the values of $A$ in the $o$ and $u$ rows are given by the following rules:
\begin{equation}\label{eq:A-rules}
\begin{split}
    &A_{o,o^+}=\begin{cases}
        x_{\kappa(u)} &\quad \text{if } s=+1 \\
        \frac{1}{x_{\kappa(u)}} &\quad \text{if } s=-1 \\
    \end{cases}
    \, , \hspace{1em}
    A_{o,u^+}=\begin{cases}
        1-x_{\kappa(o)} &\quad \text{if } s=+1 \\
        \frac{x_{\kappa(o)}}{x_{\kappa(u)}}\left(1-\frac{1}{x_{\kappa(o)}}\right) &\quad \text{if } s=-1 \\
    \end{cases}
    \, , \hspace{1em}
    A_{u,o^+}=0
    \, , \\&
    A_{u,u^+}=1
    \, , \hspace{1em}\text{and}\hspace{1em}
    A_{o,k}=A_{u,k}=0 \quad \text{if } k\neq o^+,u^+.
    \end{split}
\end{equation}

This matrix, also called the \emph{Gassner matrix}, is the result of applying the free Fox derivative calculus of \cref{sec:Fox-free-derivatives} to the
lower Wirtinger presentation of \cref{sec:writinger}, derived from a braid representative of the link.

For any two components $L_i,L_j$ of the closure of $\bd$, define
\[
\operatorname{lk}_{\bd}(L_i,L_j)=
\begin{cases}
\operatorname{lk}(L_i,L_j), & i\neq j,\\
w(\bd_i), & i=j,
\end{cases}
\]
where $\operatorname{lk}(L_i,L_j)$ is the usual linking number—half the signed number of crossings between the two components—and $w(\bd_i)$ is the signed number of crossings whose two strands belong to $L_i$.

Denote by $\hat{A}_j$ the matrix obtained from deleting the $j$-th row and column from the matrix $A$. Then, we can compute
\begin{equation}\label{eq:Alexander-transition-matrix}
    \Delta_{\mathrm{cl}(\bd)}(x_1,\dots,x_\ell) = (-1)^\ell \left(\prod_{i=1}^\ell x_i^{-\frac{1}{2}\left(-N_i+\sum_{j=1}^\ell \mathrm{lk}_\bd(L_i,L_j)\right)}\right) \frac{\det{\left(I_{2n-1}-\hat{A}_j\right)}}{1-x_{\kappa(a_j)}},
\end{equation}

for any $1 \leq j\leq 2n$, where $N_j$ denotes the number of strands associated to the $j$-th link component, $L_j$. Throughout the rest of the paper, we will set $j=1$.

\begin{remark}\label{rmk:Alexander-open-strand} Alternatively, we can construct a matrix on the long link defined by $\beta$: consider the tangle defined by identifying the endpoints of the braid for every strand, except the first one, which we leave open. This tangle has now $2n+1$ distinct segments. Then, we can construct a matrix $\tilde{A}\in M_{2n+1}(\Z[x_1,\dots,x_n])$ using the same rules as before, namely \eqref{eq:A-rules}, and we have $\det{\left(I_{2n+1}-\tilde{A}\right)}=\det{\left(I_{2n-1}-\hat{A}_1\right)}$. Therefore, we can compute
\begin{equation}
    \Delta_{\mathrm{cl}(\bd)}(x_1,\dots,x_\ell) =(-1)^\ell \left(\prod_{i=1}^\ell x_i^{-\frac{1}{2}\left(-N_i+\sum_{j=1}^\ell \mathrm{lk}_\bd(L_i,L_j)\right)}\right) \frac{\det{\left(I_{2n+1}-\tilde{A}\right)}}{1-x_1}~.
\end{equation}
\end{remark}
\begin{figure}
    \centering
    \includegraphics[width=0.4\linewidth]{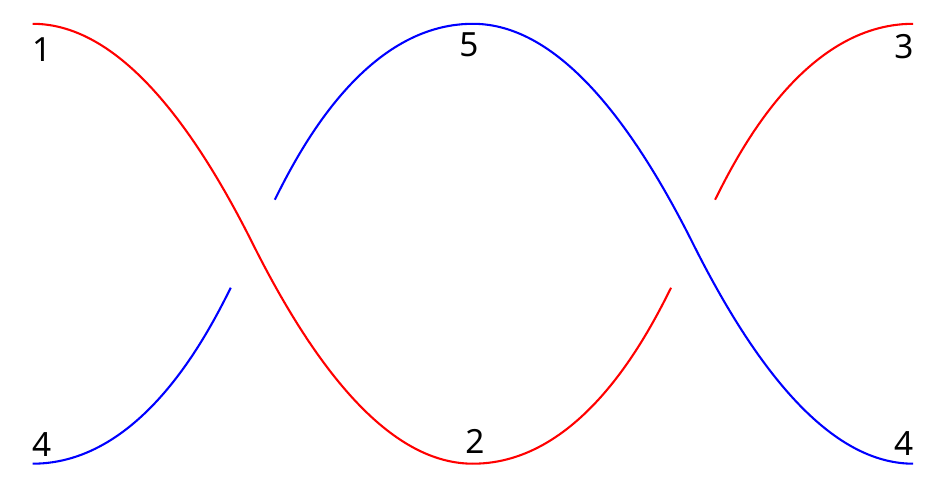}
    \caption{Braid and labeling for the Hopf link $H^{+}$.}
    \label{fig:hopf}
\end{figure}

\begin{example} ($H^\pm$) Let $H^{\pm}$ denote the positive and negative Hopf links, with braid words $\sigma_1^2$ and $\sigma_1^{-2}$, respectively. Labeling the braid segments as in Figure \ref{fig:hopf}, we construct the matrices
\begin{equation}
    \tilde{A}^{H^+}=\begin{pmatrix}
    0 & x_2 & 0 & 0 & 1-x_1 \\
    0 & 0 & 1 & 0 & 0\\
    0 & 0 & 0 & 0 & 0 \\
    0 & 0 & 0 & 0 & 1 \\
    0 & 0 & 1-x_2 & x_1 & 0 \\
    \end{pmatrix},
    \hspace{1em}
    \tilde{A}^{H^-}=\begin{pmatrix}
    0 & 1 & 0 & 0 & 0 \\
    0 & 0 & \frac{1}{x_2} & \frac{x_1}{x_2}\left(1-\frac{1}{x_1}\right) & 0 \\
    0 & 0 & 0 & 0 & 0 \\
    0 & \frac{x_2}{x_1}\left(1-\frac{1}{x_2}\right) & 0 & 0 & \frac{1}{x_1} \\
    0 & 0 & 0 & 1 & 0 \\
    \end{pmatrix}
\end{equation}
and
\begin{equation}
\begin{split}
    \Delta_{H^+}(x_1,x_2) &= \frac{\det\left(I_5-\tilde{A}^{H^+}\right)}{1-x_1}=\frac{1-x_1}{1-x_1}=1, \\
    \Delta_{H^-}(x_1,x_2 )&= x_1 x_2\frac{\det\left(I_5-\tilde{A}^{H^-}\right)}{1-x_1}=x_1 x_2\frac{-x_1^{-1}x_2^{-1}(1-x_1)}{1-x_1}=-1.
\end{split}
\end{equation}
\end{example}

\begin{example}\label{ex:T24-star} Let $T(2,4)^*$ denote the boundary of the twisted annulus with two full twists.
This link is equivalent to the $T(2,4)$ torus link, represented by the positive braid $\sigma_1^4$, as unoriented links. However, as oriented links they are different: in particular, $T(2,4)$ is fibered, while $T(2,4)^*$ is not. Consider the braid representative of $T(2,4)^*$ given by $\bd=\sigma_2^2\sigma_1\sigma_2^{-1}\sigma_1$.
\begin{figure}
    \centering
    \includegraphics[width=0.5\linewidth]{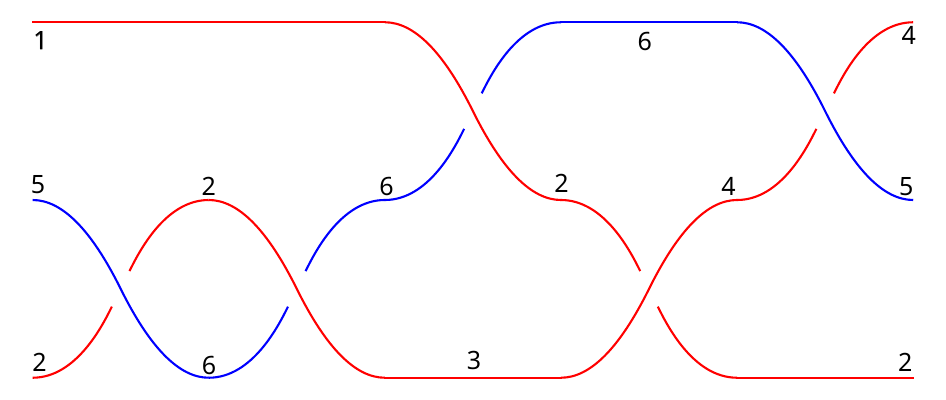}
    \caption{Braid and its associated segment labeling for the torus link $T(2,4)^*$.}
    \label{fig:T24*br}
\end{figure}
In \cref{fig:T24*br} we assign labels to each of the segments of the braid $\bd$. For simplicity, we will maintain the same label when going through the understrand, since this will not change the end result. We obtain the matrix:
\begin{equation}\label{eq:T24*-matrix}
    \tilde{A} = \begin{pmatrix}
        0 & x_2 & 0 & 0 & 0 & 1-x_1 \\
        0 & 0 & x_2 & 0 & 0 & 1-x_1 \\
        0 & 1-x_1^{-1} & 0 & x_1^{-1} & 0 & 0 \\
        0 & 0 & 0 & 0 & 0 & 0 \\
        0 & 1-x_2 & 0 & 0 & 0 & x_1 \\
        0 & 0 & 0 & 1-x_2 & x_1 &0 \\
    \end{pmatrix}
\end{equation}
and the polynomial
\begin{equation}\label{eq:T24*-alex}
\begin{split}
    \Delta_{T(2,4)^*}(x_1,x_2)&=x_1^{-\frac{1}{2}(-2-1+2)}x_2^{-\frac{1}{2}(-1+2)}\frac{\det\left(I_7-\tilde{A}\right)}{1-x_1} \\&= x_1^{\frac{1}{2}}x_2^{-\frac{1}{2}}\frac{(1-x_1)x_1^{-1}(x_1+x_2)}{1-x_1} = x_1^{-\frac12}x_2^{-\frac12}\left(x_1+x_2\right),
\end{split}
\end{equation}
which agrees with the Alexander polynomial of $T(2,4)^*$.
\end{example}

\subsubsection{Random walk interpretation of the Alexander polynomial}\label{sec:random-walk-Alexander}

In this section, we show that we can compute the Alexander polynomial from a ``jump-down'' random walk model on the braid closure, in which the simple multicycles either all avoid, or all go through the endpoint of the first strand.

Let $\mathcal{Q}$ be the set of simple multicycles in the random walk model defined by $\hat{A}_1$ as in \cref{sec:random-walks}. These are the simple multicycles that do not go through the first segment $a_1$ and for which, at every crossing, either they go through the overstrand, go through the understrand, go through both, ``jump down'' from the overstrand to the understrand, or they do not go through any segment adjacent to the crossing.

Let $\mathcal{Q}_{\text{tot}}$ be the set of simple cycles in the random walk model defined by $A$, and let $\mathcal{Q}'$ be the set of simple cycles as above that always go through the first segment. Notice that we can split $\mathcal{Q}_{\text{tot}}=\mathcal{Q} \sqcup \mathcal{Q}'$ since $\mathcal{Q}\cap\mathcal{Q}'=\emptyset$. Let $W$ be the weight function defined by $A$.

From \eqref{eq:Alexander-transition-matrix} we can write:
\begin{equation}\label{eq:Alex-simple-cycles}
    \Delta_{\mathrm{cl}(\bd)}(x_1,\dots,x_n) = (-1)^\ell \left(\prod_{i=1}^\ell x_i^{-\frac{1}{2}\left(-N_i+\sum_{j=1}^n \mathrm{lk}(L_i,L_j)\right)}\right) \frac{1}{1-x_1} \sum_{c\in \mathcal{Q}} (-1)^{|c|} W(c).
\end{equation}
Moreover, from \eqref{eq:detiszer} we have that
\begin{equation*}
    \det (I_{2n}-A)\factoreq \Delta_L + (-1)^\ell \left(\prod_{i=1}^\ell x_i^{-\frac{1}{2}\left(-N_i+\sum_{j=1}^n \mathrm{lk}(L_i,L_j)\right)}\right) \frac{1}{1-x_1} \sum_{c\in \mathcal{Q}'} (-1)^{|c|} W(c)=0,
\end{equation*}
from which we derive
\begin{equation}\label{eq:Alex-simple-cycles-through-open-strand}
    \Delta_{\mathrm{cl}(\bd)}(x_1,\dots,x_n) = - (-1)^\ell \left(\prod_{i=1}^\ell x_i^{-\frac{1}{2}\left(-N_i+\sum_{j=1}^n \mathrm{lk}(L_i,L_j)\right)}\right) \frac{1}{1-x_1} \sum_{c\in \mathcal{Q}'} (-1)^{|c|} W(c).
\end{equation}
\begin{figure}
\centering
\begin{subfigure}{0.15\textwidth}\centering
  \includegraphics[width=\linewidth]{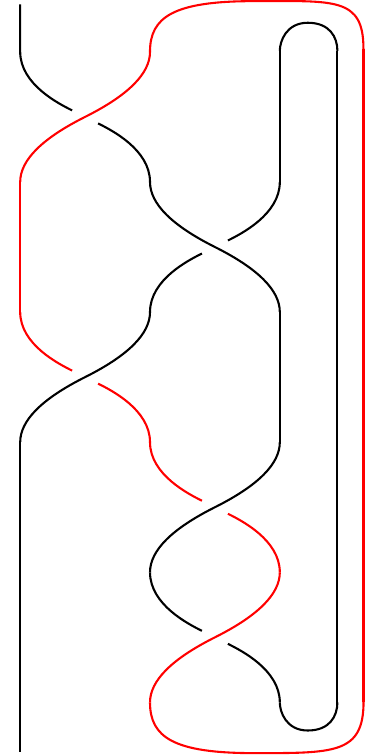}
\caption{$c_1$}
\end{subfigure}
\hspace{2em}
\begin{subfigure}{0.15\textwidth}\centering
  \includegraphics[width=\linewidth]{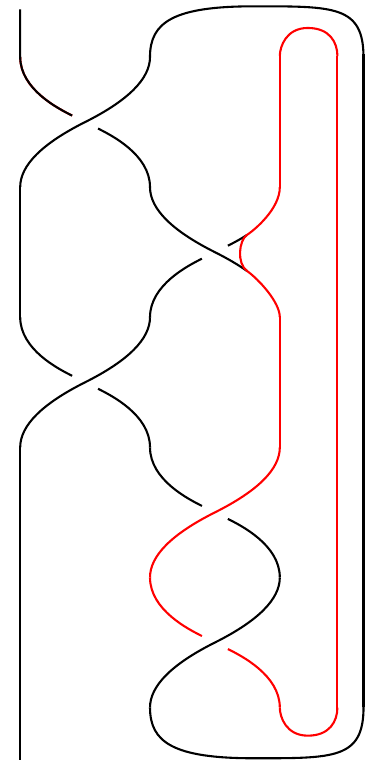}
\caption{$c_2$}
\end{subfigure}
\hspace{2em}
\begin{subfigure}{0.15\textwidth}\centering
  \includegraphics[width=\linewidth]{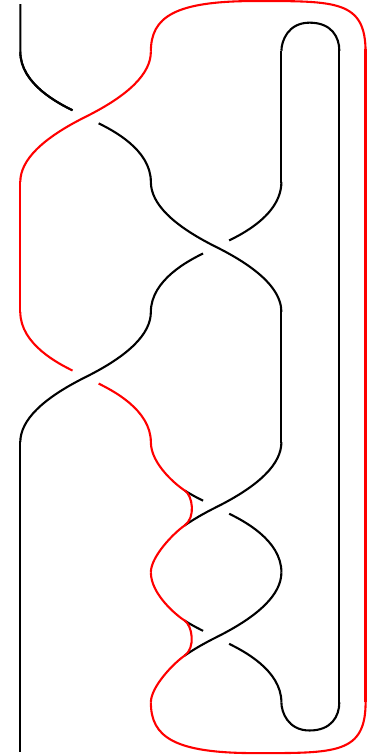}
\caption{$c_3$}
\end{subfigure}
\caption{A bottom-to-top-oriented braid for the torus link $T(2,4)^*$ and the simple cycles $c_1$, $c_2$ and $c_3$.}
\label{fig:T24stcycles}
\end{figure}
\begin{example}\label{ex:T24*-cycles} ($T(2,4)^*$) Let $c_1,c_2$ and $c_3$ be the cycles in \cref{fig:T24stcycles}. Then, the set of simple multicycles that do not go through the open strand is $\mathcal{Q}=\{\emptyset,c_1,c_2,(c_1,c_2),c_3\}$, and
\begin{equation*}
\begin{split}
    \sum_{c\in \mathcal{Q}} (-1)^{|c|}W(c)&=1+(-1)^{|c_1|}W(c_1)+(-1)^{|c_2|}W(c_2)+(-1)^{|c_1|+|c_2|}W(c_1)W(c_2)+(-1)^{|c_3|}W(c_3)
    \\&=1-x_1^2-x_2(1-x_1^{-1})+x_1^2x_2(1-x_1^{-1})-x_1(1-x_1)(1-x_2)\\&=(1-x_1)x_1^{-1}(x_1+x_2),
\end{split}
\end{equation*}
which agrees with the determinant computed from \eqref{eq:T24*-matrix}.
\end{example}

\subsection{Alexander norm}

In \cite{McMullen}, McMullen defined a norm from the Alexander polynomial of a link\footnote{More generally, McMullen introduced the Alexander norm as a norm on any connected, compact, orientable 3-manifold of toroidal boundary.} as follows: Given $\phi\in H^1(M,\Z)$, write $\phi=(\phi_1,\dots,\phi_\ell)$ in the dual meridian basis. Write $\Delta_L(\mathbf{x})=\sum_{\mathbf{k}} a_{\mathbf{k}}\mathbf{x}^\mathbf{k}$. Then, we define
\begin{equation}\label{eq:Alexander-norm}
    ||\phi||_A\defeq\max_{\mathbf{j},\mathbf{k}\in\supp \Delta_L} \inner{\phi}{\mathbf{k}-\mathbf{j}} = \max_{\mathbf{j},\mathbf{k}\in\supp \Delta_L} \sum_{i=1}^\ell \phi_i(k_i-j_i).
\end{equation}

Let $\newtonalex\subset H_1(M,\R)\cong \R^{\ell}$ denote the Newton polytope of the Alexander polynomial; i.e. the smallest convex set containing its support. Then, the unit norm ball of $||\cdot||_A$ is dual to twice the Newton polytope.

In the same work, McMullen proved that the Alexander norm bounds the Thurston norm on $H^1(M,\Z)$. This is a natural generalization of the established result for knots, which states that the total degree of the Alexander polynomial is bounded from above by twice the knot genus.
\begin{theorem}[{\cite[Theorem 1.1]{McMullen}}]\label{thm:Thurston-Alexander-norms} Given $\phi\in H^1(M,\Z)$, its Alexander and Thurston norms satisfy
\begin{equation}
    ||\phi||_A\leq\thurston{\phi} + \begin{cases}
        0 &\text{ if } \ell\geq 2 \\
        1 &\text{ if } \ell=1 \text{ and } H^1(M,\Z)=\Z\phi.
        \end{cases}
\end{equation}
Moreover, equality holds if $\phi$ is a fibered cohomology class.
\end{theorem}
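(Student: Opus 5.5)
We would follow McMullen's strategy: reduce to a one-variable statement along $\phi$, then bound degrees using a presentation of the Alexander module adapted to a norm-minimizing surface. Since both norms are homogeneous, it suffices to treat primitive $\phi\in H^1(M,\Z)$.

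\textbf{Step 1 (reduction to one variable).} Set $\Delta_\phi(t)\defeq\Delta_L(t^{\phi_1},\dots,t^{\phi_\ell})$. By \eqref{eq:Alexander-norm}, $\|\phi\|_A$ is the width of $\supp\Delta_L$ in the direction $\phi$. This equals $\deg\Delta_\phi$ (top exponent minus bottom exponent) as long as no cancellation occurs at the extreme levels. Cancellation can only lower the degree, so in general
\[
\|\phi\|_A\geq \deg\Delta_\phi .
\]
This direction is the wrong one for the inequality, so we will instead bound the order of the Alexander module of the infinite cyclic cover $M_\phi\to M$ determined by $\phi$. Let $\Lambda=\Z[t^{\pm1}]$.

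\begin{itemize}
\item We use the standard relation between the one-variable order $\operatorname{ord}_\Lambda H_1(M_\phi)$ and the multivariable polynomial. This is a Torres-type identity in the spirit of \cref{prop:Alex-properties}(4),(5), obtained from the Fox matrix of \cref{sec:Fox-free-derivatives} specialized along $\phi$.
\item For $\ell\geq 2$ it reads $\operatorname{ord}_\Lambda H_1(M_\phi)\doteq (t-1)\,\Delta_\phi(t)$.
\item For $\ell=1$ it reads $\operatorname{ord}_\Lambda H_1(M_\phi)\doteq \Delta_\phi(t)$.
\item To recover the full width of the Newton polytope rather than $\deg\Delta_\phi$, we work over $\Lambda_\ell$ with the $\phi$-grading. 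We bound the $\phi$-degree of the gcd of the maximal minors of the Fox matrix, rather than the degree of its specialization.
\end{itemize}

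\textbf{Step 2 (presentation from a minimizing surface).} Choose an embedded surface $S$ dual to $\phi$ with $\chi_-(S)=\thurston{\phi}$. After discarding sphere and disk components and tubing where possible, we may assume $S$ is connected-on-the-level-of-$\phi$ and that $\chi(S)\leq 0$. The knot case with $\thurston{\phi}=0$ is handled separately.

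\begin{itemize}
\item Cut $M$ along $S$ to get $M_S$, which has two copies $S_\pm$ of $S$ in its boundary.
\item Since $\chi(M)=0$, we get $\chi(M_S)=\chi(S)$.
\item Build $M_S$ from a collar $S\times I$ by attaching $1$- and $2$-handles in equal numbers. This uses $\chi(M_S)=\chi(S)$ together with the boundary being nonempty toral pieces.
\item Mayer--Vietoris in $M_\phi$ then gives an exact sequence
\[
H_1(S)\otimes\Lambda\xrightarrow{\ t\,i_+-i_-\ }H_1(M_S)\otimes\Lambda\to H_1(M_\phi)\to \cdots .
\]
\item Equivalently, we obtain a Fox-type presentation of $H_1(M_\phi)$. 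The generators are the $1$-handles together with a basis of $H_1(S)$; the relators have $t$-degree at most $1$.
\item The handle count makes this presentation square up to the one redundant relation. Its determinant therefore has $t$-degree at most $b_1(S)=1-\chi(S)=\thurston{\phi}+1$.
\end{itemize}

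\textbf{Step 3 (the inequality).} Combine Step 2 with Step 1.

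\begin{itemize}
\item For $\ell\geq2$, dividing out the factor $(t-1)$ gives $\|\phi\|_A\leq\thurston{\phi}$.
\item For $\ell=1$ there is no factor $(t-1)$ to remove, which accounts for the extra $+1$.
\end{itemize}

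Carrying the same degree count over the full ring $\Lambda_\ell$ bounds the $\phi$-width of the first elementary ideal. Its gcd $\Delta_L$ therefore has width at most the same bound, which gives the statement for $\|\phi\|_A$ itself.

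\textbf{Step 4 (equality for fibered $\phi$).} If $\phi$ is fibered, then $M_S=S\times I$ and $H_1(M_\phi)\cong H_1(S)$ as a $\Lambda$-module, with $t$ acting by the monodromy $h_*$. Hence
\[
\operatorname{ord}_\Lambda H_1(M_\phi)=\det(t\,I-h_*),
\]
which is monic of degree exactly $b_1(S)$. Because it is monic, no cancellation occurs at the extreme $\phi$-levels. So $\deg\Delta_\phi=\|\phi\|_A$, and the chain of inequalities above becomes a chain of equalities.

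\textbf{Main obstacle.} We expect the delicate step to be Step 1: matching the one-variable order of the $\phi$-cover with the $\phi$-width of the multivariable $\Delta_L$. This requires tracking the correct power of $(t-1)$ and the normalizing units, and excluding the possibility that the specialization $\Delta_\phi$ vanishes identically. Our first attempt would be to work directly with the $\Lambda_\ell$ Fox matrix, graded by $\phi$, and to mimic the degree count of Step 2 on its maximal minors, so that no specialization is needed.
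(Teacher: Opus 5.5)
The paper gives no proof of its own here; it quotes McMullen. Measured against the standard argument, your engine in Steps 2--3 is right: Mayer--Vietoris along a minimizing surface, a presentation of $H_1(M_\phi)$ of $t$-degree at most $b_1(S)$, and $\operatorname{ord}H_1(M_\phi)\doteq(t-1)\Delta_L(t^\phi)$ for $\ell\ge 2$. But there is a genuine gap exactly at the point you call the main obstacle. Nothing you wrote gets from $\deg\Delta_L(t^\phi)\le\|\phi\|_T$ to $\|\phi\|_A\le\|\phi\|_T$. The sentence ``carrying the same degree count over $\Lambda_\ell$ bounds the $\phi$-width of the first elementary ideal, so its gcd has width at most the same bound'' is only an assertion. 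It is also too weak as phrased: bounding the $\phi$-width of generators of $E_1$ by $b_1(S)$ only yields $\|\phi\|_A\le\|\phi\|_T+1$.

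The missing idea is a genericity-plus-continuity reduction. Both $\|\cdot\|_A$ and $\|\cdot\|_T$ are seminorms, hence homogeneous and continuous on $H^1(M;\mathbb{R})$. Off the finitely many hyperplanes $\langle\phi,\mathbf{k}-\mathbf{j}\rangle=0$ with $\mathbf{k}\neq\mathbf{j}\in\operatorname{supp}\Delta_L$, the map $\mathbf{k}\mapsto\langle\phi,\mathbf{k}\rangle$ is injective on the support. So $\Delta_L(t^\phi)$ has no cancellation, is nonzero, and has degree exactly $\|\phi\|_A$. Proving the bound for primitive $\phi$ in this dense set and passing to limits gives it for every $\phi$. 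Your multivariable route can also be made to work, but only through one specific minor. From the deficiency-one HNN presentation $\langle a_i,b_j,t\mid r_j,\ t\,w_i\,t^{-1}a_i^{-1}\rangle$ built from $S$, delete the $t$-column. The resulting minor has $\phi$-width at most $b_1(S)$ and equals $(\mathbf{x}^{[t]}-1)\Delta_L$ up to units, with $\phi([t])=1$. Dividing out that factor, not taking a gcd, is what gives $\|\phi\|_T$.

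Second, ``tubing where possible'' in Step 2 is not allowed. Tubing lowers $\chi$ by $2$ and destroys norm-minimality. What you need is a minimizing $S$ with $M\setminus S$ connected, which is available for primitive $\phi$. Connectedness of $M_S$ is also what makes $H_0(S)\otimes\Lambda\to H_0(M_S)\otimes\Lambda$ injective in your sequence. If such an $S$ has $c\ge 2$ components, the $H_0$-terms make $H_1(M_\phi)$ surject onto $\Lambda^{c-1}$, forcing $\Delta_L(t^\phi)=0$. That is impossible for generic $\phi$ when $\Delta_L\neq 0$, so the genericity reduction is needed here too.

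In Step 4, ``monic, hence no cancellation'' is a non sequitur: cancellation in the specialization can leave a monic polynomial. It is also unnecessary. Once the inequality holds for all $\phi$, the squeeze $\|\phi\|_T=b_1(S)-1=\deg\Delta_L(t^\phi)\le\|\phi\|_A\le\|\phi\|_T$ gives equality. Finally, the equality clause needs a fiber with $\chi\le 0$. For the unknot $b_1(S)=0$, and $\|\phi\|_A=0<\|\phi\|_T+1$.
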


Assume that $\ell \geq 2$. If we translate this statement in terms of polytopes, we have a polytope inclusion $B_T\subseteq 2\newtonalex^\vee$ or, equivalently, $2\newtonalex\subseteq B_T^\vee$. The equality of both norms for fibered cohomology classes implies that, for every fibered face $F_i$, there exists a top-dimensional face $\tilde{F_i}\subseteq \newtonalex^\vee$ such that
\[F_i\subseteq 2\tilde{F_i} \text{ under the polytope inclusion }B_T\subseteq 2\newtonalex^\vee.\] Therefore, for every fibered vertex $v\in B_T^\vee$, both $\frac{v}{2}$ and $-\frac{v}{2}$ are in the support of the Alexander polynomial. Furthermore, $\mathbf{x}^{\frac{v}{2}}$ and $\mathbf{x}^{-\frac{v}{2}}$ must have coefficients $\pm 1$ \cite[Theorem 4.1]{Dunfield}.

\subsection{Link Floer homology norm} A practical way to compute the Thurston polytope of a link complement, and its associated fibered faces, is by first computing its link Floer homology.

Given an oriented link $L\subset S^3$, its link Floer homology assigns to it a finitely generated, graded vector space over $\mathbb{F}=\Z/2\Z$, which splits as
\begin{equation}
    \HFL(L)=\bigoplus_{h\in\HL, d\in\Z} \HFL_d(L,h)
\end{equation}
where $\HL$ denotes the lattice defined at the beginning of \cref{sec:preliminaries}. Here, $h\in \HL$ denotes the Alexander-grading and $d\in\Z$ the Maslov (homological) grading.

Link Floer homology is a categorification of the multivariable Alexander polynomial, in the sense that we recover the latter when we take the Euler characteristic of the former. Precisely, if we write $h=(h_1,\dots,h_\ell)\in \HL\subset H_1(M,\R)$ in the meridian basis, we have
\begin{equation}\label{eq:HFL-recovers-Alex}
    \sum_{h\in\HL} \chi{(\HFL_*(L,h))}\, \mathbf{x}^\mathbf{s}  = \left(\prod_{i=1}^\ell( x_i^{\frac12}-x_i^{-\frac12})\right)\Delta_L(\mathbf{x})
\end{equation}
where $\HFL_*(L,h)\defeq\bigoplus_{d\in\Z}\HFL_d(L,h)$.

The link Floer homology shares properties similar to those of the Alexander polynomial, gathered in \cref{prop:Alex-properties}. In particular, it satisfies the symmetry \cite{Ozsvth2008}
\begin{equation}
    \HFL_{*}(L,h)\cong \HFL_{*-2\inner{\mathbb{1}^*}{h}}(L,-h).
\end{equation}
Under partial orientation reversal, we have the following result.
\begin{proposition}[\cite{Manolescu2007}]\label{prop:HFL-orientation-reversal} Let $L=\bigcup_{i=1}^\ell L_i$ be an oriented link. Let $L^{(i)}$ be the link defined by
\[L^{(i)}=L_1\cup\cdots\cup L_i^{\text{rev}}\cup\cdots\cup L_\ell\]
where $L_i^{\text{rev}}$ denotes the orientation reversal of $L_i$. Then, writing $h=(h_1,\dots,h_\ell)\in \HL$,
\begin{equation}
    \HFL_{*}(L,h)\cong \HFL_{*-2h_i+l_i}(L,(h_1,\dots,-h_i,\dots,h_\ell))
\end{equation}
where $l_i=\sum_{j=1}^\ell\mathrm{lk}(L_i,L_j)$.
\end{proposition}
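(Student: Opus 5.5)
The plan is to prove the isomorphism at the chain level: reversing the orientation of $L_i$ changes no holomorphic curve counts, only the gradings. Fix a $2\ell'$-pointed Heegaard diagram $(\Sigma,\boldsymbol\alpha,\boldsymbol\beta,\mathbf w,\mathbf z)$ for $L$. On each component the basepoints alternate $w,z$, and the orientation of the component is determined by the convention that it runs from $w$ to $z$ in the $\alpha$-handlebody. Interchanging the roles of the $w$- and $z$-basepoints lying on $L_i$, and leaving all others fixed, gives a valid Heegaard diagram for $L^{(i)}$. Note that $L^{(i)}$ should appear on the right-hand side of the displayed isomorphism.

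The chain complex $\widehat{CFL}$ is generated by $\mathbb T_\alpha\cap\mathbb T_\beta$. Its differential counts Maslov index one disks with $n_{w}=n_{z}=0$ for all basepoints. This set of basepoints is unchanged by the swap, so the two complexes are literally equal as ungraded complexes. It therefore remains to compare the two gradings on the common generating set.

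\emph{Alexander grading.} For a disk $\phi\in\pi_2(\mathbf x,\mathbf y)$, the relative grading satisfies $A_j(\mathbf x)-A_j(\mathbf y)=\sum_{p\in \mathbf z_j}n_p(\phi)-\sum_{p\in\mathbf w_j}n_p(\phi)$. Swapping $w_i\leftrightarrow z_i$ negates this difference in the $i$-th coordinate and leaves the others unchanged. Hence $A^{L^{(i)}}=(A_1,\dots,-A_i+c,\dots,A_\ell)$ for some constant $c$.

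\emph{Maslov grading.} Let $M_{\mathbf w}$ be the grading computed with the $\mathbf w$ basepoints. The new grading uses $\mathbf w'=(\mathbf w\setminus\mathbf w_i)\cup\mathbf z_i$. From $\mu(\phi)$ and the relative formula $M(\mathbf x)-M(\mathbf y)=\mu(\phi)-2\sum_{p\in\mathbf w}n_p(\phi)$ we get
\[
M'(\mathbf x)-M'(\mathbf y)=M(\mathbf x)-M(\mathbf y)-2\bigl(A_i(\mathbf x)-A_i(\mathbf y)\bigr).
\]
Consequently $M'=M-2A_i+c'$ for another constant $c'$.

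The main obstacle is pinning down the constants $c$ and $c'$; I expect them to be $0$ and $l_i$ respectively. For $c$, I would use that the absolute Alexander grading is characterized by the symmetry $\HFL_*(L,h)\cong \HFL_{*-2\langle\mathbb 1^*,h\rangle}(L,-h)$, which holds for both $L$ and $L^{(i)}$. Since $A^{L^{(i)}}_i=-A_i+c$ is an affine reflection of $A_i$, applying the symmetry to both links forces the support to be centered at $0$ in each coordinate, so $c=0$.

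For $c'$, I would fix the absolute Maslov grading through $\widehat{HF}(S^3)$ after forgetting the $z$-basepoints. Forgetting all $\mathbf z$ for $L$ and forgetting $(\mathbf z\setminus\mathbf z_i)\cup\mathbf w_i$ for $L^{(i)}$ give two multi-pointed diagrams of $S^3$. The top generators of these differ by the grading of the top class, which can be computed from the $\mathrm{spin}^c$ first Chern class evaluated on the relevant Seifert-type surfaces. This evaluation should produce the shift by the linking data $l_i=\sum_j\mathrm{lk}(L_i,L_j)$.

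As a cross-check, and as a fallback way to fix $c'$, I would take graded Euler characteristics via \eqref{eq:HFL-recovers-Alex}. The identity must be compatible with \cref{prop:Alex-properties}(1), namely $\Delta_{L^{(i)}}(\mathbf x)=-\Delta_L(\dots,x_i^{-1},\dots)$, together with the factor $x_i^{1/2}-x_i^{-1/2}$, which changes sign under $x_i\mapsto x_i^{-1}$. This determines $c'$ modulo $2$ and its sign consistency. One could then compute $c'$ exactly on a single example, such as a Hopf-link sublink, and use that $c'$ is additive in the linking numbers under band-sum/crossing-change arguments.
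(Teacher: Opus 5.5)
Your setup is sound as far as it goes. You are right that $L^{(i)}$ belongs on the right-hand side, and swapping $w_i$ and $z_i$ does give a diagram for $L^{(i)}$ with literally the same complex. The relative computations $A'_i=-A_i+c$ and $M'=M-2A_i+c'$ are correct. Your symmetry argument also forces $c=0$ and no shift in the other coordinates, provided you use exactly one $w$ and one $z$ per component. With more basepoints the homology is $\HFL\otimes V^{\otimes k}$, which is not symmetric, and the constants change.

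The genuine gap is $c'$, which carries the entire linking-number content of the statement, and none of your routes determines it.
\begin{itemize}
\item Symmetry is compatible with every value of $c'$: composing your isomorphism with the symmetries of $L$ and of $L^{(i)}$, in either order, gives the same total shift.
\item The Euler characteristic only sees $(-1)^{M'}=(-1)^{M-2A_i+c'}$ with $2A_i\equiv l_i\pmod 2$, so it gives at best $c'\equiv l_i\pmod 2$.
\item A single example cannot fix a link-dependent constant. There is also no principle making $c'$ additive under band sums or crossing changes; these relate different links through skein triangles whose maps carry their own grading shifts.
\item The Chern-class sentence is not an argument. The absolute gradings of $(\Sigma,\boldsymbol\alpha,\boldsymbol\beta,\mathbf w)$ and $(\Sigma,\boldsymbol\alpha,\boldsymbol\beta,\mathbf w')$ are each normalized by the same condition on $\widehat{HF}(S^3)\otimes V^{\otimes(\ell-1)}$, so their top classes sit in the same grading by definition. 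Comparing the two gradings on the common generators is exactly what must be proved, and $c_1$ evaluated on Seifert surfaces computes Alexander gradings, not differences of Maslov gradings.
\end{itemize}

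The missing idea is that the linking number lives in the Alexander grading when you forget components, and the knot case has no correction. Forget the $z$-basepoints of every component other than $L_i$. Then $(\Sigma,\boldsymbol\alpha,\boldsymbol\beta,\mathbf w,z_i)$ is a diagram for the knot $L_i$ with free basepoints, and its Maslov grading is the same function $M$. By Ozsv\'ath--Szab\'o's forgetful formula (deleting $L_j$ shifts $h_i$ by $-\tfrac12\mathrm{lk}(L_i,L_j)$), its Alexander grading is $A^{L_i}=A_i-\tfrac12\mathrm{lk}(L_i,L\setminus L_i)$. Likewise, $(\Sigma,\boldsymbol\alpha,\boldsymbol\beta,\mathbf w',w_i)$ is a diagram for $L_i^{\mathrm{rev}}$ whose Maslov grading is $M'$.

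For a knot, your own computation plus the symmetry shows the swap constant vanishes if and only if $\widehat{\mathit{HFK}}(K^{\mathrm{rev}})\cong\widehat{\mathit{HFK}}(K)$ as bigraded groups, which is a standard fact. Free basepoints only contribute factors $\mathbb F_{(0,0)}\oplus\mathbb F_{(-1,0)}$, which the swap preserves. Hence $M-M'=2A^{L_i}$ exactly, so $M'=M-2A_i+\mathrm{lk}(L_i,L\setminus L_i)$, i.e.\ $c'=l_i$. Here the linking number is computed in $L$; it changes sign in $L^{(i)}$.

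Alternatively, you can work on a grid diagram. There the two Maslov gradings (for the original and the swapped markings) and $A_i$ are explicit bilinear expressions in the markings, and the cross terms between $L_i$ and the other components produce exactly the linking numbers.
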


In a similar manner to the multivariable Alexander polynomial, the link Floer homology induces a norm as well. As a natural generalization of \eqref{eq:Alexander-norm}, let
\begin{equation}
    ||\phi||_{\HFL}\defeq \max_{\{s\in\HL\mid \HFL(L,s)\neq 0\}} |\inner{ \phi}{s}|~,
\end{equation}

and define the \emph{link Floer polytope} as the convex hull of its Alexander-graded support:
\[
\HFLpolytope\defeq\operatorname{conv}\{h\in\mathbb H(L)\mid
\widehat{\mathit{HFL}}(L,h)\neq0\}~.
\]
By \eqref{eq:HFL-recovers-Alex}, the link Floer homology norm bounds the Alexander norm from above, as does the Thurston norm. Surprisingly, the two are equivalent.

\begin{theorem}[{\cite[Theorem 1.1]{OzsSza2008-2}}]\label{thm:HFL-poly-Thurston} Let $L$ be a link with no trivial components. Then, for every $\phi\in H^1(M,\R)$,
\begin{equation}
    \thurston{\phi} + \sum_{i=1}^\ell |\inner{\phi}{\mu_i}| = 2||\phi||_{\HFL}~.
\end{equation}
In other words, there is a polytope equality
\begin{equation}\label{eq:Thurston-HFL-polytopes}
    B_T^\vee + Q = 2\HFLpolytope
\end{equation}
where $Q\defeq\{\sum_{i=1}^\ell t_i\mu_i\mid -1\leq t_i\leq 1\}$ denotes the symmetric hypercube in $H^1(M,\Z)$ with edge-length two.
\end{theorem}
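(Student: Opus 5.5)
The plan is to prove the identity for integral classes $\phi$ and then extend it to all of $H^1(M,\R)$. Both sides are convex, piecewise linear, and homogeneous on rays, so agreement on the lattice suffices. A continuity argument further lets me restrict to classes with every $\phi_i=\inner{\phi}{\mu_i}\neq 0$.

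First I reduce to the case $\phi_i>0$ for all $i$, using partial orientation reversal. By \cref{prop:HFL-orientation-reversal}, reversing $L_i$ acts on the Alexander-graded support of $\HFL$ by $h_i\mapsto -h_i$; it only shifts the Maslov grading, so $||\cdot||_{\HFL}$ transforms by flipping the sign of the $i$-th coordinate of $\phi$. The left-hand side transforms the same way: a surface for $L^{(i)}$ with boundary multiplicity $-\phi_i$ on $L_i^{\mathrm{rev}}$ is the same surface as one for $L$ with multiplicity $\phi_i$. Hence both sides are invariant under these sign changes, and we may assume all $\phi_i>0$. By the symmetry $\HFL_*(L,h)\cong\HFL_{*-2\inner{\mathbb 1^*}{h}}(L,-h)$, it then suffices to compute $\max\{\inner{\phi}{h}\mid \HFL(L,h)\neq 0\}$, rather than the maximum of the absolute value.

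For the upper bound $2\max\inner{\phi}{h}\le \thurston{\phi}+\sum_i\phi_i$, I would take a Thurston-norm-minimizing surface $S$ representing $\mathrm{PD}[\phi]$ with no closed components. I would build a multi-pointed Heegaard diagram adapted to $S$, in which $S$ appears as a surface meeting the diagram in a controlled way. The relative Alexander grading of a generator is computed by evaluating $c_1$ of its relative Spin$^c$ structure on $S$. An adjunction-type inequality then gives $2\inner{\phi}{h}\le -\chi(S)+\sum_i\phi_i$ for every generator, where the $\phi_i$ term counts the meridional boundary contributions. Since $\chi_-(S)=-\chi(S)$ once trivial components are excluded, this is the desired bound.

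For the lower bound I would pass to sutured Floer homology, and I expect this to be the main obstacle. Let $M(L)$ be the link complement with two meridional sutures on each torus, so that $SFH(M(L))\cong\HFL(L)$ with Alexander gradings given by relative Spin$^c$ structures. Decomposing along the taut surface $S$ above, Juhász's surface decomposition formula identifies $SFH(M(L)_S)$ with the direct sum of the summands of $\HFL(L)$ in the extremal gradings $h$ with $2\inner{\phi}{h}=\thurston{\phi}+\sum_i\phi_i$. Gabai's theorem says that decomposing a taut sutured manifold along a norm-minimizing surface yields a taut sutured manifold. Juhász's nonvanishing theorem, that taut balanced sutured manifolds have nonzero $SFH$, then shows the extremal group is nonzero. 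The delicate points are twofold. One must choose $S$ to be well-groomed relative to the meridional sutures, so that the decomposition is admissible for Juhász's formula. One must also check that the outer Spin$^c$ structures are exactly the top Alexander grading in direction $\phi$. Combining the two bounds gives the norm identity, and dualizing gives the polytope equality \eqref{eq:Thurston-HFL-polytopes}, since the support function of $Q$ is $\sum_i|\phi_i|$.
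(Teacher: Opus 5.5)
The paper gives no proof of this statement. It is quoted from Ozsv\'ath--Szab\'o and used as a black box, so there is no internal argument to match. Your route is the sutured one: identify $\widehat{\mathit{HFL}}(L)$ with $SFH$ of the complement with two meridional sutures per torus, then use Juh\'asz's decomposition formula along a norm-minimizing $S$ and his theorem that taut balanced sutured manifolds have nonzero $SFH$. This differs from the cited source. Ozsv\'ath--Szab\'o's paper predates the decomposition formula, and their lower bound ultimately rests on Gabai's taut foliations and the contact/symplectic nonvanishing results from their genus-bounds work. What your route buys is economy. A single diagram adapted to $S$ gives both the inequality for every supported grading, which subsumes your separate adjunction step, and the identification of the extremal group with $SFH$ of $M$ cut along $S$. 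It avoids closed manifolds, surgery and fillings entirely.

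As written, though, your lower bound misses a case. ``No trivial components'' still allows split links, e.g.\ a split union of two trefoils. Then $M$ is reducible, so $M(L)$ is not taut. Neither is $M(L)$ cut along $S$, since a splitting sphere can be taken disjoint from $S$ and survives the decomposition. Gabai's theory and Juh\'asz's nonvanishing theorem both require irreducibility, so the argument does not apply there. The repair is a reduction to non-split links, since both sides are additive over split pieces:
\begin{itemize}
\item A norm-minimizing surface can be made disjoint from a splitting sphere without increasing $\chi_-$, so the Thurston norm is additive.
\item $\widehat{\mathit{HFL}}(L'\sqcup L'')\cong\widehat{\mathit{HFL}}(L')\otimes\widehat{\mathit{HFL}}(L'')\otimes V$ with $V$ of rank two in Alexander grading $0$, so the support is the product of the supports. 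Alternatively, use $SFH(M_1\# M_2)\cong SFH(M_1)\otimes SFH(M_2)\otimes\Z^2$.
\end{itemize}
For non-split $L$ without trivial components, $M$ is irreducible and each $T_i$ is incompressible. A compressing disk would be bounded by a longitude, exhibiting $L_i$ as a split unknot. Hence $M(L)$ is taut.

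The two points you flag are genuine, but they go through here, and more simply than via well-grooming.
\begin{itemize}
\item \emph{Niceness.} Juh\'asz's formula needs $S$ nice, not well-groomed. An open $S$ with coherently oriented boundary is automatically good, hence nice, once all $\phi_i\neq0$, because every boundary curve then crosses the meridional sutures.
\item \emph{Tautness.} Do not quote Gabai as ``decomposing along any norm-minimizing surface is taut''; the relevant norm is the sutured one and the representative matters. Here tautness can be checked directly. $R_\pm(\gamma')$ is $S$ with rectangles attached along boundary arcs, hence homeomorphic to $S$. A surface in $M'$ of smaller complexity would reglue to a representative of $\phi$ of smaller complexity.
\item \emph{The exact level.} To pin the level of the extremal face without computing the rotation term in Juh\'asz's $c(S,t)$, decompose along both $S$ and $-S$. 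The rotation terms cancel in the distance between the two outer faces, which comes out to $\|\phi\|_T+\sum_i\phi_i$ in Alexander units. The symmetry $h\mapsto -h$ of the support then centers these faces.
\end{itemize}
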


Given a vertex $v\in B_T^\vee$, define $s(v)\defeq\frac{1}{2}(v+Q)\subseteq \HL$. By \eqref{eq:Thurston-HFL-polytopes}, the fibered faces of the Thurston polytope $(F_1,\dots,F_s)$ correspond to sets of points in the link Floer polytope. Moreover, for every face $F_i$ and associated vertex $v_i$, there is at least one extremal vertex in $s(v_i)\subset\HFLpolytope$. The following result explains how link Floer homology is able to detect the fibered faces.

\begin{theorem}\label{thm:HFL-fibered-classes} Let $v\in\mathrm{Vert}(B_T^\vee)$ be an extremal vertex of the dual Thurston polytope. Then, $v$ corresponds to a fibered face of the Thurston polytope if, and only if, there exists $h\in s(v)$ with $h\in\mathrm{Vert}(\HFLpolytope)$ and $\rank{\HFL(L,h)}=1$.
\end{theorem}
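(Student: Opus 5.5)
The plan is to reduce \cref{thm:HFL-fibered-classes} to the two deep inputs already available: the polytope identity \eqref{eq:Thurston-HFL-polytopes} of \cref{thm:HFL-poly-Thurston}, and the fiberedness detection theorem of link Floer homology, due to Ni and, in the multi-class form, to Juh\'asz and Ghiggini. For a class $\phi$ the detection theorem reads: $\phi$ is fibered if and only if the top group $\bigoplus_{\inner{\phi}{h}=\max}\HFL(L,h)$ has rank $2^{\ell-1}$, up to the factor coming from the $(x_i^{1/2}-x_i^{-1/2})$ normalization, i.e.\ if and only if the sutured Floer homology of the complement of a minimal-genus surface dual to $\phi$ has rank one. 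The proof therefore has three parts: translate the vertex $v$ into a class $\phi$, identify the $\phi$-maximal face of $\HFLpolytope$ with the set $s(v)$, and read the rank condition there.

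First, fix a vertex $v\in\mathrm{Vert}(B_T^\vee)$ and choose $\phi$ in the interior of the normal cone of $v$. Then $\phi/\thurston{\phi}$ lies in the relative interior of the dual top-dimensional face $F$, and the maximizer of $\inner{\phi}{\cdot}$ on $B_T^\vee$ is exactly $\{v\}$. Take $\phi$ generic, so that all $\phi_i\neq 0$. Then the maximizer of $\inner{\phi}{\cdot}$ on $Q$ is the single vertex $\sum_i\mathrm{sgn}(\phi_i)\mu_i$. By \eqref{eq:Thurston-HFL-polytopes}, the $\phi$-maximal face of $2\HFLpolytope$ is the Minkowski sum of the two maximal faces, namely the point $v+\sum_i\mathrm{sgn}(\phi_i)\mu_i$. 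Halving, this gives a vertex $h_\phi\in s(v)\cap\mathrm{Vert}(\HFLpolytope)$.

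As $\phi$ ranges over the sign chambers of the open normal cone, one obtains all vertices of $\HFLpolytope$ lying in $s(v)$. Moreover, the top $\phi$-graded piece of $\HFL$ is exactly $\HFL(L,h_\phi)$.

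Second, apply the detection theorem to $\phi$. Fiberedness is an open condition on the cone over $\mathrm{relint}(F)$ by Thurston, so $F$ is fibered if and only if any single such generic $\phi$ is fibered. For generic $\phi$ the top group is the single group $\HFL(L,h_\phi)$. After accounting for the convention in \eqref{eq:HFL-recovers-Alex}, where $\HFL$ rather than $\widehat{\mathit{HFK}}$ is used so that no extra $2^{\ell-1}$ factor appears, the detection criterion becomes $\rank\HFL(L,h_\phi)=1$.

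This gives the forward direction: if $F$ is fibered, then $h_\phi$ works. For the converse, suppose some vertex $h\in s(v)$ has rank one. Pick a generic $\phi$ in the normal cone of $h$ in $\HFLpolytope$. By the Minkowski decomposition, $\phi$ lies in the normal cone of $v$ in $B_T^\vee$, and since $h$ is a vertex we may take $\phi$ in its interior. The criterion then shows $\phi$ is fibered, hence $F$ is fibered.

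The main obstacle is the precise form of the detection theorem in the multivariable, non-meridional-class setting. Ni's theorem is stated for the class of a Seifert surface, and extending it to an arbitrary $\phi$ requires Juh\'asz's sutured decomposition: cutting along a norm-minimizing surface $S$ dual to $\phi$ gives $SFH(M\setminus S)\cong$ top $\phi$-graded $\HFL$, and $M\setminus S$ is a product if and only if $SFH$ has rank one. The care needed is in matching the sutured manifold's suture count with the meridian shift $Q$, which is exactly what produces the $\tfrac12(v+Q)$ offset in $s(v)$. A secondary point is the degenerate case where $\phi$ cannot be chosen with all $\phi_i\neq0$. This is avoided by genericity, since the interior of a normal cone is open and full-dimensional.
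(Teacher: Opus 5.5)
Your argument is sound in outline, but it is not the paper's route. The paper gives no argument of its own and simply attributes the two directions to \cite{OzsSza2008-2} and \cite{Altman2015}. You instead rebuild the statement from two ingredients. The first is the polytope identity \eqref{eq:Thurston-HFL-polytopes}. The second is one criterion for a generic integral class $\phi$ with all $\phi_i\neq0$: the top $\phi$-graded part of $\HFL$ is the sutured Floer homology of $S^3(L)$ decomposed along a norm-minimizing surface $S$ dual to $\phi$ (Juh\'asz's decomposition formula), and it has rank one exactly when the decomposed manifold is a product.

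Your route buys a uniform treatment of both directions. It also explains transparently why the relevant groups are exactly $\HFL\bigl(L,\tfrac12(v+\sum_i\operatorname{sgn}(\phi_i)\mu_i)\bigr)$, with $\phi$ ranging over the orthant chambers of the open normal cone of $v$. And it shows why rank one at a single such vertex already suffices, via Thurston's openness of fibered cones.

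The cost is that the step you call the main obstacle is precisely what the cited papers prove; here it is only named. To complete it you must check three things:
\begin{enumerate}
\item $S$ gives a taut decomposition. This uses irreducibility of $M$, i.e.\ the no-trivial-components hypothesis of \cref{thm:HFL-poly-Thurston}.
\item The outer summands in the decomposition formula are the $\phi$-extremal ones.
\item Decomposing along $S$ with two meridional sutures per torus gives a sutured manifold isotopic to the complementary sutured manifold of $S$. On each annulus of $\partial M\setminus\partial S$, the old meridional arcs and the new sutures join into one zigzag curve isotopic to the core. Only then does ``product'' really mean ``$S$ is a fiber''.
\end{enumerate}

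Two smaller corrections. First, drop the formulation with rank $2^{\ell-1}$. For $\HFL$ the criterion is rank one outright, as \cref{tab:L4-hfl} shows: $T(2,4)$ is fibered and $\HFL(T(2,4),(1,1))\cong\mathbb F$.

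Second, in the converse you should justify that the $B_T^\vee$-summand of $2h$ is $v$, rather than another vertex $v'$ with $h\in s(v)\cap s(v')$. Take $\phi$ in the interior of the normal cone of $h$. It is uniquely maximized at $2h$ on $B_T^\vee+Q$, hence uniquely maximized on each summand. Writing $2h=v+q$ with $q\in Q$ gives $\phi(v)=\phi(2h)-\phi(q)\geq\max_{B_T^\vee}\phi$, so $v$ is that unique maximizer.
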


The converse direction was proven in \cite{OzsSza2008-2} and the forward direction in \cite{Altman2015}.

In particular, link Floer homology detects link fiberedness, as it had been previously shown for its single-Alexander-graded counterpart, knot Floer homology \cite[Corollary~1.2]{Ni2007}.

\begin{theorem}\label{thm:fibered-link-HFL}
Let $L$ be an oriented link. Then, $L$ is fibered if, and only if, there is a unique
\[
h_{\max}\in\supp{\HFL(L)}
\]
maximizing the total Alexander grading $\inner{\mathbb{1}^*}{h}=\sum_i h_i$ and $\HFL(L,h_{\max})\cong\mathbb{F}$.

Moreover, if $L$ is fibered then $\HFL(L,h_{\max})$ is supported in Maslov grading $\lambda(L)$.
\end{theorem}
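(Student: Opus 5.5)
The plan is to reduce the statement to the single-graded (collapsed) version of knot Floer homology for links, where fiberedness detection is already known, and then transfer it back to the multi-graded group $\HFL(L)$. For the Maslov statement we will use the contact-geometric interpretation of the top group.

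\emph{Step 1: collapsing the Alexander grading.} Collapse the $\HL$-grading of $\HFL(L)$ to the single grading $\inner{\mathbb{1}^*}{h}=\sum_i h_i$, shifted by a constant. By the standard relation of Ozsváth--Szabó between link Floer homology and the knot Floer homology $\widehat{HFK}(L)$ of $L$ (viewed as a knot in $\#^{\ell-1}S^1\times S^2$), the collapsed group is isomorphic to $\widehat{HFK}(L)\otimes V^{\otimes(\ell-1)}$. Here $V\cong\mathbb F^2$ is supported in (Alexander, Maslov) bigradings $(0,0)$ and $(-1,-1)$.

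\emph{Step 2: reading off the top grading.} Since every factor of $V$ has its top summand in degree $(0,0)$ and that summand is one-dimensional, the top collapsed Alexander grading of $\HFL(L)$ equals the top Alexander grading of $\widehat{HFK}(L)$ up to a fixed shift. The corresponding groups are then isomorphic, with the same Maslov grading. On the other hand, the top collapsed group is exactly
\[
\bigoplus_{h:\ \inner{\mathbb{1}^*}{h}\ \text{maximal}} \HFL(L,h).
\]
This direct sum has rank one if and only if there is a unique maximizer $h_{\max}$ in the support and $\HFL(L,h_{\max})\cong\mathbb F$.

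\emph{Step 3: fiberedness.} Apply Ni's theorem in its link form \cite{Ni2007}: $L$ is fibered if and only if the top Alexander grading of $\widehat{HFK}(L)$ is $\mathbb F$. Combined with Step 2, this gives the equivalence. As a consistency check, we can cross-validate with \cref{thm:HFL-poly-Thurston} and \cref{thm:HFL-fibered-classes}. The face of $2\HFLpolytope=B_T^\vee+Q$ maximizing $\mathbb{1}^*$ is the Minkowski sum of the corresponding faces. The face of $Q$ in direction $\mathbb{1}^*$ is the single vertex $(1,\dots,1)$, so a unique maximizer forces the face of $B_T^\vee$ in direction $\mathbb{1}^*$ to be a vertex $v$. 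Then $h_{\max}\in s(v)$, and $\mathbb{1}^*$ lies in the open cone over the top face dual to $v$. This is consistent with the polytope picture, although by itself it does not identify which point of $s(v)$ carries rank one. That is why we route the argument through Ni.

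\emph{Step 4: Maslov grading (main obstacle).} For fibered $L$, the generator of the top group of $\widehat{HFK}(-L)$ (or of the mirror, depending on conventions) is identified with the Ozsváth--Szabó contact invariant of the contact structure supported by the open book with binding $L$. The contact invariant lives in Maslov grading equal to the (normalized) Hopf invariant of the associated plane field on $S^3$. By Rudolph's definition, this Hopf invariant is the enhanced Milnor number, i.e.\ $\lambda(L)$. Transporting this grading back through Steps 1--2 gives the claim, since the $V$-factors contribute Maslov grading $0$ in the top degree. The delicate part is the bookkeeping of signs and shifts: the mirror and orientation conventions relating $\widehat{HFK}(-L)$ to $\HFL(L)$, the symmetry $\HFL_*(L,h)\cong\HFL_{*-2\inner{\mathbb{1}^*}{h}}(L,-h)$ that moves between the top and bottom groups, and the normalization of $\lambda$. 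I would fix these by checking both Hopf links, where $\lambda$ and $\HFL$ are explicitly known.
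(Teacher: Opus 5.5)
Your route is the paper's main route: collapse the multigrading with \cite[Theorem 1.1]{Ozsvth2008}, detect fiberedness with \cite[Corollary 1.2]{Ni2007}, and take the Maslov statement from knot Floer homology. However, Step 1 misquotes the collapse theorem: there is no factor $V^{\otimes(\ell-1)}$. The collapsed group $\bigoplus_{\inner{\mathbb{1}^*}{h}=s}\HFL(L,h)$ is isomorphic to $\widehat{HFK}(L,s)$ itself, the knot Floer homology of the knotification in $\#^{\ell-1}(S^1\times S^2)$, with Maslov gradings shifted by $\frac{\ell-1}{2}$. Euler characteristics rule out a $V$-factor. Collapsing \eqref{eq:HFL-recovers-Alex} and using \eqref{eq:alexander-one-variable-specialization} gives $(x^{1/2}-x^{-1/2})^{\ell-1}\Delta_L^{(1)}(x)$, which is already $\chi(\widehat{HFK}(L))$, whereas $\chi(V)=1-x^{-1}$. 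Concretely, for the Hopf link both groups have rank $4$. For the equivalence this slip is harmless: the conclusion of your Step 2 (top collapsed group $\cong$ top group of $\widehat{HFK}(L)$) holds even more directly, and Steps 2--3 then coincide with the paper's proof. Your caution about the polytope route is justified: \cref{thm:HFL-fibered-classes}, as stated, only produces \emph{some} rank-one vertex in $s(v)$, not necessarily $h_{\max}$.

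The Maslov part has a genuine gap, for two reasons. First, the misquote propagates. The top group does shift by $\frac{\ell-1}{2}$ between $\HFL$ and $\widehat{HFK}(L)$. Moreover, an open book whose binding is the knotification supports a contact structure on $\#^{\ell-1}(S^1\times S^2)$, not on $S^3$. These $\ell$-dependent normalizations must be matched explicitly. Calibrating on the two Hopf links, which both have $\ell=2$, cannot detect an error that depends on $\ell$.

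Second, and more seriously, ``the contact invariant lives in Maslov grading equal to the Hopf invariant'' says nothing about the generator when $c(\xi_L)=0$. Overtwisted structures have vanishing contact invariant. By uniqueness of the tight structure on $S^3$, every fibered link with $\lambda(L)\neq 0$ supports an overtwisted one (e.g.\ the negative Hopf link, with $\lambda=1$), and the vanishing persists after knotification. Equivalently, $\widehat{HF}(-S^3)\cong\mathbb F$ sits in a single degree, so a nonzero contact class can only ever certify $\lambda(L)=0$. What you need is the grading of the generator of the extremal $\widehat{HFK}$ group itself, valid for all fibered links, tight or overtwisted. That is exactly the input the paper imports from \cite{OzsSza2005,ChengHeddenSarkar2024} rather than reading it off $c(\xi_L)$. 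With that statement cited and the $\frac{\ell-1}{2}$ shift tracked, your Step 4 reduces to the paper's argument.
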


\begin{proof}

The Theorem follows directly by combining standard results of knot Floer homology and link Floer homology; namely \cite[Theorem 1.1]{Ozsvth2008}, to transform the statement into one about the knot Floer homology of a fibered link, and \cite[Corollary 1.2]{Ni2007} for the detection result.

Alternatively, we can prove it by combining \cref{thm:HFL-poly-Thurston} and \cref{thm:HFL-fibered-classes}:
Let $\mathbb{1}^*\in H^1(S^3\setminus L;\mathbb Z)$ be the oriented cohomology class satisfying $\mathbb{1}^*(\mu_i)=1$. Since $\mathbb{1}^*$ is fibered, the normalized class $\mathbb{1}^*/||\mathbb{1}^*||_T$ lies in the relative interior of a top-dimensional face $F$ of $B_T$ and uniquely exposes an extremal vertex $v$ of $B_T^\vee$, dual to $F$. Moreover, since $\mathbb{1}^*(\mu_i)=1$ for every $i$, it uniquely exposes the vertex $\mathbb{1}=\sum_i\mu_i$ of $Q$ as well.
Hence, by \cref{thm:HFL-poly-Thurston}, $\mathbb{1}^*$ uniquely exposes the vertex $h_{\max}=\frac{1}{2}\left(v+\mathbb{1}\right)$ in $\HFLpolytope$.
It follows that $h_{\max}$ is the unique supported grading of $\HFL$ maximizing $\inner{\mathbb{1}^*}{\cdot}$, and by \cref{thm:HFL-fibered-classes} it must have rank $1$. The reverse direction can be argued in a similar way.

We deduce that $\HFL(L,h_{\text{max}})$ must be supported in Maslov grading $\lambda(L)$ from the analogous statement in knot Floer homology \cite{OzsSza2005,ChengHeddenSarkar2024}.
\end{proof}

For the special class of alternating links, the equality in \eqref{eq:HFL-recovers-Alex} holds after substituting the Euler characteristic $\chi(\HFL_*(L,s))$ for the rank of $\HFL(L,s)$ \cite[Theorem 1.3]{Ozsvth2008}; i.e. the information captured by link Floer homology is equivalent to that captured by the multivariable Alexander polynomial. As a result, we obtain the following corollary:

\begin{corollary}\label{cor:Alex-monic-fibered} Let $L$ be an alternating link. Then, the vertices of $\newtonalex$ associated to fibered faces are precisely the extremal vertices with associated coefficients $\pm1$.
\end{corollary}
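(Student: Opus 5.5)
We argue from the identity \eqref{eq:HFL-recovers-Alex}, using that for alternating links $\HFL$ is \emph{thin}. By \cite[Theorem 1.3]{Ozsvth2008}, for alternating $L$ each $\HFL(L,h)$ is supported in a single Maslov grading, determined by $h$ and the signature. Hence
\[
\rank \HFL(L,h) = \bigl|\chi(\HFL_*(L,h))\bigr| = \bigl|\text{coefficient of }\mathbf{x}^h\text{ in } \textstyle\prod_i(x_i^{1/2}-x_i^{-1/2})\,\Delta_L(\mathbf{x})\bigr|.
\]
In particular, no cancellation occurs in the Euler characteristic. The support of $\HFL$ therefore coincides with the support of $\prod_i(x_i^{1/2}-x_i^{-1/2})\Delta_L$, so $\HFLpolytope$ is the Newton polytope of this product. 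Newton polytopes are multiplicative under products, so
\[
\HFLpolytope=\newtonalex+\tfrac12 Q .
\]
This assumes $L$ has no trivial (split unknot) components; a split alternating link has $\Delta_L=0$ and is handled separately or excluded.

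Next we compare vertices. Combining the previous identity with \cref{thm:HFL-poly-Thurston}, $B_T^\vee+Q=2\HFLpolytope=2\newtonalex+Q$. Cancellation of Minkowski summands for convex bodies, via support functions, then gives
\[
B_T^\vee=2\newtonalex .
\]
Consequently every extremal vertex $v$ of $B_T^\vee$ equals $2w$ for a unique vertex $w$ of $\newtonalex$. The vertices of $\HFLpolytope$ lying in $s(v)=\tfrac12(v+Q)$ are exactly $w+\tfrac12\varepsilon$, where $\varepsilon$ runs over the vertices $(\pm1,\dots,\pm1)$ of $Q$: a generic functional exposing $v$ and a given $\varepsilon$ exposes the sum.

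The key computation is the coefficient at such a vertex. The coefficient of $\mathbf{x}^{w+\varepsilon/2}$ in $\prod_i(x_i^{1/2}-x_i^{-1/2})\Delta_L$ is $\pm a_w$, where $a_w$ is the coefficient of $\mathbf{x}^w$ in $\Delta_L$. This holds because an extreme point of a Minkowski sum decomposes uniquely as a sum of extreme points of the summands, so only one pair of monomials contributes. By thinness, $\rank\HFL(L,w+\varepsilon/2)=|a_w|$.

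We conclude with \cref{thm:HFL-fibered-classes}: $v=2w$ is fibered if and only if some vertex $h\in s(v)$ has rank one, which holds if and only if $|a_w|=1$. Thus the vertices of $\newtonalex$ associated to fibered faces are precisely the extremal vertices with coefficient $\pm1$.

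The main obstacle is the step $B_T^\vee=2\newtonalex$. The cancellation itself is standard, but it requires that \emph{every} vertex of $\newtonalex$ be matched by the polytope identity. We also must make sure that extremal vertices of $\newtonalex$ with coefficient $\pm1$ are genuine vertices of $B_T^\vee$ and not merely points on lower-dimensional faces. If full-dimensionality fails, we instead argue via exposing functionals. Any extremal vertex $w$ of $\newtonalex$ is uniquely exposed by an open cone of $\phi$. For such $\phi$, McMullen's \cref{thm:Thurston-Alexander-norms} combined with the identity above gives $\|\phi\|_A=\thurston{\phi}$. This identifies the dual face, after which the rank-one criterion applies as before.
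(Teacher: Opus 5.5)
Your proposal is correct and is the paper's argument made explicit. Thinness of $\HFL$ for alternating links \cite[Theorem 1.3]{Ozsvth2008} identifies ranks with absolute values of coefficients of $\prod_i(x_i^{1/2}-x_i^{-1/2})\Delta_L$, and \cref{thm:HFL-fibered-classes} then selects the vertices with coefficient $\pm1$; your identities $\HFLpolytope=\newtonalex+\tfrac12 Q$ and $B_T^\vee=2\newtonalex$ supply the bookkeeping the paper leaves implicit (Minkowski cancellation via support functions needs no full-dimensionality, so your closing worry is moot). One correction, which does not affect the conclusion: $w+\tfrac12\varepsilon$ is a vertex of $\HFLpolytope$ only when the open orthant of $\varepsilon$ meets the open cone of functionals uniquely maximized at $w$ (for $T(2,4)^*$ with $w=(\tfrac12,-\tfrac12)$, the point $w+\tfrac12(-1,1)=(0,0)$ is not a vertex and has rank $2$), and for the direction fibered $\Rightarrow$ coefficient $\pm1$ you should also check that any vertex $w'+\tfrac12\varepsilon$ of $\HFLpolytope$ lying in $s(v)$ has $w'=w$, since otherwise a functional uniquely maximized there gives $\inner{\phi}{w'-w}>0\geq\inner{\phi}{q-\varepsilon}$, contradicting $w'-w=\tfrac12(q-\varepsilon)$ for the $q\in Q$ with $w'+\tfrac12\varepsilon=w+\tfrac12 q$.
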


\subsection{Random walk models induced by state sum models}\label{sec:random-walks-from-state-sums}

Let $\mathrm{R}=\mathrm{R}(\alpha,\beta,\gamma):V\otimes V\rightarrow V\otimes V$ be an endomorphism of $V\otimes V$ depending on three independent parameters $(\alpha,\beta,\gamma)$, where $V$ is a graded vector space over $\C$:
\begin{equation*}
    V=\bigoplus_{i\geq 0} \C v_i, \hspace{1.5em} \deg{(v_i)}=i~.
\end{equation*}
The tensor product $V^{\otimes m}$ is graded by the total degree $\deg{(v_{i_1}\otimes \cdots\otimes v_{i_m})}=i_1+\cdots+i_m$.

The morphism $\mathrm{R}$ is uniquely determined by its entries $\mathrm{R}_{i,j}^{i',j'}$, where
\begin{equation}\label{eq:R-morphism-into-matrix-elements}
    \mathrm{R} (v_i\otimes v_j) = \sum_{i',j'\geq 0} \mathrm{R}_{i,j}^{i',j'} (v_{i'}\otimes v_{j'}).
\end{equation}
Assume that these entries take the form
\begin{equation}\label{eq:R-entries}
    \mathrm{R}_{i,j}^{i',j'}= \mathrm{R}_{i,j}^{i',j'}(\alpha,\beta,\gamma)= \delta_{i+j,i'+j'} \binom{i}{i-j'} \alpha^{j}\beta^{j'}\gamma^{i-j'}~.
\end{equation}
Then, $\mathrm{R}$ preserves the grading of $V\otimes V$.

Identifying $v_i$ with the monomial $z^i$, we can identify $V\cong \C[z]$ and $V^{\otimes m}\cong \C[z_1,\dots,z_m]$ for every $m\geq 0$. Under this identification, $\mathrm{R}$ induces an algebra morphism, since

\begin{equation}\label{eq:R-algebra-morphism}
    \mathrm{R}(z_1^iz_2^j) = \left(\gamma z_1+\beta z_2\right)^i\left(\alpha z_1\right)^j=\left(\mathrm{R}(z_1)\right)^i\left(\mathrm{R}(z_2)\right)^j~.
\end{equation}

Given a braid let $m$ denote its number of strands and $n$ its number of crossings. A state on the braid diagram is an assignment of a basis vector $v_i\in V$, or equivalently a nonnegative integer $i$, to each segment of the braid. At a crossing, let $i,j$ denote the labels assigned by the state on the incoming strands and $i',j'$ the labels on the outgoing strands (\cref{fig:ouo+u+}). For every crossing, with associated labels $\{i,j,i',j'\}$, $\mathrm{R}_{i,j}^{i',j'}$ is the contribution of the crossing.

Throughout the section, we work with the partial closure obtained by identifying the incoming and outgoing ends of every strand, except the first, and we require all states to assign $0$ to the incoming and outgoing segments of the first strand. The state sum associated to the partial closure, denoted by $\mathrm{Z}(\boldsymbol{\alpha},\boldsymbol{\beta},\boldsymbol{\gamma})$, is given by
\begin{equation}\label{eq:state-sum-Z}
    \mathrm{Z}(\boldsymbol{\alpha},\boldsymbol{\beta},\boldsymbol{\gamma}) = \sum_{s} \prod_{c} \mathrm{R}^{i_c',j_c'}_{i_c,j_c}(\alpha_c,\beta_c,\gamma_c)~.
\end{equation}
Here, the sum ranges over all states that are compatible with the identifications imposed by the partial braid closure, and $c$ ranges over the crossings of the diagram. The $\mathrm{R}$--morphisms are evaluated at different parameters $(\alpha_c,\beta_c,\gamma_c)$ for every crossing. We use the notation $\boldsymbol{\alpha}$ for the set of the $n$ $\alpha_c$--variables, and similarly for the rest of the parameters. The infinite sum is understood as a formal power series in $\boldsymbol{\alpha},\boldsymbol{\beta},\boldsymbol{\gamma}$.

Given a braid, and a labeling of the segments of its partial closure, we construct a $2n+1$ square matrix $\mathcal{A}$ from $\mathrm{R}$ in the following way. For every set of labels $\{o,u,o^+,u^+\}$ associated to the adjacent segments of a crossing, as in \cref{fig:ouo+u+}, we have
\begin{equation}\label{eq:transition-matrix-from-R}
    \mathcal{A}_{o,o^+} = \mathrm{R}_{1,0}^{0,1}, \hspace{1em} \mathcal{A}_{o,u^+} = \mathrm{R}_{1,0}^{1,0}, \hspace{1em} \mathcal{A}_{u,o^+} = \mathrm{R}_{0,1}^{0,1}, \hspace{1em} \mathcal{A}_{u,u^+} = \mathrm{R}_{0,1}^{1,0}
\end{equation}
and any other pair $(k,r)\in\{o,u,o^+,u^+\}^2$ satisfies $\mathcal{A}_{k,r}=0$.

A level of the braid is a regular horizontal (resp. vertical) slice containing no crossing, when the braid is read from bottom to top (resp. from left to right). Each level intersects the braid in exactly $m$ segments. Since $\mathrm{R}$ preserves the total degree, the sum of the state values assigned to the $m$ strands at any level of the braid is fixed. We call this value the total degree of a state $s$.

The matrix $\mathcal{A}\in M_{2n+1}(\Z[\boldsymbol{\alpha},\boldsymbol{\beta},\boldsymbol{\gamma}])$ satisfies the conditions of \cref{sec:random-walks}, and therefore induces a random walk model on the braid closure.

Denote by $\mathcal{P}$ the set of all multicycles. Then, we have that
\begin{equation}\label{eq:Z-inverse-det}
    \mathrm{Z}(\boldsymbol{\alpha},\boldsymbol{\beta},\boldsymbol{\gamma}) = \sum_{c\in\mathcal{P}} \weight{c} = \frac{1}{\det{(I-\mathcal{A})}}~.
\end{equation}
The first equality follows from the map induced between the set of multicycles $\mathcal{P}$ and the set of states: For every multicycle, we assign the state which records the number of times the multicycle passes through every segment. Conversely, different multicycles may determine the same state, since the state records only the occupation number of each segment and forgets the ordering information of the multicycle. Note that this map turns into a one-to-one correspondence if we restrict to the set of primitive cycles and the set of states of total degree $1$. The second equality, which we understand as elements of $\Z[[\boldsymbol{\alpha},\boldsymbol{\beta},\boldsymbol{\gamma}]]$, follows from work of Foata--Zeilberger\footnote{See \cite[Section 2.3]{GiscardRochet2017} for an exposition in the case of \emph{hikes} on graphs. The directed graph has as vertices the segments of the braid, and as edges the allowed jumps between segments, determined by the morphism $\mathrm{R}$. While \cite{LinWang2001} first applied the Foata--Zeilberger formula to a state sum model on a braid, it was noticed in \cite[Remark 2.5]{Boninger2025} that, as stated, their formula is incorrect, and \cite{GiscardRochet2017} provides the right formulation. The transition matrix $\mathcal{A}$ is there denoted by $\mathrm{G}$.} \cite{FoataZeilberger1999}.

\begin{remark} More generally, we could have considered entries of the form
\begin{equation}
    \mathrm{R}_{i,j}^{i',j'}(\alpha,\beta,\gamma,\zeta)=\sum_{k=0}^i\sum_{r=0}^j \delta_{k+j-r,i'}\delta_{i-k+r,j'} \binom{i}{k}\binom{j}{r}\gamma^k\beta^{i-k}\alpha^{j-r}\zeta^{r},
\end{equation}
which satisfy $\mathrm{R}(z_1^iz_2^j) = \left(\gamma z_1+\beta z_2\right)^i\left(\alpha z_1+\zeta z_2\right)^j=\left(\mathrm{R}(z_1)\right)^i\left(\mathrm{R}(z_2)\right)^j$, and therefore also induce an algebra morphism. The difference between the induced random walk models is in what kind of jumps are allowed. For our goals, the model induced by \eqref{eq:R-entries} (also known as the ``jump-down'' model) suffices.
\end{remark}

\subsection{Inverted state sum}\label{sec:inverted-state-sum}

Park initially introduced the large-color $R$-matrix to provide a construction of the Gukov--Manolescu series for positive braid knots \cite{Park20}.
The $n$--colored Jones polynomial is obtained from finite-dimensional $U_q(\mathfrak{sl}_2)$--modules by assigning an $R$--matrix to every crossing of a braid and then taking a reduced quantum trace.
In the large-color limit, one lets $n\to\infty$ while keeping the component variable $x=q^n$ fixed.
The finite-dimensional modules are then replaced by infinite-dimensional Verma modules, and the resulting state sum has infinitely many states. As a result, one obtains a two-variable power series which recovers the Melvin--Morton--Rozansky expansion of the Introduction.

Subsequently, in \cite{Park21} Park modified the large-color $R$-matrices and introduced the inverted state sum, which allowed him to compute a bigger class of examples, including homogeneous braid knots. In this section, we review this construction.

Let $\beta\in B_n$ be a braid, oriented from bottom to top, with closure $L$.
Denote by $V_\beta$ the set of crossings and by $E_\beta$ the set of oriented segments obtained by cutting the closed braid at its crossings.
At a crossing $c\in V_\beta$, we write $i,j$ for the labels on the bottom-left and bottom-right segments, and $i',j'$ for those on the top-left and top-right segments, respectively (cf. \cref{fig:ouo+u+}).

\begin{definition}[Inversion datum]\label{def:inversion-datum-ss}
An \emph{inversion datum} on $\beta$ is a sign assignment
\begin{equation}\label{eq:inversion-datum-background}
    \iota:E_\beta\longrightarrow\{+,-\}
\end{equation}
such that, at every crossing, the signs on the adjacent segments are among
the following possibilities:
\begin{equation}\label{eq:allowed-sign-patterns}
\begin{alignedat}{2}
&\stackanchor{-+}{+-},\quad
 \stackanchor{+-}{-+},\quad
 \stackanchor{--}{--},\quad
 \stackanchor{-+}{-+},\quad
 \stackanchor{++}{++}
 &&\quad\text{for a positive crossing},\\
&\stackanchor{-+}{+-},\quad
 \stackanchor{+-}{-+},\quad
 \stackanchor{--}{--},\quad
 \stackanchor{+-}{+-},\quad
 \stackanchor{++}{++}
 &&\quad\text{for a negative crossing}.
\end{alignedat}
\end{equation}
Here the outgoing pair is displayed above the incoming pair.
\end{definition}

In particular, at every crossing the number of incoming and outgoing
$-$--segments is the same.
Consequently, the union of the $-$--segments forms an oriented simple multicycle in the braid, where the local patterns in \eqref{eq:allowed-sign-patterns} prescribe how the multicycle passes through, or jumps at, a crossing; cf. \cref{sec:random-walk-Alexander}.
We refer to this as the \emph{inversion multicycle} associated to $\iota$.

\begin{definition}[State]\label{def:state-inverted-ss}
    Fix an inversion datum $\iota$.
    A \emph{state} compatible with $\iota$ is an integer labeling
    \begin{equation*}
        s:E_\beta \longrightarrow\Z
    \end{equation*}
    such that
    \begin{equation*}
        s(e)\geq 0 \quad\text{if }\iota(e)=+,
        \qquad
        s(e)<0\quad \text{if }\iota(e)=-
    \end{equation*}
    and satisfying the conservation law
    \begin{equation}\label{eq:inverted-state-conservation}
        i+j=i'+j'
    \end{equation}
    at every crossing.
\end{definition}

There is a distinguished state determined entirely by the inversion datum.
Identifying the sign $\pm$ with $\pm 1$, set
\begin{equation}
    s_0(e) = \frac{\iota(e)-1}{2}=
    \begin{cases}
        0,&\iota=+,\\
        -1,&\iota=-
    \end{cases}~.
\end{equation}
The local condition in \eqref{eq:allowed-sign-patterns} guarantees that $s_0$ satisfies \eqref{eq:inverted-state-conservation}.
We call $s_0$ the \emph{ground state}.
The extended $R$--matrices determine which other states compatible with $\iota$ actually contribute to the state sum.

\subsubsection{Inverted $R$--matrices}\label{sec:inverted-R-matrices}

At a crossing $c$, let $i,\,j,\,i',\,j'$ denote the state labels on the bottom-left, bottom-right, top-left, and top-right segments, respectively.
Every nonzero matrix element contains the factor $\delta_{i+j,i'+j'}$, and therefore automatically enforces \eqref{eq:inverted-state-conservation}.
We use the notation
\begin{equation}
 (a;q)_k=\prod_{r=0}^{k-1}(1-aq^r),
 \qquad
 \qbinom{m}{k}_q
 =
 \frac{(q^m;q^{-1})_k}{(q;q)_k},
 \qquad k\geq0~.
\end{equation}

At a positive crossing, let $x$ be the variable associated with the overstrand and let $y$ be the variable associated with the understrand.
For $i\geq j'\geq0$ or $0>i\geq j'$, the corresponding matrix entry is
\begin{align}
 \check R(x,y)_{i,j}^{i',j'}={}&
 \delta_{i+j,i'+j'}
 q^{\frac{j+j'+1}{2}+jj'}
 x^{\frac{i'+j+1}{4}}
 y^{\frac{3j'-i+1}{4}}
 \qbinom{i}{i-j'}_q
 \prod_{r=1}^{i-j'}(1-q^{j+r}y),
 \label{eq:positive-rmatrix-a}
\end{align}
whereas, for $j'\geq0>i$,
\begin{align}
 \check R(x,y)_{i,j}^{i',j'}={}&
 \delta_{i+j,i'+j'}
 q^{\frac{j+j'+1}{2}+jj'}
 x^{\frac{i'+j+1}{4}}
 y^{\frac{3j'-i+1}{4}}
 \qbinom{i}{j'}_q
 \prod_{r=0}^{j'-i-1}(1-q^{j-r}y)^{-1}~.
 \label{eq:positive-rmatrix-b}
\end{align}
At a negative crossing, let $x$ be the variable associated to the understrand and $y$ that associated to the overstrand.
For $j\geq i'\geq 0$ or $0>j\geq i'$, we have
\begin{align}
 \check R^{-1}(x,y)_{i,j}^{i',j'}={}&
 \delta_{i+j,i'+j'}
 q^{-\frac{i+i'+1}{2}-ii'}
 x^{-\frac{3i'-j+1}{4}}
 y^{-\frac{j'+i+1}{4}}
 \qbinom{j}{j-i'}_{q^{-1}}
 \prod_{r=1}^{j-i'}(1-q^{-i-r}x^{-1}),
 \label{eq:negative-rmatrix-a}
\end{align}
while, for $i'\geq0>j$,
\begin{align}
 \check R^{-1}(x,y)_{i,j}^{i',j'}={}&
 \delta_{i+j,i'+j'}
 q^{-\frac{i+i'+1}{2}-ii'}
 x^{-\frac{3i'-j+1}{4}}
 y^{-\frac{j'+i+1}{4}}
 \qbinom{j}{i'}_{q^{-1}}
 \prod_{r=0}^{i'-j-1}(1-q^{-i+r}x^{-1})^{-1}~.
 \label{eq:negative-rmatrix-b}
\end{align}
All remaining matrix entries are zero. A state $s$ compatible with $\iota$ is called \emph{valid} if the corresponding $R$--matrix entry is nonzero at every crossing.

In the formulas above, we regard $\check{R}$ and $\check{R}^{-1}$ as elements of $\Z[[x,y]]$, by identifying any rational expression in $x$ or $y$ with its Laurent series expansion at $x=0$ or $y=0$.\footnote{Note that our formulas differ from \cite{Park21} by inverting the $x,y$ variables and shifting by an overall power of $q$.
}

\subsubsection{Inverted state sums}\label{sec:inverted-state-sums-subsubs}

Let $b_1, \ldots, b_n$ denote the bottom segments of the braid. Consider the partial closure of the braid, where we identify the bottom and top ends of every strand but the first. We refer to $b_1$ as the ``open strand''.
Write
\begin{equation*}
    \kappa(r)\in\{1,\ldots,\ell\}
\end{equation*}
for the component of $L$ containing $b_r$, so that $x_{\kappa(r)}$ denotes the associated $x$-variable. We take the convention that $\kappa(1)=1$.

Following \cite{OSSS25}, we denote by $\Omega(\iota)$ the set of valid states for which the label on the open strand is fixed to its ground-state value:
\begin{equation}\label{eq:Omega-iota}
    \Omega(\iota)
    \defeq
    \bigl\{
        s\text{ valid for }\iota
        \ \big|\
        s(b_1)=s_0(b_1)
    \bigr\}.
\end{equation}
When a continuous description is needed below, we will simply take the real relaxation $\Omega_\R(\iota)$ by extending the equations and inequalities appearing in \eqref{eq:positive-rmatrix-a}--\eqref{eq:negative-rmatrix-b} to $\R^{E_\beta}$.

For $s\in \Omega(\iota)$, the reduced quantum trace contributes the factor
\begin{equation}\label{eq:trace-factor-link}
 \mu_r(s)
 =
 x_{\kappa(r)}^{-\frac12}
 q^{-\frac12-s(b_r)},
 \qquad 2\leq r\leq n~.
\end{equation}
If $R_c(s)$ denotes the corresponding entry of \eqref{eq:positive-rmatrix-a}--\eqref{eq:negative-rmatrix-b} at the crossing $c$, define
\begin{equation}\label{eq:state-contribution-link}
 P(s)
 =
 \left(\prod_{r=2}^n\mu_r(s)\right)
 \left(\prod_{c\in V_\beta}R_c(s)\right).
\end{equation}

Let $\mathbf{s}(\iota)$ be the number of closed components in the inversion multicycle, not counting the component containing the open strand.
The inverted state sum associated to $(\beta,\iota)$ is
\begin{equation}\label{eq:multivariable-inverted-state-sum}
 Z^{\mathrm{inv}}_{(\beta,\iota)}(\mathbf{x},q)
 =
 (-1)^{\mathbf{s}(\iota)}
 \sum_{s\in\Omega(\iota)}P(s),
\end{equation}
and we set
\begin{equation}\label{eq:FL-inverted-state-sum}
    F_{(\bd,\iota)}(\mathbf{x},q)
    =
    \left( x_{1}^{\frac12} - x_{1}^{-\frac12} \right) Z^{\mathrm{inv}}_{(\beta,\iota)}(\mathbf{x},q)~.
\end{equation}
This is the multivariable form of Park's inverted state-sum construction used throughout the paper;
see \cite{Park21,FKCompute2026}.

The local admissibility conditions do not by themselves guarantee that the inverted state sum is well defined: since $\Omega(\iota)$ may be infinite, one must additionally require that only finitely many states contribute below any
fixed degree.

For each component $L_i$ of $L$, let $X_i(s)$ denote the minimal $x_i$--degree of
the summand $P_\iota(s)$.
Given $N\in\mathbb N$, define
\begin{equation}\label{eq:degree-truncation-ss}
    Q_N(\iota)
    :=
    \left\{
        s\in \Omega_{\mathbb R}(\iota)
        \ \middle|\
        \left\langle \mathbb{1},X(s)\right\rangle\leq N
    \right\}~.
\end{equation}

\begin{definition}[Nice]\label{def:nice-ss}
A pair of a braid and an inversion datum $(\beta,\iota)$ is \emph{nice} if there exists an $N\in \N$ such that
$Q_N(\iota)$ is nonempty and bounded. Equivalently, for some $N$, there are finitely many $s \in\Omega(\iota)$ for which the total minimal degree of $P(s)$ is at most $N$.

We say that a link is \emph{nice} if it admits a nice braid
presentation and inversion datum.
\end{definition}

\begin{remark}
    By \cref{cor:one-cutoff}, if $Q_{N}(\iota)$ is nonempty and bounded for some $N$, then it is bounded for all $N\in\N$. This justifies our definition of \emph{niceness}: If a pair $(\bd,\iota)$ is nice, the associated state sum can be written as
    \begin{equation*}
        F_{(\bd,\iota)}(\mathbf{x},q)=\sum_{\mathbf{k}\in \Z^\ell} g_\mathbf{k}(q) \mathbf{x}^\mathbf{k}
    \end{equation*}
    with $g_{\mathbf{k}}(q)\in\Z[q,q^{-1}]$.

\end{remark}

\subsection{Random walk model from the inverted state sum}\label{sec:random-walk-inverted-state-sum}

In this section, we review how to use the formalism from \cref{sec:random-walks-from-state-sums} to define a random walk model from the inverted state sum of \cref{sec:inverted-state-sum}, as in \cite{Park21}.

Define the parametrized inverted $R$-matrices as
\begin{equation}\label{eq:parametrized-inverted-R}
\begin{split}
    \mathrm{R}_{+,+}^{+,+}(\alpha,\beta,\gamma)_{i,j}^{i',j'}&=\delta_{i+j,i'+j'} \binom{i}{j'} \alpha^j \beta^{j'} \gamma^{i-j'}, \\
    \mathrm{R}_{+,-}^{-,+}(\alpha,\beta,\gamma)_{i,j}^{i',j'}&=\delta_{i+j,i'+j'} \binom{i}{j'} \alpha^{-(-j-1)} \beta^{j'} \gamma^{i-j'}, \\
    \mathrm{R}_{-,+}^{+,-}(\alpha,\beta,\gamma)_{i,j}^{i',j'}&=\delta_{i+j,i'+j'} \binom{-(-i-1)+i-j'}{i-j'} \alpha^{j} \beta^{-(-j'-1)} (-\gamma)^{-(-i-1)+(-j'-1)}, \\
    \mathrm{R}_{-,+}^{-,+}(\alpha,\beta,\gamma)_{i,j}^{i',j'}&=\delta_{i+j,i'+j'} \binom{j'+(-i-1)}{j'} \alpha^j (-\beta^{j'}) \gamma^{-(-i-1)-j'}, \\
    \mathrm{R}_{-,-}^{-,-}(\alpha,\beta,\gamma)_{i,j}^{i',j'}&=\delta_{i+j,i'+j'} \binom{-(-i-1)+i-j'}{i-j'} \alpha^{-(-j-1)} \beta^{-(-j'-1)} (-\gamma)^{-(-i-1)+(-j'-1)}. \\
\end{split}
\end{equation}
where $i,j,i',j'\geq 0$. The labeling convention $\{i,j,i',j'\}$ of \cref{fig:ouo+u+} is most useful when thinking about $\mathrm{R}$ as matrix elements (cf. \eqref{eq:R-morphism-into-matrix-elements}), while the convention $\{o,u,o^+,u^+\}$ of \cref{fig:ouo+u+} makes the formulas simpler when thinking of $\mathrm{R}$ as morphisms associated to crossings. Indeed, the expressions for the negative crossings are the same as above in the $\{o,u,o^+,u^+\}$ convention (which exchanges the roles of $i,j$ and $i',j'$). Therefore:
\begin{equation}
    (\mathrm{R}^{-1})_{s_1,s_2}^{s_3,s_4}(\alpha,\beta,\gamma)_{i,j}^{i',j'} = \mathrm{R}_{s_2,s_1}^{s_4,s_3}(\alpha,\beta,\gamma)_{j,i}^{j',i'}.
\end{equation}

Consider the following set of rules: at each crossing, evaluate

\begin{equation}\label{eq:rules-abg}
    \alpha = x_{\kappa(o)}^{\frac{s}{2}}, \;\beta = x_{\kappa(u)}^{\frac{s}{2}}, \;\gamma = (x_{\kappa(i)}^{-1}x_{\kappa(u)})^{-\frac{s}{4}}(1-x_{\kappa(u)}^s)
\end{equation}
where $\kappa(o)$ denotes the variable assigned to the overstrand, $\kappa(u)$ the variable assigned to the understrand, and $s$ denotes the sign of the crossing.

The matrix $\mathrm{R}_{+,+}^{+,+}(\alpha,\beta,\gamma)$ agrees with the positive parametrized matrix of the state sum model in \cref{sec:random-walks-from-state-sums}. Let $\mathcal{B}$ be the transition matrix defined using $\mathrm{R}_{+,+}^{+,+}$ and $(\mathrm{R}^{-1})_{+,+}^{+,+}$. It has entries
\begin{equation}\label{eq:B-rules}
\begin{split}
    &\mathcal{B}_{i,i^+}=\beta
    \, , \hspace{1em}
    \mathcal{B}_{i,j^+}=\gamma
    \, , \hspace{1em}
    \mathcal{B}_{j,j^+}=\alpha
    \, , \hspace{1em}
    \mathcal{B}_{j,i^+}=0
    \, , \hspace{1em}
    \mathcal{B}_{i,k}=\mathcal{B}_{j,k}=0 \quad \text{if } k\neq i^+,j^+
    \end{split}
\end{equation}
in the same convention as \eqref{eq:transition-matrix-from-R}.

Define $\rho(\beta)\in\Z^\ell$ such that
\begin{equation}\label{eq:rho-beta-def}
    \mathbf{x}^{\rho(\beta)}= \prod_{i=1}^\ell x_i^{-\frac12(-N_i+\sum_{j=1}^\ell \mathrm{lk}_\beta(L_i,L_j))}~.
\end{equation}
Then, we have the following result.
\begin{lemma}\label{lemma:B-matrix-Alexander} $\det{\left(I-\mathcal{B}\right)}= (1-x_1) \mathbf{x}^{-\rho(\beta)}\Delta_L(\mathbf{x})$ under \eqref{eq:rules-abg}.
\end{lemma}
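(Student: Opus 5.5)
The plan is to reduce the statement to the Gassner-matrix formula of \cref{rmk:Alexander-open-strand}, which gives
\[
\det\bigl(I_{2n+1}-\tilde A\bigr)=(-1)^\ell\,(1-x_1)\,\mathbf{x}^{-\rho(\beta)}\,\Delta_{\mathrm{cl}(\bd)}(\mathbf{x}),
\]
since the monomial prefactor there is exactly $\mathbf{x}^{\rho(\beta)}$ by \eqref{eq:rho-beta-def}. It then suffices to compare $\det(I-\mathcal B)$ with $\det(I-\tilde A)$. I would do this through the multicycle expansion \eqref{eq:transition-matrix-det}. Both determinants are signed sums over simple multicycles of the respective random walk models.

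\textbf{Step 1: the supports agree.} By \eqref{eq:B-rules} and \eqref{eq:A-rules}, at every crossing the nonzero entries of $\mathcal B$ and of $\tilde A$ sit in the same positions: pass along the overstrand, pass along the understrand, or jump from the overstrand to the understrand. This holds after translating between the $\{i,j,i',j'\}$ and $\{o,u,o^+,u^+\}$ labelings of \cref{fig:ouo+u+}, which swap roles at negative crossings. Consequently $\mathcal Q$, and the number of closed components $|c|$ of each $c\in\mathcal Q$, are the same for both models. The only difference is the weight.

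\textbf{Step 2: a gauge transformation.} I would look for a potential $f:E_\beta\to\{\text{monomials in }x_i^{\pm 1/4}\}$ and a sign $\epsilon$ such that, entrywise,
\[
\mathcal B_{r,k}=\epsilon\,\frac{f(k)}{f(r)}\,\tilde A_{r,k}.
\]
Then $\mathcal B=\epsilon\, D\tilde A D^{-1}$ with $D=\mathrm{diag}(f)^{-1}$, and the weight of every cycle picks up only $\epsilon^{\text{length}}$, because the factors $f(k)/f(r)$ telescope. I would make $f$ concrete by fixing it on the bottom segments, using the half-power data from the trace factors \eqref{eq:trace-factor-link}, and propagating it upward crossing by crossing.

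At a positive crossing the requirements are:
\begin{itemize}
\item the pass-through weights $x_u^{1/2}$ versus $x_u$ and $x_o^{1/2}$ versus $1$ fix the ratios $f(o^+)/f(o)$ and $f(u^+)/f(u)$;
\item the jump weight $\gamma=(x_o^{-1}x_u)^{-1/4}(1-x_u)$ versus $1-x_o$ must then be consistent with those ratios.
\end{itemize}
Negative crossings are handled the same way using the expressions in \eqref{eq:A-rules}.

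\textbf{Step 3: the residual global monomial and sign.} A purely local gauge may not close up around the braid closure, because going once around a component accumulates a power of $x_i$. If so, the discrepancy is a global monomial factor which, by construction, depends only on $N_i$ and $\mathrm{lk}_\beta(L_i,L_j)$. It therefore combines with the $\mathbf{x}^{\rho(\beta)}$ prefactor. The remaining sign $(-1)^\ell$, together with any swap between the $x$ and $x^{-1}$ conventions (upper versus lower Wirtinger, that is, mirror), I would absorb using \cref{prop:Alex-properties}(3) and the symmetry \eqref{eq:symmetry-Alex}. As a sanity check, I would compute both sides on the Hopf links and on $T(2,4)^*$, where the data are already listed in the worked examples.

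\textbf{Main obstacle.} I expect the difficulty to lie entirely in Step 2 and Step 3: checking that the jump entry $\gamma$ is gauge-compatible with the two pass-through entries at both crossing signs, with the stated quarter-powers, and then tracking the leftover monomial and sign exactly. If a consistent $f$ fails to exist, the fallback is to compare weights cycle by cycle. I would show that for each simple multicycle $c$ the ratio $W_{\mathcal B}(c)/W_{\tilde A}(c)$ equals $\mathbf{x}^{\langle\cdot\rangle}$ computed from the winding of $c$. Then, using that every cycle in $\mathcal Q$ winds each component consistently, this ratio would be shown to be independent of $c$.
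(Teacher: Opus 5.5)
\textbf{The gauge in Step 2 cannot exist.} With a monomial-valued $f$, every ratio $\mathcal B_{r,k}/\tilde A_{r,k}$ would have to be $\pm$ a monomial. At a jump, however, $\mathcal B_{o,u^+}\propto 1-x_{\kappa(u)}^{s}$ while $\tilde A_{o,u^+}\propto 1-x_{\kappa(o)}^{s}$. You wrote both down, but at every crossing between two different components their quotient is not a monomial.

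Your cycle-by-cycle fallback has the same hole. You assert that $W_{\mathcal B}(c)/W_{\tilde A}(c)$ is a monomial, but give no reason why the non-monomial factors should cancel.

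This mismatch is exactly what the paper's proof flags, and it is why the paper never compares $\mathcal B$ with the Gassner matrix of $\beta$ itself. It instead passes to the reversed braid $\beta^{\mathrm r}$ and transposes its Fox matrix, so that each jump carries the variable of the strand jumped \emph{onto}. A monomial diagonal conjugation then suffices. The unit is fixed by $\Delta_{L^{\mathrm{rev}}}=(-1)^\ell\Delta_L$ together with Rozansky's normalization.

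Step 3 also could not rescue a gauge that fails to close up. Such a holonomy multiplies each cycle by a factor that depends on how often the cycle crosses the closure. That factor does not come out of the determinant as a single unit, since the empty multicycle contributes $1$ on both sides.

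\textbf{How your direct route can be repaired.} The missing observation is this: along a closed cycle, the component you jump onto at one jump is the component you jump off at the next. So the factors $1-x_{\kappa(\cdot)}$ match cyclically. Concretely, allow the non-monomial gauge
\[
\Phi(e)=(1-x_{\kappa(e)})\,\Psi(e),\qquad
\Psi(e)=\prod_{q<p(e)}x_{\kappa(q)}^{-1/4}\prod_{q>p(e)}x_{\kappa(q)}^{1/4}.
\]
Here $p(e)$ is the position of $e$, and the products run over the strands to its left and to its right on any level met by $e$.

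The first factor is constant along components. So it leaves pass-through ratios unchanged and converts $1-x_{\kappa(o)}$ into $1-x_{\kappa(u)}$ at each jump.

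The remaining monomial ratios at a crossing of sign $s$ are $x_{\kappa(u)}^{-s/2}$ (over-pass), $x_{\kappa(o)}^{s/2}$ (under-pass) and $(x_{\kappa(o)}/x_{\kappa(u)})^{s/4}$ (jump). For both signs these are exactly $\Psi(k)/\Psi(r)$. Moreover $\Psi$ is well defined on segments and compatible with the closure.

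Hence $\mathcal B=\mathrm{diag}(\Phi)^{-1}\tilde A\,\mathrm{diag}(\Phi)$ over the fraction field, and $\det(I-\mathcal B)=\det(I-\tilde A)$ on the nose. There is no residual monomial, so Step 3 disappears, and the statement follows from \cref{rmk:Alexander-open-strand}.

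The sign $(-1)^\ell$ in that remark is a normalization discrepancy with the lemma, invisible for even $\ell$. It cannot be absorbed via the mirror formula as you propose, since no mirror enters. Compared with the paper's argument, this route avoids both the orientation reversal and the appeal to an external unit computation.
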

\begin{proof}
To prove the result, it suffices to find an equality between the left-hand side and the determinant described in \cref{sec:Gassner,sec:random-walk-Alexander}.

It is immediate, after comparing the formulas \eqref{eq:A-rules} and \eqref{eq:rules-abg}--\eqref{eq:B-rules}, that the entry $(o,u^+)$ has a factor of either $1-x_{\phi(i)}^s$ or $1-x_{\phi(u)}^s$, depending on the model. Therefore, we will compare the random walk defined by $\mathcal{B}$ with a slightly different model than the one in \cref{sec:random-walk-Alexander}, which has the correct polynomial factor and will turn out to be equivalent.

Define the reversed braid $\bd_L^\mathrm{r}$ as the image of $\bd_L$ under the composition of the endomorphisms of the braid group $B_m$ given by $\sigma_i \mapsto \sigma_{m-i}$ and by $\prod_{i=1}^n \sigma_{g(i)}^{s(i)} \mapsto \prod_{i=1}^n \sigma_{g(n-i)}^{s(n-i)}$ for any $g:\{1,\dots,n\}\rightarrow \{1,\dots,n\}$ and $s:\{1,\dots,t\}\rightarrow \{\pm1\}$. Equivalently, $\bd_L^{\mathrm{r}}$ can be drawn as $\bd_L$ with all its strands pointing in the opposite direction, which corresponds to a $180$--degree rotation in the plane of the braid.

Let $\mathcal{A}$ be the transition matrix defined from the lower Wirtinger presentation of $\bd^{\mathrm{r}}$ (cf. \cref{sec:Gassner} for the upper Wirtinger presentation). The simple cycles associated with $\mathcal A$ and described in \cref{sec:random-walk-Alexander} are the same, up to orientation reversal. Since the closure of $\bd^{\mathrm{r}}$ is the reverse link of the closure of $\bd$, by \cref{prop:Alex-properties}, $\det{\left(I-\mathcal{A}\right)}$ agrees with $(1-x_1)\Delta_L(\mathbf{x})$, up to multiplication by the unit $\mathbf{x}^{-\rho(\beta)}$.

Consider the transpose of the $\mathcal{A}$-model. The variables associated to the ``jumps'' between overstrand and understrand are the same at every crossing for the transposed model and the $\mathcal{B}$-model. More concretely, the only difference between the weights of both models is given by monomials of non-zero weight. Therefore, there is an invertible matrix $U$ such that

\begin{equation*}
    \det{\left(I-\mathcal{B}\right)}=\det{\left(U^{-1}\left(I-\mathcal{A}\right)U\right)^T} = \det{\left(I-\mathcal{A}\right)}.
\end{equation*}
We conclude that the result agrees with $\Delta_L(\mathbf{x})$ up to the same units as $\det{\left(I-\mathcal{A}\right)}$. The units needed are worked out for the lower Wirtinger presentation model of the reversed braid in \cite{Rozansky98}, and agree with \eqref{eq:Alexander-transition-matrix} up to the sign $(-1)^\ell$.
\end{proof}

In \cite{Park21}, given an inversion datum $\iota$, Park considers a random walk model induced by the parametrized inverted $R$-matrices of \eqref{eq:parametrized-inverted-R}. At every crossing, $\iota$ determines which formula of \eqref{eq:parametrized-inverted-R} we use, as in \cref{sec:inverted-state-sum}. The transition matrix of this model is denoted by $\mathcal{B}^\mathrm{inv}$.

The inversion datum $\iota$ is equivalent to a simple multicycle in the random walk model induced by $\mathcal{B}$, for which jumps are only allowed from $o$ to $o^+$, $u$ to $u^+$ or $o$ to $u^+$ (also known as a ``jump-down'' model). The associated cycle $c_\iota$ is constructed by connecting the segments labeled with a $-$ sign: for a segment $e\in E_\bd$,
\begin{equation}
    e\in c_\iota \Leftrightarrow \iota(e)=-1.
\end{equation}
See, for example, \cref{fig:L7-plus-cycle}.

Upon inversion, there is a one-to-one correspondence between the cycles of non-zero weight in the random walk models induced by $\mathcal{B}$ and by $\mathcal{B}^{\textrm{inv}}$ (cf.  \cref{sec:random-walks,sec:random-walks-from-state-sums}). We denote these sets by\footnote{Here, $\mathcal{Q}$ corresponds to the set of simple multicycles that go through the open strand if $c_\iota$ does, or that do not go through the open strand if $c_\iota$ does not. The result gives the same invariant, up to overall sign, by an argument similar to \cref{sec:random-walk-Alexander}.} $\mathcal{Q}$ and $\mathcal{Q}(\iota)$, respectively. For more details on this correspondence, see \cite{Park21}. In particular, the simple multicycle $c_\iota$ in the $\mathcal{B}$-model corresponds to the empty cycle $\emptyset$ in the $\mathcal{B}^\textrm{inv}$-model.

\begin{lemma}\label{lemma:B-matrix-inverted-Alexander} $(-1)^\mathbf{s}\weight{c_{\iota}}\det{\left(I-\mathcal{B}^\mathrm{inv}\right)} = (1-x_1) \mathbf{x}^{-\rho(\beta)} \Delta_L(\mathbf{x})$ under \eqref{eq:rules-abg}.
\end{lemma}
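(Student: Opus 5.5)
The plan is to reduce the statement to \cref{lemma:B-matrix-Alexander}. The idea is to compare, entry by entry, the two transition matrices $\mathcal{B}$ and $\mathcal{B}^{\mathrm{inv}}$, and to show that their determinants differ exactly by the factor $(-1)^{\mathbf{s}}\weight{c_\iota}$. Both determinants expand as signed sums over simple multicycles, as in \eqref{eq:transition-matrix-det}:
\begin{equation*}
\det(I-\mathcal{B})=\sum_{c\in\mathcal{Q}}(-1)^{|c|}\weight{c},\qquad \det(I-\mathcal{B}^{\mathrm{inv}})=\sum_{c'\in\mathcal{Q}(\iota)}(-1)^{|c'|}\weight{c'}^{\mathrm{inv}}.
\end{equation*}
Here $\mathcal{Q}(\iota)$ and $\mathcal{Q}$ are in the bijection recalled before the statement, and $c_\iota$ corresponds to $\emptyset$. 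The natural candidate for this bijection is the symmetric difference, $c\mapsto c\,\triangle\, c_\iota$, taken segment-wise: a $-$ segment is occupied in the inverted model iff it is unoccupied in $\mathcal{B}$. So the task is to show, for each $c\in\mathcal{Q}$,
\begin{equation*}
(-1)^{|c|}\weight{c} = (-1)^{\mathbf{s}}\weight{c_\iota}\,(-1)^{|c'|}\weight{c'}^{\mathrm{inv}}, \qquad c'=c\triangle c_\iota .
\end{equation*}
Summing this over $\mathcal{Q}$ and applying \cref{lemma:B-matrix-Alexander} then gives the claim.

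First I would read off the entries of $\mathcal{B}^{\mathrm{inv}}$ from \eqref{eq:parametrized-inverted-R}. I will do this for each local sign pattern in \eqref{eq:allowed-sign-patterns}, extracting the degree-one transitions as in \eqref{eq:transition-matrix-from-R}, where a $-$ label $-k-1$ plays the role of occupation $k$. For instance, for $\mathrm{R}_{+,-}^{-,+}$ the factor $\alpha^{-(-j-1)}$ shows that a step along an inverted segment carries weight $\alpha^{-1}$ relative to the ground state. Similarly, $\mathrm{R}_{-,+}^{+,-}$ and $\mathrm{R}_{-,-}^{-,-}$ produce $(-\gamma)^{-1}$ and $\beta^{-1}$ type weights, times the ground contributions $\alpha$, $\beta$ or $-\gamma$. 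The pattern $\mathrm{R}_{-,+}^{-,+}$ has the extra sign $-\beta$.

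The resulting local statement should be: at each crossing, the contribution of $c$ equals the contribution of $c_\iota$ times the inverted contribution of $c'$. The reason is that removing an edge of $c_\iota$ divides by its weight, while adding a non-$c_\iota$ edge multiplies by the $\mathcal{B}$-weight. Multiplying over crossings gives the identity of weights up to sign.

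The main obstacle is the sign bookkeeping. I have to check that the $(-1)$ factors inside the inverted entries, together with $(-1)^{|c|}$ and $(-1)^{|c'|}$, combine to the global constant $(-1)^{\mathbf{s}}$, independently of $c$. To do this I would rewrite both determinants as signed permutation sums. Then $\sigma_{c'}=\sigma_c\circ\sigma_{c_\iota}^{-1}$ on the relevant segments, and $\sgn$ is multiplicative, so $\sgn\sigma_{c}=\sgn\sigma_{c'}\sgn\sigma_{c_\iota}$. Since $\det(I-M)$ carries a sign $(-1)$ per occupied segment times $\sgn\sigma$, the count of occupied segments changes by $|c_\iota|$ edges modulo the local minus signs. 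The explicit minus signs in \eqref{eq:parametrized-inverted-R} should be exactly what compensates this segment-count parity.

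What remains is $\sgn\sigma_{c_\iota}$ combined with the per-edge signs, which gives $(-1)^{\text{number of closed components of }c_\iota}$. The convention of excluding the component through the open strand (the footnote on $\mathcal{Q}$) accounts for replacing this number by $\mathbf{s}(\iota)$. The open-strand case is handled as in \cref{sec:random-walk-Alexander}, which flips an overall sign consistently.
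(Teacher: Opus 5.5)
Your outline matches the paper's proof. You reduce to \cref{lemma:B-matrix-Alexander}, expand both determinants over simple multicycles, and match terms through the inversion correspondence using the weight relation \eqref{eq:Winv-rel-W}. The paper does not prove that relation; it takes it from \cite{Park21}. Where you try to derive it yourself, the argument has a genuine gap at exactly the step you flag as the main obstacle, for two reasons.

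First, $c\mapsto c\triangle c_\iota$ is injective but not onto the nonzero-weight simple multicycles of $\mathcal{B}^{\mathrm{inv}}$. Consider a crossing where $c_\iota$ jumps down $o\to u^+$. There, the single-walker entries of the inverted $R$-matrix (normalized by the ground state) give four moves:
$u\to o^+$ with weight $-\alpha\beta/\gamma$,
$u\to o$ with weight $\alpha/\gamma$,
$u^+\to o^+$ with weight $-\beta/\gamma$,
$u^+\to o$ with weight $1/\gamma$.
The move $u\to o^+$ is forbidden in $\mathcal{B}$. So inverted multicycles that occupy all four segments of such a crossing have no preimage under your map. They drop out of $\det(I-\mathcal{B}^{\mathrm{inv}})$ only because this $2\times 2$ block has determinant zero. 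Concretely, the two ways of routing through the crossing have equal weight, and their cycle counts have opposite parity. You therefore need a sign-reversing involution on top of the injection.

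Second, $\sigma_{c'}\neq\sigma_c\circ\sigma_{c_\iota}^{-1}$, because the composite passes through segments of $c\cap c_\iota$, which are absent from $c'$. The conclusion $\operatorname{sgn}\sigma_c=\operatorname{sgn}\sigma_{c'}\operatorname{sgn}\sigma_{c_\iota}$ is also false. Here is a counterexample in the closed braid $\sigma_1^2$ (the open strand plays no role in this issue).
Let $s_1,s_2$ be the bottom-left and bottom-right segments of the first crossing, and $s_3,s_4$ its top-left and top-right segments.
Let $c_\iota=(s_1\,s_3)$, which jumps down at both crossings.
Let $c=(s_1\,s_4)$, which runs along the overstrand at the first crossing and the understrand at the second.
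Then $c'$ is the inverted $2$-cycle $s_3\to s_4\to s_3$, of weight $(-\beta_1/\gamma_1)(\alpha_2/\gamma_2)$. Meanwhile $\sigma_c\circ\sigma_{c_\iota}^{-1}=(s_1\,s_3\,s_4)$. All three of $\sigma_c,\sigma_{c_\iota},\sigma_{c'}$ are transpositions, so the claimed sign identity fails.

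The term-wise identity you want does still hold in this example. However, its sign comes from the explicit entry $-\beta_1/\gamma_1$, not from multiplicativity of $\operatorname{sgn}$. Your last two paragraphs assert, rather than check, that such local signs combine to $(-1)^{\mathbf{s}}$.

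To close the gap, either cite \eqref{eq:Winv-rel-W} from \cite{Park21} as the paper does, or carry out an explicit count. That count must weigh the local $(-\beta)$ and $(-\gamma)$ signs against the change in the number of cycles along alternating $c$/$c_\iota$ paths, together with the involution above.
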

\begin{proof}

By the lemma above, it suffices to show that $(-1)^\mathbf{s}\weight{c_{\iota}}\det{\left(I-\mathcal{B}^\mathrm{inv}\right)} = \det{\left(I-\mathcal{B}\right)}$ for arbitrary $\boldsymbol{\alpha},\boldsymbol{\beta},\boldsymbol{\gamma}$. This follows from the computation in terms of simple multicycles of \cref{sec:random-walks}, and the relation
\begin{equation}\label{eq:Winv-rel-W}
    \weightinv{\tilde{c}}=(-1)^{-\mathbf{s}(\tilde{c})+\mathbf{s}(c)-\mathbf{s}(c_\iota)}\frac{\weight{c}}{\weight{c_{\iota}}}
\end{equation}
for every simple multicycle $\tilde{c}\in\mathcal{Q}(\iota)$ and its associated simple multicycle $c\in\mathcal{Q}$, proven in \cite{Park21}. Here, $\weight{c}$ denotes the weight induced by the transition matrix $\mathcal{B}$, and $\weightinv{\tilde
c}$ denotes the weight induced by $\mathcal{B}^\mathrm{inv}$.
\end{proof}

Fix an inversion datum $\iota$; let $\mathrm{Z^{\mathrm{inv}}}(\boldsymbol{\alpha},\boldsymbol{\beta},\boldsymbol{\gamma})$ denote the state sum computed using the parametrized inverted R-matrices \eqref{eq:parametrized-inverted-R}, normalized by $\weight{c_\iota}^{-1}$. Denote by $\mathrm{Z^{\mathrm{inv}}}(\mathbf{x})$ the result of evaluating the state sum under \eqref{eq:rules-abg}.

Note that, under \eqref{eq:rules-abg}, the parametrized inverted $\mathrm{R}$-matrices \eqref{eq:parametrized-inverted-R} and the inverted $\check{R}$-matrices \eqref{eq:positive-rmatrix-a}--\eqref{eq:negative-rmatrix-b} differ from each other by a factor of $(x_{\kappa(u)}x_{\kappa(o)})^{\frac{s}{4}}$, where $s$ denotes the sign of the crossing. Moreover, taking into account the extra factor \eqref{eq:trace-factor-link} that appears in the inverted state sum $Z^{\mathrm{inv}}_{(\beta,\iota)}(\mathbf{x},q)$, we obtain the relation
\begin{equation}\label{eq:parametrized-inverted-state-sum-vs-usual}
\begin{split}
    Z^{\mathrm{inv}}_{(\beta,\iota)}(\mathbf{x},1) &= \left(\prod_{r=2}^\ell x_{\kappa(r)}^{-\frac12}\right)\left(\prod_{i=1}^\ell x_i^{\frac{1}{2}\sum_{j=1}^\ell\mathrm{lk}_{\beta}(L_i,L_j)}\right)\mathrm{Z^{\mathrm{inv}}}(\mathbf{x}) \\
    &= \left(x_1^{-\frac{N_1-1}{2}}\prod_{i=2}^\ell x_{i}^{-\frac{N_i}{2}}\right)\left(\prod_{i=1}^\ell x_i^{\frac{1}{2}\sum_{j=1}^\ell\mathrm{lk}_{\beta}(L_i,L_j)}\right)\mathrm{Z^{\mathrm{inv}}}(\mathbf{x})~,
\end{split}
\end{equation}
where the left-hand side corresponds to the inverted state sum of \cref{sec:inverted-state-sum} evaluated at $q=1$.

After normalization, we obtain
\begin{equation}\label{eq:FL-parametrized-state-sum}
    F_{(\bd,\iota)}(\mathbf{x},1)=\left(x_1^{\frac12}-x_1^{-\frac{1}{2}}\right) Z_{(\bd,\iota)}^{\mathrm{inv}}(\mathbf{x},1) = -(1-x_1) \mathbf{x}^{-\rho(\beta)} \mathrm{Z^{\mathrm{inv}}}(\mathbf{x})
\end{equation}

On the other hand, we have that
\begin{equation}
    \mathrm{Z^{\mathrm{inv}}}(\boldsymbol{\alpha},\boldsymbol{\beta},\boldsymbol{\gamma}) = \frac{1}{\weight{c_{\iota}}}\sum_{\tilde{c}\in \mathcal{P}(\iota)}  \weightinv{\tilde{c}}~,
\end{equation}
where $\mathcal{P}(\iota)$ denotes the set of multicycles in the random walk model induced by $\mathcal{B}^{\textrm{inv}}$ and $\iota$. In particular, we have a map between the set of multicycles $\mathcal{P}(\iota)$ and the set of states $s\in \Omega(\iota)$ (see \cref{sec:random-walks-from-state-sums}). Moreover, as shown in \cite{Park21},
\begin{equation}\label{eq:Zinv-and-inverse-det-Binv}
    \mathrm{Z^{\mathrm{inv}}}(\boldsymbol{\alpha},\boldsymbol{\beta},\boldsymbol{\gamma}) = \frac{1}{\weight{c_\iota}\det{(I-\mathcal{B}^\mathrm{inv})}}
\end{equation}
where the equality is understood as formal power series in $\boldsymbol{\alpha},\boldsymbol{\beta}$ and $\boldsymbol{\gamma}$.

\section{The inverse of the Alexander polynomial}
\label{sec:alexander-inverses}

We are interested in the inverse of the multivariable Alexander polynomial when we consider the variables $x_1,\dots,x_n$ near $0$. When we are dealing with a knot $K$, the Alexander polynomial satisfies the crucial property that $x^{-d}\Delta_K(0)\neq 0$, where $d$ denotes the lowest $x$-degree of $\Delta_K(x)$, which makes $\Delta_K$ a unit in the ring $\Q[[x]]$. However, in the case of a link $L$ with more than one component, $\mathbf{x}^{-\mathbf{d}}\Delta_L(0,\dots,0)$ may be zero, where now $\mathbf{d}$ denotes the vector of lowest $\mathbf{x}$-degrees of $\Delta_L$. Therefore, we distinguish two cases, depending on whether $\Delta_L$ defines a unit in the ring $\Q[[x_1^\frac12,\dots,x_n^\frac12]]$ or not.

In the first case, there is a (unique) inverse of $\Delta_L$ in the ring $\Q[[x_1^\frac12,\dots,x_n^\frac12]]$. Take as an example the link $T(2,4)$, computed as the closure of the braid word $\sigma_1^4$. Up to multiplication by monomials in $x_1,x_2$, its multivariable Alexander polynomial is $1+x_1x_2$, and its inverse in the ring $\Z[[x_1,x_2]]\subset \Q[[x_1,x_2]]$ is the power series $\sum_{n\geq 0}(-1)^n x_1^n x_2^n$.

In the second case, we need to extend the ring in which we are working. With the same goal in mind, \cite{AK13} introduced a formalism to compute formal Laurent series in several variables associated to the inverse of a multivariable polynomial, by defining suitable rings in which these series live. We base our construction on their procedure, with some slight modifications in order to fit it to our goals.

\subsection{The inverse of a multivariable polynomial}

In this section, we develop the necessary theory to define the inverse of a multivariable polynomial. We primarily follow \cite{AK13}.

Let $\mathcal{C}$ be a rational, finitely generated, line-free, $n$-dimensional cone; i.e. a subset of $\R^n$ such that there exist $s\in\N$ and $\mathbf{v}_1,\cdots,\mathbf{v}_s\in\Z^n$ with $\mathcal{C}=\{z_1\mathbf{v}_1+\cdots+z_s\mathbf{v}_s \mid z_1,\dots,z_s \geq 0 \}$ and $-\mathbf{v}_i\notin \mathcal{C}$ for all $i$.

Let $R$ be a ring and $R^\times\subseteq R$ its ring of units. Consider the set of formal infinite series of the form
\begin{equation}
    f(\mathbf{x})=\sum_{\mathbf{k} \in \Z^{n}} a_\mathbf{k} \mathbf{x}^\mathbf{k}
\end{equation}
where $a_\mathbf{k}\in R$ for every $\mathbf{k}\in \Z^{n}$. The support of $f$ is the set $\supp f(\mathbf{x})\defeq\{\mathbf{k}\in \Z^{n}\mid a_{\mathbf{k}}\neq 0\}$. Define
\begin{equation}
    R_\mathcal{C}[[\mathbf{x}]]\defeq\{f(\mathbf{x}) \mid \textrm{supp}\;f(\mathbf{x})\subseteq \mathcal{C}\}
\end{equation}
In our case, we will consider $R$ to be the field of rational numbers $\Q$, the ring of integers $\Z$ or the ring of Laurent polynomials $\Z[q,q^{-1}]$.

Then, by the same argument as in \cite[Theorem 10]{AK13}, $R_{\mathcal{C}}[[\mathbf{x}]]$ is a ring if the cone $\mathcal{C}$ is line-free.

\begin{definition} Let $f(\mathbf{x})\in R[\mathbf{x}]$ be a multivariable polynomial. Then, a functional $\phi\in(\Z\setminus \{0\})^\ell$ \emph{exposes} $\mathbf{v}$ if
\begin{equation}
    \operatorname*{argmin}_{\mathbf{k}\in\supp f}\inner{\phi}{\mathbf{k}} = \{\mathbf{v}\}~;
\end{equation}
i.e., the functional defined by $\phi$ is minimized uniquely at $\mathbf{v}$ over $\supp f$. When there is no confusion, we say that $\mathbf{v}$ is an \emph{exposed} exponent.

If, moreover, $\phi\in\Z_{>0}^\ell$ is a \emph{positive} functional, we say that $\phi$ \emph{positively exposes} $v$, or that $v$ is a \emph{positively exposed} exponent.
\end{definition}
\begin{remark} Given a multivariable polynomial $f(\mathbf{x})\in R[\mathbf{x}]$, every extremal vertex of its Newton polytope $N(f)$ is an exposed exponent for $f$. In that case, we may refer to $\mathbf{v}\in\mathrm{Vert}(N(f))$ as an \emph{exposed vertex}.
\end{remark}
Given an exposed exponent $\mathbf{v}$, it follows from \cite[Lemma 7]{AK13} that the cone centered at $\mathbf{v}$, defined by
\begin{equation}
    \mathcal{C}_\mathbf{v}\defeq\mathrm{Cone}\{\mathbf{w}-\mathbf{v} \mid \mathbf{w}\neq \mathbf{v} \text{ and } \mathbf{w}\in\supp f\}
\end{equation}
is line-free. Therefore, $R_{\mathcal{C}_\mathbf{v}}[[\mathbf{x}]]$ is a ring for every exposed exponent $\mathbf{v}$.

\begin{lemma} Let $f(\mathbf{x})\in R[\mathbf{x}]$ be a multivariable Laurent polynomial. Let $\mathbf{v}$ be an exposed exponent of $f$, and write $f = \sum_{\mathbf{k}\in \supp f} a_\mathbf{k} \mathbf{x}^\mathbf{k}$.

Then, $\mathbf{x}^{-\mathbf{v}}f$ is a unit in $R_{\mathcal{C}_\mathbf{v}}[[\mathbf{x}]]$ if, and only if, $a_\mathbf{v}\in R^\times$.
\end{lemma}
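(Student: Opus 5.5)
We prove the two directions separately. Throughout, set $g(\mathbf{x})\defeq \mathbf{x}^{-\mathbf{v}}f(\mathbf{x})=a_\mathbf{v}+h(\mathbf{x})$ with $h(\mathbf{x})=\sum_{\mathbf{w}\neq\mathbf{v}}a_\mathbf{w}\mathbf{x}^{\mathbf{w}-\mathbf{v}}$. By construction every exponent of $h$ lies in $\mathcal{C}_\mathbf{v}\setminus\{0\}$, so $g\in R_{\mathcal{C}_\mathbf{v}}[[\mathbf{x}]]$.

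\emph{Necessity.} Suppose $g\cdot u=1$ for some $u=\sum_{\mathbf{k}\in\mathcal{C}_\mathbf{v}}b_\mathbf{k}\mathbf{x}^\mathbf{k}$. Compare the constant coefficients on both sides. A product of monomials $\mathbf{x}^{\mathbf{a}}\mathbf{x}^{\mathbf{b}}$ with $\mathbf{a},\mathbf{b}\in\mathcal{C}_\mathbf{v}$ has $\mathbf{a}+\mathbf{b}=0$ only if $\mathbf{a}=\mathbf{b}=0$, because $\mathcal{C}_\mathbf{v}$ is line-free by \cite[Lemma 7]{AK13}. Hence the constant term of $gu$ is $a_\mathbf{v}b_0=1$, so $a_\mathbf{v}\in R^\times$.

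\emph{Sufficiency.} Suppose $a_\mathbf{v}\in R^\times$. Define the candidate inverse by the geometric series
\[
u\defeq a_\mathbf{v}^{-1}\sum_{m\geq0}\left(-a_\mathbf{v}^{-1}h\right)^m .
\]
Each term has support in $\mathcal{C}_\mathbf{v}$, since the cone is closed under addition. The main point is that this sum is well defined, i.e.\ each exponent $\mathbf{k}$ receives contributions from only finitely many $m$. To see this, let $\phi$ be a functional exposing $\mathbf{v}$, so that $\inner{\phi}{\mathbf{w}-\mathbf{v}}>0$ for every $\mathbf{w}\in\supp f\setminus\{\mathbf{v}\}$. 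Since $\phi$ is integral and the support is finite, set
\[
\delta\defeq\min_{\mathbf{w}\neq\mathbf{v}}\inner{\phi}{\mathbf{w}-\mathbf{v}}\geq1 .
\]
Every monomial of $h^m$ then satisfies $\inner{\phi}{\cdot}\geq m\delta$. Consequently the monomial $\mathbf{x}^\mathbf{k}$ can appear in $h^m$ only when $m\leq\inner{\phi}{\mathbf{k}}/\delta$, so its coefficient in $u$ is a finite sum. Each $h^m$ itself is a finite product of polynomials, so its coefficients are finite sums as well.

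It remains to check $g\cdot u=1$. The product is computed coefficientwise, and products in $R_{\mathcal{C}_\mathbf{v}}[[\mathbf{x}]]$ are well defined by the ring structure (\cite[Theorem 10]{AK13}). The identity then follows from the telescoping relation
\[
(a_\mathbf{v}+h)\,a_\mathbf{v}^{-1}\sum_{m=0}^{M}\left(-a_\mathbf{v}^{-1}h\right)^m \;=\; 1-\left(-a_\mathbf{v}^{-1}h\right)^{M+1}.
\]
The error term $\left(-a_\mathbf{v}^{-1}h\right)^{M+1}$ has $\phi$-degree at least $(M+1)\delta$, so it does not affect any fixed coefficient once $M$ is large. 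Hence every coefficient of $gu$ agrees with that of $1$, and $g$ is a unit.

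The main obstacle is justifying convergence of the geometric series, that is, the finiteness of contributions to each coefficient. This rests entirely on the integral exposing functional giving the uniform lower bound $\delta\geq1$, which also yields the line-freeness used in the necessity direction.
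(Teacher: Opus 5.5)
Your proof is correct and is essentially the argument the paper relies on: the paper gives no details and defers to the geometric-series inversion of \cite[Theorem 12]{AK13} adapted from fields to rings, which is your sufficiency argument, while your constant-term comparison (via line-freeness of $\mathcal{C}_\mathbf{v}$) supplies the necessity direction that only becomes nontrivial once $R$ is not a field. The one step you leave implicit is that the $\mathbf{x}^\mathbf{k}$-coefficient of $g\cdot u$ equals that of $g$ times the partial sum truncated at $m\leq M$ for $M$ large. This follows from the same $\phi$-degree bound, since every monomial of $g$ has nonnegative $\phi$-degree.
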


The proof is a straightforward modification of \cite[Theorem 12]{AK13} when we change the field of coefficients to a ring.

\begin{definition}\label{def:inverse-polynomial} Let $f\in R[\mathbf{x}]$ be a multivariable polynomial and let $\mathbf{v}$ be an exposed exponent of $f$ for which $a_\mathbf{v}\in R^\times$. Then, we say that $f$ is \emph{invertible around} $\mathbf{v}$, and we define the \emph{inverse of $f$ around $\mathbf{v}$} as the formal Laurent series
\begin{equation}\label{eq:inverse-polynomial}
    f_\mathbf{v}^{-1}(\mathbf{x})\defeq\mathbf{x}^{-\mathbf{v}}(\mathbf{x}^{-\mathbf{v}}f(\mathbf{x}))^{-1}\in\mathbf{x}^{-\mathbf{v}} R_{\mathcal{C}_\mathbf{v}}[[\mathbf{x}]].
\end{equation}
In this case, we say that $f_\mathbf{v}^{-1}(\mathbf{x})$ is a formal Laurent series associated to $f^{-1}(\mathbf{x})$ and $a_{\mathbf{v}}^{-1}\mathbf{x}^{-\mathbf{v}}$ is its \emph{initial term}.
\end{definition}

\begin{remark}\label{rmk:lexicographic}
The exposed exponents $\mathbf{v}$ that uniquely minimize the lexicographic order of $\sigma(\Z^\ell)$, for some permutation $\sigma\in S_\ell$, are positively exposed. The inverse of a polynomial around one such exponent, as in \cref{def:inverse-polynomial}, agrees with its inverse in the \emph{field of iterated Laurent series}
\begin{equation}
    \Q\langle\langle x_{\sigma(1)},\dots,x_{\sigma(\ell)}\rangle\rangle\defeq\left(\left(\left(\Q[[x_{\sigma(1)}]]\right)[[x_{\sigma(2)}]]\right)\cdots\right)[[x_{\sigma(\ell)}]]
\end{equation}
previously studied in \cite[Chapter 2]{Xin04}.
\end{remark}

We are particularly interested in the inverses of Laurent polynomials with integer coefficients. Given an integral Laurent polynomial $f\in\Z[\mathbf{x}]$, and an exposed vertex $\mathbf{v}$ for which $a_\mathbf{v}\in\{\pm 1\}$, $f$ is invertible around $\mathbf{v}$ and $f_\mathbf{v}^{-1}(\mathbf{x})\in\mathbf{x}^{-\mathbf{v}}\Z_{\mathcal{C}_\mathbf{v}}[[\mathbf{x}]]$. We say that such a vertex is \emph{integral}. In this case, the discussion above gives the following result.

\begin{corollary}\label{cor:integral-inverses} Let $f(\mathbf{x})\in\Z[\mathbf{x}]$ be a multivariable polynomial with integer coefficients. Then, the formal Laurent series with integer coefficients associated to $f^{-1}(\mathbf{x})$ are labeled by the set
\begin{equation*}
    \mathcal{S}=
    \{\mathbf{v}\in\supp f \mid a_\mathbf{v}\in\{\pm 1\}\text{ and } \mathbf{v} \text{ is positively exposed}\}\subseteq \mathrm{Vert}(N(f)).
\end{equation*}
In other words, $f$ is invertible around $\mathbf{v}$ for every $\mathbf{v}\in\mathcal{S}$, and its inverse has integral coefficients.
\end{corollary}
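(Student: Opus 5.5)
The statement to prove is \cref{cor:integral-inverses}. It follows by combining the preceding lemma (the ring version of \cite[Theorem 12]{AK13}) with the fact that $R_{\mathcal{C}_\mathbf{v}}[[\mathbf{x}]]$ is a ring whenever $\mathbf{v}$ is exposed. The plan has two parts. First, show that every $\mathbf{v}\in\mathcal{S}$ yields an integral inverse. Second, show that these are exactly the formal Laurent series of the relevant type, i.e. cone-supported expansions around an exposed exponent with integer coefficients, and that $\mathcal{S}\subseteq\mathrm{Vert}(N(f))$.

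\textbf{Step 1 (existence and integrality).} Take $\mathbf{v}\in\mathcal{S}$. Since $\mathbf{v}$ is positively exposed, it is in particular exposed, so by \cite[Lemma 7]{AK13} the cone $\mathcal{C}_\mathbf{v}$ is line-free. Hence $\Z_{\mathcal{C}_\mathbf{v}}[[\mathbf{x}]]$ is a ring. Because $a_\mathbf{v}\in\{\pm1\}=\Z^\times$, the preceding lemma with $R=\Z$ says that $\mathbf{x}^{-\mathbf{v}}f$ is a unit in $\Z_{\mathcal{C}_\mathbf{v}}[[\mathbf{x}]]$. Then \cref{def:inverse-polynomial} produces $f^{-1}_\mathbf{v}\in\mathbf{x}^{-\mathbf{v}}\Z_{\mathcal{C}_\mathbf{v}}[[\mathbf{x}]]$.

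To make integrality transparent, I would also write the inverse explicitly. Set $\mathbf{x}^{-\mathbf{v}}f=a_\mathbf{v}(1-g)$, where $g=-a_\mathbf{v}^{-1}\sum_{\mathbf{w}\neq\mathbf{v}}a_\mathbf{w}\mathbf{x}^{\mathbf{w}-\mathbf{v}}$ has integer coefficients and support in $\mathcal{C}_\mathbf{v}\setminus\{0\}$. The inverse is then the geometric series $a_\mathbf{v}^{-1}\sum_{m\ge0}g^m$.

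For positively exposed $\mathbf{v}$, this series is well defined by a direct check. Let $\phi\in\Z_{>0}^\ell$ expose $\mathbf{v}$. Then $\inner{\phi}{\mathbf{w}-\mathbf{v}}\ge1$ for every $\mathbf{w}\in\supp f\setminus\{\mathbf{v}\}$, so every monomial of $g^m$ has $\phi$-degree at least $m$. Moreover, each $\mathbf{k}\in\mathcal{C}_\mathbf{v}$ is a nonnegative integer combination of the finitely many generators $\mathbf{w}-\mathbf{v}$, each of $\phi$-degree at least $1$, in only finitely many ways. So each coefficient is a finite sum of integers.

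\textbf{Step 2 (labeling and vertices).} Conversely, suppose a formal Laurent series of the form in \cref{def:inverse-polynomial} inverts $f$ with integer coefficients. Its initial term is $a_\mathbf{v}^{-1}\mathbf{x}^{-\mathbf{v}}$, which forces $a_\mathbf{v}^{-1}\in\Z$, hence $a_\mathbf{v}=\pm1$. The restriction to positively exposed exponents is the convention adopted for our purposes. The labeling is injective because distinct $\mathbf{v}$ give distinct initial terms: the initial term is the unique term of the series whose exponent is extremal for $-\phi$.

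Finally, $\mathcal{S}\subseteq\mathrm{Vert}(N(f))$: a point that is the unique minimizer of a linear functional over $\supp f$ is also the unique minimizer over $\operatorname{conv}(\supp f)=N(f)$, and is therefore a vertex.

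\textbf{Main obstacle.} The only delicate step is the ring version of the lemma over $\Z$ rather than a field. Its content is that the unit in question has leading coefficient $a_\mathbf{v}$, and this is exactly what the geometric series in Step 1 handles. If there is a gap, I expect it to be in confirming that the line-free cone gives finiteness of each coefficient in general. For positively exposed exponents the $\phi$-grading argument above already settles this, which is presumably why positivity is imposed in $\mathcal{S}$.
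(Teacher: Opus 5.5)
Your proposal is correct and follows the paper, which derives the corollary directly from the ring version of \cite[Theorem 12]{AK13} with $R=\Z$ (so $R^\times=\{\pm1\}$) together with the line-freeness of $\mathcal{C}_\mathbf{v}$; your explicit geometric series $a_\mathbf{v}^{-1}\sum_{m\ge0}g^m$ and your converse and vertex checks simply spell out that one-line argument. One small correction to your closing remark: the $\phi$-grading argument works for any integer functional exposing $\mathbf{v}$, not only a positive one, so positivity in $\mathcal{S}$ is just the convention of \cref{sec:positive-convention} and is not needed for convergence.
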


\begin{example} Let $f(x_1,x_2,x_3)=x_1x_2^2+x_2^2x_3+x_1x_2x_3^2$. The set of extremal vertices of $N(f)$ is $\{(1,2,0),(0,2,1),(1,1,2)\}$. We define the three cones
\begin{equation}
\begin{split}
    C_{(1,2,0)}&=\{z_1(-1,1)+z_2(0,-1,2)\mid z_1,z_2\geq 0\}, \\
    C_{(0,2,1)}&=\{z_1(1,0,-1)+z_2(1,-1,1)\mid z_1,z_2\geq 0\}, \\
    C_{(1,1,2)}&=\{z_1(0,1,-2)+z_2(-1,1,-1)\mid z_1,z_2\geq 0\}.
\end{split}
\end{equation}
Consider the cone $C_{(1,2,0)}$. The Laurent polynomial $h_{(1,2,0)}(x_1,x_2,x_3)=\frac{1}{x_1x_2^2}f(x_1,x_2,x_3)=1+\frac{x_3}{x_1}+\frac{x_3^2}{x_2}$ has support in the cone $C_{(1,2,0)}$ and its initial term is $1$. It is invertible in the ring $\Z_{C_{(1,2,0)}}[[x_1,x_2,x_3]]$, with inverse
\begin{equation}
    \sum_{n\geq 0} (-1)^n\left(\frac{x_3}{x_1}+\frac{x_3^2}{x_2}\right)^n
\end{equation}
which is clearly an element of $\Z_{C_{(1,2,0)}}[[x_1,x_2,x_3]]$. It is therefore natural to consider the inverse of $f$ to be $\frac{1}{x_1x_2^2}\sum_{n\geq 0} (-1)^n\left(\frac{x_3}{x_1}+\frac{x_3^2}{x_2}\right)^n$.
\end{example}

\subsection{Inverses of the Alexander polynomial}

By the results of the previous section, there is a family of formal Laurent series, defined in suitable rings, that correspond to the inverse of the Alexander polynomial. Moreover, each of these inverses will be associated to an extremal vertex of the Newton polytope of $\Delta_L$. In the rest of the paper, we will be concerned only with the cases in which the selected vertices are integral.

\subsubsection{The positive convention}\label{sec:positive-convention} It suffices to only consider the inverse of $\Delta_L$ around \emph{positively} exposed vertices $v\in\supp \Delta_L$.

Given an oriented link $L=\bigcup_{1\leq i\leq \ell}L_i$ and $\nu=(\nu_1,\dots,\nu_\ell)\in \{\pm 1\}^\ell$, denote by $L^\nu=L^{(\nu_1,\dots,\nu_\ell)}$ the link $L^{\nu}\defeq \bigcup_{1\leq i\leq \ell } L_i^{\nu_i}$, where
\begin{equation}
    L_i^{\nu_i}=\begin{cases}
        L_i &\text{ if } \nu_i=+1 \\
        L_i^{\mathrm{rev}}&\text{ if } \nu_i=-1
    \end{cases} \hspace{1.5em}\text{for every } 1\leq i\leq \ell~.
\end{equation}
Note that $L$ and $L^{(\nu_1,\dots,\nu_\ell)}$ are isotopic as unoriented links.

\begin{proposition}\label{prop:alex-positive-convention}

Let $v$ be an exponent of $\newtonalex$ exposed by some functional $\phi\in(\Z\setminus \{0\})^\ell$. Let $\nu_i=\sgn \phi_i$ and denote
\begin{equation}
    v^\nu=(\nu_1v_1,\dots,\nu_\ell v_\ell)\in\Z^\ell \hspace{1.25em} \text{and} \hspace{1.25em} \phi^\nu=(\nu_1\phi_1,\dots,\nu_\ell\phi_\ell)\in\Z_{>0}^\ell~.
\end{equation}
Then, $\phi$ exposes $v$ in $\newtonalex$ if, and only if, $\phi^{\nu}$ positively exposes $v^\nu$ in $N(\Delta_{L^{\nu}})$. Moreover, the inverses of $\Delta_L(x_1,\dots,x_\ell)$ and $\Delta_{L^{(\nu_1,\dots,\nu_\ell)}}(x_1,\dots,x_\ell)$ around $v$ and $v^\nu$, respectively, are related via:
\begin{equation}
    (\Delta_L)^{-1}_v(x_1,\dots,x_\ell) = (-1)^{\frac{1}{2}\left(\ell-\sum_{i=1}^{\ell}\nu_i\right)}(\Delta_{L^{\nu}})^{-1}_{v^\nu}(x_1^{\nu_1},\dots,x_\ell^{\nu_\ell})~.
\end{equation}
\end{proposition}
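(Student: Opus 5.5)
We first settle the statement about exposure, which is pure polytope bookkeeping. Then we handle the identity of inverses via the orientation-reversal formula of \cref{prop:Alex-properties}(1) together with uniqueness of inverses in the cone ring.

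\emph{Step 1 (exposure).} Let $D_\nu$ be the diagonal sign matrix $\mathrm{diag}(\nu_1,\dots,\nu_\ell)$. Iterating \cref{prop:Alex-properties}(1) over the indices with $\nu_i=-1$ gives
\begin{equation*}
\Delta_{L^\nu}(x_1,\dots,x_\ell)=(-1)^{m}\Delta_L(x_1^{\nu_1},\dots,x_\ell^{\nu_\ell}),\qquad m=\tfrac12\Bigl(\ell-\sum_i\nu_i\Bigr),
\end{equation*}
where $m$ is the number of reversed components. Since each reversal is by a different component, the operations commute and the sign simply accumulates. It follows that $\supp\Delta_{L^\nu}=D_\nu(\supp\Delta_L)$, with coefficients changed only by the global sign $(-1)^m$. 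Because $D_\nu$ is an involutive isometry of the pairing, $\langle\phi^\nu,D_\nu\mathbf{k}\rangle=\langle\phi,\mathbf{k}\rangle$. Hence $\mathbf{k}\mapsto D_\nu\mathbf{k}$ carries $\operatorname{argmin}_{\supp\Delta_L}\langle\phi,\cdot\rangle$ bijectively onto $\operatorname{argmin}_{\supp\Delta_{L^\nu}}\langle\phi^\nu,\cdot\rangle$. So $\phi$ exposes $v$ if and only if $\phi^\nu$ exposes $v^\nu$. By construction $\phi^\nu_i=|\phi_i|>0$, so the latter exposure is positive. Integrality is preserved as well, because $a^{L^\nu}_{v^\nu}=(-1)^m a^L_v$.

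\emph{Step 2 (cones and the substitution map).} Let $\theta_\nu$ denote the substitution $x_i\mapsto x_i^{\nu_i}$ on formal series. It sends $\mathbf{x}^{\mathbf{k}}$ to $\mathbf{x}^{D_\nu\mathbf{k}}$. Since $D_\nu$ is linear, it maps $\mathcal{C}_v(\Delta_L)=\mathrm{Cone}\{w-v\}$ onto $\mathcal{C}_{v^\nu}(\Delta_{L^\nu})$. Therefore $\theta_\nu$ restricts to a ring isomorphism $R_{\mathcal{C}_{v^\nu}}[[\mathbf{x}]]\to R_{\mathcal{C}_v}[[\mathbf{x}]]$. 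Each coefficient of a product is a finite sum (this is what makes these rings rings), and relabelling exponents by $D_\nu$ preserves those sums, so $\theta_\nu$ is multiplicative.

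\emph{Step 3 (identity of inverses).} Set $g=\mathbf{x}^{-v^\nu}\Delta_{L^\nu}$, which is a unit in $R_{\mathcal{C}_{v^\nu}}[[\mathbf{x}]]$. By Step 1,
\begin{equation*}
\theta_\nu(g)=(-1)^m\,\mathbf{x}^{-v}\Delta_L,
\end{equation*}
and $\theta_\nu(g^{-1})$ is the inverse of $\theta_\nu(g)$ in $R_{\mathcal{C}_v}[[\mathbf{x}]]$. Inverses are unique, so $(\mathbf{x}^{-v}\Delta_L)^{-1}=(-1)^m\theta_\nu(g^{-1})$. Multiplying by $\mathbf{x}^{-v}=\theta_\nu(\mathbf{x}^{-v^\nu})$ and using \eqref{eq:inverse-polynomial} gives
\begin{equation*}
(\Delta_L)^{-1}_v=(-1)^m\,\theta_\nu\bigl((\Delta_{L^\nu})^{-1}_{v^\nu}\bigr),
\end{equation*}
which is the claimed formula.

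The main obstacle is the sign in Step 1. Iterating the single-component reversal formula under the Rozansky normalization must give exactly $(-1)^m$, and not an extra unit monomial or a different sign. This in turn requires that \cref{prop:Alex-properties}(1) is compatible with the symmetry \eqref{eq:symmetry-Alex} for every sub-collection of reversed components. I would check this first on the Hopf links, where $\Delta_{H^\pm}=\pm1$.
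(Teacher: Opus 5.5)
The paper states this proposition without a written proof. Your argument is correct and is exactly the reasoning the statement rests on: you iterate \cref{prop:Alex-properties}(1) to get $\Delta_{L^\nu}(\mathbf{x})=(-1)^m\Delta_L(\mathbf{x}^\nu)$, transport the argmin through the sign isometry $D_\nu$, and pull the unique inverse back along the ring isomorphism $\theta_\nu$ between the cone rings.

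The obstacle you flag is not really one. Only item (1) is used, and iterating it is exact as stated, since each reversal contributes a sign and no monomial and the order of reversals does not matter. Compatibility with the symmetry \eqref{eq:symmetry-Alex} plays no role in the argument.
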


The above proposition justifies our convention: We only consider the inverses of the multivariable Alexander polynomial that are expanded around \emph{positively} exposed exponents, since the remaining expansions can be obtained by changing the choice of underlying orientation of the link.

The choice of restricting to positively exposed exponents also has a natural interpretation in terms of the $x$-variables.
If $v$ is positively exposed by $\phi\in\Z_{>0}^{\ell}$, then the specialization $x_i=t^{\phi_i}$ sends every $x_i$ to $0$ as $t\to0$. From this point of view, we interpret $(\Delta_L)^{-1}_v$ as a formal Laurent series near $\mathbf{x}=\mathbf{0}$.

\subsubsection{Alternating links}

\begin{proposition} Let $L$ be an alternating link and $v\in\supp \Delta_L$ be a positively exposed vertex of $\newtonalex$. Then, the expansion of $\Delta_L^{-1}$ around $v$ has integer coefficients if, and only if, $2v$ is a fibered vertex of the dual Thurston polytope, under the inclusion $2\newtonalex \subseteq B_T^\vee$.
\end{proposition}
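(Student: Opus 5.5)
The plan is to reduce the statement to \cref{cor:Alex-monic-fibered} together with the invertibility criterion of the previous subsection. By the lemma characterizing units in $R_{\mathcal{C}_v}[[\mathbf{x}]]$, and by \cref{def:inverse-polynomial}, the expansion $(\Delta_L)^{-1}_v$ lies in $\mathbf{x}^{-v}\Z_{\mathcal{C}_v}[[\mathbf{x}]]$ precisely when $\mathbf{x}^{-v}\Delta_L$ is a unit there, i.e. when $a_v\in\{\pm1\}$. For the converse direction we will need slightly more: if $a_v\notin\{\pm1\}$, then the series is not integral. To see this, compare the expansion over $\Q$ (where it exists, since $a_v\neq0$ for $v$ in the support) with its leading coefficient. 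The coefficient of $\mathbf{x}^{-v}$ in $(\Delta_L)^{-1}_v$ is $a_v^{-1}$, because $\mathcal{C}_v$ is line-free and $v$ is exposed. Hence the leading coefficient is already non-integral, and no other term can cancel it. Thus the expansion has integer coefficients if and only if $a_v=\pm1$.

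It then remains to show that, for $L$ alternating and $v$ a (positively) exposed vertex of $\newtonalex$, the condition $a_v=\pm1$ holds if and only if $2v$ is a fibered vertex of $B_T^\vee$. This is essentially \cref{cor:Alex-monic-fibered}, but it must be made to match the normalization $2\newtonalex\subseteq B_T^\vee$. I would argue as follows.

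For alternating links, \eqref{eq:HFL-recovers-Alex} holds with ranks, by \cite[Theorem 1.3]{Ozsvth2008}. Therefore the support of $\HFL$ is exactly the support of $\prod_i(x_i^{1/2}-x_i^{-1/2})\Delta_L$, and $\HFLpolytope=\newtonalex+\tfrac12 Q$. Combining this with \cref{thm:HFL-poly-Thurston}, we get
\[
B_T^\vee+Q=2\newtonalex+Q,
\]
and cancellation of Minkowski summands for convex bodies gives $B_T^\vee=2\newtonalex$. In particular, $v$ is a vertex of $\newtonalex$ if and only if $2v$ is a vertex of $B_T^\vee$, exposed by the same functional $\phi$.

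Next, take a generic $\phi$ exposing $v$, with no $\phi_i=0$. The vertex of $\HFLpolytope$ that $\phi$ exposes is $h=v+\tfrac12(\sgn\phi_1,\dots,\sgn\phi_\ell)$, which lies in $s(2v)$. Its coefficient in the product is $\pm a_v$, since the extremal term of $\prod_i(x_i^{1/2}-x_i^{-1/2})$ in that direction is a monomial with coefficient $\pm1$. Thinness then gives $\rank\HFL(L,h)=|a_v|$. By \cref{thm:HFL-fibered-classes}, $2v$ is fibered if and only if some vertex of $\HFLpolytope$ in $s(2v)$ has rank one. All vertices of $\HFLpolytope$ in $s(2v)$ are of the form just described, as $\phi$ ranges over the generic functionals exposing $v$, and each has rank $|a_v|$. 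This yields the equivalence.

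The main obstacle is this last bookkeeping step. We must check that every vertex of $\HFLpolytope$ lying in $s(2v)$ arises from a functional exposing $v$, which uses the Minkowski-sum face structure: faces of a sum are sums of faces. We must also rule out degenerate cases, in particular links with trivial components, where \cref{thm:HFL-poly-Thurston} requires its hypothesis. For split links, or links with $\Delta_L=0$, the statement is vacuous, and I would exclude them explicitly. Positivity of the exposing functional plays no role beyond ensuring that the inverse is taken in the convention of \cref{sec:positive-convention}.
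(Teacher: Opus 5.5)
Your proposal is correct and follows the paper's route. The paper's proof just combines \cref{cor:integral-inverses} (the expansion is integral exactly when $a_v=\pm1$) with \cref{cor:Alex-monic-fibered}; your argument unpacks the latter via thinness, \cref{thm:HFL-poly-Thurston} and \cref{thm:HFL-fibered-classes}, and makes explicit the identification $B_T^\vee=2\newtonalex$ that the paper leaves implicit.

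Two small fixes. First, with the paper's minimizing convention the vertex of $\HFLpolytope$ exposed by $\phi$ is $v-\tfrac12(\sgn{\phi_1},\dots,\sgn{\phi_\ell})$, not $v+\ldots$; this point still lies in $s(2v)$ and has coefficient $\pm a_v$, so nothing else changes. Second, the bookkeeping step you flag is immediate. Suppose a vertex $w=p+c$ of $\HFLpolytope$, with $p\in\mathrm{Vert}(\newtonalex)$ and $c$ a vertex of $\tfrac12Q$ both exposed by some $\psi$, lies in $v+\tfrac12Q$. Then $\psi(w)-\psi(v)\geq\psi(c)$ gives $\psi(v)\leq\psi(p)$, which forces $p=v$.
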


\begin{proof} The result follows from \cref{cor:integral-inverses,cor:Alex-monic-fibered}.
\end{proof}

\subsubsection{Homogeneous braid links}

A homogeneous braid $\bd\in B_n$ is a braid that can be written in the alphabet $\{\sigma_1^{\varepsilon(1)},\dots,\sigma_{n-1}^{\varepsilon(n-1)}\}$, where $\varepsilon(i)\in\{\pm 1\}$ for every $i=1,\dots,n-1$.

\begin{proposition}\label{prop:hom-braid-alex} Let $\bd$ be a homogeneous braid and $L$ its closure. Then, there exists a unique $v\in\supp{\Delta_L}$ which is a positively exposed exponent of $\Delta_L$, and it has coefficient $\pm1$.
\end{proposition}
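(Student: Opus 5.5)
The plan is to read off the positively exposed exponents of $\Delta_L$ from the simple-multicycle expansion of $\det(I-\mathcal{B})$ given by \cref{lemma:B-matrix-Alexander}. By that lemma,
\[
(1-x_1)\mathbf{x}^{-\rho(\beta)}\Delta_L(\mathbf{x})=\sum_{c\in\mathcal{Q}}(-1)^{|c|}\weight{c},
\]
where every weight is a product of entries $\alpha=x_{\kappa(o)}^{s/2}$, $\beta=x_{\kappa(u)}^{s/2}$, $\gamma=(x_{\kappa(i)}^{-1}x_{\kappa(u)})^{-s/4}(1-x_{\kappa(u)}^s)$, as in \eqref{eq:rules-abg}. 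Fix a positive functional $\phi\in\Z_{>0}^\ell$. I will study which simple multicycle, after expanding the factors $1-x^{s}$, produces the monomial minimizing $\inner{\phi}{\cdot}$. Homogeneity will be used to decouple the crossings column by column. In each column $i$, all crossings $\sigma_i^{\varepsilon(i)}$ have the same sign, so all the factors have a uniform direction: positive columns contribute nonnegative powers, and negative columns contribute nonpositive powers.

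The key step is to show that there is a unique \emph{minimizing state}. For a positive column, the $\phi$-degree is minimized by the cycle that uses no jumps there, since $\alpha,\beta$ have positive degree and the constant term $1$ of $1-x_{\kappa(u)}$ is the lowest part of $\gamma$. In fact I expect the global minimum to come from the inversion-free choice: on positive columns take the empty cycle, and on negative columns take the multicycle that traverses every segment. In the negative case, each $\alpha,\beta$ carries a negative power of $x$, and $\gamma$ contributes its lowest term $-x_{\kappa(u)}^{-1}$. I will make this precise as follows. Because $\phi>0$ and all signs in a column agree, the $\phi$-degree of each monomial in $\weight{c}$ decomposes as a sum over crossings of local contributions. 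Each local contribution is bounded below by the value at the preferred local configuration, with equality only at that configuration. This is essentially the Stallings/Gabai fact that a homogeneous closure is fibered, with the fiber surface built from Seifert circles, but carried out at the level of states, in the spirit of \cref{lemma:B-matrix-inverted-Alexander} and Park's correspondence.

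Next I will check that the locally preferred configurations glue to a single simple multicycle $c_{\min}$, and that the minimal monomial occurs with coefficient $\pm1$. The gluing should hold because homogeneity makes the choice column-wise consistent: for example, the strands between negative columns are fully occupied, which gives a valid jump-down cycle. The coefficient is $\pm1$ because a single multicycle and a single term of each $\gamma$ contribute. The minimizer $\mathbf{k}_0$ does not depend on $\phi$, since the local choices never depend on $\phi$. Dividing by $(1-x_1)\mathbf{x}^{-\rho(\beta)}$ shifts the result by a monomial and multiplies it by a factor whose $\phi$-minimal term is $1$, for every positive $\phi$. So $v=\mathbf{k}_0+\rho(\beta)$ is the $\phi$-minimizer of $\supp\Delta_L$ for every $\phi\in\Z_{>0}^\ell$, with coefficient $\pm1$. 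Uniqueness follows from this: any other positively exposed $v'$ would be the unique minimizer for some positive $\phi'$, but the minimizer for $\phi'$ is $v$, so $v'=v$.

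The main obstacle is the strict local inequality together with the absence of cancellations. A non-preferred configuration at one crossing might create an equal-degree compensation elsewhere, since simple multicycles are global objects. To handle this, I will attempt a column-by-column induction. I will write the weight along a cycle crossing column $i$ and use the fact that the sign is constant there, so every deviation adds a strictly positive multiple of some $\phi_j$. If this proves delicate, I will fall back on Murasugi-style (Cromwell) arguments that homogeneous braid closures have monic extreme Alexander terms, applied to the specializations $x_j=t^{\phi_j}$.
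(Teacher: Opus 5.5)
You correctly reduce the statement to one claim: a single monomial $\pm\mathbf{x}^{\mathbf{k}_0}$ is the $\phi$-initial form of $\sum_{c\in\mathcal{Q}}(-1)^{|c|}W(c)$ for \emph{every} positive $\phi$. Given that, your endgame works and matches the paper's. Multiplicativity of initial forms disposes of $(1-x_1)\mathbf{x}^{-\rho(\beta)}$, and $\phi$-independence gives uniqueness; the paper phrases this as $\Delta_L\doteq 1+p(\mathbf{x})$ with $\operatorname{supp}p\subseteq\mathbb{Z}_{\geq0}^\ell\setminus\{0\}$. The gap is that this central claim is never proved, and the mechanism you propose for it fails.

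In the $\mathcal{B}$-weights \eqref{eq:rules-abg}, $\gamma$ carries the prefactor $(x_{\kappa(i)}^{-1}x_{\kappa(u)})^{-s/4}$. Its exponents have both signs when the two strands lie on different components. So a jump at a positive crossing can have negative $\phi$-degree. At a negative crossing, whether $\alpha\beta$ (both strands straight) or $\gamma$ (jump) is lower depends on $\phi$. Per-crossing bounds are therefore gauge-dependent; only totals along closed cycles are meaningful.

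In the Gassner weights \eqref{eq:A-rules} the column signs really are uniform, but the local minimum is never unique. At a negative crossing, every configuration using the incoming overstrand attains $x_{\kappa(u)}^{-1}$. At a positive crossing, every configuration avoiding $o\to o^+$ attains degree $0$. So ``equality only at the preferred configuration'' is false, and uniqueness must come from a global argument about which simple multicycles realize the local minimum everywhere.

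Your candidate minimizer is also the wrong object. ``Empty on positive columns, every segment on negative columns'' is not a simple multicycle in general, because a segment at position $i$ can join a crossing of column $i-1$ to one of column $i$. Straight passes are also not what minimize. The paper's $c_-$ instead \emph{jumps down} (incoming overstrand to outgoing understrand) at every negative crossing, so it runs along single braid positions.

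This is exactly where the cancellation you worry about lives. Take the knot $\mathrm{cl}(\sigma_1^{-k})$ with $k$ odd; your choice is then the straight traversal of the whole knot. In the unreduced closed-braid model it has the same lowest monomial $x^{-k}$ as the all-jump circle, but with the opposite sign in $(-1)^{|c|}W(c)$, so the two cancel, consistent with $\det(I-A)=0$. Which multicycle survives as the unique minimizer is decided entirely by the open-strand restriction defining $\mathcal{Q}$, and your argument never uses it.

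The Cromwell fallback does not repair this. It concerns the one-variable polynomial, i.e.\ $\phi=\mathbb{1}^*$, not $x_j=t^{\phi_j}$ for general positive $\phi$. Even monicity for each $\phi$ separately would not show that all positive functionals select the same vertex, which is what uniqueness requires.
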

\begin{figure}
    \centering
    \includegraphics[width=0.5\linewidth]{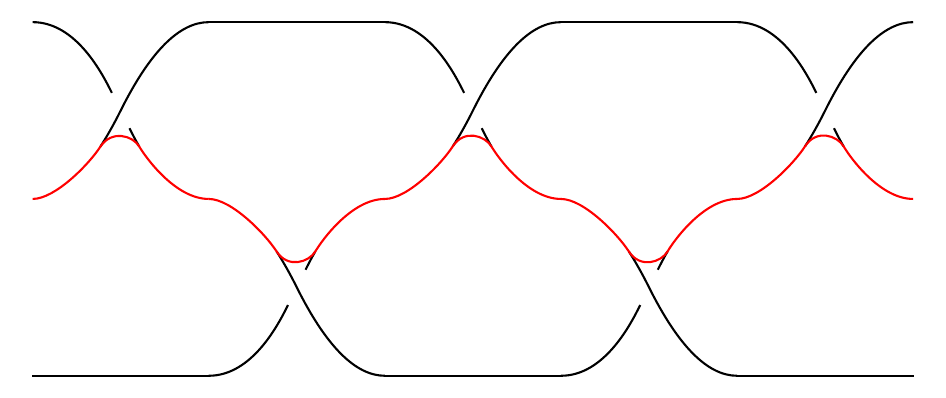}
    \caption{Inversion cycle of a homogeneous braid link, $L5a1\{0\}$.}
    \label{fig:L5a1}
\end{figure}

\begin{proof} Denote by $n_-$ the number of negative generators in $\bd$. For every $\varepsilon(i)<0$, let $c_i$ be the cycle that starts at the incoming overstrand of the first appearance of $\sigma_i^{-1}$ and jumps down to the outgoing understrand at every appearance of $\sigma_i^{-1}$. Let $c_-$ be the multicycle of $n_-$ components defined by (see \cref{fig:L5a1})
\begin{equation*}
    c_-=\bigcup_{\substack{1\leq i\leq n-1\\\text{s.t. }\varepsilon(i)<0}} c_i.
\end{equation*}
In particular, if $\beta$ is a positive braid, the cycle is empty and $\weight{\emptyset}=1$. Given \eqref{eq:A-rules}, the weight of $c_-$ in the determinant computation of $\Delta_L(\mathbf{x})$ is
\begin{equation}
    \weight{c_-} = (-1)^{n_-}\prod_{1\leq i\leq n-1} x_i^{-k_i} (1-x_i)^{k_i}
\end{equation}
where $k_i>0$ whenever $\varepsilon(i)<0$ and $k_i=0$ otherwise. Moreover, $c_-$ minimizes the exponents of all $x_i$ variables simultaneously, and the weight of any other multicycle $c$ in the braid must have minimal $x_i$-exponent strictly greater than $-k_i$ for some $1\leq i\leq n-1$. Therefore, by \eqref{eq:Alex-simple-cycles} we get that $\Delta_L(\mathbf{x}) \factoreq 1+p(\mathbf{x})$ for some $p(\mathbf{x})\in\Z[\mathbf{x}]$ whose minimal total degree is greater than $1$. Since any positive functional $\phi\in\Z_{>0}^\ell$ is uniquely minimized in $\N^\ell$ at $\mathbf{0}$, the result follows.
\end{proof}

\begin{corollary} The multivariable Alexander polynomial of a homogeneous braid link is invertible in $\Z[[\mathbf{x}]]$.
\end{corollary}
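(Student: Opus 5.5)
The corollary follows from \cref{prop:hom-braid-alex} together with the invertibility criterion of the previous subsection. Fix a homogeneous braid $\bd$ with closure $L$. By \cref{prop:hom-braid-alex}, there is a unique positively exposed exponent $v\in\supp\Delta_L$, and its coefficient is $\pm1$. The plan is to show three things: the shifted polynomial $\mathbf{x}^{-v}\Delta_L$ is supported in the positive orthant $\N^\ell$; its constant term is $\pm1$; and therefore it is a unit in $\Z[[\mathbf{x}]]$. We then interpret ``invertible in $\Z[[\mathbf{x}]]$'' for $\Delta_L$ itself up to this monomial unit $\mathbf{x}^{v}$, in line with the convention that $\Delta_L$ is only defined up to units $\pm\mathbf{x}^{a}$ and with how the paper treated $T(2,4)$ at the start of this section.

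First, the support statement. The proof of \cref{prop:hom-braid-alex} shows more than the proposition records. Through \eqref{eq:Alex-simple-cycles}, the multicycle $c_-$ simultaneously minimizes every $x_i$-exponent among all simple multicycles. Hence, after multiplying by the normalizing monomial $\mathbf{x}^{\rho(\beta)}$ and dividing by $(1-x_1)$, we get $\Delta_L(\mathbf{x})=\pm\mathbf{x}^{v}(1+p(\mathbf{x}))$ with $p\in\Z[\mathbf{x}]$ a genuine polynomial (nonnegative exponents) with $p(\mathbf{0})=0$. The one point to check here is the division by $1-x_1$. Since $1-x_1$ divides $\sum_{c}(-1)^{|c|}W(c)$ exactly (by \eqref{eq:Alexander-transition-matrix}), and since dividing a polynomial with nonnegative-exponent support and a nonzero lowest term $\pm\mathbf{x}^{w}$ by $1-x_1$ keeps the support in $w+\N^\ell$ (if $g=(1-x_1)h$, the minimal exponent of $h$ agrees with that of $g$ in each coordinate), the quotient $h$ is again of the form $\pm\mathbf{x}^{w}(1+\text{higher})$. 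This is the only step needing care; I would argue it coordinatewise using that $1-x_1$ has constant term $1$, so $h=g\cdot\sum_{k\ge0}x_1^k$ in $\Z[[\mathbf{x}]]$ after shifting, and $h$ is a polynomial.

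Second, invertibility. The cone $\mathcal{C}_v$ generated by $\{w-v\}$ is then contained in $\R_{\ge0}^\ell$, so $\Z_{\mathcal{C}_v}[[\mathbf{x}]]\subseteq\Z[[\mathbf{x}]]$. Since $a_v=\pm1\in\Z^\times$, the lemma on units (the modification of \cite[Theorem 12]{AK13}) gives $(\mathbf{x}^{-v}\Delta_L)^{-1}\in\Z_{\mathcal{C}_v}[[\mathbf{x}]]\subseteq\Z[[\mathbf{x}]]$. Alternatively, and more directly, one can write the inverse as $\pm\sum_{n\ge0}(-p)^n$, which converges in the $(x_1,\dots,x_\ell)$-adic topology because $p$ lies in the maximal ideal. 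Thus $\Delta_L$, normalized by $\mathbf{x}^{-v}$, is a unit of $\Z[[\mathbf{x}]]$. By \cref{rmk:lexicographic}, this inverse coincides with the iterated Laurent series inverse in any variable ordering.
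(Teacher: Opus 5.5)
Your proposal is correct and follows the same route as the paper. The corollary is read off from the proof of \cref{prop:hom-braid-alex}: that proof shows $\Delta_L$ equals, up to a unit $\pm\mathbf{x}^{v}$, a polynomial $1+p(\mathbf{x})$ with $p(\mathbf{0})=0$, and such a polynomial is a unit in $\Z[[\mathbf{x}]]$ by the geometric series. Your additional check, that dividing by $1-x_1$ preserves the form $\pm\mathbf{x}^{w}(1+\cdots)$, fills in a step the paper leaves implicit.
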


\subsubsection{Fibered links}

Recall that an oriented link $L$ is fibered if the oriented cohomology class $\mathbb{1}^*\in H^1(M,\Z)$ is fibered. In particular, homogeneous braid links are fibered \cite{Stallings1978}.

\begin{proposition}\label{prop:fibered-link-Alex} Let $L$ be a fibered link. Then, its multivariable Alexander polynomial has a unique monomial of maximal total degree, and it has coefficient $\pm 1$.
\end{proposition}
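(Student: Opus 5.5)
The plan is to transfer the statement to link Floer homology, where \cref{thm:fibered-link-HFL} already gives exactly this kind of uniqueness, and then pass to the Euler characteristic using \eqref{eq:HFL-recovers-Alex}. Since $L$ is fibered, \cref{thm:fibered-link-HFL} provides a unique Alexander grading $h_{\max}\in\supp\HFL(L)$ maximizing $\inner{\mathbb{1}^*}{h}=\sum_i h_i$, and $\HFL(L,h_{\max})\cong\mathbb{F}$, supported in a single Maslov grading. In particular $\chi(\HFL_*(L,h_{\max}))=\pm1$. Every other $h$ with $\HFL(L,h)\neq0$ has strictly smaller total grading, and gradings where $\HFL$ vanishes contribute nothing. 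So the left-hand side of \eqref{eq:HFL-recovers-Alex} has a unique monomial of maximal total degree, namely $\pm\mathbf{x}^{h_{\max}}$.

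Next, write $P(\mathbf{x})\defeq\prod_{i=1}^\ell(x_i^{\frac12}-x_i^{-\frac12})$, so that \eqref{eq:HFL-recovers-Alex} reads $\chi_{\HFL}(\mathbf{x})=P(\mathbf{x})\Delta_L(\mathbf{x})$. The key observation is that total degree, i.e. the functional $\inner{\mathbb{1}^*}{\cdot}$, defines a grading on the Laurent polynomial ring, and leading forms are multiplicative. Concretely, if $\mathrm{top}(g)$ denotes the sum of the terms of $g$ of maximal total degree, then $\mathrm{top}(fg)=\mathrm{top}(f)\,\mathrm{top}(g)$, because $\Z[x_1^{\pm\frac12},\dots,x_\ell^{\pm\frac12}]$ is a domain. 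For $P$ we have $\mathrm{top}(P)=\prod_i x_i^{\frac12}$, a single monomial with coefficient $1$.

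Combining the two facts, $\mathrm{top}(\Delta_L)\cdot\prod_i x_i^{\frac12}=\pm\mathbf{x}^{h_{\max}}$. This forces $\mathrm{top}(\Delta_L)=\pm\mathbf{x}^{h_{\max}-\frac12\mathbb{1}}$. That is, $\Delta_L$ has a unique monomial of maximal total degree, and its coefficient is $\pm1$, which is the claim.

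The main point needing care is the case $\Delta_L\equiv0$. There the identity above would read $0=\pm\mathbf{x}^{h_{\max}}$, which is a contradiction, so $\Delta_L\neq0$ for fibered $L$ and the argument applies. One should also confirm that the normalization of \eqref{eq:HFL-recovers-Alex} matches the convention \eqref{eq:symmetry-Alex}–\eqref{eq:Hopf-Alex}. This is harmless, since the statement is invariant under multiplication by units $\pm\mathbf{x}^{a}$.

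As a sanity check, there is an alternative route via McMullen's \cref{thm:Thurston-Alexander-norms} combined with \cite[Theorem 4.1]{Dunfield}. Because $\mathbb{1}^*$ is fibered, it lies in the interior of a fibered face, so it exposes a single vertex of $\newtonalex$, and that vertex is monic. This yields the same conclusion, but I would present the Floer-theoretic argument as the main proof.
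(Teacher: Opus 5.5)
Your argument is correct and is the same as the paper's, whose proof simply cites \eqref{eq:HFL-recovers-Alex} and \cref{thm:fibered-link-HFL}. You have made explicit the step the paper leaves implicit: top-degree forms multiply in the domain $\Z[x_1^{\pm\frac12},\dots,x_\ell^{\pm\frac12}]$, and the top form of $\prod_i(x_i^{\frac12}-x_i^{-\frac12})$ is the single monomial $\prod_i x_i^{\frac12}$. One small caveat: for $\ell=1$ the Euler characteristic of knot Floer homology is $\Delta_K$ itself, with no extra factor, so in that case your argument only becomes simpler.
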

\begin{proof} It follows from \eqref{eq:HFL-recovers-Alex} and \cref{thm:fibered-link-HFL}.
\end{proof}

\begin{example}\label{ex:Dunfield-link-fiberedness} The reverse implication of \cref{prop:fibered-link-Alex} is not true. There are many known knot counterexamples that have monic Alexander polynomial, but are not fibered.

\begin{figure}
\centering
\includegraphics[width=0.25\linewidth]{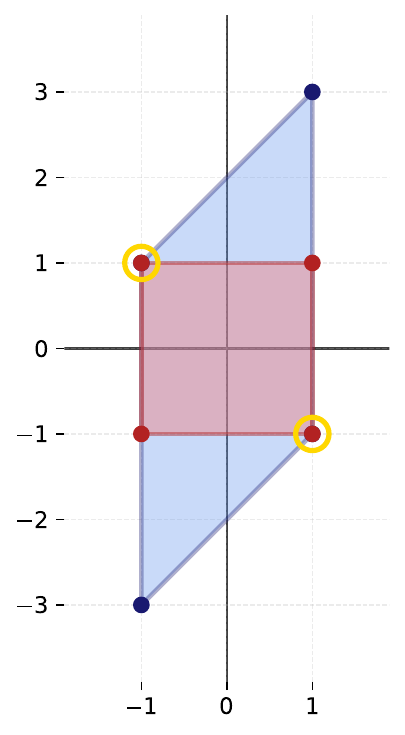}
\caption{Comparison of the dual Thurston ball $B_T^\vee$ (blue) with
$2\newtonalex$ (red). Gold rings indicate the vertices
dual to fibered faces, where the two boundaries agree as required by
McMullen's theorem.
}
\label{fig:dunfield-overlay}
\end{figure}

An interesting counterexample, which is not a knot, is $L=L10n14\{0\}$. This link already appeared in Dunfield's work \cite{Dunfield}, as an example of a link $L$ whose complement $S^3\setminus L$ is fibered, but the Thurston polytope and the dual Alexander polytope are not equal, as shown in \cref{fig:dunfield-overlay}. This link provides a counterexample to a question of McMullen in \cite{McMullen}, which is: If a link complement is fibered, must the Thurston polytope and the dual Alexander polytope be equal?

While the link complement is a fibered manifold, it is not a fibered link. Indeed, even though $\Delta_L=\frac{1}{x^{1/2}y^{1/2}}-\frac{x^{1/2}}{y^{1/2}}-\frac{y^{1/2}}{x^{1/2}}+x^{1/2}y^{1/2}$ has a unique monomial of maximal total degree, of coefficient $1$, its link Floer homology (computed from \cite{lfhcompute}) is
\begin{equation*}
\HFL\bigl(L,(i,j)\bigr)
=
\begin{cases}
\mathbb F_{(i+j)}
&
\text{if }(i,j)\in\{(-1,-1),(1,1)\},
\\[4pt]
\mathbb F_{(i+j)}\oplus\mathbb F_{(i+j+1)}
&
\text{if }(i,j)\in
\{0,1\}\times\{-2\}
\cup
\{-1,0\}\times\{2\},
\\[4pt]
\mathbb F_{(i+j)}^{\oplus 3}\oplus\mathbb F_{(i+j+1)}
&
\text{if }(i,j)\in\{-1,1\}\times\{0\},
\\[4pt]
\mathbb F_{(i+j)}^{\oplus 4}\oplus\mathbb F_{(i+j+1)}^{\oplus 2}
&
\text{if }(i,j)\in\{0\}\times\{-1,1\},
\\[4pt]
\mathbb F_{(i+j)}^{\oplus 3}\oplus\mathbb F_{(i+j+1)}^{\oplus 2}
&
\text{if }(i,j)\in\{(-1,1),(1,-1)\},
\\[4pt]
\mathbb F_{(0)}^{\oplus 6}\oplus\mathbb F_{(1)}^{\oplus 2}
&
\text{if }(i,j)=(0,0),
\\[4pt]
0
&
\text{otherwise}.
\end{cases}
\end{equation*}
There are two distinct lattice points, $(1,1)$ and $(0,2)\in\HFLpolytope$, that maximize the total sum of its components, so by \cref{thm:fibered-link-HFL}, it cannot be a fibered link.
\end{example}

\begin{example}\label{ex:L7n1-HFL}
A direct consequence of \cref{prop:fibered-link-Alex} is that the Alexander polynomial of a fibered link may not be a unit in $\Z[[\mathbf{x}]]$.

In \cite[Section 12.2.2]{Ozsvth2008}, the link Floer homology of $L=L7n1\{1\}$ is computed; it is given by
\begin{equation*}
\HFL\bigl(L,(i,j)\bigr)
=
\begin{cases}
\mathbb F_{(i+j-1)}
&
\text{if }(i,j)\in
\{0,-1\}\times\{1,2\}
\cup
\{0,1\}\times\{-1,-2\},
\\[4pt]
\mathbb F_{(0)}\oplus\mathbb F_{(-1)}
&
\text{if }(i,j)=(0,0),
\\[4pt]
0
&
\text{otherwise}.
\end{cases}
\end{equation*}
By \cref{thm:fibered-link-HFL}, the link is fibered, since the lattice point $(0,2)$ maximizes the sum of its components and $\rank{\HFL(L,(0,2))}=1$. In contrast, the multivariable Alexander polynomial of $L$ is
\begin{equation*}
    \Delta_L=\frac{x_1^{1/2}}{x_2^{3/2}}+\frac{x_2^{3/2}}{x_1^{1/2}},
\end{equation*}
which is not invertible in $\Z[[\mathbf{x}]]$. The two lexicographic orderings select different lattice points: $(1/2,-3/2)$ and $(-1/2,3/2)$. Therefore, even though the link is fibered, there exist two different expansions associated to $\Delta_L^{-1}$, namely
\begin{equation*}
    \sum_{n\geq 0} (-1)^n \frac{x_2^{(6n+3)/2}}{x_1^{(2n+1)/2}} \quad\text{ and }\quad \sum_{n\geq 0} (-1)^n \frac{x_1^{(2n-1)/2}} {x_2^{(6n-3)/2}}.
\end{equation*}
Note that this does not happen for the link $L7n1\{0\}$, which is related to $L7n1\{1\}$ by reversing the orientation of one of its link components. It is easy to see that this link is again fibered, and its Alexander polynomial is $\Delta_L=\frac{1}{x_1^{1/2}x_2^{3/2}}+x_1^{1/2}x_2^{3/2}$, whose inverse has a unique associated expansion, giving $\sum_{n\geq 0} (-1)^n x_1^{(2n+1)/2}x_2^{(6n+3)/2}$.
\end{example}

\section{Main results}
\label{sec:main-results}

\subsection{Statements}

Given a braid $\bd$ and an inversion datum $\iota$, let $c_\iota$ be the cycle induced by $\iota$. Define $\mathcal{Q}$ as the set of simple cycles in the model that do not go through the open strand if $\iota(b_1)=+1$, and that do go through the open strand if $\iota(b_1)=-1$; i.e., we will be considering the random walk model computing Alexander either from \eqref{eq:Alex-simple-cycles} or \eqref{eq:Alex-simple-cycles-through-open-strand} depending on where $c_\iota$ lives.

Given a cycle $c\in\mathcal{Q}$, we can always write its weight as
\begin{equation}\label{eq:exponent-weight-Alex-model}
    \weight{c} = \epsilon(c)\mathbf{x}^{\mathbf{a}(c)} \prod_{i=1}^\ell (1-x_i)^{b_i(c)}
\end{equation}
for some $\epsilon(c)\in\{\pm 1\}$, $\mathbf{a}(c)\in\Z^\ell$ and $b_i(c)\in\Z_{>0}^\ell$.

\begin{proposition}\label{prop:nice-iff-unital} The pair $(\bd,\iota)$ is nice if, and only if, there exists a functional $\phi \in \Z_{>0}^\ell$ for which there is a unique minimum in the set $\{\mathbf{a}(c) \mid c\in\mathcal{Q}\}$ and it is achieved at $\mathbf{a}(c_\iota)$.
\end{proposition}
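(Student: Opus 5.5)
Via the random-walk description of \cref{sec:random-walk-inverted-state-sum}, niceness becomes a statement about the monomial exponents of the multicycle weights. By \eqref{eq:Zinv-and-inverse-det-Binv} and \cref{lemma:B-matrix-inverted-Alexander}, at $q=1$ the inverted state sum is, up to the unit $\pm(1-x_1)\mathbf{x}^{-\rho(\beta)}$, the expansion of $\weight{c_\iota}^{-1}\det(I-\mathcal{B}^{\mathrm{inv}})^{-1}$, and
\[
\det(I-\mathcal{B}^{\mathrm{inv}})=\sum_{\tilde c\in\mathcal{Q}(\iota)}(-1)^{|\tilde c|}\weightinv{\tilde c}.
\]
By \eqref{eq:Winv-rel-W}, each $\weightinv{\tilde c}$ equals $\pm\weight{c}/\weight{c_\iota}$. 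Writing weights as in \eqref{eq:exponent-weight-Alex-model}, its monomial part is $\mathbf{x}^{\mathbf{a}(c)-\mathbf{a}(c_\iota)}$ times products of factors $(1-x_i)^{\pm1}$. Here the factors with negative exponent come from the inverted $R$-matrix entries \eqref{eq:positive-rmatrix-b}, \eqref{eq:negative-rmatrix-b} and are expanded as geometric series at $x_i=0$. The empty multicycle, corresponding to $c_\iota$, contributes $1$.

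\textbf{($\Leftarrow$).} Suppose $\phi\in\Z_{>0}^\ell$ has $\mathbf{a}(c_\iota)$ as unique minimiser on $\{\mathbf{a}(c)\}$. Then each nonempty $\tilde c$ has weight supported in the half-space $\{\inner{\phi}{\cdot}\ge\delta\}$ for some $\delta>0$. The $(1-x_i)^{\pm1}$ expansions only add elements of $\N^\ell$, which raise $\phi$ because $\phi$ is positive. Hence $1-\det(I-\mathcal{B}^{\mathrm{inv}})$ lies in that half-space. The next step is the key one: transfer this $\phi$-grading from multicycles to states. The plan is to show that the minimal $\mathbf{x}$-degree $X(s)$ of $P(s)$ is, up to a fixed shift, the sum of the exponents $\mathbf{a}$ of the primitive cycles in any multicycle realising $s$, plus nonnegative corrections. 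Then $\inner{\phi}{X(s)}\ge\delta\cdot(\text{number of cycles})$. Each primitive cycle changes the total degree of the state by one, so a bound on $\inner{\phi}{X(s)}$ bounds the occupation numbers, and hence the state $s$. Thus only finitely many states lie below each degree. A small refinement is needed here: the truncation in \eqref{eq:degree-truncation-ss} uses $\inner{\mathbb 1}{X}$, not $\inner{\phi}{X}$. Since $\phi$ is strictly positive, I would handle this by comparing the two gradings on the cone generated by the cycle exponents, using \cref{cor:one-cutoff}. Realising $Q_N(\iota)$ as a rational polyhedron, boundedness is equivalent to its recession cone meeting $\{\inner{\phi}{X}\le 0\}$ only at $0$.

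\textbf{($\Rightarrow$).} Assume $(\bd,\iota)$ is nice. Then $Q_N(\iota)$ is a nonempty bounded polyhedron, so its recession cone, the real relaxation of the cone spanned by cycle states, is trivial inside $\{\inner{\mathbb 1}{X}\le 0\}$. Equivalently, every nonzero nonnegative combination of the vectors $\mathbf{a}(c)-\mathbf{a}(c_\iota)$ with $c\ne c_\iota$ has positive total degree. The relevant combinations are the primitive-cycle directions together with the $\N^\ell$ directions from the geometric series. By a Farkas/Gordan alternative on this finite set of rational vectors, a strictly positive functional separates the cone $\mathrm{Cone}\{\mathbf{a}(c)-\mathbf{a}(c_\iota)\}+\R_{\ge0}^\ell$ from $0$. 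Perturbing and scaling gives $\phi\in\Z_{>0}^\ell$ with $\inner{\phi}{\mathbf{a}(c)-\mathbf{a}(c_\iota)}>0$ for all $c\ne c_\iota$, which is the required unique minimum.

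The main obstacle is the dictionary between states and multicycles in the inverted model. One must show that $X(s)$ is additive over the primitive cycles of a decomposition and that the $(1-x_i)^{\pm1}$ and $q$-binomial factors never lower the minimal degree. Otherwise cancellations could hide a degree-decreasing cycle. I would establish this crossing by crossing from \eqref{eq:positive-rmatrix-a}--\eqref{eq:negative-rmatrix-b}, checking that the $x$-exponent of each entry is affine in the labels, with $\mathbf{a}$ the linear part.
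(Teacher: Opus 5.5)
Your route is the paper's in both directions. For ($\Rightarrow$), the paper also passes to the recession cone $R(\iota)$ of the state polyhedron and turns boundedness into \eqref{eq:recessioncriterion}. It then separates $A(R(\iota))$ from $-\mathbb{R}^\ell_{\ge0}$ to get $\phi\in\mathbb{Z}^\ell_{>0}$, and uses $\langle\phi,\mathbf a(c)-\mathbf a(c_\iota)\rangle\ge\langle\phi,A(r_{\tilde c})\rangle>0$. For ($\Leftarrow$), it decomposes recession vectors into occupation vectors of inverted cycles, which is your decomposition of states into multicycles.

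What you call the main obstacle is harmless. In each $P(s)$ the $q$-binomials and the factors $(1-q^a y)^{\pm1}$ have nonzero constant term, so $X(s)=A(s)+d$ is exactly affine. Cancellations between different states are irrelevant, since niceness counts states. Also, in ($\Rightarrow$) no Gordan step is needed. With the literal definition, $A(r_{\tilde c})=\mathbf a(c)-\mathbf a(c_\iota)$ already has positive total degree, so $\phi=\mathbb 1$ works.

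The genuine gap is your ``small refinement'', and it cannot be repaired. Strict positivity of $\phi$ controls $\langle\mathbb 1,\cdot\rangle$ only on $\mathbb{R}^\ell_{\ge0}$, not on a cone of cycle exponents containing vectors with negative entries. In fact, with the $\mathbb 1$-truncation of \eqref{eq:degree-truncation-ss}, the implication ($\Leftarrow$) is false. By the previous remark, $\mathbb 1$-niceness forces $\mathbb 1$ itself to expose $\mathbf a(c_\iota)$, so any datum exposed only by some other positive functional is a counterexample.

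Concretely, take $L7n1\{1\}$ at the Thurston vertex $(-1,3)$, i.e.\ $v=(-\tfrac12,\tfrac32)$, with the empty inversion cycle and $\phi=(4,1)$ (\cref{tab:L7n1-tab-all}). There $\mathbf x^{-v}\Delta=1+xy^{-3}$, so the series has nonzero monomials $x^{n+1/2}y^{-3n-3/2}$ of total degree $-2n-1$. A state only contributes to monomials $\ge X(s)$ coordinatewise, so every $Q_N(\iota)$ contains infinitely many states.

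The paper's own proof slips at the same point. It only shows that no nonzero $r\in R(\iota)$ has $A(r)\le0$ coordinatewise, which is weaker than \eqref{eq:recessioncriterion}. What both arguments actually prove is one of two things:
\begin{enumerate}
\item the equivalence with niceness measured by $\{s:\langle\phi,X(s)\rangle\le N\}$ for some $\phi\in\mathbb{Z}^\ell_{>0}$, which is the notion the paper's examples use;
\item the literal statement with $\phi=\mathbb 1$.
\end{enumerate}
Under either reading, your bound $\langle\phi,X(s)-X(s_0)\rangle\ge\delta\cdot\#\{\text{cycles}\}$ already finishes ($\Leftarrow$), and no comparison of gradings is needed.
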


From \cref{lemma:B-matrix-inverted-Alexander} it follows that, if such a functional $\phi$ exists, it must have a unique minimum in the Newton polytope of the Alexander polynomial, and this minimum must be $\mathbf{a}(c_\iota)+\rho(\beta)$. Therefore, we can identify $\mathbf{a}(c_\iota)$ with an extremal vertex in $\newtonalex$, after normalization.

\begin{corollary}\label{cor:inversion-datum-extremal-vertex} Let $(\beta,\iota)$ be nice. Then, there exists a (unique) extremal vertex $v\in\supp \Delta_L\subseteq\newtonalex$ and a positive functional $\phi_v\in\newtonalex^\vee$ such that
\begin{enumerate}
    \item $v=\rho(\beta)+\mathbf{a}(c_\iota)$.
    \item $\phi_v$ is uniquely minimized at $v$. In other words, $v$ is positively exposed by $\phi_v$.
 \end{enumerate}
\end{corollary}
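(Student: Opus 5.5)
We derive this from \cref{prop:nice-iff-unital} and \cref{lemma:B-matrix-Alexander}, which say the same thing at the level of multicycles. Since $(\beta,\iota)$ is nice, \cref{prop:nice-iff-unital} gives a positive functional $\phi\in\Z_{>0}^\ell$ whose minimum over $\{\mathbf{a}(c)\mid c\in\mathcal{Q}\}$ is attained uniquely at $\mathbf{a}(c_\iota)$. We take $\phi_v\defeq\phi$ and $v\defeq\rho(\beta)+\mathbf{a}(c_\iota)$. It then remains to show that $v\in\supp\Delta_L$ and that $\phi$ minimizes $\langle\phi,\cdot\rangle$ over $\supp\Delta_L$ uniquely at $v$.

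Write $\mathbf{x}^{-\rho(\beta)}(1-x_1)\Delta_L(\mathbf{x})=\pm\sum_{c\in\mathcal{Q}}(-1)^{|c|}\weight{c}$. This identity is \eqref{eq:Alex-simple-cycles} or \eqref{eq:Alex-simple-cycles-through-open-strand}, according to whether $c_\iota$ passes through the open strand; equivalently it follows from \cref{lemma:B-matrix-Alexander}. Expand each weight as in \eqref{eq:exponent-weight-Alex-model}: the term $\epsilon(c)\mathbf{x}^{\mathbf{a}(c)}\prod_i(1-x_i)^{b_i(c)}$ has support in $\mathbf{a}(c)+\N^\ell$, and its unique $\phi$-minimal monomial is $\epsilon(c)\mathbf{x}^{\mathbf{a}(c)}$. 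This uses that $\phi$ is strictly positive, so every other monomial of $\prod(1-x_i)^{b_i}$ strictly increases $\langle\phi,\cdot\rangle$.

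Since $\mathbf{a}(c_\iota)$ is the unique minimizer among the $\mathbf{a}(c)$, the monomial $\mathbf{x}^{\mathbf{a}(c_\iota)}$ appears exactly once at the minimal $\phi$-level, with coefficient $\pm1$. So it cannot cancel, and it is the unique $\phi$-minimal monomial of the sum. Dividing by $(1-x_1)$ does not change the $\phi$-minimal monomial: we can see this by multiplying back, because the $\phi$-minimal term of a product with $(1-x_1)$ is the product of the $\phi$-minimal terms when $\phi_1>0$. Hence $\mathbf{x}^{-\rho(\beta)}\Delta_L$ has unique $\phi$-minimal monomial $\pm\mathbf{x}^{\mathbf{a}(c_\iota)}$. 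After shifting by $\rho(\beta)$, this says $v=\rho(\beta)+\mathbf{a}(c_\iota)\in\supp\Delta_L$ is uniquely minimized by $\phi_v$. In particular $v$ is a positively exposed vertex of $\newtonalex$, and it is integral because its coefficient is $\pm1$.

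Uniqueness of $v$ is then immediate: $v$ is determined by $(\beta,\iota)$ through the formula in (1), independently of which functional $\phi$ is chosen. The step I expect to need the most care is the sign and normalization bookkeeping. We must make sure the same $\rho(\beta)$ and the same model $\mathcal{Q}$ (through the open strand or not) appear on both sides, and that the factor $(1-x_1)$ is handled correctly when $c_\iota$ uses the open strand. For this we would use \cref{lemma:B-matrix-inverted-Alexander}, together with the fact that \eqref{eq:Alex-simple-cycles-through-open-strand} differs from \eqref{eq:Alex-simple-cycles} only by an overall sign.
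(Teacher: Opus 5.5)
Your argument is correct and is essentially the paper's. The paper reads the corollary off \cref{lemma:B-matrix-inverted-Alexander}: there $\det(I-\mathcal{B}^{\mathrm{inv}})$ is $1$ plus terms of strictly positive $\phi$-degree, because every nonempty inverted multicycle has positive $\phi$-degree, and via \eqref{eq:Winv-rel-W} this is exactly your uninverted computation. One step to make explicit: your no-cancellation argument needs the strict inequality $\langle\phi,\mathbf{a}(c)\rangle>\langle\phi,\mathbf{a}(c_\iota)\rangle$ for every multicycle $c\neq c_\iota$, not merely that $\mathbf{a}(c_\iota)$ is the unique minimizing vector; this stronger form is what the proof of \cref{prop:nice-iff-unital} actually establishes.
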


Given the above, we propose the following replacement for \cite[Theorem 2]{Park21}.
\begin{theorem}\label{thm:main-thm} Let $\bd$ be a braid and $\iota$ an inversion datum such that the pair $(\bd,\iota)$ is nice.
Let $v$ and $\phi_v$ be the extremal vertex and the functional on the Newton polytope of the Alexander polynomial described in \cref{cor:inversion-datum-extremal-vertex}. Denote by $F_L^{(2v)}(\mathbf{x},q)\defeq F_{(\beta,\iota)}(\mathbf{x},q)$ the resulting inverted state sum. Then,
\begin{enumerate}

    \item $F_L^{(2v)}(\mathbf{x},q)$ is an invariant of the pair $(L,2v)$.
    \item $F_L^{(2v)}(\mathbf{x},q)\in \mathbf{x}^{-v}(\Z[q,q^{-1}])_{\mathcal{C}_v}[[\mathbf{x}]]$, where $\mathcal{C}_v$ is the cone on $\newtonalex$ centered at $v$. In other words, we can write
    \begin{equation*}
        F_L^{(2v)}(\mathbf{x},q)=\mathbf{x}^{-v}\sum_{\mathbf{k}\in \mathcal{C}_v} f_\mathbf{k}(q)\mathbf{x}^\mathbf{k}\,,
    \end{equation*}
    where $f_\mathbf{k}(q)\in\Z[q,q^{-1}]$.
    \item Setting $q=1+\hbar$, its power expansion at $\hbar=0$ agrees with the Melvin--Morton--Rozansky expansion of the colored Jones polynomials in $\mathbf{x}^{-v}\Z_{\mathcal{C}_v}[[\mathbf{x}]]$.
    \item $F_L^{(2v)}(\mathbf{x},q)$ is annihilated by the quantum $A$-ideal.
\end{enumerate}
\end{theorem}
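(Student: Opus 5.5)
The proof goes in three stages: first we pin down the support of the series (part (2)), then we identify its $q=1$ specialization and the higher $\hbar$-corrections (part (3)), and finally we deduce invariance and the $\hat A$-annihilation (parts (1) and (4)) from these identifications, following the structure of Park's argument in \cite{Park21}.

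\textbf{Support (part (2)).} By \cref{prop:nice-iff-unital} and \cref{cor:inversion-datum-extremal-vertex}, the positive functional $\phi_v$ is uniquely minimized over the exponents $\{\mathbf{a}(c)\}$ at $\mathbf{a}(c_\iota)$. Under the bijection $c\leftrightarrow\tilde c$ of \cref{sec:random-walk-inverted-state-sum}, the relation \eqref{eq:Winv-rel-W} shows that every nonempty simple multicycle $\tilde c\in\mathcal{Q}(\iota)$ has weight whose monomial exponents lie in $\mathbf{a}(c)-\mathbf{a}(c_\iota)$ plus contributions of the factors $(1-x_i)^{\pm b_i}$. Those factors are expanded at $x_i=0$, so they only add exponents in $\N^\ell$, and $\N^\ell$ is $\phi_v$-positive.

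A general multicycle in $\mathcal{P}(\iota)$ is a concatenation of primitive cycles, so its exponents lie in the monoid generated by these shifts. Each generator lies in $\mathcal{C}_v$ after the identification $v=\rho(\beta)+\mathbf{a}(c_\iota)$: the Newton-polytope points of $\Delta_L$ are exactly the $\rho(\beta)+\mathbf{a}(c)$, by \cref{lemma:B-matrix-Alexander}, up to cancellations. Here I must check that the $\N^\ell$ directions are also absorbed into $\mathcal{C}_v$. I would do this by noting that the factor $(1-x_1)$ and the normalizing monomials in \eqref{eq:FL-parametrized-state-sum} cancel the corresponding factors in the Alexander polynomial, so that the final expression has support in $\mathcal{C}_v$ once it is identified with $\Delta_L^{-1}$ expanded around $v$.

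At generic $q$, the $q$-deformed $\check R$ entries have the same $\mathbf{x}$-support as the $q=1$ entries up to the multiplicity-preserving $q$-binomials. Niceness, via \cref{cor:one-cutoff}, then gives finitely many states in each $\mathbf{x}$-degree, hence Laurent-polynomial coefficients $f_{\mathbf k}(q)$.

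\textbf{MMR expansion (part (3)).} At $q=1$, combine \eqref{eq:Zinv-and-inverse-det-Binv}, \cref{lemma:B-matrix-inverted-Alexander} and \eqref{eq:FL-parametrized-state-sum}. This yields $F_{(\beta,\iota)}(\mathbf{x},1)=\pm\,\Delta_L^{-1}$, where the inverse is taken in $\mathbf{x}^{-v}\Z_{\mathcal{C}_v}[[\mathbf{x}]]$. The reason is that the formal power series $1/\det(I-\mathcal{B}^{\mathrm{inv}})$ converges in this ring and the inverse there is unique.

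For the higher $\hbar$-orders, I would follow Park: expand each $\check R$ entry in $\hbar$. Its $\hbar^j$ coefficient is a polynomial in the state labels times the $q=1$ weight, so summing over states gives derivatives of $1/\det(I-\mathcal B^{\mathrm{inv}})$ with respect to the parameters. These are rational functions with denominator $\Delta_L^{2j+1}$. Rozansky's result \cite{Rozansky1} identifies the same rational functions from the ordinary (uninverted) large-color state sum, which computes $\mathit{MMR}_L$. Since the inverted and ordinary sums are formal expansions of the same rational function in different rings, the numerators $P_L^{(j)}$ agree. Thus the $\hbar$-expansion is the expansion of $\mathit{MMR}_L$ near $v$.

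\textbf{Invariance and $\hat A$-annihilation (parts (1) and (4)).} Invariance follows because the $\hbar$-expansion at each order is determined by $(L,v)$ through $\mathit{MMR}_L$ and the ring $\mathbf{x}^{-v}\Z_{\mathcal{C}_v}[[\mathbf{x}]]$. A series in $\Z[q^{\pm1}]_{\mathcal{C}_v}[[\mathbf x]]$ is determined by its $\hbar$-expansion, since each coefficient $f_{\mathbf{k}}(q)$ is a Laurent polynomial determined by its Taylor series at $q=1$. The relation $2v$ versus $v$ is only the normalization convention. Alternatively, one could check invariance under Markov moves preserving the choice of $v$, but the expansion argument avoids this.

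For (4), the colored Jones polynomials satisfy the $q$-holonomic recursion. The $\hat A$-operators act on $\mathbf{x}$ by shifts $x_i\mapsto q x_i$, which preserve $\mathbf{x}^{-v}R_{\mathcal{C}_v}[[\mathbf{x}]]$. Applying the operator to $F_L^{(2v)}$ gives a series whose $\hbar$-expansion is the expansion of $\hat A$ applied to $\mathit{MMR}_L$, which vanishes perturbatively. By the same injectivity, the series itself vanishes.

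The main obstacle is part (3) beyond leading order. The delicate points are justifying that the ordinary and inverted state sums give the same rational function at each $\hbar$-order, and that rearranging infinite state sums is legitimate in the cone ring. I would first attempt this order by order using the parametrized matrices $\mathcal{B}^{\mathrm{inv}}$ and differentiation in $(\boldsymbol\alpha,\boldsymbol\beta,\boldsymbol\gamma)$.
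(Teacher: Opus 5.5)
Your overall architecture matches the paper's. You identify the $q=1$ specialization with $\pm(\Delta_L)^{-1}_v$ using \eqref{eq:Winv-rel-W}, \cref{lemma:B-matrix-inverted-Alexander} and \eqref{eq:Zinv-and-inverse-det-Binv}. You get the higher $\hbar$-orders from Park's differential operators $D_j$ acting on $\mathrm{Z}$ and $\mathrm{Z}^{\mathrm{inv}}$, which are expansions of the same rational function. You obtain invariance and $\hat A$-annihilation because a series with Laurent-polynomial coefficients is determined by its $\hbar$-expansion; the paper simply defers these last items to \cite{Park21}.

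However, the $q=1$ identification is circular as written. You assert that each generator $\mathbf a(c)-\mathbf a(c_\iota)$ lies in $\mathcal C_v$, and that $1/\det(I-\mathcal B^{\mathrm{inv}})$ converges in $\mathbf x^{-v}\Z_{\mathcal C_v}[[\mathbf x]]$. You then use uniqueness of inverses in that ring. Neither premise holds a priori. First, $\supp\Delta_L$ is only what survives cancellation among cycle weights; in \cref{ex:T24*-cycles} the monomial $-x_1^2$ coming from $c_1$ cancels. Second, the factors $(1-x_i)^{\pm b_i}$ contribute exponents throughout $\N^\ell$, while $\mathcal C_v$ can be far smaller: for $T(2,4)$ it is a single ray.

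The missing idea, which is exactly the paper's lemma, is to compare inside the ring of the larger cone $\mathcal C_{\mathbf a(c_\iota)}+\mathcal C_+$. This cone is spanned by the shifts $\mathbf a(c)-\mathbf a(c_\iota)$ together with the positive orthant.
\begin{itemize}
\item Your first paragraph already shows $\phi_v>0$ on this cone minus the origin, so it is line-free and its series form a ring.
\item It contains every exponent of $\mathbf x^{-v}\Delta_L$, so $\Z_{\mathcal C_v}[[\mathbf x]]$ is a subring in which $\mathbf x^{-v}\Delta_L$ is a unit.
\item The normalized state sum and $(\Delta_L)^{-1}_v$ are then two inverses of the same element of one commutative ring, hence equal.
\end{itemize}
Only at that point does the $q=1$ support collapse onto $\mathbf x^{-v}\mathcal C_v$.

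Separately, your generic-$q$ sentence gives Laurent-polynomial coefficients $f_{\mathbf k}(q)$, but not their vanishing for $\mathbf k\notin\mathcal C_v$. Equal supports of the individual $\check R$ entries only confine the support to the larger cone. The vanishing outside $\mathcal C_v$ at $q=1$ is produced by cancellations between different states, which need not persist at generic $q$. You could deduce it from part (3) by injectivity, but that needs every $\hbar$-order of the expansion near $v$ to be supported in $\mathbf x^{-v}\mathcal C_v$, i.e.\ a support bound on the numerators $P_L^{(j)}$. The paper's written proof also establishes the cone support only at $q=1$ and refers to \cite{Park21} for the rest, so this point has to be supplied in either version.
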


A first consequence of the theorem is that we can make the following definitions.

\begin{definition} Let $L$ be an oriented link and $v$ an element of the dual Thurston polytope of $L$. Then, the pair $(L,v)$ is nice if there exists a braid $\bd$ and inversion datum $\iota$ such that $(\beta,\iota)$ is nice and $\frac{v}{2}=\mathbf{a}(c_\iota)+\rho(\beta)$.
\end{definition}
Define $\Hnice$ as follows:
\begin{equation}
    \Hnice \defeq \{v \mid v \in 2\mathrm{Vert}(\newtonalex) \text{ and }(L,v) \text{ is a nice pair}\}\subseteq 2\AL.
\end{equation}
Note that, by the inclusion of (twice) the Newton polytope of the Alexander polynomial into the dual Thurston polytope, we can identify $\Hnice$ as a discrete subset of the dual Thurston polytope. We say a link $L$ is nice if the set $\Hnice$ is not empty.

\begin{remark}\label{rmk:non-positively-exposed-FL}
    Our definition of niceness forces the $F_L$ series we obtain to be associated to positively exposed vertices of $\newtonalex$. It is possible to generalize the computation to vertices that are not positively exposed, by substituting the definition of niceness in \eqref{eq:degree-truncation-ss} with
    \begin{equation}
        Q_N^{\nu}(\iota)   \defeq \{s\in\Omega_\R(\iota) \mid \langle \nu, X(s)\rangle \leq N \}
    \end{equation}
    where $\nu\in \{\pm1\}^{\ell}$.
    In other words, we should be able to compute the series $F_L^{(v)}(\mathbf{x},q)$ associated to arbitrary extremal vertices of $\newtonalex$ by choosing a suitable $\nu$.
    Nevertheless, as in \cref{sec:positive-convention}, these series can be recovered by considering the partial orientation reversal of the link.
    More precisely, let $L^\nu$ be as above and let $v^{\nu}=(\nu_1v_1,\dots,\nu_{\ell}v_{\ell})$ be the corresponding vertex of $N(\Delta_{L^{\nu}})$. Then,
    \begin{equation}
        F_{L^\nu}^{(v^{\nu})}(\mathbf{x},q)
        \doteq
        F_L^{(v)}(\mathbf{x}^{\nu},q)~,
    \end{equation}
    where the overall sign and $q$-power will be dictated by their respective nice braids and inversion cycles. Consequently, allowing the generalized notion of niceness gives access to more series for a fixed orientation of $L$, but does not produce new series once all partial orientation reversals are considered.

    We expect the overall $q$-power to be the following: set
    \begin{equation}
        \lambda_\nu(L) \defeq \sum_{\substack{i\,:\,\nu_i=-1\\j\,:\,\nu_j=+1}}\operatorname{lk}(L_i,L_j).
    \end{equation}
    Thus $\lambda_\nu(L)$ is the linking number between the sublink whose orientation is reversed and its complement, computed before the reversal.
    We expect that, for some $\varepsilon_{v,\nu}\in\{\pm1\}$,
    \begin{equation}
        F_{L^\nu}^{(v^{\nu})}(\mathbf{x},q)
        = \varepsilon_{v,\nu} \, q^{-\lambda_\nu(L)}
        F_L^{(v)}(\mathbf{x}^{\nu},q)~.
    \end{equation}
    Note that this would follow from \cref{conj:maslov-links}.

\end{remark}

\subsection{Proofs}

Before proving \cref{prop:nice-iff-unital}, we set some definitions and establish some supporting lemmas.
The \emph{recession cone} of a nonempty polyhedron $P\subseteq\R^{n}$ is the cone
\[
\operatorname{rec}(P)=\{r\in \R^{n}: s+tr\in P\text{ for every }s\in P
\text{ and every }t\geq0\}.
\]
The cone records precisely the directions in which one can travel arbitrarily far without leaving $P$.
For the state polyhedron, write
\[
R(\iota)=\operatorname{rec}P_{\R}(\iota),
\]
where $P_{\R}(\iota)$ is the real relaxation of the state polyhedron.
\begin{lemma}\label{lemma:translated-cone}
For every locally admissible inversion datum $\iota$,
\[
P_{\R}(\iota)=s_0+R(\iota),
\qquad
P(\iota)=s_0+\bigl(R(\iota)\cap\Z^{E_\beta}\bigr),
\]
where $s_0$ is the ground state introduced in \cref{sec:inverted-state-sum}.
\end{lemma}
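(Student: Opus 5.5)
The plan is to show that, after translating by the ground state, every constraint defining the state polyhedron becomes homogeneous. Write a general real state as $s=s_0+t$ with $t\in\R^{E_\beta}$, and list the constraints defining $P_\R(\iota)$:
\begin{enumerate}
    \item the sign constraints $s(e)\geq 0$ for $\iota(e)=+$ and $s(e)\leq -1$ (the real relaxation of $s(e)<0$) for $\iota(e)=-$;
    \item the conservation laws \eqref{eq:inverted-state-conservation} at every crossing;
    \item the open-strand condition $s(b_1)=s_0(b_1)$;
    \item the validity inequalities at each crossing, read off from the index ranges in \eqref{eq:positive-rmatrix-a}--\eqref{eq:negative-rmatrix-b}, i.e.\ $i\geq j'$ (resp.\ $j\geq i'$) whenever the two labels carry the same sign, with no further condition in the mixed cases \eqref{eq:positive-rmatrix-b}, \eqref{eq:negative-rmatrix-b}.
\end{enumerate}
I will check constraint by constraint that, in the variable $t$, each of these is a homogeneous linear equality or inequality.

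The sign constraints become $t(e)\geq 0$ on $+$--segments and $t(e)\leq 0$ on $-$--segments, because $s_0(e)$ equals $0$ or $-1$ exactly matching the constant terms. Conservation is linear and $s_0$ satisfies it by the local patterns \eqref{eq:allowed-sign-patterns}, so $t$ satisfies the homogeneous version. The open-strand condition becomes $t(b_1)=0$. For the validity inequalities I go through the five allowed patterns at each crossing sign. For the $++/++$ and $--/--$ patterns the comparison is between two segments of the same sign, so $s_0$ takes the same value on both and $i\geq j'$ becomes $t_i\geq t_{j'}$. In the mixed patterns the only conditions are sign conditions, already handled above. Hence
\[
P_\R(\iota)=s_0+C,\qquad C=\{t\mid\text{homogeneous constraints above}\},
\]
and $C$ is a polyhedral cone.

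It remains to identify $C$ with $R(\iota)$. Since $0\in C$, we have $s_0\in P_\R(\iota)$, and a translate of a cone $s_0+C$ has recession cone exactly $C$: if $r\in C$ then $s_0+t+\lambda r\in s_0+C$ for all $t\in C$, $\lambda\geq0$. Conversely, if $r\in R(\iota)$ then $s_0+\lambda r\in P_\R(\iota)$, so $\lambda r\in C$, and taking $\lambda=1$ gives $r\in C$. For the integral statement, $s_0\in\Z^{E_\beta}$, so $s$ is integral iff $t$ is. The strict inequalities $s(e)<0$ on integers coincide with $s(e)\leq -1$, so $P(\iota)=P_\R(\iota)\cap\Z^{E_\beta}=s_0+(R(\iota)\cap\Z^{E_\beta})$.

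The main obstacle is the case check of the validity inequalities: one has to confirm from \eqref{eq:positive-rmatrix-a}--\eqref{eq:negative-rmatrix-b} that every range condition compares only segments carrying equal inversion signs, or compares against the thresholds $0$ and $-1$ that coincide with $s_0$. I would tabulate the five patterns of \eqref{eq:allowed-sign-patterns} for each crossing sign to verify this.
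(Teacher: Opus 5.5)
Your proposal is correct and follows the same route as the paper. Translating by the ground state $s_0$, which saturates every defining inequality, makes all constraints homogeneous and identifies $P_\R(\iota)-s_0$ with $R(\iota)$. Your constraint-by-constraint check makes explicit what the paper asserts ``by definition,'' and your integrality remark covers the second equality, which the paper leaves implicit.

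One slip in your case list: the two pass-through patterns (incoming $(+,-)$ with outgoing $(-,+)$, and vice versa) are not pure sign conditions. At a positive crossing they still impose $i\geq j'$, and at a negative crossing $j\geq i'$, each time between two segments of equal sign. Only the jump pattern at each crossing falls under \eqref{eq:positive-rmatrix-b} or \eqref{eq:negative-rmatrix-b}. Since $s_0$ takes the same value on the compared segments in those patterns too, the inequalities are again homogeneous after translation, and your conclusion is unaffected.
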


\begin{proof}
The defining relations of $P_{\R}(\iota)$ --- conservation \eqref{eq:inverted-state-conservation}, the sign bounds, the local nonvanishing inequalities for the extended $R$-matrix, and the ground condition on the opened strand --- are finitely many linear inequalities and equalities satisfied by any state $s$.
Collecting the inequalities into a matrix $A$ and vector $v$, and the equalities into $B$, this is exactly
\[
P_{\R}(\iota)=\{s\in\R^{E_\beta}: As\leq v,\ Bs=0\},
\qquad
R(\iota)=\{r\in\R^{E_\beta}: Ar\leq0,\ Br=0\},
\]
where $\leq$ between vectors means componentwise inequality, i.e.\ $(As)_k\leq v_k$ for every row $k$ of $A$. The description of $R(\iota)$ follows from the definition of the recession cone of a system of linear relations.

The ground state $s_0$ saturates the inequality $As\leq v$ by definition.

Both inclusions are now immediate matrix computations.
Suppose $r\in R(\iota)$, so $Ar\leq0$ and $Br=0$.
Then
\[
A(s_0+r)=As_0+Ar=v+Ar\leq v,
\qquad
B(s_0+r)=Bs_0+Br=w+0=w,
\]
so $s_0+r\in P_{\R}(\iota)$. Conversely, suppose $s\in P_{\R}(\iota)$, so $As\leq v$ and $Bs=0$, and set $r=s-s_0$. Then
\[
Ar=As-As_0=As-v\leq0,
\qquad
Br=Bs-Bs_0=w-w=0,
\]
so $r\in R(\iota)$ and $s=s_0+r\in s_0+R(\iota)$. Hence $P_{\R}(\iota)=s_0+R(\iota)$.

\end{proof}

For each component $L_i$, let $X_i(s)$ denote the minimum $x_i$-degree of the summand indexed by the state $s$.
The local degree formulas extend this to an affine function on the ambient real state space, which we write as
\[
X(s)=A(s)+d,
\]
where $A(s)\in\frac12\Z^\ell$ is linear and $d\in\frac12\Z^\ell$ is a constant vector\footnote{Notice that the vector $d$ actually agrees with $\rho(\beta)$, as defined in \eqref{eq:rho-beta-def}; however, this observation is not needed for this proof.}.
Thus, whenever $r\in R(\iota)$,
\[
X(s+tr)=X(s)+tA(r).
\]

\begin{lemma}\label{prop:recessioncriterion}
An inversion datum $\iota$ is nice if, and only if,
\begin{equation}\label{eq:recessioncriterion}
R(\iota)\cap
\{r:\left\langle \mathbb{1}, A(r)\right\rangle\leq0\}
=\{0\}.
\end{equation}
\end{lemma}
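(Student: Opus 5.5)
We reduce niceness to boundedness of a sublevel set of an affine function on the polyhedron $P_{\R}(\iota)$. By \cref{lemma:translated-cone}, $P_{\R}(\iota)=s_0+R(\iota)$, and for $s=s_0+r$ with $r\in R(\iota)$ we have $X(s)=X(s_0)+A(r)$. Writing $N_0\defeq\langle\mathbb{1},X(s_0)\rangle$, we get
\[
Q_N(\iota)=s_0+\bigl\{r\in R(\iota)\ \big|\ \langle\mathbb{1},A(r)\rangle\leq N-N_0\bigr\}.
\]
The set $Q_N(\iota)$ is nonempty exactly when $N\geq N_0$, since $r=0$ then lies in it. So it suffices to show: for $N\geq N_0$, the polyhedron $K_c\defeq\{r\in R(\iota)\mid \langle\mathbb{1},A(r)\rangle\leq c\}$ with $c=N-N_0\geq0$ is bounded if and only if \eqref{eq:recessioncriterion} holds.

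For the direction assuming \eqref{eq:recessioncriterion}, I would compute the recession cone of $K_c$ directly. It is $R(\iota)\cap\{r\mid\langle\mathbb{1},A(r)\rangle\leq0\}$, because $K_c$ is cut out by the homogeneous system defining $R(\iota)$ together with one extra affine inequality whose homogenization is $\langle\mathbb{1},A(r)\rangle\leq 0$. By hypothesis this cone is $\{0\}$. A nonempty polyhedron with trivial recession cone is bounded (Minkowski--Weyl: $K_c=\mathrm{conv}(V)+\operatorname{rec}K_c$ with $V$ finite). Hence $Q_N(\iota)$ is nonempty and bounded for, say, $N=\lceil N_0\rceil$, and $(\beta,\iota)$ is nice.

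Conversely, suppose some $r\neq0$ lies in $R(\iota)$ with $\langle\mathbb{1},A(r)\rangle\leq0$. Then for any $N$ with $Q_N(\iota)\neq\emptyset$, we have $N\geq N_0$, so $s_0\in Q_N(\iota)$. The whole ray $s_0+tr$, $t\geq0$, stays in $P_{\R}(\iota)$ and satisfies $\langle\mathbb{1},X(s_0+tr)\rangle=N_0+t\langle\mathbb{1},A(r)\rangle\leq N$. So $Q_N(\iota)$ is unbounded for every such $N$, and $(\beta,\iota)$ is not nice.

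The equivalent integral formulation in \cref{def:nice-ss} (finitely many lattice states) also needs checking. Bounded implies finitely many integer points. For the converse, $R(\iota)$ is a rational cone, since all the defining relations have rational coefficients. So a nonzero $r$ as above can be scaled to an integral vector, and $s_0+tr$ for $t\in\N$ gives infinitely many integral states in $P(\iota)=s_0+(R(\iota)\cap\Z^{E_\beta})$, each of total minimal degree at most $N$.

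The main obstacle is justifying the linear-algebra setup rather than the convexity argument. Specifically, one must check that the local validity conditions from \eqref{eq:positive-rmatrix-a}--\eqref{eq:negative-rmatrix-b}, and the minimal $x_i$-degrees $X_i(s)$, are genuinely linear (respectively affine) on each region prescribed by $\iota$. One point needs particular care: the factors $\prod(1-q^{j-r}y)^{-1}$, expanded at $y=0$, do not lower the minimal degree, so $X$ is read off the monomial prefactors alone. This is what legitimizes writing $X(s)=A(s)+d$ as asserted before the lemma.
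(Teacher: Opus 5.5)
Your proof is correct in substance, but the sufficiency direction (criterion $\Rightarrow$ nice) takes a different route from the paper's.

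\textbf{Comparison of routes.} The paper proves this direction by contrapositive, with a compactness argument. It takes states $s_j\in Q_N(\iota)$ escaping to infinity, normalizes $s_j-s$, extracts a convergent subsequence, and checks that the limiting unit vector lies in $R(\iota)$ with $\langle\mathbb{1},A(r)\rangle\le0$. You instead use \cref{lemma:translated-cone} to write $Q_N(\iota)=s_0+K_{N-N_0}$. You then identify the recession cone of $K_c$ with the cone in the criterion and conclude by Minkowski--Weyl.

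\begin{itemize}
\item Your version is shorter, and it gives boundedness for all $N$ simultaneously, which is essentially \cref{cor:one-cutoff}.
\item The paper's argument avoids polyhedral structure theory and never uses the translated-cone description.
\item You do not really need that description either. For any $N$ with $Q_N(\iota)\neq\emptyset$, the recession cone of the polyhedron $Q_N(\iota)$ is exactly $R(\iota)\cap\{r:\langle\mathbb{1},A(r)\rangle\le0\}$. That settles both directions at once.
\end{itemize}

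\textbf{An error in the necessity direction.} Your claim that $Q_N(\iota)\neq\emptyset$ forces $N\ge N_0$ is false as stated. Since $Q_N(\iota)=s_0+K_{N-N_0}$, nonemptiness for $N<N_0$ only requires some $r\in R(\iota)$ with $\langle\mathbb{1},A(r)\rangle<0$. Such an $r$ can exist precisely when the criterion fails, which is the case you are treating. Then $Q_N(\iota)$ is nonempty for every $N$, but $s_0\notin Q_N(\iota)$ for $N<N_0$. Your ray from $s_0$ therefore says nothing about those $N$.

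The repair is the paper's argument: start the ray at an arbitrary $s\in Q_N(\iota)$. Then $s+tr\in P_{\mathbb{R}}(\iota)$ for all $t\ge0$, and $\langle\mathbb{1},X(s+tr)\rangle\le\langle\mathbb{1},X(s)\rangle\le N$.

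\textbf{A smaller imprecision.} In your integrality remark, an arbitrary real $r$ cannot in general be rescaled to an integral vector. Take instead a nonzero rational generator of the rational polyhedral cone $R(\iota)\cap\{r:\langle\mathbb{1},A(r)\rangle\le0\}$.
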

\begin{proof}
Suppose $Q_N(\iota)$ is bounded for some $N\in \N$, and a nonzero $r$ belongs to the intersection \eqref{eq:recessioncriterion}.
Choose $s\in Q_N(\iota)$, so that $\inner{\mathbb{1}}{X(s)}\leq N$.
Then, $s+tr\in P_{\R}(\iota)$ for every $t\geq0$, while
\[
\inner{\mathbb{1}}{X(s+tr)}=\inner{ \mathbb{1}}{ X(s)}+t\inner{ \mathbb{1}}{A(r)}
\leq \inner{\mathbb{1}}{X(s)}\leq N~,
\]
making $Q_N(\iota)$ unbounded.

Conversely, suppose that for every $N\in\N$ the set $Q_N(\iota)$ is non-empty and unbounded.
Choose $s\in Q_N(\iota)$ and a sequence $s_j\in Q_N(\iota)$ with $\lVert s_j-s\rVert\to\infty$.
After passing to a subsequence, the unit vectors
\[
r_j=\frac{s_j-s}{\lVert s_j-s\rVert}
\]
converge to a unit vector $r$.
Divide each defining affine equality or inequality of $P_{\R}(\iota)$, after subtracting its value at $s$, by $\lVert s_j-s\rVert$, and pass to the limit.
The resulting homogeneous equalities and inequalities show that $s+tr\in P_{\R}(\iota)$ for every $s\in P_{\R}(\iota)$ and $t\geq0$; hence $r\in R(\iota)$.
Finally,
\[
\inner{\mathbb{1}}{A(r_j)}
=\frac{\inner{\mathbb{1}}{ X(s_j)-X(s)}}{\lVert s_j-s\rVert}
\leq\frac{N-\inner{\mathbb{1}}{X(s)}}{\lVert s_j-s\rVert}.
\]
The right-hand side tends to zero.
Since $r_j\to r$ and $A$ is continuous, $A(r_j)\to A(r)$; hence $\inner{\mathbb{1}}{A(r)}\leq0$.
Since $r$ is a unit vector, it is nonzero, which proves that the intersection \eqref{eq:recessioncriterion} is nontrivial.
\end{proof}

Equation \eqref{eq:recessioncriterion} gives a necessary and sufficient condition for niceness that does not depend on the cutoff $N_*$.
We therefore have the following.
\begin{corollary}\label{cor:one-cutoff}
Suppose that $Q_{N_*}(\iota)$ is nonempty and bounded for some cutoff $N_*\in\N$.
Then, $Q_N(\iota)$ is bounded for every $N\in\N$.
\end{corollary}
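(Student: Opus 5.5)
The plan is to make the corollary a direct consequence of \cref{prop:recessioncriterion}. The key point is that the criterion \eqref{eq:recessioncriterion} refers only to the recession cone $R(\iota)$ and the linear part $A$ of the degree map. Neither object depends on the cutoff. So the first step is to show that boundedness of $Q_{N_*}(\iota)$ for one $N_*$ forces \eqref{eq:recessioncriterion}. The second step is to show that \eqref{eq:recessioncriterion} forces boundedness of $Q_N(\iota)$ for every $N$.

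\textbf{Step 1.} Assume $Q_{N_*}(\iota)$ is nonempty and bounded, and suppose, for contradiction, that some nonzero $r\in R(\iota)$ satisfies $\inner{\mathbb{1}}{A(r)}\leq 0$. Pick $s\in Q_{N_*}(\iota)$. By \cref{lemma:translated-cone}, or directly from the definition of the recession cone, $s+tr\in P_\R(\iota)$ for all $t\ge 0$. Since $X$ is affine,
\[
\inner{\mathbb{1}}{X(s+tr)}=\inner{\mathbb{1}}{X(s)}+t\inner{\mathbb{1}}{A(r)}\le N_*,
\]
so the whole ray $s+tr$ lies in $Q_{N_*}(\iota)$. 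This contradicts boundedness, exactly as in the first half of the proof of \cref{prop:recessioncriterion}. Hence \eqref{eq:recessioncriterion} holds.

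\textbf{Step 2.} Fix an arbitrary $N$. If $Q_N(\iota)$ is empty it is trivially bounded. Otherwise, suppose it is unbounded and rerun the compactness argument from the second half of \cref{prop:recessioncriterion}, but at this fixed $N$. Take $s\in Q_N(\iota)$ and a sequence $s_j\in Q_N(\iota)$ with $\lVert s_j-s\rVert\to\infty$. After passing to a subsequence, the normalized differences converge to a unit vector $r$. This $r$ lies in $R(\iota)$, because the homogenized defining inequalities of $P_\R(\iota)$ pass to the limit. Moreover $\inner{\mathbb{1}}{A(r)}\le 0$, because $\bigl(N-\inner{\mathbb{1}}{X(s)}\bigr)/\lVert s_j-s\rVert\to 0$. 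Thus $r$ is a nonzero element of the intersection in \eqref{eq:recessioncriterion}, contradicting Step 1. So $Q_N(\iota)$ is bounded.

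The only point needing care is that the lemma is phrased as ``nice'' versus ``for every $N$, $Q_N$ nonempty and unbounded''. That phrasing leaves a gap when some $Q_N$ are empty and others unbounded. Step 2 avoids the gap by applying the limiting argument to a single fixed $N$ rather than quoting the lemma verbatim. I expect no real obstacle beyond checking that this fixed-$N$ version of the compactness step is valid. It is, since it uses only one nonempty unbounded $Q_N(\iota)$ and the closedness of $P_\R(\iota)$.
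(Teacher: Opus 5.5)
Your proof is correct and is essentially the paper's argument: the paper derives the corollary in one line from the fact that the criterion \eqref{eq:recessioncriterion} of \cref{prop:recessioncriterion} does not involve the cutoff, and your Steps 1 and 2 are the two halves of that lemma's proof, run at a single fixed cutoff. Running the compactness half at one fixed $N$ is a sensible tightening, since the lemma's converse is stated only for the case where every $Q_N(\iota)$ is nonempty and unbounded, and your version avoids relying on that.
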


\begin{lemma}
\label{lemma:degree-recession-cone-pointed}
Suppose that $\iota$ is nice.
Then the image cone $A(R(\iota))$ is line-free.
\end{lemma}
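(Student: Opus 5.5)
The argument uses the recession criterion of \cref{prop:recessioncriterion}. Line-freeness of $A(R(\iota))$ means that there is no nonzero $w$ with both $w$ and $-w$ in $A(R(\iota))$. I will show that every nonzero element of $A(R(\iota))$ has strictly positive pairing with $\mathbb{1}$; line-freeness then follows at once.

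First, $R(\iota)$ is a polyhedral cone: by the proof of \cref{lemma:translated-cone} it equals $\{r : Ar\leq 0,\ Br=0\}$. Since $A$ is linear, $A(R(\iota))$ is again a convex polyhedral cone, namely the image of a finitely generated cone under a linear map. Hence $A(R(\iota))$ is closed under addition and nonnegative scaling.

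Next, take $r\in R(\iota)$ with $r\neq 0$. Niceness together with \eqref{eq:recessioncriterion} gives $\langle\mathbb{1},A(r)\rangle>0$. Consequently every nonzero $r\in R(\iota)$ has $A(r)\neq 0$, and in fact $\langle \mathbb{1},A(r)\rangle>0$.

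Now suppose $w$ and $-w$ both lie in $A(R(\iota))$, say $w=A(r_1)$ and $-w=A(r_2)$ with $r_1,r_2\in R(\iota)$. Because $R(\iota)$ is a convex cone, $r_1+r_2\in R(\iota)$, and $A(r_1+r_2)=w-w=0$. In particular $\langle\mathbb{1},A(r_1+r_2)\rangle=0$, so the previous step forces $r_1+r_2=0$. If $r_1\neq0$, then $r_1$ and $-r_1=r_2$ both lie in $R(\iota)$. Applying \eqref{eq:recessioncriterion} to each gives $\langle\mathbb{1},A(r_1)\rangle>0$ and $\langle\mathbb{1},A(-r_1)\rangle>0$, which is a contradiction. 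Therefore $r_1=0$, so $w=0$, and $A(R(\iota))$ is line-free.

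There is essentially no obstacle here. The one point to check is that $R(\iota)$ is genuinely a convex cone, so that it is closed under sums. This holds because it is the solution set of homogeneous linear inequalities and equalities.

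Equivalently, the functional $\langle\mathbb{1},\cdot\rangle$ is strictly positive on $A(R(\iota))\setminus\{0\}$: a nonzero $w=A(r)$ forces $r\neq 0$, and then $\langle\mathbb{1},w\rangle>0$. A strictly positive functional on $A(R(\iota))\setminus\{0\}$ immediately excludes lines. This will also be the useful form later, since it shows the exponents $X(s)$ grow in the direction of a positive functional.
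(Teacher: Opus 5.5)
Your proof is correct given the recession criterion exactly as stated in \cref{prop:recessioncriterion}, but it finishes differently from the paper. Both arguments start the same way: $r_1+r_2\in R(\iota)$ with $A(r_1+r_2)=0$, so the criterion forces $r_1+r_2=0$.

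The paper then concludes from line-freeness of $R(\iota)$ itself, which it gets from the homogeneous sign bounds $r(e)\geq 0$ for $\iota(e)=+$ and $r(e)\leq 0$ for $\iota(e)=-$. You instead apply the criterion a second time, to $r_1$ and $-r_1$. Your last paragraph shows even more directly that $\langle\mathbb{1},\cdot\rangle$ is strictly positive on $A(R(\iota))\setminus\{0\}$.

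That route is shorter and gives more. Under the literal criterion it would even let the forward direction of \cref{prop:nice-iff-unital} take $\phi=\mathbb{1}$ and skip \cref{lemma:strict-separation-line-free-cones}. But it depends on the exact $\mathbb{1}$-form of the criterion.

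The paper's route only uses ``$r\in R(\iota)$ and $A(r)=0$ imply $r=0$''. So it also survives the componentwise reading $A(R(\iota))\cap(-\mathbb{R}^{\ell}_{\geq 0})=\{0\}$. This is the form the proof of \cref{prop:nice-iff-unital} actually invokes. It is also the only form compatible with nice vertices exposed by functionals other than $\mathbb{1}$, such as $(4,1)$ for $L7n1\{1\}$; under the literal reading every nice vertex would have to be the $\mathbb{1}$-minimizer.

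Under the componentwise reading your second application of the criterion fails, because $A(r_1)$ can have coordinates of both signs, and the sign-bound argument is what carries the proof. So the strict positivity of $\mathbb{1}$ on the image cone is not something to rely on later.
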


\begin{proof}
First observe that the homogeneous sign bounds give
\[
 r(e)\geq 0\quad\text{when }\iota(e)=+,
 \qquad
 r(e)\leq 0\quad\text{when }\iota(e)=-.
\]
Consequently, $R(\iota)$ itself is line-free.

Now suppose that both $v,-v\in A(R(\iota))$.
Choose $r_+,r_-\in R(\iota)$ such that $A(r_+)=v$ and $A(r_-)=-v$.
Since $R(\iota)$ is a cone, $r_++r_-\in R(\iota)$, and
\[
 A(r_++r_-)=0.
\]
The recession criterion and niceness imply that $r_++r_-=0$.
Hence $r_+=-r_-$.
Because $R(\iota)$ is line-free, $r_+=r_-=0$, and therefore $v=0$.
Thus $A(R(\iota))$ contains no nontrivial line.
\end{proof}

\begin{lemma}
\label{lemma:strict-separation-line-free-cones}
Let $C_1,C_2\subseteq\mathbb R^n$ be line-free rational polyhedral cones such that $C_1\cap C_2=\{0\}$.
Then there is a linear functional $\phi\colon\mathbb R^n\to\mathbb R$ such that
\[
  \phi(v)>0\quad\text{for every }v\in C_1\setminus\{0\},
  \qquad
  \phi(w)<0\quad\text{for every }w\in C_2\setminus\{0\}.
\]
\end{lemma}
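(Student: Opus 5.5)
The plan is to pass to dual cones and use finite generation. Since $C_1$ and $C_2$ are rational polyhedral, choose finite generating sets $C_1=\mathrm{Cone}\{u_1,\dots,u_p\}$ and $C_2=\mathrm{Cone}\{w_1,\dots,w_q\}$, with all generators nonzero. Set
\[
D\defeq C_1+(-C_2)=\mathrm{Cone}\{u_1,\dots,u_p,-w_1,\dots,-w_q\}.
\]
This is again a finitely generated, hence closed, convex cone. It suffices to find a linear functional $\phi$ that is strictly positive on every generator of $D$. Then for a nonzero $v=\sum_i z_i u_i\in C_1$, some $z_i>0$, so $\phi(v)>0$. Likewise $\phi(w)<0$ for every nonzero $w\in C_2$.

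The key step is to show that $D$ is line-free. Suppose $d$ and $-d$ both lie in $D$. Write $d=a-b$ and $-d=a'-b'$ with $a,a'\in C_1$ and $b,b'\in C_2$. Adding gives $a+a'=b+b'$, and this vector lies in $C_1\cap C_2=\{0\}$.

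1. From $a+a'=0$ and the line-freeness of $C_1$, we get $a=a'=0$.
2. Similarly, $b+b'=0$ and the line-freeness of $C_2$ give $b=b'=0$.
3. Hence $d=0$, so $D\cap(-D)=\{0\}$.

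Next we need the standard fact that a closed line-free convex cone admits a functional strictly positive on $D\setminus\{0\}$. I would prove it by duality. Since $D$ is closed, $D=(D^\vee)^\vee$. If the dual cone $D^\vee$ had empty interior, it would lie in a hyperplane $\{y:\langle y,h\rangle=0\}$ with $h\neq0$. Then both $h$ and $-h$ would lie in $(D^\vee)^\vee=D$, contradicting line-freeness.

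So $D^\vee$ has nonempty interior, and we pick $\phi$ in $\mathrm{int}(D^\vee)$. For a nonzero $d\in D$, if $\phi(d)=0$ we could perturb $\phi$ to $\phi-\varepsilon d^{*}$ inside $D^\vee$ and obtain a negative value on $d$. That is impossible, so $\phi(d)>0$. In particular $\phi$ is positive on each $u_i$ and each $-w_j$.

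Rationality of the generators also lets us choose $\phi$ with rational, and after scaling integer, coefficients. This is because $\mathrm{int}(D^\vee)$ is a nonempty open set and therefore contains rational points. This integrality is probably what is needed later, when $\phi$ is compared with $\Z_{>0}^\ell$ functionals.

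The main obstacle is the interior-of-the-dual argument, in particular justifying the bidual identity $D=(D^\vee)^\vee$. For a finitely generated cone I would invoke Farkas' lemma (Minkowski--Weyl) directly rather than a general closedness argument. The rest is bookkeeping.
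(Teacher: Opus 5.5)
Your proposal is correct and is essentially the paper's proof: you form $D=C_1+(-C_2)$, show it is pointed because $a+a'=b+b'$ lies in $C_1\cap C_2=\{0\}$, and take $\phi$ in the interior of $D^\vee$. The only differences are that you justify the standard facts the paper merely asserts (nonemptiness of $\mathrm{int}(D^\vee)$ via $D=(D^\vee)^\vee$, and strict positivity of interior points of $D^\vee$ on $D\setminus\{0\}$), and your integrality remark anticipates how the paper uses the lemma afterwards.
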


\begin{proof}
Consider the rational polyhedral cone
\[
  D=C_1+(-C_2).
\]
We first show that $D$ is line-free.
Suppose that $z,-z\in D$.
We may write
\[
  z=x_1-y_1,\qquad -z=x_2-y_2,
  \qquad x_1,x_2\in C_1,\quad y_1,y_2\in C_2.
\]
It follows that $x_1+x_2=y_1+y_2$.
This vector belongs to $C_1\cap C_2$, so it is zero.
Since $C_1$ and $C_2$ are pointed, $x_1=x_2=y_1=y_2=0$, and hence $z=0$.

A pointed polyhedral cone admits a linear functional which is strictly positive away from the origin: equivalently, the dual cone $D^\vee$ has nonempty interior, and every $\phi\in\operatorname{int}(D^\vee)$ satisfies $\phi(u)>0$ for all $u\in D\setminus\{0\}$.
Choose such a $\phi$.
Since $C_1\subseteq D$, it is strictly positive on $C_1\setminus\{0\}$.
If $0\neq w\in C_2$, then $-w\in D\setminus\{0\}$, and therefore $\phi(w)=-\phi(-w)<0$.
\end{proof}
In particular, if either $C_1$ or $C_2$ is $\R^{\ell}_{\leq 0}$ then the functional $\phi$ can be chosen in $\R^{\ell}_{>0}$.
We are now ready to prove \cref{prop:nice-iff-unital}.
\begin{proof}[Proof of \cref{prop:nice-iff-unital}]
Suppose first that $(\bd,\iota)$ is nice.
By \cref{prop:recessioncriterion}, niceness implies that \cref{eq:recessioncriterion} holds.
It follows that the two rational polyhedral cones $A(R(\iota))$ and $-\mathbb R_{\geq0}^{\ell}$ meet only at the origin.
The first is line-free by \cref{lemma:degree-recession-cone-pointed}, while the second is trivially line-free.
We may therefore apply \cref{lemma:strict-separation-line-free-cones} to obtain a linear functional $\phi$ such that
\[
   \phi(A(r))>0\quad\text{for every }0\neq r\in R(\iota),
   \qquad
   \phi(w)<0\quad\text{for every }
   0\neq w\in-\mathbb R_{\geq0}^{\ell}.
\]
In particular, $\phi(-e_j)<0$, and hence $\phi(e_j)>0$, for every $j=1,\ldots,\ell$.
Thus $\phi$ lies in the interior of the positive orthant.  Since the cones
are rational polyhedral, we may choose $\phi$ rational; multiplying by a
positive integer, we may assume that $\phi\in\mathbb Z_{>0}^{\ell}$.

We next compare this functional with the weights in the inverted model.
Let $\widetilde c$ be a nonempty simple multicycle in that model, and let $r_{\widetilde c}$ be its occupation vector,
\begin{equation}
    (r_{\widetilde c})_k = \begin{cases}
    1 & \text{if }\widetilde c\text{ passes through the braid segment labeled by }k \\
    0 & \text{otherwise}
    \end{cases}~.
\end{equation}
This vector is nonzero and belongs to $R(\iota)$: adjoining any number of further copies of $\widetilde c$ preserves all the homogeneous state constraints and hence determines a ray in the state polyhedron.

By definition of $A$ we find that every exponent $\mathbf v$ in the support of the expanded inverted weight has the form
\[
   \mathbf v=A(r_{\widetilde c})+\mathbf u,
   \qquad
   \mathbf u\in\mathbb Z_{\geq0}^{\ell}.
\]
Since every coordinate of $\phi$ is positive and $\phi(A(r_{\widetilde c}))>0$, it follows that
\begin{equation}\label{eq:nice-to-unital-positive-minimum}
   \min_{\mathbf v\in\supp\weightinv{\widetilde c}}
      \phi(\mathbf v)
   \geq \phi(A(r_{\widetilde c}))>0.
\end{equation}

Now take $c\in\mathcal Q\setminus\{c_\iota\}$.
Under the inversion correspondence, $c$ determines a nonempty inverted multicycle $\widetilde c$, and \eqref{eq:Winv-rel-W} gives, up to sign,
\[
   |\weightinv{\widetilde c}|
   = \frac{\weight{c}}{\weight{c_\iota}}.
\]
The lowest $\phi$--degree is additive under multiplication of nonzero Laurent series in the prescribed cone.
Hence
\[
   \mathbf{a}(c)-\mathbf{a}(c_\iota)=\min_{\mathbf v\in\supp\weight{c}}\phi(\mathbf v)
   -\min_{\mathbf v\in\supp\weight{c_\iota}}\phi(\mathbf v)
   =\min_{\mathbf v\in\supp\weightinv{\widetilde c}}
      \phi(\mathbf v)>0.
\]
Therefore, $\mathbf a(c_\iota)$ is the unique $\phi$--minimum on $\{\mathbf a(c)\mid c\in\mathcal Q\}$.

Conversely, suppose that $\phi\in\mathbb Z_{>0}^{\ell}$ has a unique minimum on $\{\mathbf a(c)\mid c\in\mathcal Q\}$, achieved at $\mathbf a(c_\iota)$.
Let $R(\iota)$ be the recession cone of the real state polyhedron and let $A=(A_1,\ldots,A_\ell)$ be the linear part of its componentwise minimum-degree map.  By \cref{prop:recessioncriterion}, it is enough to show that \eqref{eq:recessioncriterion} holds.

Every recession vector $r\in R(\iota)$ has a decomposition
\begin{equation}\label{eq:unital-to-nice-cycle-decomposition}
   r=\sum_k\lambda_k r_{\widetilde c_k},
   \qquad \lambda_k\geq0,
\end{equation}
where the $\widetilde c_k$ are nonempty inverted primitive multicycles, which by the inversion correspondence determine a multicycle $c_k$.

The unique-minimum hypothesis now implies
\begin{align*}
   \phi\bigl(A(r_{\widetilde c_k})\bigr)
   &=\min_{\mathbf v\in\supp\weight{c_k}}\phi(\mathbf v)
     -\min_{\mathbf v\in\supp\weight{c_\iota}}\phi(\mathbf v) \\
   &=\phi(\mathbf a(c_k))-\phi(\mathbf a(c_\iota)) > 0~.
\end{align*}
It follows from \eqref{eq:unital-to-nice-cycle-decomposition} that
\[
   \phi(A(r))>0
   \qquad\text{for every }0\neq r\in R(\iota).
\]
Therefore, if a nonzero $r\in R(\iota)$ satisfied
$A_j(r)\leq0$ for every $j$, positivity of $\phi$ on the coordinate rays
would give
\[
   \phi(A(r))=\sum_{j=1}^\ell A_j(r)\phi(e_j)\leq0,
\]
a contradiction.
Hence \eqref{eq:recessioncriterion} holds, and the
recession criterion proves that $(\bd,\iota)$ is nice.
\end{proof}

To prove the main Theorem, we follow the notation already set up in \cref{sec:random-walk-inverted-state-sum}. Denote by $\mathrm{Z}^{\mathrm{inv}}(\mathbf{x})$ the result of the inverted state sum $\mathrm{Z}^{\mathrm{inv}}(\boldsymbol{\alpha},\boldsymbol{\beta},\boldsymbol{\gamma})$ under \eqref{eq:rules-abg}. The relation between $\mathrm{Z^{\mathrm{inv}}}(\mathbf{x})$ and the inverted state sum used to define $F_L(\mathbf{x},q)$ in \eqref{eq:FL-inverted-state-sum} is given by \eqref{eq:parametrized-inverted-state-sum-vs-usual}.

Denote by $(\Delta_L)_v^{-1}$ the inverse of the Alexander polynomial around $v$ as in \eqref{eq:inverse-polynomial}, with the coefficient ring $R=\Z$.

\begin{lemma} $(1-x_1)\mathrm{Z}^{\mathrm{inv}}(\mathbf{x})=\mathbf{x}^{\rho(\beta)}(\Delta_L)_v^{-1}$ as elements of $\mathbf{x}^{\rho(\beta)-v}\Z_{\mathcal{C}_v} [[\mathbf{x}]]$.
\end{lemma}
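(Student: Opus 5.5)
The plan is to combine the two identities already available from \cref{sec:random-walk-inverted-state-sum}. By \eqref{eq:Zinv-and-inverse-det-Binv}, $\mathrm{Z}^{\mathrm{inv}}(\boldsymbol{\alpha},\boldsymbol{\beta},\boldsymbol{\gamma})\cdot \weight{c_\iota}\det(I-\mathcal{B}^{\mathrm{inv}})=1$ as formal power series in the parameters. By \cref{lemma:B-matrix-inverted-Alexander}, after the substitution \eqref{eq:rules-abg}, $(-1)^{\mathbf{s}}\weight{c_\iota}\det(I-\mathcal{B}^{\mathrm{inv}})=(1-x_1)\mathbf{x}^{-\rho(\beta)}\Delta_L(\mathbf{x})$. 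Formally, therefore, $\mathrm{Z}^{\mathrm{inv}}(\mathbf{x})\cdot(1-x_1)\mathbf{x}^{-\rho(\beta)}\Delta_L(\mathbf{x})=\pm1$. We then divide by $\Delta_L$ in the ring $\mathbf{x}^{\ast}\Z_{\mathcal{C}_v}[[\mathbf{x}]]$ and use uniqueness of inverses to identify the result with $\mathbf{x}^{\rho(\beta)}(\Delta_L)^{-1}_v$. The overall sign should be absorbed by the normalization conventions, namely the $(-1)^{\mathbf{s}(\iota)}$ in \eqref{eq:multivariable-inverted-state-sum} and the sign in \eqref{eq:FL-parametrized-state-sum}. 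I will track it but not dwell on it.

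\textbf{Step 1: the substitution is legitimate in a common ring.} The identity \eqref{eq:Zinv-and-inverse-det-Binv} lives in $\Z[[\boldsymbol{\alpha},\boldsymbol{\beta},\boldsymbol{\gamma}]]$. After \eqref{eq:rules-abg}, each parameter becomes a Laurent polynomial in $\mathbf{x}^{1/2}$ (or $\mathbf{x}^{1/4}$), so the substitution is not automatically convergent. I will use niceness via \cref{prop:nice-iff-unital} and \cref{cor:inversion-datum-extremal-vertex}. With the positive functional $\phi=\phi_v$, every nonempty inverted multicycle $\tilde c$ has weight $\weightinv{\tilde c}$ whose $\phi$-minimal degree is strictly positive; this is inequality \eqref{eq:nice-to-unital-positive-minimum}. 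Moreover, the exponents of $\weightinv{\tilde c}$ lie in $\mathbf{a}(c)-\mathbf{a}(c_\iota)+\Z_{\ge0}^\ell\subseteq\mathcal{C}_v$, once we identify $\mathbf{a}(c)+\rho(\beta)$ with points of $\supp\Delta_L$. Hence the sum over $\mathcal{P}(\iota)$, which consists of products of such weights, has only finitely many terms in each $\phi$-degree and converges in $\Z_{\mathcal{C}_v}[[\mathbf{x}]]$. The same holds for the finite polynomial $\weight{c_\iota}\det(I-\mathcal{B}^{\mathrm{inv}})$, which equals $1+$(terms supported in $\mathcal{C}_v\setminus\{0\}$). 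Substitution is then a continuous ring homomorphism from the completed parameter ring onto the completion of $\Z_{\mathcal{C}_v}[[\mathbf{x}]]$ with respect to the $\phi$-grading, so the product identity survives substitution.

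\textbf{Step 2: conclude by uniqueness of inverses.} In $\Z_{\mathcal{C}_v}[[\mathbf{x}]]$, the element $\mathbf{x}^{-v}\Delta_L$ is a unit by the lemma preceding \cref{def:inverse-polynomial}, since $v$ is integral. Rewrite the identity as
\[
\bigl[(1-x_1)\mathbf{x}^{-\rho(\beta)+v}\mathrm{Z}^{\mathrm{inv}}(\mathbf{x})\bigr]\cdot\bigl[\mathbf{x}^{-v}\Delta_L(\mathbf{x})\bigr]=\pm1 .
\]
Step 1 shows that the first bracket lies in $\Z_{\mathcal{C}_v}[[\mathbf{x}]]$: its leading term is $\mathbf{x}^{-\mathbf{a}(c_\iota)-\rho(\beta)+v}$ times a unit, and $v=\rho(\beta)+\mathbf{a}(c_\iota)$. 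The factor $(1-x_1)$ is harmless because $e_1\in\mathcal{C}_v$ after positivity, or alternatively it is absorbed into $\mathbf{x}^{-v}\Delta_L$ through \cref{lemma:B-matrix-inverted-Alexander}. Since inverses in a ring are unique, the first bracket equals $(\mathbf{x}^{-v}\Delta_L)^{-1}$ up to the sign. Multiplying by $\mathbf{x}^{\rho(\beta)-v}$ then gives the claim in $\mathbf{x}^{\rho(\beta)-v}\Z_{\mathcal{C}_v}[[\mathbf{x}]]$.

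\textbf{Main obstacle.} The delicate point is Step 1, specifically the claim that the infinitely many multicycle contributions all land in the single line-free cone $\mathcal{C}_v$ with finite coefficients. This requires showing that the cone generated by $\{\mathbf{a}(c)-\mathbf{a}(c_\iota)\}+\Z_{\ge0}^\ell$ is contained in $\mathcal{C}_v$, or at least in a line-free cone on which $\phi_v$ is positive, and then comparing with $\mathcal{C}_v$. My first attempt will be to work in the larger line-free cone $A(R(\iota))+\R_{\ge0}^\ell$ supplied by \cref{lemma:degree-recession-cone-pointed}. I will prove the identity there, where both sides make sense and inverses are unique. Afterwards, I will observe that $(\Delta_L)^{-1}_v$ lies in the subring supported on $\mathcal{C}_v$, so the equality places $\mathrm{Z}^{\mathrm{inv}}$ in the smaller ring as well.
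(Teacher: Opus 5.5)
Your Steps 1 and 2, as written, rely on containments that fail in general. It is not true that $\mathbf{a}(c)-\mathbf{a}(c_\iota)+\mathbb{Z}_{\ge0}^\ell\subseteq\mathcal{C}_v$. It is not true that $e_1\in\mathcal{C}_v$. Nor is it true that $\mathbf{a}(c)+\rho(\beta)\in\operatorname{supp}\Delta_L$: the signed sum over simple multicycles cancels heavily, and the $(1-x_i)^{\pm1}$ factors spread each inverted weight over a translated orthant.

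The paper's own $L7n1\{1\}$ shows this. There $v=(-\tfrac12,\tfrac32)$ is positively exposed by $(4,1)$ and is nice with trivial inversion datum. Yet $\mathcal{C}_v=\mathbb{R}_{\ge0}(1,-3)$ is a ray that contains no nonzero vector of $\mathbb{R}^2_{\ge0}$. Moreover, $(1-x_1)\mathbf{x}^{-v}\Delta_L=(1-x_1)(1+x_1x_2^{-3})$ has the exponents $e_1$ and $(2,-3)$ off that ray.

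So the individual multicycle contributions to $\mathrm{Z}^{\mathrm{inv}}(\mathbf{x})$ are not supported in $\mathcal{C}_v$; only their total is, after cancellation. Likewise, $W(c_\iota)\det(I-\mathcal{B}^{\mathrm{inv}})$ is not $\mathbf{x}^{\mathbf{a}(c_\iota)}$ times ``$1$ plus terms in $\mathcal{C}_v\setminus\{0\}$''. After substitution $\det(I-\mathcal{B}^{\mathrm{inv}})$ is not even a polynomial, because of the expanded $(1-x_i)^{-1}$ factors. The uniqueness-of-inverses argument therefore cannot be run inside $\mathbb{Z}_{\mathcal{C}_v}[[\mathbf{x}]]$.

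The fallback in your ``Main obstacle'' paragraph is exactly the paper's proof, and it has to be the main argument. The paper works in $K=\mathcal{C}_{\mathbf{a}(c_\iota)}+\mathcal{C}_+$, which is your $A(R(\iota))+\mathbb{R}^\ell_{\ge0}$ under $A(r_{\tilde c})=\mathbf{a}(c)-\mathbf{a}(c_\iota)$. This cone is line-free because $\phi_v>0$ on it. The two elements $(1-x_1)\mathbf{x}^{\mathbf{a}(c_\iota)}\mathrm{Z}^{\mathrm{inv}}(\mathbf{x})$ and $(1-x_1)^{-1}\mathbf{x}^{-\mathbf{a}(c_\iota)}W(c_\iota)\det(I-\mathcal{B}^{\mathrm{inv}})=(-1)^{\mathbf{s}}\mathbf{x}^{-v}\Delta_L$ both lie in $\mathbb{Z}_K[[\mathbf{x}]]$ and multiply to $1$. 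Since $e_1\in K$, the factor $(1-x_1)$ needs no special handling there.

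The step your sketch omits is $\mathcal{C}_v\subseteq K$. You need it so that $\mathbb{Z}_{\mathcal{C}_v}[[\mathbf{x}]]$ is a subring of $\mathbb{Z}_K[[\mathbf{x}]]$, which makes $(\mathbf{x}^{-v}\Delta_L)^{-1}_v$ also an inverse in the larger ring and hence equal to the first element. It holds for the following reason. The product $(1-x_1)\mathbf{x}^{-v}\Delta_L=\pm\mathbf{x}^{-\mathbf{a}(c_\iota)}W(c_\iota)\det(I-\mathcal{B}^{\mathrm{inv}})$ is supported in $K$. Therefore $N(\mathbf{x}^{-v}\Delta_L)\subseteq N(\mathbf{x}^{-v}\Delta_L)+[0,e_1]=N\bigl((1-x_1)\mathbf{x}^{-v}\Delta_L\bigr)\subseteq K$.

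Finally, ``substitution is a continuous homomorphism out of the completed parameter ring'' is not accurate, since the parameters are sent to elements with negative exponents. What you actually use is that there are finitely many primitive inverted cycles, each of strictly positive $\phi_v$-degree. Hence the multicycle sum is summable in $\mathbb{Z}_K[[\mathbf{x}]]$, and the Foata--Zeilberger cancellation identity carries over to that ring.
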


\begin{proof}[Proof of the lemma]

Recall that we can write the weighted sum of primitive cycles as
\begin{equation}\label{eq:det-B-inv-weights}
    \det{(I-\mathcal{B}^{\mathrm{inv}})}=\sum_{\tilde{c}\in\mathcal{Q}(\iota)} (-1)^{\mathbf{s}(\tilde{c})}\weightinv{\tilde{c}},
\end{equation}
and the sum over all cycles as
\begin{equation}
    \mathrm{Z}^{\mathrm{inv}}(\mathbf{x}) = \frac{1}{\weight{c_\iota}\det{(I-\mathcal{B}^{\mathrm{inv}})}} = \frac{1}{\weight{c_\iota}}\sum_{\tilde{c}\in\mathcal{P}(\iota)} \weightinv{\tilde{c}}.
\end{equation}

For every simple cycle $\tilde{c}$ in the random walk model induced by the inverted state sum, there exists a simple cycle $c$ in the jump-down model for which \eqref{eq:Winv-rel-W} holds (\cref{sec:random-walk-inverted-state-sum}). Using the definition of $\mathbf{a}(c)$ and $\mathbf{b}(c)$ in \eqref{eq:exponent-weight-Alex-model}, we can write
\begin{equation}\label{eq:weight-inv-exponent}
    \weightinv{\tilde{c}} = \epsilon(c)\epsilon(c_{\iota}) \mathbf{x}^{\mathbf{a}(c)-\mathbf{a}(c_\iota)} \prod_{i=1}^\ell (1-x_i)^{b_i(c)-b_i(c_\iota)}.
\end{equation}
Here $b_i(c)-b_i(c_\iota)\in\Z$ can take any value. Under the assumptions of \cref{sec:inverted-R-matrices}, any factor $(1-x_i)^{-1}$ is identified with its expansion at $x_i=0$, so that
\[
    \weightinv{\tilde{c}}\in \mathbf{x}^{\mathbf{a}(c)-\mathbf{a}(c_\iota)}\Z[[\mathbf{x}]]~.
\]

Therefore, for any functional induced by an element of $\Z_{>0}^\ell$, its minimum in the set $\supp{\weightinv{\tilde{c}}}$ will be attained at ${\mathbf{a}(c)-\mathbf{a}(c_\iota)}$. By \cref{prop:nice-iff-unital}, the functional $\phi_v\in\Z^\ell_{>0}$ attains its unique minimum at $v=\rho(\beta)+\mathbf{a}(c_\iota)$, and therefore

\[\phi_v\cdot((\rho(\beta)+\mathbf{a}(c))-(\rho(\beta)+\mathbf{a}(c_\iota)))=\phi_v\cdot(\mathbf{a}(c)-\mathbf{a}(c_\iota))> 0 ~\text{ for any }c\in\mathcal{Q}, ~c\neq c_\iota.\]

Define the cones
\begin{equation*}
    \mathcal{C}_{\mathbf{a}(c_\iota)}:= \mathrm{Cone}\;\{\mathbf{a}(c)-\mathbf{a}(c_\iota) \mid c\neq c_\iota\} \hspace{1.25em}\text{and}\hspace{1.25em}\mathcal{C}_+ := \{(z_1,\dots,z_\ell)\mid z_i\geq 0, i=1,\dots,\ell\}~.
\end{equation*}
Then,
\begin{equation}\label{eq:support-inverted-weight}
    \supp \weightinv{\tilde{c}} \subseteq \mathcal{C}_{\mathbf{a}(c_\iota)} + \mathcal{C}_+
\end{equation}
for every $\tilde{c}\in\mathcal{Q}(\iota)$, where $\mathcal{C}_{\mathbf{a}(c_\iota)} + \mathcal{C}_+$ denotes the Minkowski sum of the cones.

The cone $\mathcal{C}_{\mathbf{a}(c_\iota)}+\mathcal{C}_+$ is line-free, since it still has a unique minimum under the functional $\phi_v$. Therefore, the set of functions $\Z_{\mathcal{C}_{\mathbf{a}(c_\iota)} + \mathcal{C}_+}[[\mathbf{x}]]$ forms a ring, and there is a ring inclusion $\Z_{\mathcal{C}_v}[[\mathbf{x}]]\subseteq \Z_{\mathcal{C}_{\mathbf{a}(c_\iota)} + \mathcal{C}_+}[[\mathbf{x}]]$.

By \cref{lemma:B-matrix-inverted-Alexander},
\begin{equation}\label{eq:det-and-alex}
    \mathbf{x}^{-\mathbf{a}(c_\iota)} \frac{\weight{c_\iota}}{1-x_1}\det{(I-\mathcal{B}^\mathrm{inv})}
    = (-1)^{\mathbf{s}}\mathbf{x}^{-v} \Delta_L(\mathbf{x})~,
\end{equation}
and from \eqref{eq:det-B-inv-weights} and \eqref{eq:support-inverted-weight}, we deduce that
\begin{equation}
    \mathbf{x}^{-\mathbf{a}(c_\iota)}\weight{c_\iota} \in \Z_{\mathcal{C}_+}[[\mathbf{x}]]
    \hspace{1.25em}\text{and}\hspace{1.25em}
    \frac{1}{1-x_1}\det{(I-\mathcal{B}^\mathrm{inv})}\in\Z_{\mathcal{C}_{\mathbf{a}(c_\iota)}+\mathcal{C}_+}[[\mathbf{x}]]~.
\end{equation}
The left-hand side of \eqref{eq:det-and-alex} is an element of $\Z_{\mathcal{C}_{\mathbf{a}(c_\iota)}+\mathcal{C}_+}[[\mathbf{x}]]$, while the right-hand side is an element of $\Z_{\mathcal{C}_v}[[\mathbf{x}]]$. It follows that both live in $\Z_{\mathcal{C}_v}[[\mathbf{x}]]$, and since the right-hand side is a unit in this ring, the left-hand side is too. Moreover,
\begin{equation}
    \mathbf{x}^{\mathbf{a}(c_\iota)}\mathrm{Z^{\mathrm{inv}}}(\mathbf{x})= \frac{\mathbf{x}^{\mathbf{a}(c_\iota)}}{\weight{c_\iota}} \sum_{\tilde{c}\in\mathcal{P}(\iota)} \weightinv{\tilde{c}} \in \mathbb{Z}_{\mathcal{C}_{\mathbf{a}(c_\iota)}+\mathcal{C}_+}[[\mathbf{x}]]~.
\end{equation}
Since $\mathrm{Z^{\mathrm{inv}}}$ is related to \eqref{eq:det-and-alex} via \eqref{eq:Zinv-and-inverse-det-Binv}, we conclude that $\mathbf{x}^{\mathbf{a}(c_\iota)}\mathrm{Z^{\mathrm{inv}}}(\mathbf{x})$ is invertible around $\mathbf{a}(c_\iota)=v$, and we obtain:
\begin{equation}
    (1-x_1) \mathbf{x}^{v-\rho(\beta)} \mathrm{Z^{\mathrm{inv}}}(\mathbf{x})
    = (1-x_1)\left(\mathbf{x}^{-\mathbf{a}(c_\iota)} \weight{c_\iota}\det{(I-\mathcal{B}^\mathrm{inv})}\right)_v^{-1} = (-1)^{\mathbf{s}} \left(\mathbf{x}^{-v}\Delta_L(\mathbf{x})\right)^{-1}_v
\end{equation}
Therefore,
\begin{equation*}
    (1-x_1)\mathrm{Z^{\mathrm{inv}}}(\mathbf{x})\in\mathbf{x}^{\rho(\beta)-v}\Z_{\mathcal{C}_v}[[\mathbf{x}]]
\end{equation*}
and it agrees with the inverse of the Alexander polynomial, up to the monomial $\mathbf{x}^{\rho(\beta)}$.
\end{proof}

The previous Lemma, together with \eqref{eq:FL-parametrized-state-sum}, gives
\begin{equation}
    F_{(\beta,\iota)}(\mathbf{x},1) = (-1)^{\mathbf{s}+1} x^{-v}(x^{-v}\Delta_L(\mathbf{x}))^{-1}_v =(-1)^{\mathbf{s}+1}(\Delta_L)^{-1}_v(\mathbf{x})~.
\end{equation}

The rest of the proof is the same as the proof of Theorems 1 and 2 of \cite{Park21}: There is a differential operator $D_j$ in the parameters $\boldsymbol{\alpha},\boldsymbol{\beta},\boldsymbol{\gamma}$ such that the $\hbar^j$-coefficient of the MMR expansion \eqref{eq:MMR-link} agrees with the action of $D_j$ on $\mathrm{Z}(\boldsymbol{\alpha},\boldsymbol{\beta},\boldsymbol{\gamma})$. Similarly, the corresponding $\hbar^j$-coefficient of the inverted state sum is given by the action of $D_j$ on $\mathrm{Z^{\mathrm{inv}}}(\boldsymbol{\alpha},\boldsymbol{\beta},\boldsymbol{\gamma})$, modulo the overall sign. This finishes the proof.

\subsection{Leading term}

Denote by $\mathfrak{i}(L)$ the index of a link $L$, defined as
\begin{equation}
    \mathfrak{i}(L) = \frac{\ell-\chi(L)}{2}
\end{equation}
where $\chi(L)$ is the minimal Euler characteristic of all Seifert surfaces bounded by $L$. For a knot $K$, $\chi(K)=1-2g(K)$ and $\mathfrak{i}(K)=g(K)$.

Define the minimal first Betti number of $L$ as
\begin{equation}
    b_1(L)\defeq \min{\{b_1(S)=\rank{H_1(S,\Z)} \mid (S,\partial S)\hookrightarrow (M,\partial M) \text{ and }\partial S=L\}}.
\end{equation}
It follows that $b_1(L)=1-\chi(L)=2g(L)+\ell-1$.

Let $O_{\phi}(\mathbf{x}^{v})\defeq\{f \mid \inner{\phi}{v} < \inner{\phi}{w}  \text{ for every }w\in\supp{f}\}$.

\begin{proposition}\label{prop:L} Let $v\in\Hnice$. Let $\phi_v$ be the positive functional of \cref{cor:inversion-datum-extremal-vertex}. Then, there exists an integer $\alpha_v(L)$ and sign $\varepsilon_v\in\{\pm 1\}$ such that
\begin{equation}\label{eq:struct-nice-FL}
    F_L^{(v)}(\mathbf{x},q)=\varepsilon_v \, q^{\alpha_v(L)} \mathbf{x}^{-\frac{v}{2}} + O_{\phi_v}(\mathbf{x}^{-\frac{v}{2}}).
\end{equation}
\end{proposition}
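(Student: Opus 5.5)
The plan is to show that, among all valid states in $\Omega(\iota)$, the ground state $s_0$ is the unique state whose contribution $P(s)$ contains a monomial of minimal $\phi_v$-degree, and that this contribution is a single signed monomial in $q$ and $\mathbf{x}$. Fix a nice pair $(\bd,\iota)$ realizing $v$, so that $F_L^{(v)}=F_{(\bd,\iota)}$ and $\frac v2=\rho(\beta)+\mathbf{a}(c_\iota)$ by \cref{cor:inversion-datum-extremal-vertex}, where $\phi_v$ positively exposes $\frac v2$.

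\textbf{Step 1: the other states have strictly larger $\phi_v$-degree.} By \cref{lemma:translated-cone}, every state has the form $s=s_0+r$ with $r\in R(\iota)\cap\Z^{E_\beta}$. The minimal-degree map is affine, $X(s)=X(s_0)+A(r)$. In the proof of \cref{prop:nice-iff-unital} we obtained $\phi_v(A(r))>0$ for all $0\neq r\in R(\iota)$, using the decomposition \eqref{eq:unital-to-nice-cycle-decomposition} together with the unique-minimum property. The $q$-dependence does not change $\mathbf{x}$-degrees. Hence, for every $s\neq s_0$, every monomial of $P(s)$ has $\phi_v$-degree at least
\[
\phi_v(X(s))=\phi_v(X(s_0))+\phi_v(A(r))>\phi_v(X(s_0)).
\]
Here we use that each factor, such as $\prod(1-q^{\cdot}y)^{\pm1}$ and the $q$-binomials, is a power series with constant term $\pm1$ in positive powers of the variables. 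Because $\phi_v$ is positive, the minimal $\phi_v$-degree is attained at the minimal-degree vector $X(s)$.

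\textbf{Step 2: evaluate the ground-state contribution.} At each crossing, $s_0$ has labels in $\{0,-1\}$ following one of the patterns \eqref{eq:allowed-sign-patterns}. Substituting into \eqref{eq:positive-rmatrix-a}--\eqref{eq:negative-rmatrix-b}, we check each pattern:
\begin{itemize}
\item When $i=j'$ (or $j=i'$), the $q$-binomial is $1$ and the product is empty.
\item In the jumping patterns, such as $i=-1$ and $j'=0$, we get $\qbinom{-1}{0}=1$ and a single factor $(1-q^{\cdot}y)^{-1}$, or in the other case $\qbinom{-1}{1}$, which is $\pm q^{\cdot}$, times $(1-q^{\cdot}y)$.
\end{itemize}
In every case, $R_c(s_0)$ is $\pm q^{a_c}\mathbf{x}^{m_c}$ times factors of the form $(1-q^{b}x_i)^{\pm1}$. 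Multiplying by the trace factors $\mu_r(s_0)$, the sign $(-1)^{\mathbf{s}(\iota)}$, and $(x_1^{1/2}-x_1^{-1/2})$, the lowest $\phi_v$-degree part of $F_{(\bd,\iota)}$ is therefore a single term $\pm q^{\alpha}\mathbf{x}^{X_0}$, with $\alpha\in\frac12\Z$ a priori. Every other contribution lies in $O_{\phi_v}(\mathbf{x}^{X_0})$.

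\textbf{Step 3: identify the exponent and integrality.} To show $X_0=-\frac v2$, I will not track the degree bookkeeping. Instead, I will set $q=1$ and use the main theorem: $F_L^{(v)}(\mathbf{x},1)=\pm(\Delta_L)^{-1}_{v/2}$, whose initial term is $\pm\mathbf{x}^{-v/2}$ by \cref{def:inverse-polynomial}. The leading term $\pm q^\alpha\mathbf{x}^{X_0}$ survives at $q=1$ and has no cancellation partner, so $X_0=-\frac v2$. This also identifies $\varepsilon_v$ up to the sign $(-1)^{\mathbf{s}+1}$.

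For $\alpha_v(L)\in\Z$, I would use that the coefficients $f_{\mathbf{k}}(q)$ lie in $\Z[q,q^{-1}]$ by \cref{thm:main-thm}(2), and apply this to the leading coefficient.

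I expect the main obstacle to be Step 2. It requires a careful case check of the five sign patterns at positive and negative crossings, to confirm that no $q$-binomial vanishes and that the leading coefficient is a pure monomial $\pm q^{\alpha}$ rather than a polynomial in $q$. I would handle this crossing-by-crossing and then multiply the resulting local factors.
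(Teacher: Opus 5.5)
Your proposal is correct and follows essentially the paper's own argument. The ground state $s_0$ is the unique valid state reaching the minimal $\phi_v$-degree; your Step 1 makes this explicit via $\phi_v(A(r))>0$ on $R(\iota)\setminus\{0\}$, where the paper only says ``the same arguments as in \cref{thm:main-thm}''. Its contribution is a signed $q$-monomial by the crossing-by-crossing check recorded in \cref{tab:xq-pos-contributions,tab:xq-neg-contributions}. The main theorem then pins that monomial to $\mathbf{x}^{-v/2}$: you use the $q=1$ specialization, while the paper uses the cone support from part (2).

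One caveat concerns your integrality step, which inherits an overstatement from \cref{thm:main-thm}(2). The paper's own computations have half-integral leading exponents, for example $-q^{3/2}x^{1/2}y^{1/2}$ for $T(2,4)$. This matches \cref{prop:alpha-Hopf-homogenoeus-link}, since $b_1(T(2,4))=3$ and $\lambda(T(2,4))=0$. So the correct conclusion is $\alpha_v(L)\in\tfrac12\Z$, which is exactly what your Step 2 already establishes.
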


\begin{proof} The same arguments used to prove \cref{thm:main-thm} work here. In particular, it follows that the ground state is the only state contributing to $\mathbf{x}^{-\frac{v}{2}}$ in the inverted state sum, it contributes a monomial in $q$ as shown in \cref{tab:xq-pos-contributions,tab:xq-neg-contributions}, and the structure in \eqref{eq:struct-nice-FL} follows since $F_L^{(v)}(\mathbf{x},q)\in x^{-\frac{v}{2}}(\Z[q,q^{-1}])_{\mathcal{C}_\frac{v}{2}}[[\mathbf{x}]]$. Alternatively, we could have proven this using optimization theory results, similar to \cite{OSSS25}.
\end{proof}

\begin{proposition}\label{prop:fib-index-xdeg} Let $L$ be a fibered link and $v\in \Hnice$ be the unique minimizer of the positive functional induced by the canonical cohomology class $\mathbb{1}^*$ in $\mathrm{Vert}(\newtonalex)$. Then, $\inner{\mathbb{1}^*}{v}=\chi(L)$.
\end{proposition}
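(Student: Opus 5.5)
The plan is to reduce the statement to McMullen's equality of the Alexander and Thurston norms on fibered classes (\cref{thm:Thurston-Alexander-norms}), combined with the symmetry \eqref{eq:symmetry-Alex} of the normalized multivariable Alexander polynomial. Throughout, write $v=2w$ with $w\in\mathrm{Vert}(\newtonalex)$, using $\Hnice\subseteq 2\,\mathrm{Vert}(\newtonalex)$. The hypothesis says that $w$ is the unique minimizer of $\inner{\mathbb{1}^*}{\cdot}$ on the vertices of $\newtonalex$. Since a linear functional on a polytope attains its minimum at a vertex, this gives
\[
\inner{\mathbb{1}^*}{w}=\min_{\mathbf{k}\in\supp\Delta_L}\inner{\mathbb{1}^*}{\mathbf{k}}.
\]
This agrees with \cref{prop:fibered-link-Alex}, which says the extremal total degree of $\Delta_L$ is attained by a unique monomial with coefficient $\pm1$.

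The first step is to use the symmetry $\Delta_L(\mathbf{x}^{-1})=(-1)^\ell\Delta_L(\mathbf{x})$ from \eqref{eq:symmetry-Alex}. It shows that $\supp\Delta_L=-\supp\Delta_L$, so
\[
\max_{\mathbf{k}\in\supp\Delta_L}\inner{\mathbb{1}^*}{\mathbf{k}}=-\inner{\mathbb{1}^*}{w}.
\]
Substituting into the definition \eqref{eq:Alexander-norm} gives
\[
\|\mathbb{1}^*\|_A=\max_{\mathbf{j},\mathbf{k}}\inner{\mathbb{1}^*}{\mathbf{k}-\mathbf{j}}=-2\inner{\mathbb{1}^*}{w}=-\inner{\mathbb{1}^*}{v}.
\]

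The second step invokes fiberedness. Since $L$ is fibered, $\mathbb{1}^*$ is a fibered class, and for $\ell\geq2$ \cref{thm:Thurston-Alexander-norms} gives $\|\mathbb{1}^*\|_A=\thurston{\mathbb{1}^*}$. The fiber is a Seifert surface $S$ of $L$ and is norm-minimizing in its class, because fibers are Thurston-norm minimizing. It has no closed or disk components once we exclude the unlink and unknot cases, so $\thurston{\mathbb{1}^*}=\chi_-(S)=-\chi(S)$. Moreover, $\chi(S)=\chi(L)$, since a fiber maximizes Euler characteristic among Seifert surfaces. Combining the two steps yields $\inner{\mathbb{1}^*}{v}=-\|\mathbb{1}^*\|_A=\chi(L)$.

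I expect the main obstacle to be bookkeeping of normalizations and edge cases rather than any conceptual step. First, I will check that the paper's convention that the support of $\Delta_L$ lies in $\AL$, with the half-integer shifts, is genuinely centrally symmetric and carries no residual unit factor $\mathbf{x}^{\rho(\beta)}$. Second, for $\ell=1$ McMullen's inequality carries the extra $+1$, so the knot case must be treated separately: there $\Delta_K$ has span $2g$ while $\chi(K)=1-2g$. In that case I would match conventions through the single-variable specialization \eqref{eq:alexander-one-variable-specialization}, where the factor $(x^{1/2}-x^{-1/2})$ accounts for the discrepancy. Alternatively, I would restrict the statement to $\ell\geq2$, which is the setting of the surrounding discussion.
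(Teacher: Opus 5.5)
Your argument is correct for $\ell\geq 2$, but it reaches the value by a different route from the paper's.

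The paper's route runs through link Floer homology:
\begin{enumerate}
\item It places $v$ among the vertices of $B_T^\vee$, a step that silently uses the same McMullen equality at fibered classes.
\item It uses $B_T^\vee+Q=2\HFLpolytope$ to identify $\frac12(-v+\mathbb{1})$ as the unique $\inner{\mathbb{1}^*}{\cdot}$-maximizer of $\HFLpolytope$.
\item It evaluates there with the Floer-theoretic detection of the index, $\mathfrak{i}(L)=\frac{\ell-\chi(L)}{2}$ (Ni, Ozsv\'ath--Szab\'o).
\end{enumerate}

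You bypass link Floer homology altogether. Central symmetry of $\operatorname{supp}\Delta_L$ turns the minimum into $-\frac12\|\mathbb{1}^*\|_A$, and McMullen's equality converts this into $\|\mathbb{1}^*\|_T$. Thurston's theorem that fibers are norm-minimizing then gives $-\chi(L)$, since the fiber is a Seifert surface with $\chi\leq 0$. Your version is shorter and more elementary. The paper's version keeps the computation inside the $\HFL$-polytope picture that drives \cref{conj:maslov-links}. Reading $v$ directly as a vertex of $B_T^\vee$ is also how the knot remark $v=1-2g$ comes out.

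On that knot case, your diagnosis is right but the proposed fix is not. With the usual $\Delta_K$, your reading gives $-2g$, and \eqref{eq:alexander-one-variable-specialization} does not change this, since for $\ell=1$ it is the identity. The real mechanism is that for $\ell=1$ the normalization $\det(I-\hat A_1)/(1-x_1)$ computes the torsion $\Delta_K/(x^{1/2}-x^{-1/2})$ rather than $\Delta_K$. This is why $F_K(x,1)=\pm(x^{1/2}-x^{-1/2})/\Delta_K$, and the torsion's lowest exponent is shifted by $\frac12$, giving $v=1-2g$. Either invoke that, or restrict cleanly to $\ell\geq 2$.
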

If $L$ is a knot, we recover the statement $v=\inner{\mathbb{1}^*}{v}=1-2g(L)$, as expected \cite{OSSS25}.

\begin{proof} Since $L$ is a fibered link, $v$ is an extremal vertex of $B_T^\vee$, and $\frac{1}{2}(-v+\mathbb{1})$ is the unique maximizer of the functional induced by $\mathbb{1}^*\in H^1(M,\Z)$ in $\HFLpolytope$. From \cite{Ni2006} and \cite[Theorem 1.1]{OzsSza2008-2}, we have that
\begin{equation}
    \mathfrak{i}(L)=\inner{\mathbb{1}^*}{\frac{1}{2}(-v+\mathbb{1})}= \frac{1}{2}\left(\inner{\mathbb{1}^*}{-v} + |L|\right).
\end{equation}
It follows that $\inner{\mathbb{1}^*}{-v}=-\chi(L)$.
\end{proof}

Whenever $L$ is a homogeneous braid link, there is a unique $v\in\Hnice$ (cf. \cref{prop:hom-braid-alex,thm:main-thm}), and it is the one minimizing the functional of the Proposition above. In that case, define $\alpha(L)\defeq\alpha_v(L)$.

\begin{proposition}\label{prop:alpha-Hopf-homogenoeus-link}
Let $L$ be a homogeneous braid link. Then, $\alpha(L)$ is an invariant of $L$, and
\begin{equation}\label{eq:L-homogeneous-braid}
    \alpha(L) = \frac{b_1(L)}{2}-\lambda(L).
\end{equation}

\end{proposition}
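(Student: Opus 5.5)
We will compute the leading coefficient directly from the ground state of a canonical nice inversion datum, and then identify the resulting exponent of $q$ topologically.

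\textbf{Choice of inversion datum.} Let $\bd$ be a homogeneous braid on $m$ strands with closure $L$. As in the proof of \cref{prop:hom-braid-alex}, take the inversion datum $\iota_-$ whose inversion multicycle is $c_-$. This multicycle jumps down at every occurrence of each negative generator $\sigma_i^{-1}$. The proof of \cref{prop:hom-braid-alex} shows that $\mathbf{a}(c_-)$ is the unique minimizer of every positive functional on $\{\mathbf{a}(c)\}$. Hence $(\bd,\iota_-)$ is nice by \cref{prop:nice-iff-unital}.

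\textbf{Invariance.} By \cref{prop:hom-braid-alex}, $\Delta_L$ has a unique positively exposed vertex $v$. By \cref{thm:main-thm}(1), the series $F_L^{(2v)}$ is therefore an invariant of $L$ alone. By \cref{prop:L}, its leading coefficient $\varepsilon_v q^{\alpha_v(L)}$ is read off from the ground state. So $\alpha(L)$ is an invariant, and we may compute it on any homogeneous braid representative with the datum $\iota_-$.

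\textbf{Computing the ground-state exponent.} On $s_0$, every segment of $c_-$ carries $-1$ and every other segment carries $0$. We evaluate the $q$-power crossing by crossing using \eqref{eq:positive-rmatrix-a}--\eqref{eq:negative-rmatrix-b}, and add the trace factors $q^{-\frac12-s(b_r)}$ for $2\le r\le m$.

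At a positive crossing with pattern $\stackanchor{++}{++}$ we have $i=j=i'=j'=0$, so the contribution is $q^{1/2}$. At a negative crossing with pattern $\stackanchor{-+}{+-}$, read in the correct convention, we have $i=-1$, $j=0$, $i'=0$, $j'=-1$. The prefactor $q^{-\frac{i+i'+1}{2}-ii'}$ then gives $q^{0}$. The $q$-binomial $\qbinom{j}{j-i'}$ equals $1$, and any remaining products are empty.

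Each positive crossing thus contributes $+\tfrac12$, and each negative crossing contributes a fixed constant $c$; we still need to double-check $c$, which we expect to be $-\tfrac12$. The trace factors contribute $-\tfrac{m-1}{2}$, plus $+1$ for each closed strand whose bottom segment carries $-1$. Writing $n_\pm$ for the numbers of positive and negative crossings, the total exponent is an affine expression
\[
\alpha(L) = \tfrac{n_+}{2} + c\, n_- - \tfrac{m-1}{2} + \#\{\text{bottom segments on } c_-\}.
\]

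\textbf{Topological identification.} For a homogeneous braid, Seifert's algorithm applied to the closed braid gives a fiber surface (Stallings). This surface has $\chi = m-n$, so $b_1(L) = n-m+1$.

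For the Hopf invariant $\lambda(L)$ we use the Maslov grading statement of \cref{thm:fibered-link-HFL}. We also use additivity of $\lambda$ under Murasugi sum: a homogeneous braid closure is a Murasugi sum of positive and negative torus-type fibered links along the columns. This gives $\lambda(L)=n_-$-type counts, more precisely $\lambda(L) = \#\{i : \varepsilon(i) < 0\}$ adjusted by the number of negative crossings. Comparing the two formulas then yields $\alpha(L) = \tfrac{b_1(L)}{2} - \lambda(L)$.

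\textbf{Main obstacle.} The delicate part is the bookkeeping in the ground-state evaluation. We must place the half-integer $q$-shifts correctly at negative crossings on $c_-$, correctly count which bottom segments lie on $c_-$, and match the resulting count with $\lambda$.

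To pin down the formula, we would first check the base cases: $T(2,n)$ for positive braids, where $\lambda = 0$, and negative Hopf-type examples. We would then verify additivity under stacking columns, so that the general case reduces to single-generator columns $\sigma_i^{\pm k}$.
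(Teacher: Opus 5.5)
Your route is the same as the paper's. You read $\alpha(L)$ off the ground state of the datum $\iota_-$ built from $c_-$, get $b_1(L)=N-(m-1)$ from the braid-closure fiber surface, and get $\lambda(L)$ from Stallings' decomposition. The invariance paragraph and the value of $b_1$ are fine. But the two counts that make up the proof are missing, and the one you attempt uses the wrong local pattern.

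\textbf{The ground-state $q$-exponent.} Write $N_\pm$ for your $n_\pm$, $N=N_++N_-$, and $k_\pm$ for the numbers of positive/negative generators, so $m-1=k_++k_-$.
\begin{itemize}
\item At a negative crossing $\sigma_p^{-1}$, $c_-$ jumps, so its minus segment stays in its column. The pattern is the vertical $\stackanchor{+-}{+-}$ ($i=i'=0$, $j=j'=-1$, minus in the right column $p+1$). This is the only single-minus vertical pattern allowed at a negative crossing.
\item By \eqref{eq:negative-rmatrix-b} it gives $q^{-\frac{i+i'+1}{2}-ii'}=q^{-1/2}$, as in \cref{tab:xq-neg-contributions}; the factor $(1-x^{-1})^{-1}$ only contributes $-x$ at lowest order.
\item Your values $i=-1$, $j=0$, $i'=0$, $j'=-1$ describe a crossover, which never occurs here. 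With $c=0$ your formula already fails for $H^-=\widehat{\sigma_1^{-2}}$: it gives $+\tfrac12$ instead of $\tfrac{b_1}{2}-\lambda=-\tfrac12$.
\end{itemize}
What you need is the structural fact that the minus set of $\iota_-$ is exactly the union of the columns $p+1$ with $\varepsilon(p)<0$. Homogeneity then forces every crossing to carry a vertical pattern:
\begin{itemize}
\item at a positive $\sigma_q$, either $\stackanchor{++}{++}$ or $\stackanchor{-+}{-+}$ (the latter when $\varepsilon(q-1)<0$, a case you overlooked);
\item at a negative crossing, either $\stackanchor{+-}{+-}$ or $\stackanchor{--}{--}$.
\end{itemize}
By \cref{tab:xq-pos-contributions,tab:xq-neg-contributions}, each contributes exactly $q^{\pm1/2}$ according to its sign. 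The same fact settles your open count of bottom segments. Each negative column $p+1\geq 2$ meets the bottom level exactly once and avoids the open strand. So exactly $k_-$ trace factors equal $q^{1/2}$ and the remaining $k_+$ equal $q^{-1/2}$. Hence $\alpha(L)=\tfrac12(N_+-N_-)-\tfrac12(k_+-k_-)$.

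\textbf{The Hopf invariant.} You never compute $\lambda(L)$. ``$\#\{i:\varepsilon(i)<0\}$ adjusted by the number of negative crossings'' is a placeholder, and the Maslov-grading clause of \cref{thm:fibered-link-HFL} does not determine $\lambda(L)$ here. The needed inputs are:
\begin{itemize}
\item Stallings' theorem that the fiber surface of a homogeneous braid closure is a plumbing of $N_+-k_+$ positive and $N_--k_-$ negative Hopf bands (each column $\sigma_i^{\pm r}$ contributes $r-1$ bands);
\item additivity of $\lambda$ under this operation (Neumann--Rudolph);
\item $\lambda(H^+)=0$ and $\lambda(H^-)=1$.
\end{itemize}
Together these give $\lambda(L)=N_--k_-$. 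Then $\tfrac12(N-m+1)-(N_--k_-)=\tfrac12(N_+-N_-)-\tfrac12(k_+-k_-)$, which is the claim. The base-case checks and column-stacking additivity you propose are unnecessary once these two counts are in place.
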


\begin{proof} Let $\bd\in B_n$ be a homogeneous braid representative for $L$. Denote by $N_\pm$ its number of positive and negative crossings, $N:=N_++N_-$ its total number of crossings, and $k_{\pm}$ its number of positive and negative generators, such that $n-1=k_++k_-$.

Let us first focus on the total lowest $x$-degree. \cref{tab:xq-pos-contributions,tab:xq-neg-contributions} show that every crossing contributes $\frac12$ to the ground state, and every strand contributes $-\frac12$ to \eqref{eq:trace-factor-link}, when collapsing all $\mathbf{x}$-variables into one. Then, we have that
\begin{equation}
    -\sum_{i=1}^\ell v_i=\deg_x F_L(x,\dots,x,q)=\frac12(N-n).
\end{equation}
In a similar way, we deduce that the associated $q$-degree is given by
\begin{equation}
    \alpha(L)=\frac{1}{2}(N_+-N_-) -\frac{1}{2}(k_+-k_-)
\end{equation}
Let $H^{\pm}$ denote the positive and negative Hopf bands. Since these are twisted annuli, we have that $g(H^{\pm})=0$ and $b_1(H^{\pm})=1$. Furthermore, $\lambda(H^+)=0$ and $\lambda(H^-)=1$.

Stallings \cite{Stallings1978} proved that, if $\bd$ is a homogeneous braid link, it can be obtained by plumbing $N_+-k_+$ positive bands and $N_--k_-$ negative bands and taking its boundary. Since both the first Betti number \cite{Gabai1983Murasugi,Gabai1985MurasugiII} and the Hopf invariant \cite{neumann1987unfoldings} of a link are additive under this operation, the result follows from:
\begin{equation}
        b_1(L)= \left(N_{+}-k_+\right)b_1(H^+)+\left(N_{-}-k_-\right)b_1(H^-) = N - (n-1),
\end{equation}
and
\begin{equation}
    \lambda(L)=(N_+-k_+)\lambda(H^+)+(N_--k_-)\lambda(H^-)=N_--k_-.
\end{equation}
\end{proof}

\begin{table}
    \centering
    \begin{tabular}{c|*{6}{c}}

      \begin{tikzpicture}[scale = .25, rotate = 90, baseline=-2pt]
    \draw (-1,-1) -- (-0.2,-0.2);
    \draw (0.2,0.2) -- (1,1);
    \draw (-1,1) -- (1,-1);
    \end{tikzpicture}
    & \stackanchor{++}{++} & \stackanchor{+-}{-+} & \stackanchor{-+}{+-} & \stackanchor{-+}{-+} & \stackanchor{--}{--} \\
    \midrule

    $R(s_0)$ & $x^\frac14 y^\frac14 q^\frac12$ & $x^\frac14 y^{-\frac14}$ & $x^{-\frac14}y^{\frac14}$ & $y^{\frac12}q^\frac12$ & $x^{-\frac14}y^{-\frac14}q^{\frac12}$\\
    \end{tabular}
    \caption{Minimal $x,y$-exponents of $R$ evaluated at the state $s_0$, in terms of the value of $\id$ at a crossing $c$.}
    \label{tab:xq-pos-contributions}
\end{table}

\begin{table}
    \centering
    \begin{tabular}{c|*{6}{c}}

    \begin{tikzpicture}[scale = .25, baseline=-2pt]
    \draw (-1,-1) -- (-0.2,-0.2);
    \draw (0.2,0.2) -- (1,1);
    \draw (-1,1) -- (1,-1);
    \end{tikzpicture}

    & \stackanchor{--}{--} & \stackanchor{+-}{-+} & \stackanchor{-+}{+-} & \stackanchor{+-}{+-} & \stackanchor{++}{++} \\
    \midrule

    $R^{-1}(s_0)$ & $x^\frac14 y^\frac14 q^{-\frac12}$ & $x^{-\frac14} y^{\frac14} $ & $x^{\frac14}y^{-\frac14}$ & $x^{\frac12}q^{-\frac12}$ & $x^{-\frac14}y^{-\frac14}q^{-\frac12}$\\
    \end{tabular}
    \caption{Minimal $x,y$-exponents of $R^{-1}$ evaluated at the state $s_0$, in terms of the value of $\id$ at a crossing $c$.}
    \label{tab:xq-neg-contributions}
\end{table}

In the general case, we conjecture that $\alpha_v(L)$ can be obtained from known quantities in the following way.

\begin{conjecture}\label{conj:maslov-links}
Given an oriented, nice link $L$, let $v\in \Hnice$. Assume that $v\in B_T^\vee$ is a fibered vertex positively exposed by $\phi\in H^1(M,\Z)$. Then,
\begin{equation}
    \alpha_{v}(L) = \frac{1-\inner{\mathbb{1}^*}{v}}{2} - M(h)
\end{equation}

where $h=\frac{1}{2}(-2v+\mathbb{1})$ and $\HFL(L,h)=\mathbb{F}_{(M(h))}$.

In particular, if $L$ is fibered and $v$ is the unique minimizer of the functional induced by the canonical cohomology class $\mathbb{1}^*\in H^1(M,\Z)$,
\begin{equation}
    \alpha_v(L)= \frac{b_1(L)}{2} - \lambda(L)~.
\end{equation}
\end{conjecture}
This conjecture is supported by computations: we verified the conjectured identity for 854 distinct pairs \((L,v)\), with \(v\in\mathbb{G}(L)\), drawn from 630 oriented two- and three-component link variants with at most ten
crossings.

\subsection{The single-variable series}

\begin{theorem}\label{thm:single-var-FL} Let $L$ be an oriented link and $v\in \Hnice$. Assume that $v$ is the unique minimizer of the functional induced by the canonical cohomology class $\mathbb{1}^*$. Then, the evaluation $\mathbf{x}=(x,\dots,x)$ gives a well-defined power series
\begin{equation}
    F_L^{(v)}(x,q)\in x^{-\frac12\inner{\mathbb{1}^*}{v}}\Z[q,q^{-1}][[x]].
\end{equation}
\end{theorem}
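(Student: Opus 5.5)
We prove well-definedness of the diagonal specialization by showing that, once $\mathbf{x}=(x,\dots,x)$ is substituted, only finitely many terms of $F_L^{(v)}$ land in each power of $x$. By \cref{thm:main-thm}(2), with $v$ replaced by $\frac v2$ in our current indexing,
\[
F_L^{(v)}(\mathbf{x},q)=\mathbf{x}^{-\frac v2}\sum_{\mathbf{k}\in\mathcal{C}_{v/2}} f_\mathbf{k}(q)\mathbf{x}^\mathbf{k}.
\]
The diagonal substitution sends $\mathbf{x}^{\mathbf{k}}$ to $x^{\inner{\mathbb{1}^*}{\mathbf{k}}}$, so the $x^{N}$ coefficient of the specialization is $\sum_{\inner{\mathbb{1}^*}{\mathbf{k}}=N} f_\mathbf{k}(q)$. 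The task is to show that this sum is finite for every $N$, and that it vanishes for $N<0$ (measured after the prefactor $x^{-\frac12\inner{\mathbb{1}^*}{v}}$).

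\textbf{Step 1: the functional $\mathbb{1}^*$ is strictly positive on $\mathcal{C}_{v/2}$.} By hypothesis, $\frac v2$ is the unique minimizer of $\inner{\mathbb{1}^*}{\cdot}$ over $\supp\Delta_L$, equivalently over $\mathrm{Vert}(\newtonalex)$. Hence for every generator $w-\frac v2$ of $\mathcal{C}_{v/2}$, with $w\in\supp\Delta_L$ and $w\ne \frac v2$, we have $\inner{\mathbb{1}^*}{w-\frac v2}>0$. Because the cone is finitely generated and rational, there is a constant $\delta>0$ with $\inner{\mathbb{1}^*}{\mathbf{k}}\ge\delta\|\mathbf{k}\|$ on $\mathcal{C}_{v/2}$. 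Consequently each slab $\{\mathbf{k}\in\mathcal{C}_{v/2}\cap\frac12\Z^\ell : \inner{\mathbb{1}^*}{\mathbf{k}}\le N\}$ is a bounded set of lattice points, hence finite. It follows that each coefficient of the specialization is a finite sum of Laurent polynomials, so it lies in $\Z[q,q^{-1}]$. Moreover only exponents $\inner{\mathbb{1}^*}{\mathbf{k}}\ge0$ occur, giving
\[
F_L^{(v)}(x,q)\in x^{-\frac12\inner{\mathbb{1}^*}{v}}\Z[q,q^{-1}][[x]].
\]

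\textbf{Step 2: integrality of the exponents.} The exponents $\inner{\mathbb{1}^*}{\mathbf{k}}$ must be nonnegative integers rather than half-integers. Every $\mathbf{k}$ in the support is a nonnegative integer combination of differences $w-\frac v2$ of elements of $\AL$, together with the contributions of the expanded factors $(1-x_i)^{-1}$, which lie in $\Z^\ell_{\ge0}$. Differences of points of the affine lattice $\AL$ lie in $\Z^\ell$, so all these exponents are integers. To make this rigorous I will check that the actual support of the state sum, rather than the convex cone, sits in $\frac v2+\Z^\ell$. I will do this using \eqref{eq:weight-inv-exponent}, together with the fact that $q$-deformations only multiply state contributions by $q$-powers and factors $(1-q^a x_i)^{\pm1}$.

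\textbf{Main obstacle.} Step 1 relies on \cref{thm:main-thm}(2) holding for the full $q$-series, not just at $q=1$: we need every state's contribution, with all its $q$-factors, to be supported in $\frac{-v}{2}+\mathcal{C}_{v/2}$. I would secure this from the proof of \cref{prop:nice-iff-unital}. There, each state equals $s_0+r$ with $r\in R(\iota)$, and its minimal degree vector $X(s_0)+A(r)$ satisfies $\phi_v(A(r))>0$. One caveat is that $\phi_v$ need not equal $\mathbb{1}^*$. I would therefore repeat the separation argument of \cref{lemma:strict-separation-line-free-cones} directly with $\mathbb{1}^*$. Niceness, via \eqref{eq:recessioncriterion}, already says that $\inner{\mathbb{1}}{A(r)}>0$ for every nonzero $r\in R(\iota)$. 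Combined with finite generation of $R(\iota)$, this gives a uniform bound $\inner{\mathbb{1}}{A(r)}\ge\delta\|r\|$. Thus only finitely many states contribute to each total $x$-degree, which is exactly the well-definedness claimed in the theorem.
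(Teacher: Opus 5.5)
Your Step~1 is the paper's proof. You cite \cref{thm:main-thm}(2) for the support of the full $q$-series. You note that the unique-minimizer hypothesis makes $\inner{\mathbb{1}^*}{\cdot}$ strictly positive on each generator $w-\frac v2$ of $\mathcal{C}_{v/2}$. You then conclude that each level set $\{\inner{\mathbb{1}^*}{\mathbf{k}}=m\}$ meets the support in finitely many points. Your bound $\inner{\mathbb{1}^*}{\mathbf{k}}\ge\delta\|\mathbf{k}\|$ is tidier than the paper's claim that every lattice point of $\mathcal{C}_{v/2}$ is a nonnegative integer combination of the generators. That claim fails for non-unimodular cones, although finiteness survives. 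Step~2 is unnecessary: \cref{thm:main-thm}(2) already places $F_L^{(v)}$ in $\mathbf{x}^{-v/2}(\Z[q,q^{-1}])_{\mathcal{C}_{v/2}}[[\mathbf{x}]]$, whose elements have exponents in $\Z^\ell$ by definition.

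You should, however, discard the fallback in your last paragraph, because it is unsound. It never uses the hypothesis that $v$ is the unique $\mathbb{1}^*$-minimizer. It would therefore show that the diagonal specialization is a power series for \emph{every} nice pair, and the paper's own examples rule that out. For $T(2,4)^*$ at $v_-$ (\cref{sec:example-T24}) the $x^0$-coefficient is an infinite $q$-series. For $L7n1\{1\}$ at the vertex $(-1,3)$ (\cref{sec:L7n1-FL}, listed as nice with minimizing functional $(4,1)$) the monomials of $F_+^{(-1,3)}$ have total degrees $-1,-3,-5,\dots$.

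The culprit is reading niceness as $\inner{\mathbb{1}}{A(r)}>0$ for all $0\neq r\in R(\iota)$. \cref{def:nice-ss} and \cref{prop:recessioncriterion} are indeed written with the total-degree cutoff. But the notion the paper actually uses is finitely many states per multidegree, and the proof of \cref{prop:nice-iff-unital} works only with $A(R(\iota))\cap(-\R^\ell_{\ge0})=\{0\}$. That notion gives positivity only for some positive $\phi_v$, not for $\mathbb{1}$.

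Nor can you feed the hypothesis in state by state: unique $\mathbb{1}^*$-minimality over $\supp\Delta_L$ does not by itself control all cycle exponents $\mathbf{a}(c)$, since cycle weights may cancel in $\Delta_L$. The hypothesis enters only through the cone of \cref{thm:main-thm}(2), exactly as in your Step~1.
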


In particular, the conditions of the theorem are satisfied by any fibered link (cf. \cref{thm:fibered-link-HFL}). Therefore, given a nice fibered link and such a $v\in\Hnice$, we can define
\begin{equation}
    F_L(x,q)\defeq F_L^{(v)}(x,\dots,x,q),
\end{equation}
and it follows from \cref{prop:fib-index-xdeg} that
\begin{equation}
    F_L(x,q)\in x^{-\frac{\chi(L)}{2}}\Z[q,q^{-1}][[x]]~.
\end{equation}

\begin{proof} Let $\mathcal{C}_\frac{v}{2}$ be the cone in the statement of \cref{thm:main-thm} with generators $z_1,\dots,z_s$, such that every element $z\in\mathcal{C}_\frac{v}{2}$ can be written as $z=\sum_{i=1}^s n_iz_i$ for some $n_1,\dots,n_s\in\Z_{\geq 0}$. Every generator $z_i$ must satisfy $\inner{\mathbb{1}^*}{z_i}>0$, since there exist $w_i\in\supp \Delta_L$ such that $z_i=w_i-v$ for every $i=1,\dots, s$, and $\inner{\mathbb{1}^*}{w_i} > \inner{\mathbb{1}^*}{\frac{v}{2}}$ by assumption.

For every $z\in\mathcal{C}_{\frac{v}{2}}$ as above,
\begin{equation}
    \inner{\mathbb{1}^*}{z}=\sum_{i=1}^s n_i\inner{\mathbb{1}^*}{z_i}>0 \hspace{1.25em}\text{ if }~n_i>0~\text{ for some }~1\leq i\leq s.
\end{equation}
Therefore, for every $m\in\left(\frac{\ell}{2}+\Z\right)_{\geq 0}$,
\begin{equation}\label{eq:single-variable-cardinality}
    0<|\{z\in\mathcal{C}_{\frac{v}{2}}\mid \inner{\mathbb{1}^*}{z} =m\}|< \infty~,
\end{equation}
and, if $m=0$, we have $|\{z\in\mathcal{C}_{\frac{v}{2}}\mid \inner{\mathbb{1}^*}{z} =0\}|=1$.

Write
\begin{equation}
    F_L^{(v)}(\mathbf{x},q)=\mathbf{x}^{-\frac{v}{2}}\sum_{z\in\mathcal{C}_\frac{v}{2}} f_{z}(q) \mathbf{x}^{z},
\end{equation}
where $f_z(q)\in\Z[q,q^{-1}]$. The evaluation of $\mathbf{x}=(x,\dots,x)$ sends $F_L^{(v)}(\mathbf{x},q)$ to
\begin{equation}\label{eq:single-variable-v}
\begin{split}
    F_L^{(v)}(x,q) &\defeq F_L^{(v)}(x,\dots,x,q) = x^{-\frac12\inner{\mathbb{1}^*}{v}} \sum_{z\in\mathcal{C}_{\frac{v}{2}}} f_{z}(q) x^{\inner{\mathbb{1}^*}{z}}~,
\end{split}
\end{equation}
from which we obtain
\begin{equation}
    F_L^{(v)}(x,q)=x^{-\frac12\inner{\mathbb{1}^*}{v}} \left(f_{\mathbf{0}}(q)+\sum_{m\in\left(\frac{\ell}{2}+\Z\right)_{>0}} \left(\sum_{\{z\in \mathcal{C}_{\frac{v}{2}} \mid \inner{\mathbb{1}^*}{z}=m\}} f_{z}(q)\right) x^{m}\right).
\end{equation}
By \eqref{eq:single-variable-cardinality}, the $x^m$-coefficient of $F_L^{(v)}(x,q)$ is a finite Laurent polynomial in $q$.

\end{proof}

\begin{remark} Given a non-fibered oriented link, there might be multiple vertices $v\in\Hnice$ which minimize the pairing $\inner{\mathbb{1}^*}{v}$. In this case, we expect that the evaluation $\mathbf{x}=(x,\dots,x)$ gives a Laurent power series
\begin{equation*}
    F_L(x,q)\defeq F_L^{(v)}((x,\dots,x),q) \in x^{-\frac{\chi(L)}{2}}\Z((q))[[x]]
\end{equation*}
which does not depend on the choice of such a $v$.
\end{remark}

\subsection{Topological interpretation}

We expect that the notion of niceness is intrinsically related to that of fibrations of the link complement.
\begin{conjecture}\label{conj:nice-fibered-face-Thurston} Let $v\in \newtonalex$ be a positively exposed vertex. Then, the pair $(L,2v)$ is nice if, and only if, $2v\in B_T^\vee$ is a fibered vertex.

In other words,
\begin{equation}
    \Hnice=\{v\in\mathrm{Vert}(B_T^\vee)\mid v \text{ is fibered}~\}\,.
\end{equation}
\end{conjecture}

Therefore, we conjecture that the Gukov--Manolescu invariant of a link $L$ is a family of formal Laurent power series in $\ell+1$ variables labeled by fibered vertices of the dual Thurston polytope or, equivalently, fibered faces of the Thurston polytope.

\begin{remark} An equivalent statement to the conjecture is: The pair $(L,2v)$ is nice if, and only if, $\mathrm{int}(\mathcal{C}_v^\vee)=\R_{>0}\,\mathrm{relint}(F_{2v})$, where $F_{2v}=(2v)^\vee$ is a fibered top-dimensional face of the Thurston polytope and $\mathrm{relint}(F_{2v})$ denotes its relative interior. This reformulation will come in handy in \cref{sec:surgery}.
\end{remark}

\begin{remark}\label{rmk:conj-nice-HFL} Given \cref{thm:HFL-fibered-classes}, another reformulation of the conjecture is the following: The pair $(L,v)$ is nice if, and only if, $h\defeq \frac{1}{2}(v-\mathbb{1})\in \HFLpolytope$ is an extremal vertex and
satisfies $\rank{\HFL(L,h)}=1$.
\end{remark}

\begin{remark} Given a link $L$ and a fibered cohomology class $\phi\in H^1(M,\Z)$ for which $\inner{\phi}{\mu_i}>0$ for all $1\leq i\leq \ell$, the pair $(L,\phi)$ induces a rational open book decomposition of $S^3$, as defined by \cite{BakerEtnyreVanHornMorris2012}\footnote{We note that their notion of \emph{fibered link} is different than ours, and only requires the link complement to admit a fibration over the circle.}. In the same paper, they consider \emph{resolutions} of rational open book decompositions, by substituting $L$ with the $(\mathbf{p},\mathbf{q})$-cable of $L$, where $p_i=\inner{\phi}{\mu_i}$ and $\mathbf{q}$ is chosen suitably. By \cite[Theorem 5.1]{BakerEtnyreVanHornMorris2012}, the resolution of $(L,\phi)$ gives an honest open book decomposition on the cable link, which is a fibered link. Moreover, both (rational or integral) open book decompositions support the same contact structure.

Inspired by the arguments in \cite[Proposition 3.3]{OzsSza2008-2}, we believe that the Hopf invariant of this contact structure is precisely what is captured by the Maslov grading appearing in \cref{conj:maslov-links}, and therefore also by $\alpha_v(L)$. This provides further support for \cref{conj:nice-fibered-face-Thurston}.
\end{remark}

The hard direction to prove is the `if' part of \cref{conj:nice-fibered-face-Thurston}. Indeed, by \cref{rmk:conj-nice-HFL}, and given the connection between fibered faces and integral expansions of $\Delta_L^{-1}$, we can immediately deduce the `only if' direction of the conjecture for alternating links.

\begin{proposition} Let $L$ be an alternating link. Then, $(L,v)$ is nice only if $v\in B_T^\vee$ is a fibered vertex.

\end{proposition}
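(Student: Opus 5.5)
The plan is to chain three earlier results: niceness forces monicity at the vertex, \cref{cor:Alex-monic-fibered} turns monicity into fiberedness for alternating links, and the polytope inclusion locates the vertex correctly. Throughout, the vertex indexing the nice pair lies in $2\,\mathrm{Vert}(\newtonalex)$, so the polytope identifications must be tracked with care.

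\textbf{Step 1.} Suppose $(L,v)$ is nice. By definition, there are a braid $\bd$ and an inversion datum $\iota$ with $(\bd,\iota)$ nice and $\frac v2=\mathbf a(c_\iota)+\rho(\bd)$. By \cref{cor:inversion-datum-extremal-vertex}, $\frac v2$ is an extremal vertex of $\newtonalex$, positively exposed by some $\phi_v\in\Z^\ell_{>0}$.

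\textbf{Step 2.} We show that the coefficient of $\mathbf x^{v/2}$ in $\Delta_L$ is $\pm1$. By \cref{thm:main-thm}(2)–(3), $F^{(v)}_L(\mathbf x,1)=\pm(\Delta_L)^{-1}_{v/2}$ lies in $\mathbf x^{-v/2}\Z_{\mathcal C_{v/2}}[[\mathbf x]]$. Alternatively, by \cref{prop:L}, its leading term is $\varepsilon_v\mathbf x^{-v/2}$ with coefficient $\pm1$. Since the initial term of the inverse around $\frac v2$ is $a_{v/2}^{-1}\mathbf x^{-v/2}$, we get $a_{v/2}^{-1}=\pm1$, hence $a_{v/2}=\pm1$.

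A more direct route is to read off the lemma preceding the proof of \cref{thm:main-thm}. There, $\mathbf x^{-v/2}\Delta_L$ is identified, up to sign, with $\mathbf x^{-\mathbf a(c_\iota)}\weight{c_\iota}\det(I-\mathcal B^{\mathrm{inv}})/(1-x_1)$. The constant term of this expression is $\pm1$, because the ground state (the empty inverted multicycle) contributes $\pm1$, and every other term has strictly positive $\phi_v$-degree by \cref{prop:nice-iff-unital}. I will present whichever of these two routes is cleaner.

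\textbf{Step 3.} Since $L$ is alternating, \cref{cor:Alex-monic-fibered} says that the extremal vertices of $\newtonalex$ with coefficient $\pm1$ are exactly those associated to fibered faces. So $\frac v2$ is associated to a fibered face. Under the inclusion $2\newtonalex\subseteq B_T^\vee$, together with McMullen's equality of norms on fibered cones (\cref{thm:Thurston-Alexander-norms}), the vertex $v=2\cdot\frac v2$ is then a fibered vertex of $B_T^\vee$.

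The main obstacle is this last identification. \cref{cor:Alex-monic-fibered} is phrased in terms of the Newton polytope, while the claim concerns vertices of $B_T^\vee$. One must check that a monic vertex $w$ of $\newtonalex$ yields a genuine vertex $2w$ of $B_T^\vee$ dual to a fibered face, and not merely a point on its boundary.

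For alternating links I would argue this through \cref{thm:HFL-poly-Thurston} together with the fact that rank equals $|\chi|$ for $\HFL$ of alternating links (\cite[Theorem 1.3]{Ozsvth2008}). Together these give $\HFLpolytope=N\bigl(\prod_i(x_i^{1/2}-x_i^{-1/2})\Delta_L\bigr)$, so that $2\HFLpolytope=2\newtonalex+Q$. Comparing with $B_T^\vee+Q=2\HFLpolytope$ and cancelling $Q$ in the Minkowski sum then yields $B_T^\vee=2\newtonalex$. With this equality, extremal vertices correspond exactly, and \cref{thm:HFL-fibered-classes} applied at the rank-one vertex $\frac12(v+\mathbb 1)$ of $\HFLpolytope$ shows that $v$ is fibered.
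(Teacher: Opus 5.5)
Your argument is correct and is the paper's own: niceness forces the expansion of $\Delta_L^{-1}$ around $\frac v2$ to have integer coefficients (equivalently $a_{v/2}=\pm1$), and \cref{cor:Alex-monic-fibered} then turns this into fiberedness for alternating links, which the paper states in one line; your Step 3 just re-derives that corollary. One small slip there: the rank-one vertex of $\HFLpolytope$ to feed into \cref{thm:HFL-fibered-classes} is $\frac12(v-\mathbb{1})$, the corner of $s(v)$ exposed by the positive functional $\phi_v$, not $\frac12(v+\mathbb{1})$. For $T(2,4)$, for instance, $v=(-1,-1)$ and $\frac12(v+\mathbb{1})=(0,0)$ is the interior rank-two point.
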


\begin{proof}
\cref{cor:Alex-monic-fibered} shows that, otherwise, the expansion of $\Delta_L^{-1}$ around $\frac{v}{2}$ cannot have integer coefficients; therefore $(L,v)$ cannot be nice.
\end{proof}

\begin{conjecture} Let $L$ be a link and let $v\in \newtonalex$ be a fibered and positively exposed vertex. Then, there exists a braid $\bd$ and an inversion datum $\iota$ for which the assumptions of \cref{thm:main-thm} are satisfied with $\phi=\phi_v$.
\end{conjecture}

In support of the Conjecture, we have the following partial result:
\begin{proposition} Let $L$ be a prime link with at most $10$ crossings and two or three components. Then, for every fibered vertex $v\in \mathrm{Vert}\left(B_T^\vee\right)$, there exists a braid $\bd$ for which the pair $(\bd,\frac{v}{2})$ is nice.
\end{proposition}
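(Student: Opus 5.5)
This statement is a finite computational claim, so the plan is an exhaustive case-by-case verification organized by the criteria already established.

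\textbf{Step 1: the list of fibered vertices.} Enumerate all prime links with at most $10$ crossings and two or three components, together with all their orientation variants, from standard tables. For each oriented link, compute link Floer homology with \cite{lfhcompute}. Use \cref{thm:HFL-poly-Thurston} to recover $B_T^\vee$ from $\HFLpolytope$ via $B_T^\vee+Q=2\HFLpolytope$. Use \cref{thm:HFL-fibered-classes} to mark the fibered vertices, namely those $v$ for which some $h\in s(v)$ is a vertex of $\HFLpolytope$ with $\rank\HFL(L,h)=1$. By \cref{thm:Thurston-Alexander-norms}, each fibered vertex also corresponds to an integral vertex $v/2$ of $\newtonalex$, which we record from $\Delta_L$.

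\textbf{Step 2: positive convention.} By \cref{prop:alex-positive-convention} and \cref{rmk:non-positively-exposed-FL}, we may pass to the partial reversal $L^\nu$. This reduces every fibered vertex to one that is positively exposed. It is enough to treat positively exposed vertices as long as all orientation variants are included in Step 1.

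\textbf{Step 3: search for a nice pair.} For each such $(L,v)$, search over braid representatives of $L$ using \cite{OrlandBraidsSoftware}, including conjugates and stabilizations. For each braid, enumerate inversion data $\iota$; these are simple multicycles in the jump-down model. Keep only those satisfying $\rho(\beta)+\mathbf a(c_\iota)=v/2$, using \cref{cor:inversion-datum-extremal-vertex}. Niceness is then certified through \cref{prop:nice-iff-unital}: produce $\phi\in\Z_{>0}^\ell$ whose unique minimum on $\{\mathbf a(c)\mid c\in\mathcal Q\}$ is attained at $\mathbf a(c_\iota)$. Equivalently, use \cref{prop:recessioncriterion} and verify $R(\iota)\cap\{\langle\mathbb 1,A(r)\rangle\le0\}=\{0\}$. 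This check is a finite linear program and is rigorous, since $R(\iota)$ is a rational polyhedral cone. As a sanity check, compare the resulting $F$ at $q=1$ with $(\Delta_L)^{-1}_{v/2}$, via the identity derived after the main lemma of \cref{thm:main-thm}.

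\textbf{Expected obstacle.} The hard part is the search in Step 3. There is no algorithm that produces a suitable braid, and the number of inversion data grows exponentially. I would proceed in two stages:
\begin{itemize}
\item First, try braids that are homogeneous in the relevant variables. \cref{prop:hom-braid-alex} handles the vertex selected by $\mathbb 1^*$ in that case.
\item For the remaining vertices, prune the search by requiring the candidate $\phi$ to lie in $\R_{>0}\mathrm{relint}(F_v)$ from the outset. This converts the search into LP feasibility checks, one per candidate cycle.
\end{itemize}
Any case that survives these stages would be handled by enlarging the braid family through Markov moves until a certificate is found.
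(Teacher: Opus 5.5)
Your proposal matches the paper's proof, which consists only of the report that this finite check was done by computer. The fibered vertices were located from link Floer homology with \cite{lfhcompute}, and nice braid/inversion-datum pairs were produced and verified with \cite{OrlandBraidsSoftware,FKCompute2026}; your LP certificate via \cref{prop:nice-iff-unital} and \cref{prop:recessioncriterion} is an explicit description of what that verification has to establish. Your Step 2 is also the correct reading of the statement rather than a mere convenience: by \cref{cor:inversion-datum-extremal-vertex}, a vertex that is not positively exposed can never be nice under the paper's definition (for $T(2,4)$ both vertices $\pm(1,1)$ are fibered by \cref{cor:Alex-monic-fibered}, but only $(-1,-1)$ is positively exposed), so the claim only makes sense for positively exposed fibered vertices ranging over all orientation variants, which is consistent with the paper running its computations over all oriented variants.
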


This result was checked computationally by using \cite{lfhcompute} to compute the relevant fibered vertices of the Newton polytope, and \cite{OrlandBraidsSoftware,FKCompute2026} to check niceness.

\begin{remark} Even if a pair $(\bd,v')$ is nice, it does not mean that all pairs $(\bd,v)$ are nice for every $v\in \Hnice$. Nevertheless, we observed that in most cases it is possible to find a braid $\tilde{\bd}$ for which $(\tilde{\bd},v)$ is nice for every $v\in\Hnice$.
\end{remark}

Given a fibered link, there may be multiple vertices $v\in H_1(M,\Z)$ that make the pair $(L,v)$ nice (cf. \cref{sec:L7n1-FL}). By reducing to the single variable scenario, we still expect a detection result.

\begin{conjecture}\label{conj:single-var-FL-fibered} $L$ is a fibered link if, and only if, the following holds:
\begin{enumerate}
    \item There is a unique $v\in\Hnice$ satisfying $\inner{\mathbb{1}^*}{v}= \chi(L)$.
    \item There exists a power series
    \begin{equation*}
        F_L(x,q)\in x^{-\frac{\chi(L)}{2}}\Z[q,q^{-1}][[x]]
    \end{equation*}
    such that the evaluation $\mathbf{x}=(x,\dots,x)$ of $F_L^{(v)}(\mathbf{x},q)$ agrees with $F_L(x,q)$.
\end{enumerate}
\end{conjecture}
Note that the forward direction of the Conjecture is already covered by \cref{thm:single-var-FL}. The reverse direction is inspired by two facts: It is directly analogous to the knot case, for which the same statement was conjectured in \cite{OSSS25}, and we have not found any counterexample in our computational testing.

Together with \cref{conj:maslov-links}, the single-variable Gukov--Manolescu invariant of a fibered link is conjectured to take the form
\begin{equation}\label{eq:FL-fibered-expectation}
    F_L(x,q)= \varepsilon\, x^{-\frac{\chi(L)}{2}}q^{\frac{b_1(L)}{2}-\lambda(L)} + O(x^{-\frac{\chi(L)}{2}+1})\in \Z[q,q^{-1}][[x]]
\end{equation}
for some $\varepsilon\in\{\pm1\}$, in analogy to the expectation for fibered knots derived in \cite{OSSS25}.

\begin{remark} We notice that, for a fibered link, the leading $q$-exponent appearing in \eqref{eq:FL-fibered-expectation} agrees with the $q$-exponent of the leading term of its Akutsu-Deguchi-Ohtsuki (ADO) polynomial, as proven in \cite{NeumannVanDerVeen24}.
\end{remark}

\begin{remark}\label{rem:stratified} While in this paper we only study the Gukov--Manolescu series of nice pairs $(L,v)$, we expect there to exist an invariant associated to each pair $(L,v)$ for every link $L$ and extremal vertex of the dual Thurston polytope of $L$.

For certain non-fibered vertices $v\in B_T^\vee$, the associated $F_L^{(v)}$ series may be constructed by selecting a suitable stratification of the state space $\Omega(\iota)$ in the inverted state sum of \eqref{eq:multivariable-inverted-state-sum}, as was studied for knots (cf. \cite[Section~5]{OSSS25} and \cite{Park20}). The result should be series that live in the ring
\begin{equation*}
    \Z((q))_{\mathcal{C}_v}[[\mathbf{x}]]~.
\end{equation*}
Establishing convergence and invariance of such stratified state sums, however, lies beyond the scope of this paper.
\end{remark}

\section{Examples}\label{sec:worked-examples}

\subsection{\texorpdfstring{$L7n1$}{L7n1}:}\label{sec:L7n1-FL}

Let $L_+=L7n1\{1\}$, with braid representative
\[
 \beta_+=\sigma_1\sigma_2\sigma_1^{-1}\sigma_2\sigma_1\sigma_2^{-2}.
\]
Reversing the orientation of the first component of $L_+$ gives $L_-=L7n1\{0\}$, with braid representative
\[
 \beta_-=\sigma_1^{-1}\sigma_2^{-1}\sigma_1^{-1}
 \sigma_2^{-1}\sigma_1^{-1}\sigma_2^{-2}~.
\]
The braids with segment labels attached are represented in \cref{fig:L7-braids}.

\begin{figure}
\centering
\begin{subfigure}{0.45\textwidth}\centering
  \includegraphics[width=\linewidth]{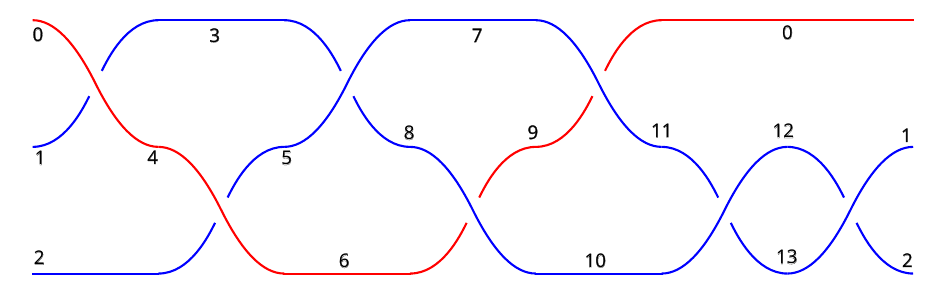}
\caption{$L_+$}
\end{subfigure}
\begin{subfigure}{0.45\textwidth}\centering
  \includegraphics[width=\linewidth]{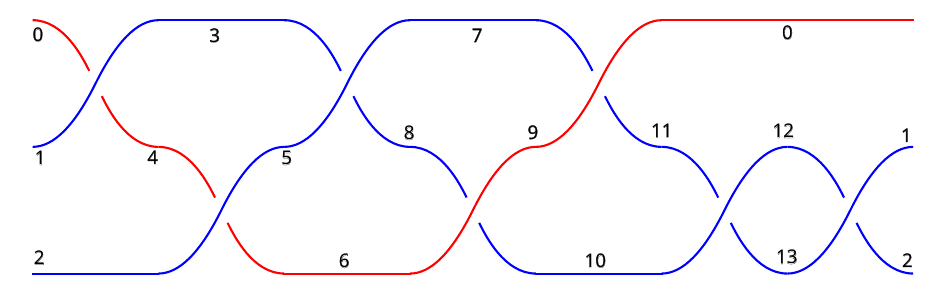}
\caption{$L_-$}
\end{subfigure}
\caption{Left-to-right oriented braid representatives and the segment labels used in the
transition matrices for $L7n1$.}
\label{fig:L7-braids}
\end{figure}

\subsubsection{Floer support, Alexander polynomial, and Thurston polytope}

The complete nonzero link Floer groups for both orientations appear in \cref{tab:L7-hfl}.
The Poincar\'e polynomials associated to the braids are
\[
P_{L_+}(t;x_0,x_1)=t^{-2}x_1^{-2}+t^{-1}x_1^{-1}+t^{-1}+t^{-1}x_0 x_1^{-2}+x_0^{-1}x_1+1+x_0 x_1^{-1}+t x_0^{-1}x_1^2+t x_1+t^2 x_1^2,
\]
and
\[
P_{L_-}(t;x_0,x_1)=t^{-1}x_0^{-1}x_1^{-2}+x_0^{-1}x_1^{-1}+x_1^{-2}+t x_1^{-1}+t+t^2+t^3 x_1+t^4 x_1^2+t^4 x_0 x_1+t^5 x_0 x_1^2~.
\]

\begin{table}
\centering
\begin{tabular}{c|c@{\qquad}c|c}
$A(L_+)$ & $\HFL(L_+)$ & $2A(L_-)$ & $\HFL(L_-)$ \\[0.3em] \hline
$(0,-2)$ & $\mathbb F_{(-2)}$ & $(0,-4)$ & $\mathbb F_{(0)}$ \\
$(0,-1)$ & $\mathbb F_{(-1)}$ & $(0,-2)$ & $\mathbb F_{(1)}$ \\
$(0,0)$ & $\mathbb F_{(-1)}\oplus\mathbb F_{(0)}$ & $(0,0)$ & $\mathbb F_{(1)}\oplus\mathbb F_{(2)}$ \\
$(1,-2)$ & $\mathbb F_{(-1)}$ & $(-2,-4)$ & $\mathbb F_{(-1)}$ \\
$(-1,1)$ & $\mathbb F_{(0)}$ & $(2,2)$ & $\mathbb F_{(4)}$ \\
$(1,-1)$ & $\mathbb F_{(0)}$ & $(-2,-2)$ & $\mathbb F_{(0)}$ \\
$(-1,2)$ & $\mathbb F_{(1)}$ & $(2,4)$ & $\mathbb F_{(5)}$ \\
$(0,1)$ & $\mathbb F_{(1)}$ & $(0,2)$ & $\mathbb F_{(3)}$ \\
$(0,2)$ & $\mathbb F_{(2)}$ & $(0,4)$ & $\mathbb F_{(4)}$ \\
\end{tabular}
\caption{Complete nonzero link Floer homology for $L7n1$ with both orientations.}
\label{tab:L7-hfl}
\end{table}
The two generators at the origin cancel in the Euler characteristic, which recovers the multivariable Alexander polynomial
\[
 \chi\left(\HFL(L)\right)(x,y)=P_{L}(-1,x,y)
 =(x^{1/2}-x^{-1/2})(y^{1/2}-y^{-1/2})\Delta_L(x,y)~,
\]
for $L\in\{L_+,L_-\}$, where
\[
 \Delta_{L_+}(x,y)=x^{1/2}y^{-3/2}+x^{-1/2}y^{3/2} \hspace{1.25em}\text{and}\hspace{1.25em}\Delta_{L_-}(x,y)=-x^{-1/2}y^{-3/2}-x^{1/2}y^{3/2}~.
\]
Indeed, the multivariable Alexander polynomials are related via $\Delta_-(x,y)=-\Delta_+(x^{-1},y)$, as described in \cref{prop:Alex-properties}. The link Floer polytope and the Thurston polytope of $L_+$ and $L_-$ are illustrated in \cref{fig:L7-hfl-thurston-polytopes}.

\begin{figure}
\centering
\begin{subfigure}{.48\textwidth}\centering
\includegraphics[width=\linewidth,height=.25\textheight,keepaspectratio]{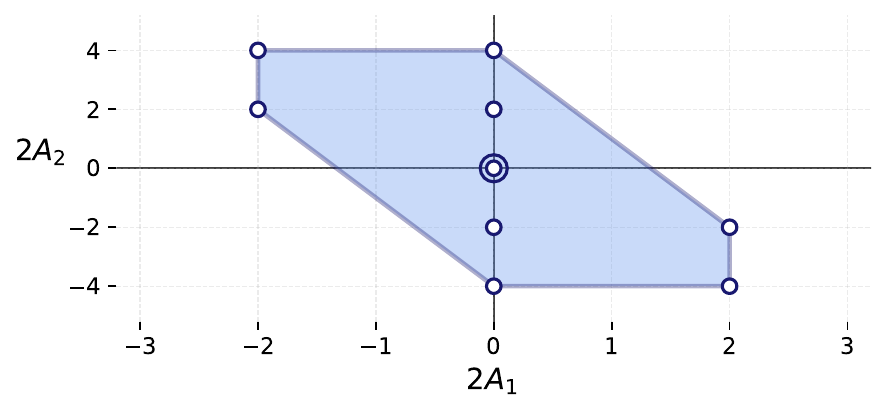}
\caption{$2\mathcal P_{\HFL}(L_+)$.}
\end{subfigure}\hfill
\begin{subfigure}{.48\textwidth}\centering
\includegraphics[width=\linewidth,height=.25\textheight,keepaspectratio]{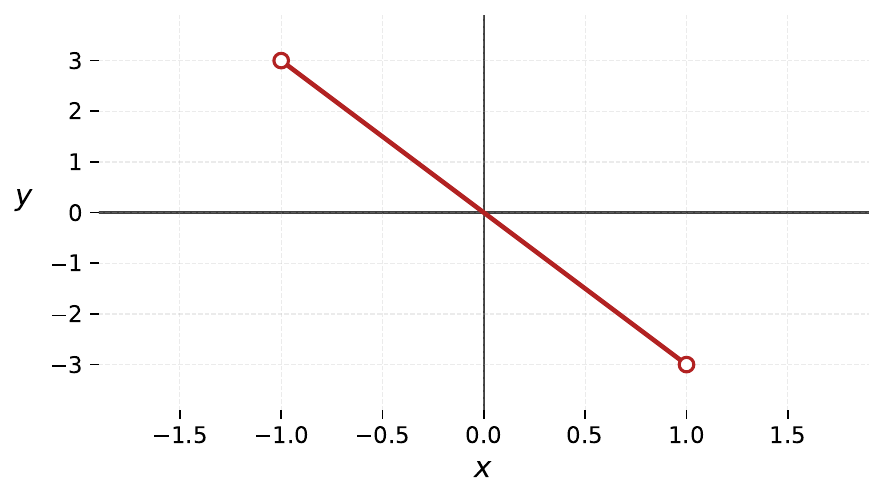}
\caption{$B_T^\vee(L_+)$.}
\end{subfigure}\\[1em]
\begin{subfigure}{.48\textwidth}\centering
\includegraphics[width=\linewidth,height=.25\textheight,keepaspectratio]{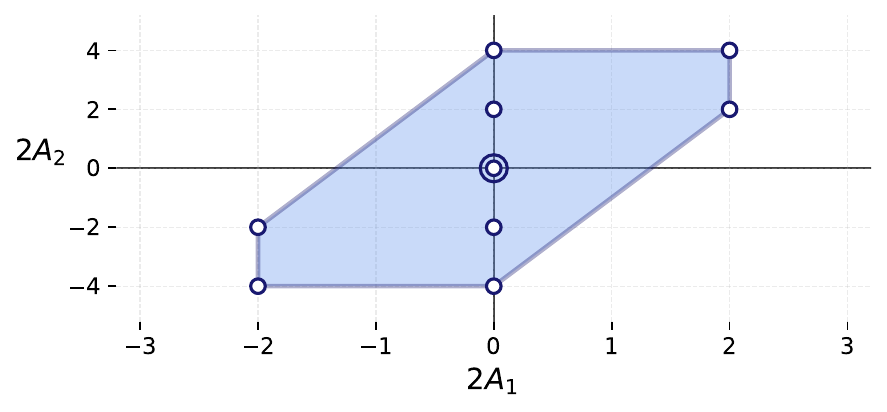}
\caption{$2\mathcal P_{\HFL}(L_-)$.}
\end{subfigure}\hfill
\begin{subfigure}{.48\textwidth}\centering
\includegraphics[width=\linewidth,height=.25\textheight,keepaspectratio]{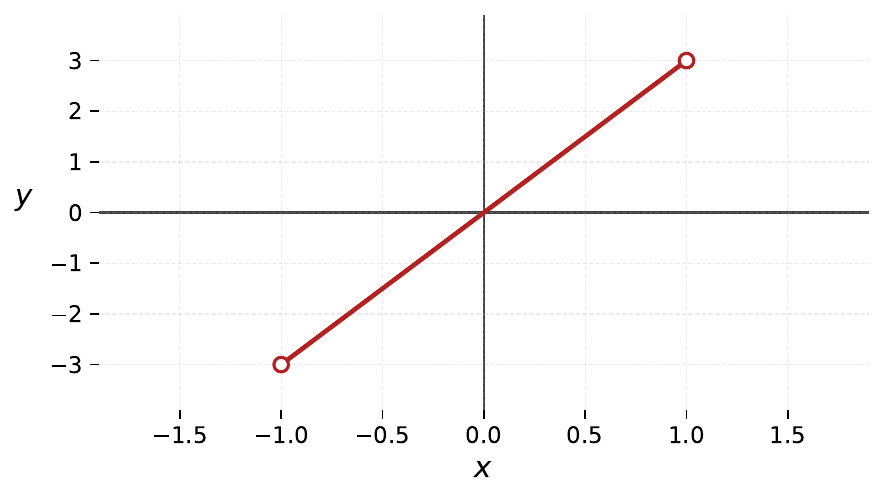}
\caption{$B_T^\vee(L_-)$.}
\end{subfigure}
\caption{The doubled $\HFL$ support polytope $2\mathcal P_{\HFL}$ (blue, on the
$2A_1,2A_2$ axes of \cref{tab:L7-hfl}) and dual Thurston ball $B_T^\vee$ (red,
on plain $x,y$ axes) for both orientations of $L7n1$.  Concentric circles
mark the rank at each point.}
\label{fig:L7-hfl-thurston-polytopes}
\end{figure}

\begin{table}
  \begin{center}
    \[
    \begin{array}{c|c|c|c|c}
    \text{orientation}&\text{HFL vertex}&
    \text{Thurston vertex}&\text{minimizing functional} & \text{inversion cycle}\\ \hline
    L_+&(0,-2)&(1,-3)&(1,1) & (2\,5\,8\,10\,13)\\
    L_+&(-1,1)&(-1,3)&(4,1) & \text{id}\\
    L_-&(-1,-2)&(-1,-3)&(1,1) & (1\,4\,6\,9\,11\,13)(2\,5\,8\,10\,12)
   \end{array}
  \]
  \end{center}
  \caption{$\HFL$ and Thurston vertices, minimizing functionals and inversion cycles associated to $L7n1$ for both relative orientations.}\label{tab:L7n1-tab-all}
\end{table}

\subsubsection{Transition matrix and unique minimizing assignments}
The transition $e\to f$ has weight $\mathcal A_{ef}$.
For a fixed functional $\phi$, let $x^{m_{ef}}$ be the monomial of this weight whose exponent is minimized by $\phi$;
that is,
\[
    m_{ef} = \operatorname*{argmin}_{m\in\operatorname{supp}\mathcal A_{ef}}\inner{\phi}{ m}~.
\]
Thus each such transition contributes $m_{ef}$ to the exponent vector $\mathbf{a}(c)$ of a cycle weight $\weight{c}$, and contributes $\inner{\phi}{m_{ef}}$ to $\inner{\phi}{\mathbf{a}(c)}$.
A zero entry of the transition matrix does not define an allowed transition, while a fixed point of an assignment contributes the diagonal entry $1$ of $I-\mathcal A$, hence exponent zero.

We find that the cycles $c_{\iota}$,  listed by their associated permutations in the last column of \cref{tab:L7n1-tab-all}, are associated to the two distinct fibered vertices of the dual Thurston polytope. Recall that the inversion datum $\iota$ can be recovered from the permutation $\sigma_\iota$ by assigning $-$ to the segments whose labels appear in the permutation and $+$ to the remaining segments.
The only nontrivial inversion datum for $L_+$, associated to the vertex $p=(1,-3)$ of the Thurston polytope, is shown in \cref{fig:L7-plus-cycle};
the other minimum is the identity assignment and therefore is associated to the empty cycle.
\begin{figure}
\centering
\includegraphics[width=.58\linewidth]{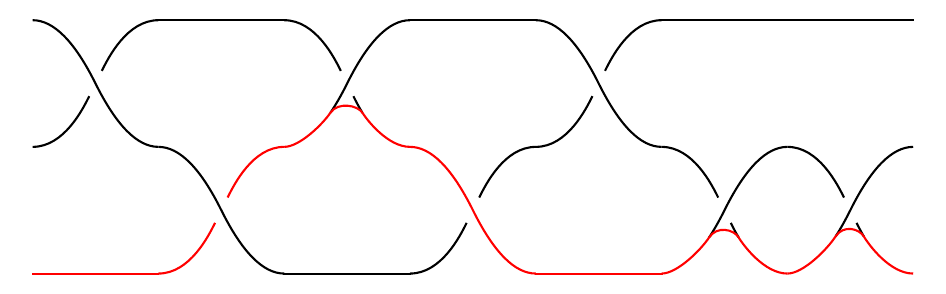}
\caption{The nontrivial inversion cycle for $L_+$ induced by the permutation $(2\ 5\ 8\ 10\ 13)$, where the labels are as in \cref{fig:L7-braids}.}
\label{fig:L7-plus-cycle}
\end{figure}

\subsubsection{The $F_L$ expansions}

The first few terms associated to the Gukov--Manolescu series of $L_+$ at $(1,-3)$ and $(-1,3)$ are
\begin{align*}
 F_+^{(1,-3)}(x,y,q)={}&-q^{-1/2}x^{-1/2}y^{3/2}
 +q^{-7/2}x^{-3/2}y^{9/2}
 -q^{-19/2}x^{-5/2}y^{15/2}+\cdots,\\
 F_+^{(-1,3)}(x,y,q)={}&- q^{1/2}x^{1/2}y^{-3/2} + q^{-7/2}x^{3/2}y^{-9/2} - q^{-19/2}x^{5/2}y^{-15/2}+\cdots ~.
\end{align*}
Note that since $(1,-3)$ and $(-1,3)$ are antipodal, their expansions are related by coordinate inversion.
At $q=1$ these are the two geometric-series expansions
\begin{align*}
  \lim_{t\to 0}\Delta_+^{-1}(tx,t^{-3}y)\big|_{t=1}
 &=\sum_{n\ge0}(-1)^n x^{-(2n+1)/2}y^{(6n+3)/2},\\
 \lim_{t\to 0}\Delta_+^{-1}(t^{-1}x,t^3y)\big|_{t=1}
 &=\sum_{n\ge0}(-1)^n x^{(2n+1)/2}y^{-(6n+3)/2}.
\end{align*}
For $L_-$, the same intrinsic fibered vertex whose coordinates were $(1,-3)$
for $L_+$ now has coordinates $(-1,-3)$.
Its first terms are
\[
 F_-^{(-1,-3)}(x,y,q)=-q^{-5/2}x^{1/2}y^{3/2}
 +q^{-11/2}x^{3/2}y^{9/2}
 -q^{-23/2}x^{5/2}y^{15/2}+\cdots.
\]
As expected, this expansion is related to the series $F^{(1,-3)}_+$.
In particular, we have
\begin{equation}\label{eq:L7-F-orientation}
  F_-^{(-1,-3)}(x,y,q)=q^{-2}F_+^{(1,-3)}(x^{-1},y,q)=q^{-2}F_+^{(-1,3)}(x,y^{-1},q).
\end{equation}
The factor $q^{-2}$ is the linking number correction (see \cref{rmk:non-positively-exposed-FL}).

\subsection{\texorpdfstring{$L9n18$}{L9n18}:}\label{ex:L9n18}
In most examples, the vertices $v\in\newtonalex$ that accompany the Gukov--Manolescu series are minimized by some lexicographic ordering of the variables (see \cref{rmk:lexicographic}). Here, we study a link for which this does not hold.

Let $L_+=L9n18\{1\}$ and $L_-=L9n18\{0\}$.
We use the braid representatives
\[
 \beta_+=\sigma_2\sigma_3^{-1}\sigma_2\sigma_3\sigma_1\sigma_2
 \sigma_3^{-1}\sigma_2\sigma_3\sigma_1~,
\]
and
\[
 \beta_-=\sigma_1^{-1}\sigma_2^{-3}\sigma_1^{-1}
 \sigma_2^{-3}\sigma_1^{-1}.
\]

\begin{figure}
\centering
\begin{subfigure}{0.45\textwidth}\centering
  \includegraphics[width=\linewidth]{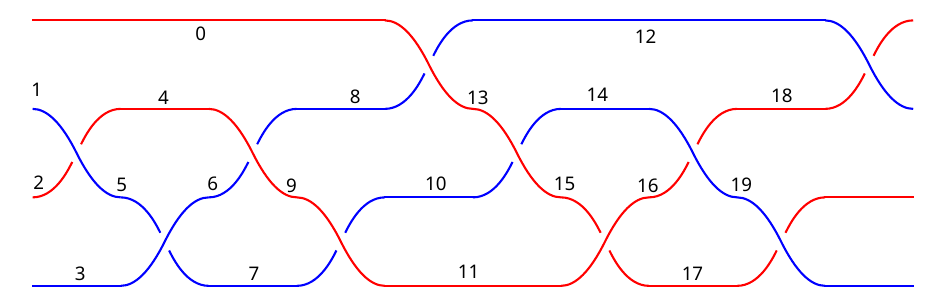}
\caption{$L_+$}
\end{subfigure}
\begin{subfigure}{0.45\textwidth}\centering
  \includegraphics[width=\linewidth]{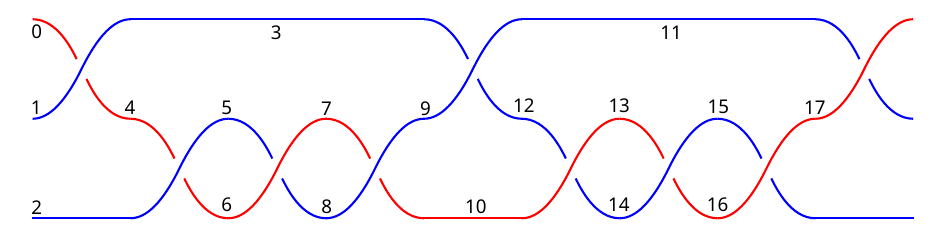}
\caption{$L_-$}
\end{subfigure}
\caption{Left-to-right oriented braid representatives and the segment labels used in the
transition matrices for $L9n18$.}
\label{fig:L9-braids}
\end{figure}

\subsubsection{Floer support, Alexander polynomial, and Thurston polytope}

The complete nonzero $\HFL$ groups for $L_+$ and $L_-$ are listed in \cref{tab:L9-plus-hfl}.
\begin{table}
\centering
\[
\begin{array}{c|c@{\quad}c|cc@{\qquad}c|c@{\quad}c|cc}
    A(L_+)&\HFL(L_+)&A(L_+)&\HFL(L_+)&&A(L_-)&\HFL(L_-)&A(L_-)&\HFL(L_-)\\ \cline{1-4}\cline{6-9}
  (-1,-1)&\mathbb F_{(-4)}&(-1,0)&\mathbb F_{(-3)}
  &&(-2,-2)&\mathbb F_{(-1)}&(-2,-1)&\mathbb F_{(0)}\\
  (0,-1)&\mathbb F_{(-3)}&(-2,1)&\mathbb F_{(-2)}
  &&(-1,-2)&\mathbb F_{(0)}&(-1,-1)&\mathbb F_{(1)}\\
  (0,0)&\mathbb F_{(-2)}^{\oplus2}&(1,-2)&\mathbb
  F_{(-2)}&&(-1,0)&\mathbb F_{(1)}&(-1,1)&\mathbb
  F_{(2)}\\
  (-2,2)&\mathbb F_{(-1)}&(-1,1)&\mathbb F_{(-1)}
  &&(0,-1)&\mathbb F_{(1)}&(0,0)&\mathbb F_{(2)}
  ^{\oplus2}\\
  (0,1)&\mathbb F_{(-1)}&(1,-1)&\mathbb F_{(-1)}
  &&(0,1)&\mathbb F_{(3)}&(1,-1)&\mathbb F_{(2)}\\
  (1,0)&\mathbb F_{(-1)}&(2,-2)&\mathbb F_{(-1)}
  &&(1,0)&\mathbb F_{(3)}&(1,1)&\mathbb F_{(5)}\\
  (-1,2)&\mathbb F_{(0)}&(1,1)&\mathbb F_{(0)}
  &&(1,2)&\mathbb F_{(6)}&(2,1)&\mathbb F_{(6)}\\
  (2,-1)&\mathbb F_{(0)}&&&&(2,2)&\mathbb F_{(7)}&&
  \end{array}
\]
\caption{Complete nonzero link Floer homology for $L_+=L9n18\{1\}$; the
Alexander coordinates are doubled.}
\label{tab:L9-plus-hfl}
\end{table}
Their Poincar\'e polynomials are
\[
\begin{aligned}
P_{L_+}(t,x_0,x_1)={}&t^{-4}x_0^{-1}x_1^{-1}
 +t^{-3}x_0^{-1}+t^{-3}x_1^{-1}+t^{-2}x_0^{-2}x_1+2t^{-2}\\
&+t^{-2}x_0x_1^{-2}+t^{-1}x_0^{-2}x_1^2
 +t^{-1}x_0^{-1}x_1+t^{-1}x_1+t^{-1}x_0x_1^{-1}\\
&+t^{-1}x_0+t^{-1}x_0^2x_1^{-2}
 +x_0^{-1}x_1^2+x_0x_1+x_0^2x_1^{-1}~.
\end{aligned}
\]
and
\[
\begin{aligned}
P_{L_-}(t,x_0,x_1)={}&t^{-1}x_0^{-2}x_1^{-2}
 +x_0^{-2}x_1^{-1}+x_0^{-1}x_1^{-2}+tx_0^{-1}x_1^{-1}
 +tx_0^{-1}+tx_1^{-1}\\
&+t^2x_0^{-1}x_1+2t^2+t^2x_0x_1^{-1}
 +t^3x_1+t^3x_0+t^5x_0x_1\\
&+t^6x_0x_1^2+t^6x_0^2x_1+t^7x_0^2x_1^2 ~,
\end{aligned}
\]

The multivariable Alexander polynomials are given by
\begin{equation}\label{eq:L9-Alexander}
  \Delta_+(x,y)=\frac{(x+y^2)(x^2+y)}{x^{3/2}y^{3/2}} \quad\text{and}\quad \Delta_-(x,y)=-\Delta_+(x,y^{-1})~.
\end{equation}
The link Floer and Thurston polytopes are displayed in \cref{fig:L9-hfl-thurston-polytopes}.
\begin{figure}
\centering
\begin{subfigure}{.48\textwidth}\centering
\includegraphics[width=\linewidth]{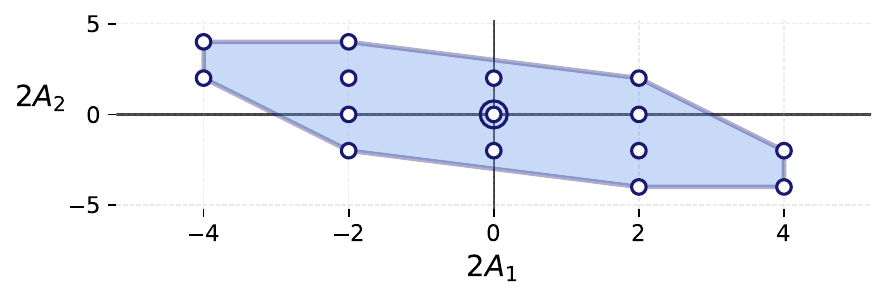}
\caption{$2\mathcal P_{\HFL}(L_+)$.}
\end{subfigure}\hfill
\begin{subfigure}{.48\textwidth}\centering
\includegraphics[width=\linewidth]{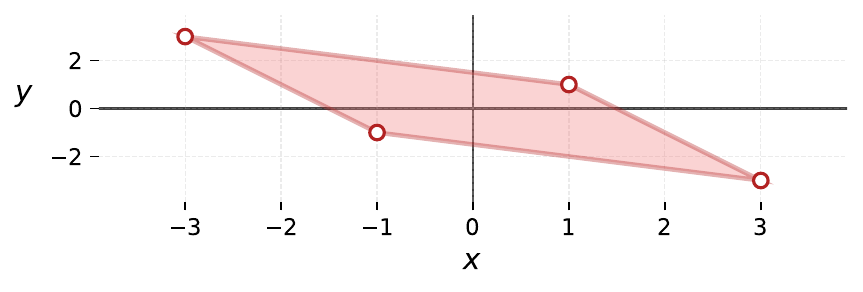}
\caption{$B_T^\vee(L_+)$.}
\end{subfigure}
\caption{The doubled $\HFL$ support polytope $2\mathcal P_{\HFL}$ (blue, on
$2A_1,2A_2$ axes) and dual Thurston ball $B_T^\vee$ (red, on plain $x,y$ axes)
for both orientations of $L9n18$.
The number of concentric circles marks the rank at each
point. The same diagrams for the other orientation can be obtained by a symmetric transformation along the vertical axis, similar to \cref{fig:L7-hfl-thurston-polytopes}.}
\label{fig:L9-hfl-thurston-polytopes}
\end{figure}

\begin{table}
  \begin{center}
    \[
    \begin{array}{c|c|c|c|c}
    \text{orientation}&\text{HFL vertex}&
    \text{Thurston vertex}&\text{minimizing functional} & \text{inversion cycle}\\ \hline
    L_+&(1,-2)&(3,-3)&(1,3)&(3\,7\,10\,14\,19)\\
    L_+&(-1,-1)&(-1,-1)&(1,1)&\text{id}\\
    L_+&(-2,1)&(-3,3)&(3,1)&(2\,4\,9\,11\,17)\\
    L_-&(-2,-2)&(-3,-3)&(1,1)&\substack{(1\,4\,6\,7\,10\,13\,16\,17)\\(2\,5\,8\,9\,12\,14\,15)}
   \end{array}
  \]
  \end{center}
  \caption{$\HFL$ and Thurston vertices, minimizing functionals and inversion cycles associated to $L9n18$ for both relative orientations.}\label{tab:L9n18-tab-all}
\end{table}

\begin{figure}
\centering
\begin{subfigure}{.48\textwidth}\centering
\includegraphics[width=\linewidth]{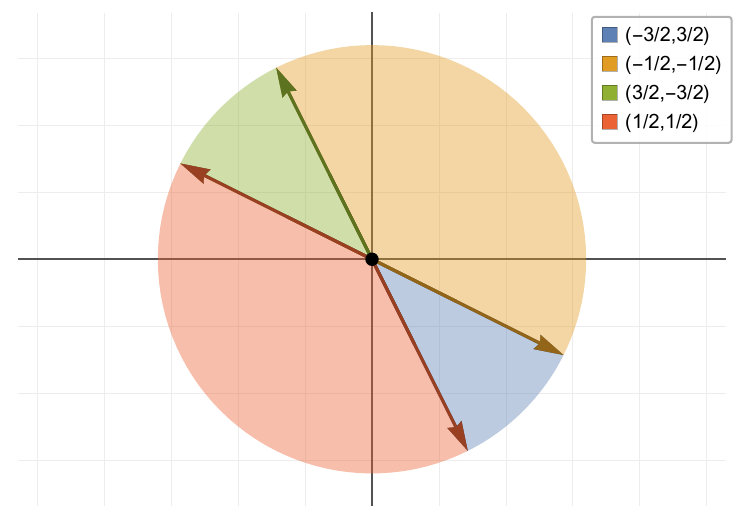}
\caption{The cones $\mathcal{C}_v$.}
\end{subfigure}\hfill
\begin{subfigure}{.48\textwidth}\centering
\includegraphics[width=\linewidth]{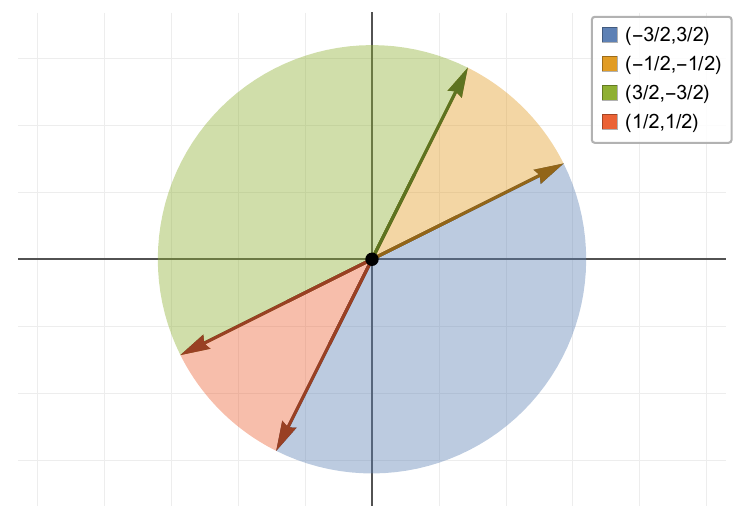}
\caption{The dual cones $\mathcal{C}_v^\vee$.}
\end{subfigure}
\caption{The cones $\mathcal{C}_v$ and dual cones $\mathcal{C}_v^\vee$ for the fibered vertices of $L9n18\{1\}$, in the multivariable Alexander convention. The cones $\mathcal{C}_v$ indicate the support of the associated $F_L^{(v)}$, while the dual cones $\mathcal{C}_v^\vee$ indicate the family of functionals that expose each vertex $v$. The vertices associated to blue, yellow, and green are positively exposed, since their associated dual cone intersects the positive quadrant $(\R_{\geq 0})^2$.}
\label{fig:L9-cones}
\end{figure}

\subsubsection{Transition matrix and unique minimizing assignments}
The minimizing functionals for each Thurston vertex, along with the inversion cycles at fibered Thurston vertices, are recorded in \cref{tab:L9n18-tab-all} and the non-trivial cycles are presented in \cref{fig:minimizing-L9_1}.
There are three positively exposed vertices, as illustrated in \cref{fig:L9-cones}. Only two of them, namely $(-3,3)$ and $(3,-3)$, minimize some lexicographic ordering.

\begin{figure}
\centering
\begin{subfigure}{0.45\textwidth}\centering
  \includegraphics[width=\linewidth]{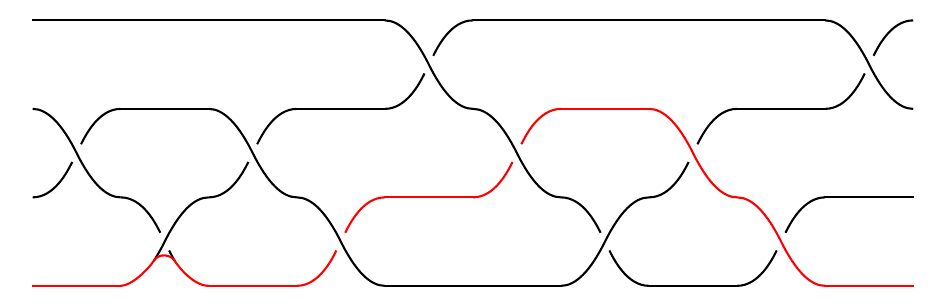}
\end{subfigure}
\begin{subfigure}{0.45\textwidth}\centering
  \includegraphics[width=\linewidth]{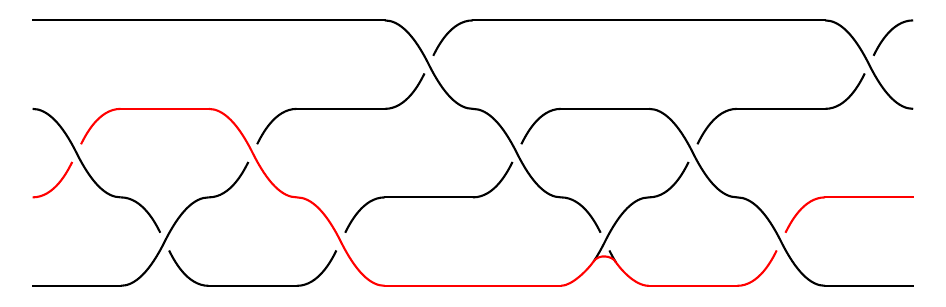}
\end{subfigure}
\caption{The nontrivial minimum-cost cycles $c_\iota = (3\ 7\ 10\ 14\ 19)$ and $c_\iota = (2\ 4\ 9\ 11\ 17)$ for $L_+$.}
\label{fig:minimizing-L9_1}
\end{figure}

\subsubsection{The $F_L$ expansions}

The leading terms of the three state sums of $L_+$ are
\begin{align*}
F_+^{(3,-3)}(x,y,q)={}&-q^{1/2}x^{-3/2}y^{3/2}
 +q^{-3/2}(x^{-5/2}y^{7/2}+x^{-7/2}y^{5/2})+\cdots,\\
F_+^{(-1,-1)}(x,y,q)={}&-q^{3/2}x^{1/2}y^{1/2}
 +q^{3/2}(x^{-1/2}y^{5/2}+x^{5/2}y^{-1/2})+\cdots,\\
F_+^{(-3,3)}(x,y,q)={}&-q^{1/2}x^{3/2}y^{-3/2}
 +q^{-3/2}(x^{5/2}y^{-7/2}+x^{7/2}y^{-5/2})+\cdots.
\end{align*}
Their $q=1$ specializations are the three cone expansions of $-\Delta_+^{-1}$ indexed by the indicated fibered vertices of $B_T^\vee$
\begin{align*}
  F^{(3,-3)}_+(x,y;1)=&{}-\lim_{t\to 0}\Delta_+^{-1}(tx,t^{3}y)\big|_{t=1}~,\\
  F^{(-1,-1)}_+(x,y;1)=&{}-\lim_{t\to 0}\Delta_+^{-1}(tx,ty)\big|_{t=1}~,\\
  F^{(-3,3)}_+(x,y;1)=&{}-\lim_{t\to 0}\Delta_+^{-1}(t^{3}x,ty)\big|_{t=1}~.
\end{align*}
Reversing the second component relates the series associated to the vertex $(-3,3)$ of $L_+$ to the one with coordinates $(-3,-3)$ of $L_-$.
It is the only one of these three vertices to be positively exposed in $L_-$.
The expansion begins with
\[
 F_-^{(-3,-3)}(x,y,q)=-q^{-7/2}x^{3/2}y^{3/2}
 +q^{-11/2}(x^{5/2}y^{7/2}+x^{7/2}y^{5/2})+\cdots~.
\]
It is related to the expansions of $F_+$ by
\begin{equation}\label{eq:L9-F-orientation}
  F_-^{(-3,-3)}(x,y,q)=q^{-4}F_+^{(-3,3)}(x,y^{-1},q)=q^{-4}F_+^{(-3,3)}(x^{-1},y,q)~.
\end{equation}

\subsection{\texorpdfstring{$T(2,4)$}{T(2,4)} and \texorpdfstring{$T(2,4)^*$}{T(2,4)}: a fibered and non-fibered orientation}\label{sec:example-T24}
We now revisit \cref{ex:T24-star} and exhibit the two orientations of the torus link $T(2,4)$ and its partial orientation reversal, as the simplest possible pair in which one orientation is fibered and the other is not. We use it to showcase \cref{conj:single-var-FL-fibered}.

Let $L_+=T(2,4)$ and $L_-=T(2,4)^*$, with braid representatives
\begin{equation}
    \beta_+=\sigma_1^{4} \qquad \beta_-=\sigma_2^{-2}\sigma_1^{-1}\sigma_2\sigma_1^{-1}~.
\end{equation}

\subsubsection{Floer support, Alexander polynomial and Thurston polytope}

The link Floer homology for both orientations is displayed in \cref{tab:L4-hfl}.
\begin{table}
    \centering
    \begin{equation}
    \begin{array}{c|c@{\qquad}c|c}
      A(L_+) & \HFL(L_+) & A(L_-) & \HFL(L_-) \\[0.3em]
      \hline
      (-1,-1) & \mathbb F_{(-4)} & (-1,0) & \mathbb
      F_{(-1)} \\
      (-1,0) & \mathbb F_{(-3)} & (0,-1) & \mathbb
      F_{(-1)} \\
      (0,-1) & \mathbb F_{(-3)} & (-1,1) & \mathbb
      F_{(0)} \\
      (0,0) & \mathbb F_{(-2)}^{\oplus 2} & (0,0) &
      \mathbb F_{(0)}^{\oplus 2} \\
      (0,1) & \mathbb F_{(-1)} & (1,-1) & \mathbb F_{(0)}
      \\
      (1,0) & \mathbb F_{(-1)} & (0,1) & \mathbb F_{(1)}
      \\
      (1,1) & \mathbb F_{(0)} & (1,0) & \mathbb F_{(1)}
  \end{array}~.
    \end{equation}
    \caption{$\HFL$ groups for $T(2,4)$ and $T(2,4)^*$.}
    \label{tab:L4-hfl}
\end{table}
The multivariable Alexander polynomial $\Delta_-$ of $L_-$ can be found in \eqref{eq:T24*-alex}, while $\Delta_+$ can be obtained directly from \cref{prop:Alex-properties}.
The vertices of the Newton polytope are $\{(-1,-1), (1,1)\}$ for $\Delta_+$ and $\{(-1,1),(1,-1)\}$ for $\Delta_-$.

Since the unoriented link is alternating, we can use \cref{cor:Alex-monic-fibered} to detect link fiberedness.
For $L_+$, the functional induced by the canonical class $\mathbb 1=(1,1)$ strictly increases along the two vertices, therefore $L_+$ is fibered, and the unique maximizer of $\inner{\mathbb{1}^*}{\cdot}$ is $v_+=(-\tfrac12,-\tfrac12)\in\supp\Delta_+$.
For $L_-$, on the other hand, $\inner{\mathbb 1}{(-1,1)}=\inner{\mathbb 1}{(1,-1)}=0$, so $\Delta_-$ does not have a unique monomial of maximal total degree, and therefore it is not a fibered link.

\subsubsection{The $F_L$ expansions and the single-variable series}

We compute $F_L^{(v)}$ at the vertex $v_+=(-\tfrac12,-\tfrac12)$ for $L_+$ and at $v_-=(\tfrac12,-\tfrac12)$ for $L_-$, which gives
\begin{multline}\label{eq:L4a1-plus-FL}
 F_+^{(v_+)}(x,y,q)=\sum_{n\geq0}(-1)^{n+1}q^{\binom{n+1}{2}+\frac32}\,x^{n+\frac12}y^{n+\frac12}
 \\= -q^{3/2}x^{1/2}y^{1/2}+q^{5/2}x^{3/2}y^{3/2}-q^{9/2}x^{5/2}y^{5/2}+q^{15/2}x^{7/2}y^{7/2}-\cdots~,
 \end{multline}
and
\begin{multline}\label{eq:L4a1-minus-FL}
 F_-^{(v_-)}(x,y,q)=\sum_{k\geq0}(-1)^{k+1}q^{\binom{k+1}{2}-\frac12}\,x^{-k-\frac12}y^{k+\frac12} \\
 = -q^{-1/2}x^{-1/2}y^{1/2}+q^{1/2}x^{-3/2}y^{3/2}-q^{5/2}x^{-5/2}y^{5/2}+q^{11/2}x^{-7/2}y^{7/2}-\cdots~.
\end{multline}
The linking number of the two components of $L_+$ is 2, and the two expansions are related by
\begin{equation}
    F_{-}^{(v_-)}(x,y,q) = q^{-\lambda_r}F_{+}^{(v_+)}(x^{-1},y,q) =q^{-2}F_{+}^{(v_+)}(x^{-1},y,q)~.
\end{equation}
At $q=1$ these series are, up to sign, the geometric-series expansions of $\Delta_+^{-1}$ around $v_+$ and of $\Delta_-^{-1}$ around $v_-$, respectively.

Now set $\mathbf x=(x,x)$, as in \cref{thm:single-var-FL}. We obtain

\begin{equation}\label{eq:L4a1-plus-single}
 F_+(x,q):=F_+^{(v_+)}(x,x,q)=\sum_{n\geq0}(-1)^{n+1}q^{\binom{n+1}{2}+\frac32}\,x^{2n+1}\in\Z[q,q^{-1}][[x]]~,
\end{equation}
where each coefficient is a finite Laurent $q$-polynomial, consistent with \cref{thm:single-var-FL}.

By contrast, the support of $F_-^{(v_-)}(x,y,q)$ is generated by $\{(-k-\tfrac12,k+\tfrac12)\mid k\in\N\}$, which collapses onto the same monomial $1$ once $y=x$.
The hypothesis of \cref{thm:single-var-FL} fails precisely because $v_-$ is not a unique $\mathbb 1$-minimizer. Concretely,
\begin{equation}\label{eq:L4a1-minus-single}
 F_-(x,q):=F_-^{(v_-)}(x,x,q)=\sum_{k\geq0}(-1)^{k+1}q^{\binom{k+1}{2}-\frac12}
 = -q^{-1/2}+q^{1/2}-q^{5/2}+q^{11/2}-q^{19/2}+q^{29/2}-\cdots~,
\end{equation}

In this case, the $x^0$-coefficient of $F_-(x,q)$ is a genuine one-sided infinite series in $q$.

\section{Partial surgery on 2-component links}\label{sec:surgery}

As a first direct application of our theory, we consider partial surgeries on links.
Given an oriented link $L=\bigcup_{i=1}^{\ell} L_i\subset S^3$, let $M=S^3\setminus \mathcal N(L)$ denote its exterior, with boundary $\partial M=\bigcup_{i=1}^{\ell}T_i$. Dehn filling the torus $T_i$ along a slope $\alpha$ corresponds to performing surgery on the component $L_i$. In particular, suppose that $L_i$ is unknotted and let $(\mu_i,\lambda_i)$ be the meridian--longitude basis of $T_i$. Filling along $\alpha=\mu_i+r\lambda_i$ corresponds to $1/r$-surgery on $L_i$ and produces the exterior of an $(\ell-1)$-component link $L'\subset S^3$. The Gukov--Manolescu series of $L$ and $L'$ are conjecturally related by a partial $\frac{1}{r}$-Laplace transform in the variable associated to the surgered component \cite{Park20}.

\begin{definition}\label{def:Laplace-transform} Let $L=L_1\cup\cdots\cup L_i\cup\cdots\cup L_\ell$ be an oriented link. The partial $\frac{1}{r}$-Laplace transform on the $i$-th component of $L$ is defined by
\begin{equation}\label{eq:rational-Laplace-def}
    \mathcal{L}_{\frac{1}{r};x_i} \left(a(q)\mathbf{x}^\mathbf{k}\right)\mapsto a(q)\,q^{-r k_i^2}\prod_{\substack{1\leq j \leq \ell \\ j\neq i}} x_j^{k_j-r\,\mathrm{lk}(L_i,L_j)\,k_i}\,,
\end{equation}
where $a(q)\in\Z((q))$. We extend this map to formal Laurent series in $q$ and $\mathbf{x}$, whenever the result is well-defined as a formal Laurent series in $q$ and $\hat{\mathbf{x}}_i=(x_1,\dots,x_{i-1},x_{i+1},\dots,x_\ell)$.
\end{definition}

By \cref{thm:main-thm}, if the Gukov--Manolescu invariant $F_L^{(v)}$ exists, then it is supported on the cone $\mathcal{C}_v$ centered around some integral vertex $v$, where
\begin{equation}
    \mathcal{C}_v\defeq \mathrm{Cone}\left\{w-v\mid w\in\supp\Delta_L, \, w\neq v\right\}\subseteq \R^\ell~.
\end{equation}

\begin{conjecture}[{\cite[Conjecture 7]{Park20} (modified)}]\label{conj:Laplace-2comp} Let $L=L_1\cup L_2$ be an oriented link and assume that $L_1$ is an unknot. Then,
\begin{equation}\label{eq:FL-relation-surgery}
    F_{L_2}(x_2,q)\doteq \mathcal{L}_{\frac{1}{r};x_1}\left(\left(x_1^\frac12-x_1^{-\frac12}\right)F_L^{(v)}((x_1,x_2),q)\right)(x_2,q)
\end{equation}
for any $v\in\Hnice$ for which the right-hand side gives an element of $\Z((q))[[x_2]]$.
\end{conjecture}

In this section, we study for which $v\in\Hnice$ we expect the right-hand side of \eqref{eq:FL-relation-surgery} to give a well-defined power series in $q$ and $x$ in terms of the dual cones $\mathcal{C}_v^\vee$.

\begin{remark} We expect that \cref{conj:Laplace-2comp} can be extended to cover the Gukov--Manolescu series of a link $L$ with more than two components. This extension would require studying the relation between the support cones $\mathcal{C}_v$ and $\mathcal{C}_{v'}$, where $v\in\Hnice$ and $v'\in\mathbb{G}(L')$, under the Laplace transform of \eqref{eq:rational-Laplace-def}.
\end{remark}

\subsection{Laplace transform and partial orientation reversal}

By the following Lemma, reversing the orientation of the surgered component does not affect the partial Laplace transform, up to inversion of the $x_i$-variables.

\begin{lemma} Let $L$ be an oriented link, $L_i$ an unknot, and $v$ an extremal vertex exposed by $\phi\in(\Z\setminus\{0\})^\ell$. Let $\nu\in\{\pm1\}^\ell$ with $\nu_i=\sgn{\phi_i}$, and let $v^\nu$ and $L^{(\nu)}$ be as in \cref{prop:alex-positive-convention}.

Assume that $F_L^{(2v)}\in\Z((q))_{\mathcal{C}_v}[[\mathbf{x}]]$ exists. Then, for any $1\leq i\leq \ell$,
\begin{equation}\label{eq:Laplace-orientation-reversal}
    \mathcal{L}_{\frac{1}{r};x_i}\left((x_i^{\frac{1}{2r}}-x_i^{-\frac{1}{2r}})F_L^{(2v)}\right)(\widehat{\mathbf{x}}_i)\doteq \mathcal{L}_{\frac{1}{r};x_i}\left((x_i^{\frac{1}{2r}}-x_i^{-\frac{1}{2r}})F_{L^{(\nu)}}^{(2v^{\nu})}\right)(\widehat{\tau_\nu(\mathbf{x})}_i)~,
\end{equation}
where $\hat{\mathbf{x}}_i=(x_1,\dots,x_{i-1},x_{i+1},\dots,x_\ell)$ and $\tau_\nu(\mathbf{x})=\mathbf{x}^\nu\in\C^\ell$ is defined as
\begin{equation*}
    \tau_\nu(\mathbf{x})_j=\begin{cases}
        x_j &\text{if }\nu_j=1\\
        x_j^{-1} &\text{if }\nu_j=-1
    \end{cases}~.
\end{equation*}
\end{lemma}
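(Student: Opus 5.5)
The plan is to reduce the statement to the relation between the two series under partial orientation reversal, namely the identity of \cref{rmk:non-positively-exposed-FL},
\[
F_{L^{\nu}}^{(2v^{\nu})}(\mathbf{x},q)\doteq F_L^{(2v)}(\tau_\nu(\mathbf{x}),q).
\]
Once this is in hand, a direct computation with \eqref{eq:rational-Laplace-def} should finish the proof. The relation itself I would obtain from the state sum. Take the nice pair $(\bd,\iota)$ realizing $F_L^{(2v)}$ and reverse the strands of the components $L_j$ with $\nu_j=-1$. The braid then becomes a tangle diagram of $L^{\nu}$, whose closure can be re-braided, for instance via the reversed braid $\bd^{\mathrm r}$ used in the proof of \cref{lemma:B-matrix-Alexander}. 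The inversion datum is transported to this diagram, and the $R$-matrix entries \eqref{eq:positive-rmatrix-a}--\eqref{eq:negative-rmatrix-b} are identified under $x_j\mapsto x_j^{-1}$ for the reversed components. This identification holds up to overall powers of $q$ and a sign, which is exactly why the statement uses $\doteq$. At $q=1$ the identity is consistent with \cref{prop:alex-positive-convention}: both sides are, up to sign, the corresponding cone expansions of $\Delta^{-1}$, and the cones match because $\mathcal{C}_{v^\nu}=\tau_\nu(\mathcal{C}_v)$ coordinatewise.

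Granting the relation, I then compare the two Laplace transforms monomial by monomial. The key point is that reversing $L_i$ or any $L_j$ flips the sign of $\operatorname{lk}(L_i,L_j)$ exactly when it flips $k_j$ or $k_i$. I treat two cases.

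\textbf{Case $j\neq i$ with $\nu_j=-1$.} A monomial $a(q)\mathbf{x}^{\mathbf{k}}$ of $F_L^{(2v)}$ becomes, after $\tau_\nu$, a monomial with exponent $-k_j$ in $x_j$. The linking number of the reversed link satisfies $\operatorname{lk}^{\nu}(L_i,L_j)=-\operatorname{lk}(L_i,L_j)$. The transform therefore yields $x_j^{-(k_j-r\operatorname{lk} k_i)}$, which is the original output evaluated at $x_j^{-1}$, i.e.\ at $\widehat{\tau_\nu(\mathbf{x})}_i$.

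\textbf{Case $\nu_i=-1$.} The substitution $k_i\mapsto -k_i$ leaves $q^{-rk_i^2}$ invariant. It also leaves $\operatorname{lk}(L_i,L_j)\,k_i$ unchanged, since both factors change sign (when $\nu_j=-1$ as well, this combines with the first case). The prefactor satisfies $(x_i^{\frac1{2r}}-x_i^{-\frac1{2r}})\mapsto-(x_i^{\frac1{2r}}-x_i^{-\frac1{2r}})$, so it contributes only a sign, which is absorbed by $\doteq$.

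Because the Laplace transform acts termwise, these identities extend to the full series. The transform on one side is well defined exactly when it is well defined on the other, since the monomials correspond bijectively.

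\textbf{Main obstacle.} The difficult step is the first one: establishing the orientation-reversal relation at the level of $q$-series rather than at $q=1$. Niceness of the transported inversion datum is not automatic, although \cref{prop:nice-iff-unital} reduces it to the unique-minimum condition, which transforms correctly under $\phi\mapsto\phi^{\nu}$. Matching the $R$-matrix entries crossing by crossing is delicate. If that matching fails, I would fall back on the perturbative uniqueness of \cref{thm:main-thm}(3). Both sides are cone-supported series whose MMR expansions agree by \cref{prop:alex-positive-convention} and by the behaviour of the colored Jones polynomial under reversal, and this determines them up to the stated normalization.
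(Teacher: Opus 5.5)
Your proposal follows the paper's proof: the paper also takes $F_L^{(2v)}(\mathbf{x},q)=\varepsilon\, q^{\alpha_\nu}F_{L^{(\nu)}}^{(2v^\nu)}(\tau_\nu(\mathbf{x}),q)$ directly from \cref{rmk:non-positively-exposed-FL}, then compares the two Laplace transforms termwise using $\mathrm{lk}(L_i^{(\nu_i)},L_j^{(\nu_j)})=\nu_i\nu_j\,\mathrm{lk}(L_i,L_j)$, and gets agreement on the nose for $\nu_i=1$ and up to sign for $\nu_i=-1$, exactly as in your two cases. The paper does not attempt your state-sum derivation of that relation and treats it as an input, so your ``main obstacle'' is assumed rather than proved there; note also that $\beta^{\mathrm{r}}$ reverses every component rather than a chosen subset, so it would not realize $L^{(\nu)}$ in any case.
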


\begin{proof} By \cref{rmk:non-positively-exposed-FL}, both series are related by
\begin{equation}
    F_L^{(2v)}(\mathbf{x},q) =\varepsilon q^{\alpha_\nu} F_{L^{(\nu)}}^{(2v^\nu)}(\tau_\nu(\mathbf{x}),q)
\end{equation}
where $\varepsilon\in\{\pm1\}$ and
\begin{equation*}
    \alpha_\nu=\sum_{\substack{j=1\\\nu_j=-1}}^\ell \mathrm{lk}(L_i,L\setminus L_i)~.
\end{equation*}
The left-hand side of \eqref{eq:Laplace-orientation-reversal} is
\begin{equation}
    \sum_{\mathbf{k}\in\mathcal{C}_v} f_\mathbf{k}(q) \left(q^{-r m_{i,+}^2} \prod_{\substack{j=1\\j\neq i}}^\ell x_j^{k_j-r\,\mathrm{lk}(L_i,L_j)m_{i,+}}- q^{-r m_{i,-}^2} \prod_{\substack{j=1\\j\neq i}}^\ell x_j^{k_j-r\,\mathrm{lk}(L_i,L_j)m_{i,-}}\right)
\end{equation}
where $m_{i,\pm}\defeq n_i\pm \frac{1}{2r}$. On the other hand, the right-hand side of \eqref{eq:Laplace-orientation-reversal} is, up to overall $q$-power and sign,
\begin{equation}
\begin{split}
    \sum_{\mathbf{k}\in\mathcal{C}_v} f_\mathbf{k}(q) \left( q^{-r \,m_{i,\nu_i}^2} \prod_{\substack{j=1\\j\neq i}}^\ell x_j^{\nu_jk_j-\nu_jr\,\mathrm{lk}(L_i,L_j)m_{i,\nu_i}} -q^{-r \,m_{i,-\nu_i}^2} \prod_{\substack{j=1\\j\neq i}}^\ell x_j^{\nu_j k_j-\nu_j r\,\mathrm{lk}(L_i,L_j)\,m_{i,-\nu_i}} \right)
\end{split}
\end{equation}
where $m_{i,\pm\nu_i}=n_i\pm\nu_i\frac{1}{2r}$, and we have used that $\mathrm{lk}(L_i^{(\nu_i)},L_j^{(\nu_j)})=\nu_i\nu_j\mathrm{lk}(L_i,L_j)$. Both agree up to an overall sign if $\nu_i=-1$, and on the nose if $\nu_i=1$.
\end{proof}

For two-component links, define the symmetric $F_L$-series as
\begin{equation}
    F_L^{\text{sym},(v)} \defeq \frac{1}{2}\left(F_L^{(v)}(\mathbf{x})+F_L^{(-v)}(\mathbf{x})\right)\,.
\end{equation}

\begin{lemma}\label{prop:2-comp-surgery-convention-positively-exposed} Let $L$ be a two-component, oriented link, and $v$ a vertex exposed by $\phi\in(\Z\setminus \{0\})^2$. Let $\nu\in\{\pm 1\}^2$ with $\nu_i=\sgn{\phi_i}$, and let $v^\nu$ and $L^{(\nu)}$ be as in \cref{prop:alex-positive-convention}.

Assume that $F_L^{(v)}\in\Z((q))_{\mathcal{C}_v}[[\mathbf{x}]]$ exists. Then, for $1\leq i\leq 2$,
\begin{equation}
    \mathcal{L}_{\frac{1}{r};x_i}\left(\left(x_i^{\frac{1}{2r}}-x_i^{-\frac{1}{2r}}\right)F_L^{\text{sym,}(2v)}\right)(x)\doteq \mathcal{L}_{\frac{1}{r};x_i}\left(\left(x_i^{\frac{1}{2r}}-x_i^{-\frac{1}{2r}}\right)F_{L^{(\nu)}}^{\text{sym,}(2v^\nu)}\right)(x)~.
\end{equation}
\end{lemma}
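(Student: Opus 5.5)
We would reduce to the preceding lemma, applied separately to each of the two summands of $F_L^{\text{sym},(2v)}$. The map $f\mapsto \mathcal{L}_{\frac{1}{r};x_i}\bigl((x_i^{\frac{1}{2r}}-x_i^{-\frac{1}{2r}})f\bigr)$ is linear wherever it is defined. Hence it suffices to treat $F_L^{(2v)}$ and $F_L^{(-2v)}$ individually, and then check that the overall signs and $q$-powers produced by the preceding lemma agree on both summands.

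First, the summand $F_L^{(2v)}$ is handled directly by the preceding lemma. This gives
\[
\mathcal{L}_{\frac{1}{r};x_i}\bigl((x_i^{\frac{1}{2r}}-x_i^{-\frac{1}{2r}})F_L^{(2v)}\bigr)\doteq \mathcal{L}_{\frac{1}{r};x_i}\bigl((x_i^{\frac{1}{2r}}-x_i^{-\frac{1}{2r}})F_{L^{(\nu)}}^{(2v^\nu)}\bigr)\circ\tau_\nu .
\]

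Second, the summand $F_L^{(-2v)}$ is treated the same way. If $\phi$ exposes $v$, then $-\phi$ exposes $-v$, since $\supp\Delta_L$ is symmetric by \eqref{eq:symmetry-Alex}. The associated sign vector is $-\nu$, and the lemma relates this summand to $F_{L^{(-\nu)}}^{(-2v^{\nu})}$. Two observations finish the comparison:
\begin{enumerate}
\item $L^{(-\nu)}=(L^{(\nu)})^{\mathrm{rev}}$, and by \cref{prop:Alex-properties}(2) the Alexander polynomials of these two links agree up to sign.
\item The inversion $\tau_{-\nu}$ is $\tau_\nu$ composed with total inversion $\mathbf{x}\mapsto\mathbf{x}^{-1}$.
\end{enumerate}
The symmetric combination is invariant under total inversion together with $v\mapsto -v$. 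Moreover, for two components, after the Laplace transform in $x_i$ only the single remaining variable $x_j$ survives, and its inversion is absorbed into the symmetrization. Together these show that the two transformed summands assemble into the symmetrized series of $L^{(\nu)}$, evaluated at the same single variable $x$. This is why the statement has no $\tau_\nu$ on the right-hand side, unlike the preceding lemma.

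The main obstacle is bookkeeping the units hidden in $\doteq$. The preceding lemma gives a sign, and a $q$-power $q^{\alpha_\nu}$ from \cref{rmk:non-positively-exposed-FL}, separately for each summand. For the symmetrization to survive, these must coincide for $v$ and $-v$. Equality of the $q$-powers is plausible because $\lambda_\nu(L)$ is symmetric under $\nu\mapsto-\nu$ for two components: the linking number between the reversed and unreversed sublinks is the same either way. For the signs, I would track $(-1)^{\frac12(\ell-\sum\nu_i)}$ from \cref{prop:alex-positive-convention} together with the sign from reversing $L_i$ when $\nu_i=-1$, and check that the two summands pick up equal signs. If they do not agree on the nose, I would settle for interpreting $\doteq$ summand-wise, which the statement's use of $\doteq$ suggests is intended.
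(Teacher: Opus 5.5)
Your outline has the right shape: the preceding lemma, plus the observation that symmetrization absorbs the variable inversion. However, two steps fail as written.

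First, the vertex bookkeeping for the second summand is off. The functional $-\phi$ has sign vector $-\nu$, and $(-v)^{-\nu}=v^{\nu}$. So the preceding lemma relates $F_L^{(-2v)}$ to $F_{L^{(-\nu)}}^{(2v^{\nu})}$ evaluated at $\tau_{-\nu}$, which is a positively exposed vertex, as it must be. It does not relate it to $F_{L^{(-\nu)}}^{(-2v^{\nu})}$.

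Second, and more seriously, turning this into the $-2v^\nu$ summand of $F^{\text{sym},(2v^\nu)}_{L^{(\nu)}}$ requires the series to be unchanged, up to units, under total orientation reversal: $F_{(L^{(\nu)})^{\mathrm{rev}}}^{(2w)}\doteq F_{L^{(\nu)}}^{(2w)}$. By \cref{rmk:non-positively-exposed-FL} with $\nu=(-1,-1)$, this is equivalent to the inversion relation $F_{L'}^{(-2w)}(\mathbf{x})\doteq F_{L'}^{(2w)}(\mathbf{x}^{-1})$. Your observation (1) only gives $\Delta_{L^{(-\nu)}}=\Delta_{L^{(\nu)}}$, which controls the $q=1$ specialization and says nothing about the $q$-series. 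Your sentence that the symmetric combination is invariant under total inversion together with $v\mapsto -v$ is exactly this inversion relation, asserted rather than supplied. It is the actual input of the paper's proof, which invokes it in the form
\[
\mathcal{L}_{\frac1r;x_i}\bigl((x_i^{\frac{1}{2r}}-x_i^{-\frac{1}{2r}})F_L^{(-2v)}\bigr)(x)=\mathcal{L}_{\frac1r;x_i}\bigl((x_i^{\frac{1}{2r}}-x_i^{-\frac{1}{2r}})F_L^{(2v)}\bigr)(x^{-1}).
\]

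Once you take that relation as the key input, the second application of the lemma is unnecessary and your problem with matching units disappears. Write $G$ and $H$ for the transforms of $F_L^{(2v)}$ and $F_{L^{(\nu)}}^{(2v^\nu)}$, and $j\neq i$ for the surviving index.
\begin{enumerate}
\item The left side is $\tfrac12\bigl(G(x)+G(x^{-1})\bigr)$.
\item A single application of the preceding lemma gives $G(x)\doteq H(x^{\nu_j})$, with one unit $\varepsilon q^{\alpha_\nu}$ common to both terms.
\item The same relation applied to $L^{(\nu)}$ makes the right side $\tfrac12\bigl(H(x)+H(x^{-1})\bigr)$.
\item Both sides are therefore invariant under $x\mapsto x^{-1}$, which is why no $\tau_\nu$ appears in the statement.
\end{enumerate}
Your fallback of reading $\doteq$ summand-wise is not available. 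A relative sign between the two summands would turn the symmetric combination into an antisymmetric one, and no overall unit can absorb that.
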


It is thus equivalent to study the effect of the partial Laplace transform on the $F_L^{(v)}$ series associated to the set
\begin{equation*}
    \left\{(L,2v)\mid v \text{ is an exposed vertex of }\newtonalex\right\}
\end{equation*}
and
\begin{equation*}
    \left\{(L^{(\mu)},2v)\mid \mu\in\{\pm 1\}^2\text{ and }v \text{ is a positively exposed vertex of }\newtonalex\right\}~.
\end{equation*}
In the former, we fix an orientation of the link $L$ and allow $v$ to be a generic exposed vertex, while in the latter, we restrict the values of $v$ at the cost of considering all partial orientation reversals of $L$.

\begin{proof} The result follows from the previous Lemma and the relation
\begin{equation}
    \mathcal{L}_{\frac{1}{r};x_i}\left(\left(x_i^{\frac{1}{2r}}-x_i^{-\frac{1}{2r}}\right)F_L^{(-2v)}\right)(x) = \mathcal{L}_{\frac{1}{r};x_i}\left(\left(x_i^{\frac{1}{2r}}-x_i^{-\frac{1}{2r}}\right)F_L^{(2v)}\right)(x^{-1})~.
\end{equation}
\end{proof}

\subsection{Conditions for convergence}

\begin{proposition}\label{prop:2-comp-surgery} Let $L$ be an oriented, two-component link whose component $L_2$ is unknotted, and let $r\in\Z$.

Then, there are at most two vertices $v\in\{v_a,v_b\}\subset\mathrm{Vert}(\newtonalex)$ for which
\begin{equation}
    \mathcal{L}_{\frac{1}{r};x_2}\left(\left(x_2^{\frac{1}{2r}}-x_2^{-\frac{1}{2r}}\right)F_L^{(2v)}\right)(x_1,q)
\end{equation}
is a well-defined power series in $x_1$ and $q$.

In particular, there are two cases:
\begin{enumerate}
    \item
    If, for some $v\in\mathrm{Vert}(\newtonalex)$,
    \begin{equation*}
        (1,-r\,\mathrm{lk}(L_1,L_2))\in\mathrm{int}(\mathcal{C}_v^\vee)
    \end{equation*}
    then the Laplace transform may only give a well-defined power series in $x_1$ for the series $F_L^{(2v)}$. Moreover, if the pair $(L,2v)$ is nice, the resulting Laplace transform is in $\Z[q,q^{-1}][[x_1]]$.
    \item
    If, for some $v_a,v_b\in\mathrm{Vert}(\newtonalex)$,
    \begin{equation*}
        (1,-r\,\mathrm{lk}(L_1,L_2))\in\partial\mathcal{C}_{v_a}^\vee\cap\partial\mathcal{C}_{v_b}^\vee
    \end{equation*}
    then either series $F_L^{(2v_a)}$, $F_L^{(2v_b)}$ may be used for the Laplace transform, but the result may not give a well-defined element of $\Z((q))[[x_1]]$.
\end{enumerate}
Moreover, if $\mathrm{lk}(L_1,L_2)\neq 0$, there will be only finitely many $r$-values for which (2) holds.
\end{proposition}

We restrict our computations to links $L$ and vertices $v\in\Hnice$, for which the pair $(L,v)$ is nice.
However, as in \cref{rem:stratified}, we expect that we can define the Gukov--Manolescu series in greater generality, so we have written the statement of \cref{prop:2-comp-surgery} accordingly.

\begin{remark} Given a knot $K$, the expectation for its $F_K$-series is that, when defined, it satisfies $F_K(x,q)\in\Z[q,q^{-1}][[x]]$ only if $K$ is a fibered knot, and $F_K(x,q)\in\Z((q))[[x]]$ otherwise.

Let $L$ be a two-component, algebraically connected link; i.e. $\mathrm{lk}(L_1,L_2)\neq 0$. Assume, moreover, that the pair $(L,2v)$ is nice for every exposed vertex $v\in\mathrm{Vert}(\newtonalex)$. Then, the expectation above implies that there are at most $\left|\mathrm{Vert}(\newtonalex)\right|$ values of $r$ for which the partial $\frac{1}{r}$-surgery on $L$ yields a non-fibered knot, and infinitely many values of $r$ for which the partial $\frac{1}{r}$-surgery on $L$ yields a fibered knot.

\end{remark}

\begin{example}[Whitehead link] Consider the link $L=L5a1\{0\}$, also known as the Whitehead link. Its multivariable Alexander polynomial is
\begin{equation*}
    \Delta_L(x_1,x_2)= \left(x_1x_2\right)^{-\frac12} - x_1^\frac12x_2^{-\frac12}-x_1^{-\frac12}x_2^\frac12 + \left(x_1x_2\right)^\frac12~.
\end{equation*}
It has four exposed vertices, of which only $\left(-\frac12,-\frac12\right)$ is positively exposed (note that this must be the case since $L$ is a homogeneous braid link). Since $L$ is an alternating link, we conclude that all of the exposed vertices $v$ correspond to fibered faces. In this case, the cones $\mathcal{C}_v$ correspond to each of the four quadrants of $\R^2$.
\begin{table}[ht!]
    \centering
    \begin{tabular}{c|c}
        Vertex & Cone \\
        \hline
        $\left(-\frac12,-\frac12\right)$ & $\mathcal{C}_{\left(-\frac12,-\frac12\right)}=\mathrm{Cone}\{(1,0),(0,1)\} = (\R_{\geq 0})^2$ \\
        $\left(\frac12,-\frac12\right)$ & $\mathcal{C}_{\left(-\frac12,\frac12\right)}=\mathrm{Cone}\{(-1,0),(0,1)\} = \R_{\leq 0}\times \R_{\geq 0}$ \\
        $\left(-\frac12,\frac12\right)$ & $\mathcal{C}_{\left(\frac12,-\frac12\right)}=\mathrm{Cone}\{(1,0),(0,-1)\} = \R_{\geq 0}\times \R_{\leq 0}$ \\
        $\left(\frac12,\frac12\right)$ & $\mathcal{C}_{\left(\frac12,\frac12\right)}=\mathrm{Cone}\{(-1,0),(0,-1)\} = (\R_{\leq 0})^2$
    \end{tabular}
    \caption{The cones $\mathcal{C}_v$ associated to each of the exposed vertices $v$.}
    \label{tab:placeholder}
\end{table}

We can identify the cones with their duals, and we have $\mathcal{C}_v^\vee=\mathcal{C}_v$. Since $\mathrm{lk}(L_1,L_2)=0$, for every $r\in\Z$
\begin{equation*}
    (1,-r\,\mathrm{lk}(L_1,L_2))=(1,0)\in\partial\mathcal{C}_{\left(-\frac12,-\frac12\right)}\cap \partial \mathcal{C}_{\left(-\frac12,\frac12\right)}~.
\end{equation*}
Similarly,
\begin{equation*}
    (-r\,\mathrm{lk}(L_1,L_2),1)=(0,1)\in\partial\mathcal{C}_{\left(-\frac12,-\frac12\right)}\cap \partial \mathcal{C}_{\left(\frac12,-\frac12\right)}~.
\end{equation*}
Therefore, for every $r\in\Z\setminus \{0\}$, the partial $\frac{1}{r}$-Laplace surgery of $F_L^{(v)}$, on either component of $L$, gives a power series in $x_i$ (where $x_i$ is the variable associated to the component that we do not do surgery on), and each coefficient of the result can be expressed as an infinite weighted sum over the coefficients $f_{(n_1,n_2)}(q)$.
\end{example}

\begin{example}[$L11n128\{0\}$] Consider now $L=L11n128\{0\}$. Its multivariable Alexander polynomial is
\begin{equation*}
    \Delta_L(x_1,x_2)= x_1^{-1}x_2^{-2}-3x_1^{-1}x_2^{-1}+2x_1^{-1}-x_2^{-2}+2x_2^{-1}-1+2x_2-x_2^2 + 2x_1 - 3x_1x_2+x_1x_2^2~
\end{equation*}
and $\mathrm{lk}(L_1,L_2)=1$. The integral vertices are
\begin{equation*}
    (-1,-2), (0,-2), (1,2), (0,2)~.
\end{equation*}
These are identified using the link Floer homology of $L$ \cite{lfhcompute} and \cref{thm:HFL-fibered-classes}.
Of these, only $(-1,-2)$ is positively exposed.

\cref{fig:L11n128_0-cones} shows the cones $\mathcal{C}_v$ and their duals $\mathcal{C}_v^\vee$ for the fibered vertices. Since not all exposed vertices are associated to a fibered face, there is a gap in \cref{fig:L11n128_0-cones-dual}: the union of all dual cones does not cover the whole plane. As a consequence, there will be values of $r$ for which the partial $\frac{1}{r}$-Laplace transform does not give a well-defined power series.

The first component $L_1$ is an unknot, so we can apply the partial $\frac{1}{r}$-Laplace transform on the first component. Given $r\in\Z\setminus \{0\}$, we distinguish the following cases:
\begin{itemize}
    \item If $r\leq -1$, then $(-r\,\mathrm{lk}(L_1,L_2),1)=(-r,1)\in\mathrm{int}\left(\mathcal{C}_{(-1,-2)}^\vee\right)$, the interior of the blue cone.
    \item If $r=1$, then $(-1,1)\in\mathrm{int}\left(\mathcal{C}_{(0,-2)}^\vee\right)$, the interior of the yellow cone.
    \item If $r=2$, then $(-2,1)\in\partial\mathcal{C}_{(0,-2)}^\vee$, the boundary of the yellow cone.
    \item If $r>2$, there is no vertex $v$ for which $(-r,1)\in \mathcal{C}_v^\vee$.
\end{itemize}
We conclude that the partial $\frac{1}{r}$-Laplace transform on $x_1$ gives a well-defined element of $\Z[q,q^{-1}][[x_2]]$ for $r\in(-\infty,1)\cap \left(\Z\setminus\{0\}\right)$, as long as we use the corresponding $F_L^{(2v)}$ series.

\begin{figure}
\centering
\begin{subfigure}{.48\textwidth}\centering
\includegraphics[width=\linewidth]{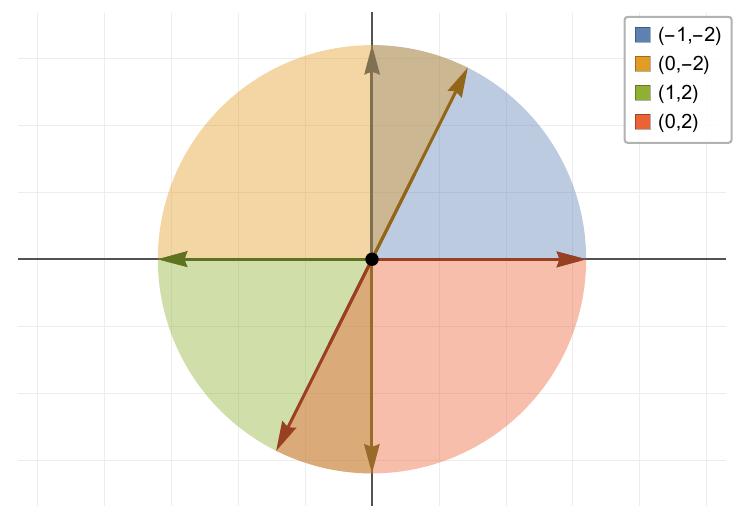}
\caption{The cones $\mathcal{C}_v$.}
\end{subfigure}\hfill
\begin{subfigure}{.48\textwidth}\centering
\includegraphics[width=\linewidth]{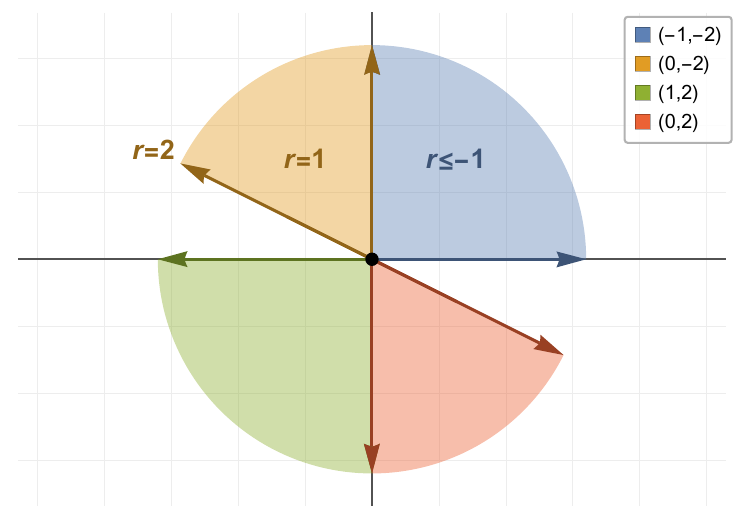}
\caption{The dual cones $\mathcal{C}_v^\vee$ and associated $r$-intervals.}
\label{fig:L11n128_0-cones-dual}
\end{subfigure}
\caption{The cones $\mathcal{C}_v$ and dual cones $\mathcal{C}_v^\vee$ for the fibered vertices of $L11n_{128}\{0\}$. We write the values of $r$ for which $(-r\,\mathrm{lk}(L_1,L_2),1)\in\mathcal{C}_v^\vee$ is satisfied.}
\label{fig:L11n128_0-cones}
\end{figure}

The $F_L$-series are:
\begin{equation}
\begin{split}
    F_L^{(-2,-4)}((x_1,x_2),q) &= q^{\frac32 }x_1^2x_2+3q^\frac32 x_1x_2^3 + q^\frac32 x_1^2x_2^2 + (q^\frac12+3q^\frac32)x_1^2x_2^3+\cdots~,
    \\
    F_L^{(0,-4)}((x_1,x_2),q) &= q^{-\frac12}x_2^2 + 2q^{-\frac12}x_2^3 + q^{-\frac12}x_1^{-1}x_2^2 + \left(2q^{-\frac12}-q^\frac12\right)x_1^{-1}x_2^3+\cdots~.
\end{split}
\end{equation}

Independently, using SnapPy \cite{SnapPy}, we checked that partial $-1$-surgery on $L_1$ is the knot exterior of $12n_{41}$ (in KnotInfo's convention). Indeed, we also get that
\begin{equation}
    \mathcal{L}_{-1;x_1}\left((x_1^{\frac12}-x_1^{-\frac12})F_L^{(-2,-4)}\right)(x_2)\doteq -q x_2^{5/2} - 3 q x_2^{7/2} + \left(-1 - 6 q - q^2\right) x_2^{9/2} +
    \cdots
\end{equation}
which agrees with the $F_K$ series of the mirror of $12n_{41}$ \cite{topologyfyi,FKCompute2026,OSSS25}.
On the other hand, $1$-surgery on $L_1$ is the knot exterior of $7_6$, which is again a fibered knot, and
\begin{equation}
     \mathcal{L}_{1;x_1}\left((x_1^{\frac12}-x_1^{-\frac12})F_L^{(0,-4)}\right)(x_2)\doteq -q x_2^{3/2} - 4 q x_2^{5/2} + \left(-3 - 10 q\right) x_2^{7/2} +
     \cdots\,,
\end{equation}
in agreement with previous computations.

\end{example}

\begin{proof}

Given \cref{prop:2-comp-surgery-convention-positively-exposed}, we fix an orientation of $L$ and consider any exposed vertex $v\in\mathrm{Vert}(\newtonalex)$. Writing
\begin{equation}
    F_L^{(2v)}((x_1,x_2),q)=\sum_{(n_1,n_2)\in\mathcal{C}_v} f_{(n_1,n_2)}(q)\,x_1^{n_1}x_2^{n_2}~,
\end{equation}
the partial $\frac{1}{r}$-Laplace transform on the variable $x_2$ sends $(x_2^{\frac{1}{2r}}-x_2^{-\frac{1}{2r}}) F_L^{(2v)}((x_1,x_2),q)$ to
\begin{equation}
    \sum_{(n_1,n_2)\in\mathcal{C}_v} f_{(n_1,n_2)}(q) \left(q^{-r\left(n_2+\frac{1}{2r}\right)^2}\,x_1^{n_1-r\,\mathrm{lk}(L_1,L_2)n_2-\frac{\mathrm{lk}(L_1,L_2)}{2}}-q^{-r\left(n_2-\frac{1}{2r}\right)^2}\,x_1^{n_1-r\,\mathrm{lk}(L_1,L_2)n_2+\frac{\mathrm{lk}(L_1,L_2)}{2}}\right).
\end{equation}
In order to get a well-defined power series in $x_1$,
\begin{equation}\label{eq:condition-n1n2-r}
    n_1-r\mathrm{lk}(L_1,L_2)n_2\pm \frac{\mathrm{lk}(L_1,L_2)}{2} > 0
\end{equation}
must hold for arbitrary $(n_1,n_2)\in\mathcal{C}_v$. Ignoring the scalar shift, a necessary condition for the convergence of the Laplace transform in $\Z((q))[[x_1]]$ is that $n_1-r\mathrm{lk}(L_1,L_2)n_2\geq 0$ or, equivalently,
\begin{equation}
    (1,-r\,\mathrm{lk}(L_1,L_2))\in\mathcal{C}_v^\vee \,,
\end{equation}
where $\mathcal{C}_v^\vee$ denotes the dual cone of $\mathcal{C}_v$. If, moreover,
\begin{equation}
    (1,-r\,\mathrm{lk}(L_1,L_2))\in \mathrm{int}\left(\mathcal C_v^\vee\right),
\end{equation}
convergence of the Laplace transform is guaranteed as a power series in $x_1$, where each $x_1$-coefficient can be obtained by a \emph{finite} weighted sum of $f_{(n_1,n_2)}(q)$ terms. In particular, for a  nice pair $(L,v)$, the Laplace transform on $F_L^{(v)}$ constructs a series in $\Z[q,q^{-1}][[x]]$.

Note that, for any $\ell\geq 1$,
\begin{equation}
    \bigcup_{v\in \mathrm{Vert}(\newtonalex)} \mathcal{C}_v=
    \bigcup_{v\in \mathrm{Vert}(\newtonalex)} \mathcal{C}_v^\vee=\R^\ell~.
\end{equation}
Therefore, given $r\in\Z\setminus \{0\}$, there is at least one exposed\footnote{All extremal vertices of a convex hull are exposed.} vertex $v$ for which $(1,-r\,\mathrm{lk}(L_1,L_2))\in\mathcal{C}_v^\vee$. Equivalently, to every extremal vertex $v\in \mathrm{Vert}(\newtonalex)$, we can assign two rational numbers $l_v$ and $u_v$ (which may be $\pm \infty$), such that if
\begin{equation}
    l_v\leq r\leq u_v
\end{equation}
then \eqref{eq:condition-n1n2-r} is satisfied.

The dual cone $\mathcal{C}_v^\vee$ consists of the functionals which attain their minima at $v$, and its interior $\mathrm{int}(\mathcal{C}_v^\vee)$ consists of the functionals which are minimized uniquely at $v$. Consequently, the set of all normal cones $\{\mathcal{C}_v^\vee\}_{v\in\mathrm{Vert}(\newtonalex)}$ may intersect one another only at their common boundaries. In particular, the intersection of the dual cones will be a collection of rays
\begin{equation}
    \bigcap_{v\in\mathrm{Vert}(\newtonalex)} \mathcal{C}_v^\vee = \bigcup_{v\in\mathrm{Vert}(\newtonalex)} \partial \mathcal{C}_v^\vee = \bigcup_{v\in\mathrm{Vert}(\newtonalex)} \{c\cdot \phi\mid \phi\in\partial \mathcal{C}_v^\vee,\, c\in \R_{>0}\}~,
\end{equation}
where each of the $\phi\in\partial \mathcal{C}_v^\vee$ is minimized at both $v$ and one of its neighboring extremal vertex, connected to $v$ via an edge.

\end{proof}

\section*{References}
\printbibliography[heading=none]

\appendix

\section{\texorpdfstring{$F_K$}{F\_K} series of non-fibered knots}

In this section, we experimentally compute the $F_K$ series for non-fibered knots that had not appeared in the literature before, by finding a link $L$ and an associated series $F_L^{(v)}$ for which the $\frac{1}{r}$-Laplace transform converges as a power series, in the sense of \cref{prop:2-comp-surgery}.

\subsection{Notation} Given a nice pair $(L,v)$ and its associated $F_L^{(v)}$ series, with coefficients $\{f_\mathbf{k}(q)\}_{\mathbf{k}\in\mathcal{C}_\frac{v}{2}}$, let $F_L^{(v),N}$ be the truncated series
\begin{equation*}
    F_L^{(v),N} = \sum_{\substack{\mathbf{k}\in\mathcal{C}_\frac{v}{2}\\|\mathbf{k}_i|\leq N, 1\leq i\leq \ell}} f_\mathbf{k}(q)\mathbf{x}^\mathbf{k}\,,
\end{equation*}
so that $F_L^{(v),N}\in\Z[q,q^{-1}][\mathbf{x},\mathbf{x}^{-1}]$ for every $N\geq 0$. In order to study the behaviour of the partial Laplace transform
\begin{equation*}
    \mathcal{L}_{\frac{1}{r};x_i}\left(\left(x_i^\frac{1}{2r}-x_i^{-\frac{1}{2r}}\right)F_L^{(v)}\right)(\hat{\mathbf{x}}_i,q)~,
\end{equation*}
for a fixed link $L$ and component $i$, we define
\begin{equation*}
    g_{\mathbf{j}}(r,N;q)\defeq \left[\mathcal{L}_{\frac1r;x_i}\left(\left(x_i^\frac{1}{2r}-x_i^{-\frac{1}{2r}}\right)F_L^{(v),N}\right)(\hat{\mathbf{x}}_i,q)\right]_{\hat{\mathbf{x}}_i^\mathbf{j}}
\end{equation*}
where the square brackets with the subscript $\hat{\mathbf{x}}_i^\mathbf{j}$ denote the selection of the coefficient associated to the monomial $\hat{\mathbf{x}}_i^\mathbf{j}$. Note that
\begin{equation*}
    g_{\mathbf{j}}(r,N;q)\in\Z[q,q^{-1}]
\end{equation*}
for every $N\geq 0$.

\subsection{Study of knots up to 9 crossings}

There are $20$ prime knots up to $9$ crossings that are neither fibered nor strongly quasipositive, and therefore are not covered by the results of \cite{Park21,OSSS25}. Using SnapPy \cite{SnapPy}, we can identify $13$ of them as the result of $\frac{1}{r}$-surgery on an unknot component of a $2$-component link. We gather our results in \cref{tab:non-fibered-knots}.

In practice, the Laplace transform seems to give a well-defined power series in $x$ and $q$ for all the surgeries in the Table. Moreover, if we define
\begin{equation*}
\begin{split}
    \psi_{a,b}(q) &\defeq \sum_{n\in\Z} \epsilon(n) q^{a n^2+bn}  \,,
\end{split}
\end{equation*}
where
\begin{equation*}
    \epsilon(n)=\begin{cases}
        1 &\text{if }n\geq 0\\
        -1 &\text{if }n<0
    \end{cases}\,,
\end{equation*}
we are able to match the first and second nonzero terms of the partial Laplace transform with a linear combination of $\psi_{a,b}(q)$ with coefficients in $\Q(q)$, up to overall $q$-power.

\begin{example} Consider the unoriented link $\mathrm{L}7a_3$. Using SnapPy, we are able to identify $S^3_{\frac{1}{2};L_1}(\mathrm{L}7a_3) = 7_3$ and $S^3_{-\frac{1}{2};L_1}(\mathrm{L}7a_3)=8_4$.

The multivariable Alexander polynomial of $L=\mathrm{L}7a_3$ is
\begin{equation*}
\begin{split}
    \Delta_L(x_1,x_2) &=  -x_1^{-1/2}x_2^{-3/2} + x_1^{-1/2}x_2^{-1/2} - x_1^{-1/2}x_2^{1/2} + x_1^{-1/2}x_2^{3/2} + x_1^{1/2}x_2^{-3/2} \\&- x_1^{1/2}x_2^{-1/2} + x_1^{1/2}x_2^{1/2} - x_1^{1/2}x_2^{3/2}
\end{split}
\end{equation*}
Since $\mathrm{lk}(L_1,L_2)=0$, we have that $(-r\,\mathrm{lk}(L_1,L_2),1)=(0,1) \in \mathcal{C}_{(-1/2,-3/2)}^\vee$ for all $r\in\Z\setminus \{0\}$. For the integral vertex $(-1/2,-3/2)$, we get the series
\begin{equation*}
\begin{aligned}
F_L^{(-1,-3)}((x_1,x_2),q)
={}&\left(-q^{3/2}x_2^{3/2}-q^{3/2}x_2^{5/2}
+\left(q^{5/2}-q^{3/2}\right)x_2^{7/2}+\cdots\right)x_1^{1/2} \\
&+\left(-q^{3/2}x_2^{3/2}
+\left(q^{5/2}-q^{3/2}-q^{1/2}\right)x_2^{5/2}
+\cdots\right)x_1^{3/2} \\
&+\left(-q^{3/2}x_2^{3/2}
+\left(q^{7/2}+q^{5/2}-q^{3/2}-q^{1/2}-q^{-1/2}\right)x_2^{5/2} + \cdots\right)x_1^{5/2}\\& +\cdots
\end{aligned}
\end{equation*}
for which
\begin{equation*}
\begin{split}
    \mathcal{L}_{\frac12;x_1}&\left(\left(x_2^\frac{1}{4}-x_2^{-\frac14}\right)F_{m(L)}^{(-1,-3)}\right)(x_2,q^{-1}) \doteq \left(1-q+q^3-q^6+q^{10}-q^{15}+\cdots\right)x_2^{3/2}
    \\ & +\left(1-q-q^2+q^3+q^4+q^5-q^6-q^7-q^8-q^9+q^{10}+\cdots\right)x_2^{5/2} \\
    & +\left(q^{-1}-2-q-2q^2+q^4+3q^5-2q^8-3q^9-q^{10}+\cdots\right)x_2^{7/2}+\cdots
\end{split}
\end{equation*}

We find
\begin{equation*}
\begin{split}
    &g_{3/2}(2,10;q) =  1-q+q^3-q^6+q^{10}+\cdots = \psi_{2,1}(q) \hspace{1.5em} \text{(mod }q^{200}) \\
    &g_{5/2}(2,10;q) \simeq \frac{-1+2\psi_{2,1}(q)}{1-q}\hspace{1.5em} \text{(mod }q^{199})
\end{split}
\end{equation*}

On the other hand,
\begin{equation*}
\begin{split}
    \mathcal{L}_{-\frac12;x_1}&\left(\left(x_2^\frac{1}{4}-x_2^{-\frac14}\right)F_L^{(-1,-3)}\right)(x_2,q) \doteq \left(1-q+q^3-q^6+q^{10}-q^{15}+\cdots\right)x_2^{3/2}
    \\ & +\left(1 - q + q^2 + q^3 - q^4 - q^5 - q^6 + q^7 + q^8 + q^9 + q^{10}+\cdots\right)x_2^{5/2} \\
    & +\left(1 - q + 2 q^2 - 3 q^4 - q^5 + q^6 + 3 q^7 + 2 q^8 - q^{10}+\cdots\right)x_2^{7/2}+\cdots
\end{split}
\end{equation*}
In this case,
\begin{equation*}
\begin{split}
    &g_{3/2}(-2,10;q) =  1-q+q^3-q^6+q^{10}+\cdots = \psi_{2,1}(q) \hspace{1.5em} \text{(mod }q^{209}) \\
    &g_{5/2}(-2,10;q) = 1 - q + q^2 + q^3 - q^4 +\cdots = \frac{\psi_{2,0}(q)-2q\,\psi_{2,1}(q)}{1-q} \hspace{1.5em} \text{(mod }q^{199})
\end{split}
\end{equation*}

\end{example}

\begin{table}
    \centering
    \makebox[\textwidth][c]{\begin{tabular}{ccccccc}
        Knot & Link $L$ & Surgery pres. & Vertex $v$ & $j$ & Closed form for $g_{j}(r,N;q)$ & Agreement \\
        \hline
        $6_1$ & $\mathrm{L}5a1$ & $S^3_{\frac12;L_1}(L\{0\})$ & $(-1,-1)$ & $1/2$ & $\psi_{2,1}(q)$ & $q^{209}$ \\
        & & & & $3/2$ & $(\psi_{2,0}(q)-2q\,\psi_{2,1}(q))/(1-q)$ & $q^{199}$ \\
        \hline
        $8_1$ & $\mathrm{L}5a1$ & $S^3_{\frac13;L_1}(L\{0\})$ & $(-1,-1)$ & $1/2$ & $\psi_{3,2}(q)$ & $q^{319}$ \\
        & & & & $3/2$ & $(\psi_{3,1}(q)-2q\,\psi_{3,2}(q))/(1-q)$ & $q^{310}$ \\
        \hline
        $8_4$ & $\mathrm{L}7a_{3}$ & $S^3_{-\frac{1}{2};L_1}(L\{0\})$ & $(-1,-3)$ & $3/2$ & $\psi_{2,1}(q)$ & $q^{209}$ \\
        & & & & $5/2$ & $(\psi_{2,0}(q)-2q\,\psi_{2,1}(q))/(1-q)$ & $q^{199}$ \\
        \hline
        $8_6$ & $\mathrm{L}7a_{1}$ & $S^3_{\frac{1}{2};L_1}(m(L\{0\}))$ & $(-1,-3)$ & $3/2$ & $\psi_{2,1}(q)$ & $q^{209}$ \\
        & & & & $5/2$ & $2\psi_{2,1}(q)$ & $q^{209}$ \\
        \hline
        $8_8$ & $\mathrm{L}7a_{1}$ & $S^3_{-\frac{1}{2};L_1}(L\{0\})$ & $(-1,-3)$ & $3/2$ & $\psi_{2,1}(q)$ & $q^{209}$ \\
        & & & & $5/2$ & $2\psi_{2,1}(q)$ & $q^{209}$ \\
        \hline
        $8_{11}$ & $\mathrm{L}8a_{2}$ & $S^3_{\frac{1}{2};L_1}(m(L\{0\}))$ & $(-1,-3)$ & $3/2$ & $\psi_{2,1}(q)$ & $q^{209}$ \\
        & & & & $5/2$ & $((4-2q)\psi_{2,1}(q)-\psi_{2,0}(q))/(1-q)$ & $q^{199}$ \\
        \hline
        $8_{13}$ & $\mathrm{L}8a_{4}$ & $S^3_{\frac{1}{2};L_1}(m(L\{0\}))$ & $(-1,-3)$ & $3/2$ & $\psi_{2,1}(q)$ & $q^{209}$ \\
        & & & & $5/2$ & $((4-2q)\psi_{2,1}(q)-\psi_{2,0}(q))/(1-q)$ & $q^{199}$ \\
        \hline
        $8_{14}$ & $\mathrm{L}10n_{20}$ & $S^3_{-\frac12;L_1}(L\{0\})$ & $(-1,-3)$ & $3/2$ & $\psi_{2,1}(q)$ & $q^{209}$ \\
        & & & & $5/2$ & $((6-2q)\psi_{2,1}(q)-2\psi_{2,0}(q))/(1-q)$ & $q^{199}$ \\
        \hline
        $9_{12}$ & $\mathrm{L}8a_{4}$ & $S^3_{-\frac12;L_1}(L\{0\})$ & $(-1,-3)$ & $3/2$ & $\psi_{2,1}(q)$ & $q^{209}$ \\
        & & & & $5/2$ & $((2-4q)\psi_{2,1}(q)+\psi_{2,0}(q))/(1-q)$ & $q^{199}$ \\
        \hline
        $9_{14}$ & $\mathrm{L}8a_{2}$ & $S^3_{-\frac{1}{2};L_1}(L\{0\})$ & $(-1,-3)$ & $3/2$ & $\psi_{2,1}(q)$ & $q^{209}$ \\
        & & & & $5/2$ & $((2-4q)\psi_{2,1}(q)+\psi_{2,0}(q))/(1-q)$ & $q^{199}$ \\
        \hline
        $9_{15}$ & $\mathrm{L}8a_{1}$ & $S^3_{\frac12;L_1}(m(L\{0\}))$ & $(-1,-3)$ & $3/2$ & $\psi_{2,1}(q)$ & $q^{209}$ \\
        & & & & $5/2$ & $4\psi_{2,1}(q)$ & $q^{209}$ \\
        \hline
        $9_{19}$ & $\mathrm{L}8a_{1}$ & $S^3_{-\frac12;L_1}(L\{0\})$ & $(-1,-3)$ & $3/2$ & $\psi_{2,1}(q)$ & $q^{209}$ \\
        & & & & $5/2$ & $4\psi_{2,1}(q)$ & $q^{209}$ \\
        \hline
        $9_{37}$ & $\mathrm{L}10n_{33}$ & $S^3_{\frac12;L_1}(m(L\{0\}))$ & $(-1,-3)$ & $3/2$ & $\psi_{2,1}(q)$ & $q^{209}$ \\
        & & & & $5/2$ & $((6-4q)\psi_{2,1}(q)-\psi_{2,0}(q))/(1-q)$ & $q^{209}$ \\
    \end{tabular}}
    \caption{Non-fibered knots that can be obtained as partial $\frac{1}{r}$-surgery on the unknot component of a $2$-component link $L$. We compute the associated polynomials $g_{j}(r,N;q)$ from the series $F_L^{(v),N}$ for $N=10$, and we normalize them so their $q$-leading term is $1$. We record a closed formula for the first and second non-zero values of $g_{j}(r,10;q)$, which have been checked up to the $q$-degree recorded in the ``Agreement'' column.
    We orient $L$ and order its components following LinkInfo's convention \cite{knotinfo}, and use $m(L)$ to denote the mirror of $L$.}
    \label{tab:non-fibered-knots}
\end{table}

\end{document}